\newtheorem{lemma}{Lemma}[section]
\newtheorem{thm}[lemma]{Theorem}
\newtheorem{prop}[lemma]{Proposition}
\newtheorem{cor}[lemma]{Corollary}
\newtheorem{hyp}[lemma]{Hypothesis}
\newtheorem{definition}[lemma]{Definition}
\begin{document}

\setlength{\parindent}{0mm}

\newcommand{\m}{$\,\textrm{max}\,$}
\newcommand{\w}{\widehat}
\newcommand{\wi}{\widehat}
\newcommand{\ov}{\overline}
\newcommand{\N}{\mathbb{N}}
\def \P{\mathbb{P}}

\newcommand{\E}{\mathcal{E}}
\newcommand{\K}{\mathcal{K}}
\newcommand{\sym}{\mathcal{S}}
\newcommand{\A}{\mathcal{A}}

\newcommand{\Z}{\mathbb{Z}}

\newcommand{\wt}{\widetilde}
\newcommand{\wh}{\widehat}
\newcommand{\ti}{\tilde}

\newcommand{\ch}{$\textrm{char}$}
\newcommand{\sy}{$\,\textrm{Syl}$}
\newcommand{\au}{$\textrm{Aut}$}
\newcommand{\Out}{$\textrm{Out}$}
\newcommand{\PSL}{$\textrm{PSL}$}
\newcommand{\PSU}{$\textrm{PSU}$}
\newcommand{\PGL}{$\textrm{PGL}$}
\newcommand{\PGU}{$\textrm{PGU}$}
\newcommand{\PGO}{$\textrm{PGO}$}
\newcommand{\PSO}{$\textrm{PSO}$}
\newcommand{\PGaL}{P\Gamma L}
\newcommand{\GL}{$\textrm{GL}$}
\newcommand{\GU}{$\textrm{GU}$}
\newcommand{\Sp}{$\textrm{Sp}$}
\newcommand{\PSp}{$\textrm{PSp}$}
\newcommand{\Sz}{$\textrm{Sz}$}
\newcommand{\SL}{$\textrm{SL}$}
\newcommand{\SU}{$\textrm{SU}$}
\newcommand{\F}{$\textrm{GF}$}
\newcommand{\Fi}{$\textrm{Fi}$}
\newcommand{\FO}{\textrm{fix}_{\Omega}}
\newcommand{\FL}{\textrm{fix}_{\Lambda}}
\newcommand{\fixO}{\textrm{fix}_{\Omega}}
\newcommand{\fixL}{\textrm{fix}_{\Lambda}}
\newcommand{\out}{$\textrm{Out}$}
\newcommand{\Sym}{\mathcal{S}}
\newcommand{\Alt}{\mathcal{A}}
\newcommand{\rank}{$\textrm{r}$}

\def \<{\langle }
\def \>{\rangle }
\def \L{\mathcal{L}}

\begin{center}
\Large{\textbf{Transitive permutation groups with trivial four point
stabilizers}}

\vspace{0.2cm} \small{Kay Magaard and Rebecca Waldecker}
\end{center}

\normalsize

\vspace{2cm}

\begin{center}
\textbf{Abstract}
\end{center}

\small
In this paper we analyze the structure of transitive permutation
groups that have trivial four point stabilizers, but some nontrivial
three point stabilizer. In particular we give a complete, detailed
classification when the group is simple or quasisimple. This paper is motivated by questions concerning the relationship between fixed points of automorphisms of Riemann surfaces and Weierstra{\ss} points and is a continuation of the authors' earlier work.

\vspace{1cm}
\normalsize

\section{Introduction}

\vspace{0.2cm} In \cite{MW} we study permutation groups that act
nonregularly such that every nontrivial element has at most two
fixed points. The motivation for the study are questions concerning
automorphisms and Weierstra\ss \ points of Riemann surfaces, and for
these the permutation groups in which nontrivial elements fix at
most three or at most four points are relevant. In the present
article we look at transitive permutation groups where some
nontrivial element fixes three points, but all four point
stabilizers are trivial.

\begin{thm}\label{simple3fp}
Suppose that $G$ acts faithfully and transitively on a set $\Omega$.
Suppose that the four point stabilizers are trivial, but that some
three point stabilizer is nontrivial. If $G$ is simple and $\omega
\in \Omega$, then one of the following holds:
\begin{enumerate}
\item $G_\omega$ is not cyclic and one of the following is true:
\begin{enumerate}
\item
$G \cong \A_5$, $|\Omega|=15$ and $G_\omega \in \sy_2(G)$.
\item
$G \cong \A_6$, $|\Omega| \in \{6, 15\}$ and $G_\omega$ is
isomorphic to $\A_5$ or $\Sym_4$, respectively.
\item
$G \cong \PSL_2(7)$, $|\Omega|=7$ and $G_\omega \cong \Sym_4$.
\item
$G \cong \A_7$, $|\Omega| = 15$ and $G_\omega \cong \PSL_2(7)$.
\item
$G \cong \PSL_2(11)$, $|\Omega|=11$ and $G_\omega \cong \A_5$.
\item
$G \cong M_{11}$, $|\Omega|=11$ and $G_\omega \cong \A_5$.

\end{enumerate}

\item $G_\omega$ is cyclic of order prime to $6$ and one of the following is true:
\begin{enumerate}
\item
$G \cong \PSL_3(q)$ and $|G_\omega| = q^2+q+1/(3,q-1)$.

\item
$G \cong \PSU_3(q)$ and $|G_\omega| = q^2-q+1/(3,q+1)$.

\item
$G \cong \PSL_4(3)$, $|\Omega|=2^7 \cdot 3^6 \cdot 5$ and $|G_\omega| = 13$.

\item
$G \cong \PSU_4(3)$, $|\Omega|=2^7 \cdot 3^6 \cdot 5$ and $|G_\omega| = 7$.

\item
$G \cong \PSL_4(5)$, $|\Omega|=2^7 \cdot 3^2 \cdot 5^6 \cdot 13$ and
$|G_\omega| = 31$.

\item
$G \cong \A_7$, $|\Omega| = 360$ and $|G_\omega| = 7$.

\item
$G \cong \A_8$, $|\Omega|=2880$ and $|G_\omega| = 7$.

\item
$G \cong M_{22}$, $|\Omega|=2^7 \cdot 3^2 \cdot 5 \cdot 11$ and
$|G_\omega| = 7$.

\end{enumerate}
\end{enumerate}
\end{thm}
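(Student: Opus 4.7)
The hypothesis is equivalent to the statement that every nontrivial element of $G$ has at most three fixed points on $\Omega$, and that this bound is attained. Fix $x\in H:=G_\omega$ with $\mathrm{Fix}(x)=\{\omega,\alpha,\beta\}$. Then $\langle x\rangle$ lies in the three-point stabiliser $G_{\omega\alpha\beta}$, and every nontrivial subgroup of $G_{\omega\alpha\beta}$ has fixed-point set exactly $\{\omega,\alpha,\beta\}$. My proof rests on the classification of finite simple groups and the list of maximal subgroups in each family; the case split is given by whether $H$ is cyclic.

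Suppose first that $H$ is not cyclic. The setwise stabiliser $G_{\{\omega,\alpha,\beta\}}$ normalises $G_{\omega\alpha\beta}$ and acts on the $3$-set $\{\omega,\alpha,\beta\}$, yielding $G_{\{\omega,\alpha,\beta\}}/G_{\omega\alpha\beta}\hookrightarrow \mathrm{Sym}(3)$. Combined with the results of \cite{MW} on two-point stabilisers in nonregular actions where nontrivial elements fix at most two points, this tightly constrains $H$: it must be generated by a nontrivial three-point stabiliser together with a small number of elements each fixing at most two points on $\Omega$. Running through the possible pairs $(G,H)$ with $G$ simple and $H$ admitting such a structure, via the maximal-subgroup lists for simple groups with small-index subgroups, produces the six cases in part (1); each is verified directly by explicit fixed-point computation in the coset action on $G/H$.

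Suppose now that $H$ is cyclic with $(|H|,6)=1$. Then every nontrivial element of $H$ shares the same fixed-point set, namely $\mathrm{Fix}(H)=\{\omega,\alpha,\beta\}$. The normaliser $N_G(H)$ preserves $\mathrm{Fix}(H)$ and induces a subgroup of $\mathrm{Sym}(3)$ on it, while $N_G(H)/C_G(H)\hookrightarrow \mathrm{Aut}(H)$ is cyclic. Thus $|H|$ is large relative to $|N_G(H)|$, which forces $H$ to be a cyclic subgroup of ``Singer type''. Enumerating simple groups containing such cyclic subgroups of the appropriate order yields precisely the Singer-torus actions of \PSL$_3(q)$ and \PSU$_3(q)$, together with the low-dimensional exceptions \PSL$_4(3), \PSU_4(3), \PSL_4(5), \A_7, \A_8$ and $M_{22}$. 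In each candidate one verifies the theorem by computing $\mathrm{Fix}(h)$ for a generator $h$ of $H$ in the relevant transitive action.

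The principal obstacle is the cyclic case: one must exclude $\PSL_n(q)$ and $\PSU_n(q)$ for $n\geq 4$ apart from the three listed sporadic triples, rule out all exceptional groups of Lie type, and verify that no sporadic simple group beyond $M_{22}$ qualifies. I expect this to rely on Zsigmondy-prime arguments to pin down the possible orders of $|H|$, combined with analysis of the action of outer automorphisms on conjugacy classes of maximal tori, and careful bookkeeping of which Singer-cycle actions yield exactly three fixed points rather than one or more than three.
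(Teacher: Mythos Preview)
Your outline has the right headline (CFSG plus maximal-subgroup data), but the proposed case split is circular and the non-cyclic branch lacks a reduction mechanism.

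First, the circularity. You split into ``$H$ not cyclic'' versus ``$H$ cyclic with $(|H|,6)=1$'', but the coprimality to $6$ is part of the \emph{conclusion}, not something you have established. If $H$ is cyclic of order divisible by $2$ or $3$, your argument says nothing. Relatedly, the claim that in the cyclic case ``every nontrivial element of $H$ shares the same fixed-point set'' is false in general for cyclic $H$ of composite order: a generator may fix only $\omega$ while a proper power fixes three points. You need an independent argument ruling out cyclic $H$ with $2$ or $3$ dividing $|H|$ before the Singer-torus heuristic applies.

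Second, the non-cyclic branch is not a proof. You assert that $H$ is ``generated by a nontrivial three-point stabiliser together with a small number of elements each fixing at most two points'' and then appeal to ``maximal-subgroup lists for simple groups with small-index subgroups'', but nothing in your argument bounds the index of $H$. The paper's route here is genuinely different: it proves a structural lemma (Lemma~\ref{2isttoll}) showing that for simple $G$ either $G_\omega$ has odd order, or $G_\omega$ contains a full Sylow $2$-subgroup of $G$, or $G$ has dihedral/semidihedral Sylow $2$-subgroups (whence $G$ lies in the short Gorenstein--Walter/Alperin--Brauer--Gorenstein list). This trichotomy, together with Lemma~\ref{sylow}(c) (point stabilisers contain full Sylow $p$-subgroups for $p\ge 5$) and the $p\vdash q$ relation of Lemma~\ref{key}, is what actually drives the elimination in each CFSG family. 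Your Zsigmondy idea is in the right spirit for Lie-type groups, but without the odd-order/Sylow-$2$ dichotomy you have no entry point for alternating or sporadic groups, and no way to force $G_\omega$ to consist of regular semisimple elements in the Lie-type case (Lemma~\ref{general}), which is the step that collapses the torus analysis to rank at most $3$.
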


\begin{thm}\label{almostsimple3fp}
Suppose that $G$ acts faithfully and transitively on a set $\Omega$.
Suppose that the four point stabilizers are trivial, but that some
three point stabilizer is nontrivial. If $G$ is almost simple, but
not simple and if $\omega \in \Omega$, then one of the following
holds:
\begin{enumerate}
\item
There is a prime $p$ such that $G \cong \au(\PSL_2(2^p)) =
\au(\SL_2(2^p))$, and $\Omega$ is the set of $1$-spaces of the
natural module of $\SL_2(2^p)$. (This includes the example where $G
\cong \Sym_5$ in its natural action on five points.)

\item $G \cong \PGL_3(q)$ with $(q-1,3) = 3$, $|\Omega| = q^3(q^2-1)$ and $G_\omega$ is cyclic of order $(q^3-1)/(q-1)$.

\item $G \cong \PGU_3(q)$ with $(q+1,3) = 3$, $|\Omega| = q^3(q^2+1)$ and $G_\omega$ is cyclic of order $(q^3+1)/(q+1)$.

\end{enumerate}
\end{thm}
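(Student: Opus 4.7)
The plan is to let $S = \text{soc}(G)$, the unique nonabelian simple normal subgroup with $S < G \leq \text{Aut}(S)$, and to pull the hypotheses down to the action of $S$ on $\Omega$. Four point stabilizers in $S$ are automatically trivial as subgroups of those in $G$. My first dichotomy is according to whether $S$ itself admits a nontrivial three point stabilizer on $\Omega$.

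In the first case some nontrivial element of $S$ fixes three points of $\Omega$, and after restricting to an $S$-orbit on which this occurs (dealing with the case of fixed points distributed across several $S$-orbits separately), the pair $(S,\Omega_0)$ satisfies the hypotheses of Theorem~\ref{simple3fp}. For each candidate on that list I would examine $\text{Out}(S)$ and ask which extensions $G$ with $S < G \leq \text{Aut}(S)$ act transitively on $\Omega$ and retain trivial four point stabilizers. The positive contributions come from items~(2)(a) and~(2)(b) of Theorem~\ref{simple3fp}: when $S \cong \text{PSL}_3(q)$ with $3 \mid q-1$, or $S \cong \text{PSU}_3(q)$ with $3 \mid q+1$, the stabilizer $S_\omega$ is cyclic of order $(q^2+q+1)/3$ or $(q^2-q+1)/3$, sitting as a subgroup of index $3$ in the image of a Singer torus; adjoining the outer diagonal automorphism of order $3$, which normalizes this torus, produces $G \cong \text{PGL}_3(q)$ or $\text{PGU}_3(q)$ with $G_\omega$ cyclic of order $q^2+q+1$ or $q^2-q+1$, giving items~(2) and~(3). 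All other lines of Theorem~\ref{simple3fp} must be checked one at a time: most have $\text{Out}(S)$ trivial or too small to give a proper almost simple extension, and the rest either fail to stabilize $\omega$ in an action-compatible way or introduce an outer element with a fourth fixed point.

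In the second case, every nontrivial element of $S$ fixes at most two points of $\Omega$, so $(S,\Omega)$ falls under the classification of our earlier paper~\cite{MW}. Since some element of $G$ fixes three points, such an element must lie in $G \setminus S$. The principal positive example is $S \cong \text{PSL}_2(2^p) = \text{PGL}_2(2^p)$ acting on the projective line over the field with $2^p$ elements: sharp three-transitivity makes three point stabilizers in $S$ trivial, while a field automorphism $\sigma$ of prime order $p$ fixes precisely the three $\mathbb{F}_2$-rational points. This yields $G \cong \text{Aut}(\text{PSL}_2(2^p))$ as in item~(1). To verify triviality of four point stabilizers in $G$, I would use a Sylow argument: for $p \geq 5$ we have $p \nmid |S|$, so every element of $G$ of order $p$ is conjugate into $\langle \sigma \rangle$ and fixes exactly three points, while any element of $G$ with four fixed points would, upon raising to the $p$-th power, produce a nontrivial element of $S$ with at least three fixed points, contradicting sharp three-transitivity; the cases $p=2$ (yielding $\Sym_5$ on five points) and $p=3$ are checked directly. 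The remaining items of~\cite{MW} are excluded either because $\text{Out}(S) = 1$ or because every outer coset representative fails the three- or four-point condition.

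The principal obstacle is the detailed case-by-case analysis in the first case, where every line of Theorem~\ref{simple3fp} must be examined using explicit structure of $\text{Out}(S)$ and of the corresponding permutation action. A secondary difficulty is the fixed-point count for elements in the outer coset in the second case, together with the need to treat small primes separately. Handling possibly intransitive $S$-actions is essentially orbit bookkeeping, reduced to the transitive case by restricting to a single $S$-orbit after locating the three fixed points of a suitable socle element.
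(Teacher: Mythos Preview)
Your plan matches the paper's approach. The dichotomy you set up is exactly Lemma~\ref{hypcomp}: either $(E,\alpha^E)$ satisfies Hypothesis~\ref{3fix}, leading to the case analysis against Theorem~\ref{simple3fp}, or $E$ falls under~\cite{MW} on its orbit and one eliminates all candidates except $\PSL_2(2^p)$. The ``checked one at a time'' programme you describe is carried out in Lemmas~\ref{A6comp}, \ref{A7comp}, \ref{A8comp}, \ref{AutPSL3}, \ref{AutPSU3}, \ref{autsingles}, together with a short argument for $\au(M_{22})$ in the proof itself.

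One place where the paper invests more than your plan acknowledges: in your second case you apply~\cite{MW} to $(S,\Omega)$, but $S$ need not be transitive, so one must work on a single $S$-orbit $\Delta = \alpha^S$ and first secure $S_\alpha \neq 1$. The paper does this in Lemma~\ref{comp2}, invoking results of Fukushima, Rickman, and Camina--Collins to exclude $|C_E(x)| \le 3$ for an outer element $x$ of prime order; this is not mere bookkeeping. Similarly, the elimination of the non-$\PSL_2(2^p)$ candidates from~\cite{MW} (Suzuki groups, $\PSL_3(4)$, $\PSL_2(7)$, and $\PSL_2(q)$ for odd $q$) inside Lemma~\ref{hypcomp} uses specific facts about centralizers of field and diagonal automorphisms, and that is where most of the work in that half of the dichotomy actually lies.
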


Almost 40 years ago Pretzel and Schleiermacher \cite{PS1} studied an
important special case of our present situation, namely they
investigated transitive permutation groups in which, for a fixed
prime $p$, every nontrivial element fixes either $p$ or zero points.
They stated that one would like to prove that either $G$ contains a
regular normal subgroup of index $p$ or that $G$ contains a normal
subgroup $F$ of index $p$ such that $F$ acts as a Frobenius group on
its $p$ orbits.

\begin{thm} \label{main}
Suppose that $G$ acts faithfully and transitively on a set $\Omega$.
Suppose that the four point stabilizers are trivial, but that some
three point stabilizer is nontrivial. Then $G$ has order divisible
by $3$ and if $\omega \in \Omega$, then one of the following holds:
\begin{enumerate}
 \item
 $|G_\omega|$ is even and one of the following is true:
\begin{enumerate}
\item
$G$ has a normal $2$-complement.

\item
$G$ has dihedral or semidihedral Sylow $2$-subgroups and $4$ does
not divide $|G_\omega|$. In particular $G_\omega$ has a normal
$2$-complement.

\item
$G_\omega$ contains a Sylow $2$-subgroup $S$ of $G$ and $G$ has a
strongly embedded subgroup.

\item
$|G:G_\omega|$ is even, but not divisible by $4$ and $G$ has a
subgroup of index $2$ that has a strongly embedded subgroup.

\end{enumerate}

\item
$|G_\omega|$ is odd and one of the following is true:
\begin{enumerate}
\item $G$ has a normal subgroup $R$ of order $27$ or $9$, and $G/R$ is
isomorphic to $\Sym_3$, $\Alt_4$, $\Sym_4$, to a fours group or to a
dihedral group of order $8$.
\item $G$ has a regular normal subgroup.
\item $G$ has a normal subgroup $F$ of index $3$ which acts as
a Frobenius group on its three orbits.
\item $G$ has a normal subgroup $N$ which acts
semiregularly on $\Omega$ such that $G/N$ is almost simple
and $G_\omega$ is cyclic.
\end{enumerate}

\end{enumerate}

\end{thm}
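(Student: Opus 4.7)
The plan is to reduce Theorem \ref{main} to the classifications already established in Theorems \ref{simple3fp} and \ref{almostsimple3fp} by means of a normal subgroup analysis, combined with classical $2$- and $3$-local structure results. First I would prove a basic three-point lemma: if a three-point stabiliser $H := G_{\alpha\beta\gamma}$ is nontrivial, then every nontrivial element of $H$ fixes \emph{exactly} the points $\alpha,\beta,\gamma$, so $H$ acts semiregularly on $\Omega \setminus \{\alpha,\beta,\gamma\}$, and $N_G(H)$ permutes the triad $T := \{\alpha,\beta,\gamma\}$. A counting argument using the transitivity of $G$ on $\Omega$ shows that $N_G(H)$ acts transitively on $T$, so $3 \mid |G|$ and, moreover, a Sylow $3$-subgroup of $N_G(H)$ contains a $3$-cycle on $T$.

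Assume first that $|G_\omega|$ is even. Then a point stabiliser contains an involution $t$, which fixes at most three points of $\Omega$. I would split according to whether a Sylow $2$-subgroup $S$ of $G$ is contained in some $G_\omega$ or not. If $S \leq G_\omega$, then a standard argument using the fixed-point restriction shows that $G_\omega$ is strongly embedded in $G$, giving case 1(c); a variant in which $|G:G_\omega|$ is exactly twice an odd number yields case 1(d) via an index-two normal subgroup containing a strongly embedded subgroup. In the remaining configurations no Sylow $2$-subgroup of $G$ lies in any $G_\omega$; an analysis of how involutions act on $\Omega$, together with the Gorenstein-Walter and Alperin-Brauer-Gorenstein theorems on groups with dihedral or semidihedral Sylow $2$-subgroups and with Glauberman's $Z^*$-theorem to rule out isolated involutions, produces case 1(b). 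The normal $2$-complement case 1(a) then falls out of Frobenius' transfer criterion applied to the resulting Sylow $2$-structure.

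Assume next that $|G_\omega|$ is odd. Then $G_\omega$ is soluble by the Feit-Thompson theorem, and every involution of $G$ has at most two fixed points on $\Omega$. Any nontrivial normal subgroup $N$ of $G$ either meets $G_\omega$ trivially, in which case it is semiregular on $\Omega$, or is contained in every point stabiliser and hence trivial. If some $O_p(G)$ with $p \geq 5$ is nontrivial, combining its semiregular action with the cyclic action on $T$ of a $3$-element of $N_G(H)$ yields either a regular normal subgroup (case 2(b)) or a normal subgroup of index $3$ acting as a Frobenius group on its three orbits (case 2(c)). If $O_3(G) \neq 1$, a careful analysis of how a Sylow $3$-subgroup acts on itself and on $\Omega$, together with the requirement that its centraliser of $T$ in $G$ must be semiregular off $T$, forces $|O_3(G)| \in \{9,27\}$ and restricts the quotient $G/O_3(G)$ to the small list of groups appearing in case 2(a). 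Finally, if the soluble radical $N$ of $G$ is semiregular with almost simple quotient, Theorems \ref{simple3fp} and \ref{almostsimple3fp} applied to $G/N$ acting on the set of $N$-orbits deliver case 2(d).

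The main technical obstacle, I expect, lies in the $O_3(G) \neq 1$ branch of the odd case. One must track the Sylow $3$-subgroup of $G$ simultaneously as a transitive group on the three fixed points of a three-point stabiliser and as a semiregular group on the remaining orbits, and then translate these seemingly incompatible requirements into sharp upper bounds on $|O_3(G)|$ and on the structure of $G/O_3(G)$. This will require a delicate blend of block-system, fusion, and transfer arguments, and it is the real technical heart of the proof.
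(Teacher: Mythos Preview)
Your opening argument for $3 \mid |G|$ does not work. You claim that a counting argument shows $N_G(H)$ is transitive on the triad $T = \{\alpha,\beta,\gamma\}$, but transitivity of $G$ on $\Omega$ does not imply transitivity of $N_G(H)$ on $\FO(H)$: an element $g$ with $\alpha^g = \beta$ sends $H$ to the pointwise stabiliser of $\{\alpha^g,\beta^g,\gamma^g\}$, which is in general a \emph{different} triad. All that is forced is that $N_G(H)/C_{N_G(H)}(T)$ embeds in $\Sym_3$, and a priori it could be trivial or of order $2$. The paper's Lemma \ref{3strich} proves $3 \mid |G|$ by a genuine minimal-counterexample argument: if $3 \notin \pi(G)$ and $|G_\omega|$ is odd, then every $N_G(X)$ for $1 \neq X \le H$ must fix $T$ pointwise (no $3$-cycles, no transpositions), forcing $G$ Frobenius; the even case then requires finding an index-$2$ subgroup, showing it is still transitive, and deriving a parity contradiction. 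None of this is a counting argument.

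Your odd-order analysis also misses the organising principle the paper uses. The paper first proves (Lemma \ref{cases}) that when $|G_\omega|$ is odd, either $|\FO(G_\omega)| = 3$ or $G_\omega$ is itself a Frobenius group whose complement is a three-point stabiliser $H$; in the latter case Corollary \ref{FrobTo3} shows $H$ is t.i.\ with $|N_G(H):H| = 3$, so one may replace $\Omega$ by $G/H$ and reduce to the $(0,3)$ situation. The four alternatives in part 2 then come not from a direct bound on $|O_3(G)|$ but from the t.i./non-t.i.\ dichotomy for $G_\omega$ and the Pretzel--Schleiermacher machinery (\cite{PS1}, \cite{PS2}): case 2(a) is exactly the non-t.i.\ output of \cite{PS2}, while cases 2(b)--(d) arise from \cite{PS1} together with an induction on $|G|$ through semiregular minimal normal subgroups (Proposition \ref{oddZ3} and Lemma \ref{o3}). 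Your proposed ``$O_3(G)$ branch'' with a delicate fusion/transfer argument bounding $|O_3(G)| \le 27$ is aiming at the right conclusion but by a route the paper does not take, and you give no indication of how to handle the crucial step where $G_\omega$ is t.i.\ and $(|G_\omega|,6)=1$, which is where the almost-simple quotient in 2(d) actually enters.
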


This paper is structured as follows. After fixing some standard
notation, we introduce examples which are typical for the situation
that we analyze later on. Then we move on to prove results about the
local structure of the groups under consideration and collect enough
information to bring the Classification of Finite Simple Groups into
action in an efficient way. Sections 3 to 5 deal with particular
classes of simple and quasisimple groups. Then in Section 6 we
collect this information for the proof of Theorems 1.1 and 1.2. Then
we give the proof of Theorem 1.3. after which we show that the
possibilities arising in
Theorem 1.3.2~(b) are like the examples given in Section 2.1.\\

\begin{center}

\textbf{Acknowledgments}

\end{center}

The second author wishes to thank the University of Birmingham for
the hospitality during numerous visits and the Deutsche
Forschungsgemeinschaft for financial support of this project. The
first author wishes to thank the Martin-Luther-Universit\"at
Halle-Wittenberg for its hospitality during his visits. Both authors
wish to thank Chris Parker for bringing the work of Pretzel and
Schleiermacher to their attention.

\section{Preliminaries}

\vspace{0.2cm} In this paper, by ``group'' we always mean a finite
group, and by ``permutation group'' we always mean a group that acts
faithfully.

In this chapter let $\Omega$ denote a finite set and let $G$ be a
permutation group on $\Omega$.

\medskip
\textbf{Notation}

\smallskip
Let $\omega \in \Omega$ and $g \in G$, and moreover let $\Lambda
\subseteq \Omega$ and $H \le G$.

Then
$H_{\omega}:=\{h \in H \mid \omega^h=\omega\}$ denotes the stabilizer of $\omega$ in $H$,

$\FL(H):=\{\omega \in \Lambda \mid \omega^h=\omega$ for all $h \in
H\}$ denotes the fixed point set of $H$ in $\Lambda$ and we write
$\FL(g)$ instead of $\FL(\<g\>)$.

We write $\omega^H$ for the $H$-orbit in $\Omega$ that contains $\omega$.

Whenever $n,m \in \N$, then we denote by $(n,m)$ the greatest common
divisor of $n$ and $m$. Moreover we write $\Z_n$ (or sometimes just $n$) for a cyclic group
of order $n$.

\begin{lemma}\label{charfrob}
Suppose that $G$ has a non-trivial proper subgroup $H$ such that the
following holds: ~Whenever $1 \neq X \le H$, then $N_G(X) \le H$.

Then $G$ is a Frobenius group with Frobenius complement $H$.
\end{lemma}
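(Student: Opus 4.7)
The plan is to verify the classical Frobenius condition: show that $H \cap H^g = 1$ for every $g \in G \setminus H$. Since $H$ is a nontrivial proper subgroup of $G$ by assumption, this is precisely what is needed to conclude that $G$ is a Frobenius group with Frobenius complement $H$. Note first that taking $X = H$ in the hypothesis gives $N_G(H) = H$.

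I would argue by contradiction, assuming there is some $g \in G \setminus H$ with $D := H \cap H^g \neq 1$. The key idea is to produce an element of $N_G(X)$ that does not lie in $H$ for some nontrivial $X \le H$, contradicting the hypothesis. To that end, I pick a prime $p$ dividing $|D|$ and let $P$ be a Sylow $p$-subgroup of $D$. Then $P$ is a nontrivial subgroup of $H$, so by hypothesis $N_G(P) \le H$.

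The crucial step is to show that $P$ is actually a Sylow $p$-subgroup of $H^g$ (and of $H$). Let $Q$ be a Sylow $p$-subgroup of $H^g$ with $P \le Q$. Then
\[
N_Q(P) \le N_G(P) \cap H^g \le H \cap H^g = D,
\]
so $N_Q(P)$ is a $p$-subgroup of $D$ containing $P$; since $P$ is Sylow in $D$, we get $N_Q(P) = P$. The standard fact that proper subgroups of $p$-groups are properly contained in their normalizers then forces $P = Q$, so $P$ is Sylow in $H^g$. The same argument with the roles of $H$ and $H^g$ swapped shows $P$ is Sylow in $H$.

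With this in hand the contradiction is quick: $P^{g^{-1}}$ is a Sylow $p$-subgroup of $H$, so by Sylow's theorem applied inside $H$ there exists $h \in H$ with $P^{g^{-1}} = P^h$, i.e.\ $hg \in N_G(P)$. Since $N_G(P) \le H$ and $h \in H$, this forces $g \in H$, contradicting the choice of $g$. Hence $H \cap H^g = 1$ for every $g \in G \setminus H$, which is the definition of $G$ being a Frobenius group with complement $H$. The only real obstacle is the Sylow step; once one sees that $N_Q(P)$ is forced into the intersection $D$ and therefore pinned down by the maximality of $P$ there, the rest is routine.
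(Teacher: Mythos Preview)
Your argument is correct. The one point worth making explicit is the ``symmetry'' step: to run the same argument with $H$ and $H^g$ swapped you need to know that the hypothesis transfers to $H^g$, namely that $1 \neq Y \le H^g$ implies $N_G(Y) \le H^g$. This is immediate by conjugating the original hypothesis by $g$, but since you invoke it implicitly it is worth a sentence. Once that is said, you indeed have $N_G(P) \le H \cap H^g = D$, and the Sylow argument goes through exactly as you wrote it.

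As for comparison with the paper: the paper does not actually prove this lemma at all; it simply cites it as Lemma~2.1 of the authors' earlier work \cite{MW}. So you have supplied a genuine, self-contained proof where the paper gives only a reference. Your approach via Sylow subgroups of the intersection $D$ is a standard and clean way to verify the Frobenius condition $H \cap H^g = 1$, and is presumably close in spirit to what appears in \cite{MW}.
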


\begin{proof}
This is Lemma 2.1 in \cite{MW} .
\end{proof}

\begin{lemma}\label{tuedel}
Suppose that $G$ acts transitively on the set $\Omega$ and that
$\alpha \in \Omega$. Let $1 \neq X \le G_\alpha$. Then the following
hold:

\begin{enumerate}
\item[(a)]
If $\alpha$ is the unique fixed point of $X$, then $N_G(X) \le
G_\alpha$.

\item[(b)]
If $X$ has exactly two fixed points, then $N_{G_\alpha}(X)$ has
index at most 2 in $N_G(X)$.

\item[(c)]
If $X$ has exactly three fixed points, then $N_{G_\alpha}(X)$ has
index at most $3$ in $N_G(X)$.
\end{enumerate}

\end{lemma}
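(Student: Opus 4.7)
The plan is to exhibit a natural action of $N_G(X)$ on the fixed point set $\fixO(X)$ and read off the three conclusions from the orbit--stabilizer theorem applied to that action.

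First I would check that $N_G(X)$ does act on $\fixO(X)$. For $g \in N_G(X)$ and $\beta \in \fixO(X)$, I want $\beta^g \in \fixO(X)$. Given any $x \in X$, the element $gxg^{-1}$ lies in $X$ since $g$ normalizes $X$, so $\beta$ is fixed by $gxg^{-1}$ and therefore
\[
(\beta^g)^x = \beta^{gx} = \beta^{(gxg^{-1})g} = (\beta^{gxg^{-1}})^g = \beta^g.
\]
Thus $\beta^g \in \fixO(X)$, and the action is well defined.

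Next I would identify the stabilizer of $\alpha$ in this action. An element $g \in N_G(X)$ stabilizes $\alpha$ if and only if $g \in G_\alpha$, so the stabilizer is $N_G(X) \cap G_\alpha = N_{G_\alpha}(X)$. By orbit--stabilizer, the length of the $N_G(X)$-orbit of $\alpha$ equals $|N_G(X) : N_{G_\alpha}(X)|$, and of course this orbit is contained in $\fixO(X)$, so
\[
|N_G(X) : N_{G_\alpha}(X)| \le |\fixO(X)|.
\]

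Now the three parts follow by plugging in. In (a), $|\fixO(X)| = 1$ forces $N_G(X) = N_{G_\alpha}(X) \le G_\alpha$; in (b) and (c) the displayed inequality directly gives the bounds $2$ and $3$ respectively. There is no real obstacle here: once the action is in place the statement is essentially a one-line application of orbit--stabilizer, and the only point requiring care is the verification that $N_G(X)$ preserves $\fixO(X)$, which is a standard conjugation computation.
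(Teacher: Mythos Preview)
Your proof is correct and follows the same idea as the paper: $N_G(X)$ acts on $\fixO(X)$, and the stabilizer of $\alpha$ in this action is $N_{G_\alpha}(X)$. Your direct orbit--stabilizer bound $|N_G(X):N_{G_\alpha}(X)| \le |\fixO(X)|$ is in fact slightly cleaner than the paper's argument for~(c), which instead passes through the quotient $N_G(X)/K$ (with $K$ the kernel of the action on $\fixO(X)$) as a subgroup of $\Sym_3$ and then handles the case $N_G(X)/K \cong \Sym_3$ separately.
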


\begin{proof}
Assertion (a) holds in any permutation group. As $N_G(X)$ acts on
$\FO(X)$, we see in the case of (b) that $N_G(X)/N_{G_\alpha}(X)$ is
isomorphic to a subgroup of $\Sym_2$. In (c) let $K$ denote the
kernel of the action of $N_G(X)$ on $\FO(X)$. Then $N_G(X)/K$ is
isomorphic to a subgroup of $\Sym_3$. If this factor group is
isomorphic to a proper subgroup of $\Sym_3$, then (c) holds.
Otherwise we note that there is $g \in N_G(X)$ that fixes $\alpha$
and interchanges the other two points in $\FO(X)$. Hence $g \in
G_\alpha$ and $|N_G(X):N_{G_\alpha}(X)|=3$. So again (c) holds.
\end{proof}

\begin{lemma}\label{frob}
Suppose that $G$ is a $\{2,3\}'$-group and that $G$ acts
transitively, nonregularly on a set $\Omega$ such that four point
stabilizers are trivial. Then $G$ is a Frobenius group.
\end{lemma}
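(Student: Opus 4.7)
The plan is to deduce the result from Lemma \ref{charfrob} by taking the ``Frobenius complement'' candidate to be a point stabilizer. Fix $\alpha \in \Omega$ and set $H := G_\alpha$. Since $G$ acts nonregularly, $H \neq 1$, and since $G$ is transitive on a set of size at least $2$, $H$ is a proper subgroup of $G$. So it suffices to prove that $N_G(X) \le G_\alpha$ for every nontrivial subgroup $X$ of $G_\alpha$; then Lemma \ref{charfrob} will give exactly the conclusion that $G$ is a Frobenius group with complement $G_\alpha$.

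To verify the normalizer condition, I would fix such an $X$ and split into cases according to $|\FO(X)|$. Because $X \le G_\alpha$ and $X \neq 1$, we have $\alpha \in \FO(X)$ and $|\FO(X)| \geq 1$; because four point stabilizers in $G$ are trivial and $X$ is nontrivial, $|\FO(X)| \le 3$. So only three cases arise and I would handle them directly via Lemma \ref{tuedel}. If $X$ has $\alpha$ as its unique fixed point, part (a) immediately gives $N_G(X) \le G_\alpha$. If $|\FO(X)| = 2$, part (b) gives that $N_{G_\alpha}(X)$ has index at most $2$ in $N_G(X)$; but $|G|$ and hence that index is odd, so the index is $1$ and $N_G(X) = N_{G_\alpha}(X) \le G_\alpha$. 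If $|\FO(X)| = 3$, part (c) gives that the same index is at most $3$; but $|G|$ is coprime to $3$, so again the index is $1$.

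In all three cases $N_G(X) \le G_\alpha$, so Lemma \ref{charfrob} applies and finishes the argument. There is no real obstacle here: the proof is essentially a packaging of Lemmas \ref{charfrob} and \ref{tuedel} together with the $\{2,3\}'$-hypothesis, whose sole role is to kill the potential index $2$ or index $3$ extensions of $N_{G_\alpha}(X)$ inside $N_G(X)$ that Lemma \ref{tuedel} (b) and (c) leave open. The only point requiring a line of care is to notice that the hypothesis ``$G$ acts nonregularly'' is what ensures $G_\alpha \neq 1$, so that $H$ is a legitimate candidate for a Frobenius complement.
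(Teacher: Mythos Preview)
Your argument is correct and is precisely what the paper does; its proof reads in full: ``This follows from Lemmas \ref{tuedel} and \ref{charfrob}.'' (A trivial wording remark: in the three-fixed-point case you should invoke that $|G|$ is coprime to $6$, not just to $3$, so as to rule out index $2$ as well as index $3$.)
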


\begin{proof}
This follows from Lemmas \ref{tuedel} and \ref{charfrob}.
\end{proof}

\begin{hyp}\label{3fix}
Suppose that $(G, \Omega)$ is such that $G$ acts transitively,
nonregularly on the set $\Omega$, that four point stabilizers are
trivial and that some three point stabilizer is nontrivial.
\end{hyp}

Note that Hypothesis \ref{3fix} implies that $|\Omega| \geq 5$
because nontrivial permutations on four or fewer points can have at
most two fixed points.

\begin{lemma} \label{cases}
If $(G, \Omega)$ satisfies Hypothesis \ref{3fix} and $\omega \in \Omega$, then one of the
following is true:
\begin{enumerate}
\item[(1)]
$|G_\omega|$ is even.
\item[(2)]
$G_\omega$ is a Frobenius group of odd order, where the Frobenius
complements are three point stabilizers.
\item[(3)]
$|\FO(G_\omega)| = 3$ and $|G_\omega|$ is odd.
\end{enumerate}
\end{lemma}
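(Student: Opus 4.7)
The plan is to assume $|G_\omega|$ is odd (otherwise we are in case (1)) and split the analysis according to $|\FO(G_\omega)|$. Since $(G,\Omega)$ is nonregular, $G_\omega \ne 1$, and since four point stabilizers are trivial every nontrivial subgroup of $G$ fixes at most three points; in particular $|\FO(G_\omega)| \in \{1,2,3\}$. If $|\FO(G_\omega)| = 3$ we are immediately in case (3). In the remaining two subcases I would exhibit $G_\omega$ as a Frobenius group with a three point stabilizer as complement by verifying the hypothesis of Lemma \ref{charfrob} inside $G_\omega$.

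To produce the candidate complement I would first use Hypothesis \ref{3fix} and the transitivity of $G$ to obtain a nontrivial element $h \in G_\omega$ with $|\FO(h)| = 3$. In the subcase $|\FO(G_\omega)| = 2$ with second fixed point $\omega'$, the inclusion $G_\omega \le G_{\omega'}$ combined with $|G_\omega| = |G_{\omega'}|$ gives $G_\omega = G_{\omega'}$, so $\omega'$ is automatically among the three fixed points of $h$, and I would set $H := G_{\omega,\omega',\alpha}$ where $\alpha$ is the remaining one. In the subcase $|\FO(G_\omega)| = 1$, I would set $H := G_{\omega,\alpha,\beta}$ where $\alpha, \beta$ are the two non-$\omega$ fixed points of $h$. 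In both subcases $H$ is nontrivial (it contains $h$), satisfies $|\FO(H)| = 3$, and is proper in $G_\omega$ because $|\FO(G_\omega)| < 3$.

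To verify the condition of Lemma \ref{charfrob} I would fix $1 \ne X \le H$. Then $3 = |\FO(H)| \le |\FO(X)| \le 3$, so $\FO(X) = \FO(H)$. The group $N_{G_\omega}(X)$ lies in $G_\omega$, hence fixes $\omega$, and permutes the two remaining points of $\FO(X)$. In the subcase $|\FO(G_\omega)| = 2$ one of these two points is $\omega'$, which is fixed by all of $G_\omega = G_{\omega'}$; thus the other is fixed as well and $N_{G_\omega}(X) \le H$. In the subcase $|\FO(G_\omega)| = 1$, an element $g \in N_{G_\omega}(X)$ swapping $\alpha$ and $\beta$ would satisfy $g^2 \in H$ but $g \notin H$; however $|g|$ divides the odd order $|G_\omega|$, whence $\langle g \rangle = \langle g^2 \rangle \le H$, a contradiction. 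So again $N_{G_\omega}(X) \le H$, and Lemma \ref{charfrob} yields that $G_\omega$ is a Frobenius group with complement $H$. Its $G_\omega$-conjugates are of the form $G_{\omega,\omega',\alpha^g}$ respectively $G_{\omega,\alpha^g,\beta^g}$ with $g \in G_\omega$, hence still three point stabilizers, so case (2) holds.

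The main obstacle is the subcase $|\FO(G_\omega)| = 1$, where the verification of $N_{G_\omega}(X) \le H$ relies essentially on the odd order of $G_\omega$ to preclude an involutory action on the two non-$\omega$ fixed points of $X$; the subcase $|\FO(G_\omega)| = 2$ is easier because $G_\omega = G_{\omega'}$ forces one of these two points to be fixed by the whole normalizer automatically.
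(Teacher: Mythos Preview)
Your proof is correct and follows essentially the same approach as the paper: both assume $|G_\omega|$ odd and $|\FO(G_\omega)|\neq 3$, pick a three point stabilizer $H<G_\omega$, and verify the hypothesis of Lemma~\ref{charfrob} inside $G_\omega$ by exploiting that odd order forces $N_{G_\omega}(X)$ to act trivially on $\FO(X)\setminus\{\omega\}$. The only cosmetic difference is that the paper treats your two subcases uniformly by first observing that $N_G(X)/N_H(X)$ is an odd-order subgroup of $\Sym_3$ (hence of order $1$ or $3$), so any element of $N_G(X)\setminus N_H(X)$ acts as a $3$-cycle on $\Delta$ and in particular cannot lie in $G_\omega$; this avoids your separate handling of $|\FO(G_\omega)|=1$ versus $2$, but the content is the same.
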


\begin{proof}
We suppose that $|G_\omega|$ is odd and that $|\FO(G_\omega)| \neq
3$. Thus we need to show that the statements in (2) hold, in
particular that $G_\omega$ is a Frobenius group.

Hypothesis \ref{3fix} implies that there exists a set $\Delta$ of
size 3 such that $\omega \in \Delta$ and such that the point-wise
stabilizer $H$ of $\Delta$ in $G$ is nontrivial. Let $1 \neq X \le
H$. Then $X$ acts semiregularly on $\Omega \setminus \Delta$ and
$N_G(X)$ leaves $\Delta$ invariant. Since $|G_\omega|$ is odd and
$|\Delta|=3$, this implies that $N_G(X)$ has odd order. Hence
$|N_G(X):N_H(X)| \in \{1,3\}$ and this holds for all $1 \neq X \leq
H$.

Next we observe that if there is a nontrivial subgroup $X$ of $H$
such that $|N_G(X):N_H(X)| = 3$, then all $g \in N_G(X) \setminus
N_H(X)$ act transitively on $\Delta$; i.e. they fix no point of
$\Delta$.

Thus $|N_{G_\omega}(X):N_H(X)|=1$ for all $1 \neq X \leq H <
G_\omega$ and Lemma \ref{charfrob} implies that $G_\omega$ is a
Frobenius group where $H$ is a Frobenius complement. This is our
claim.
\end{proof}

We recall that a subgroup $H$ of $G$ is t.i. if and only if, for all
$g \in G$, either $H \cap H^g = 1$ or $H^g=H$.

\begin{cor}\label{FrobTo3}
Suppose that $(G, \Omega)$ satisfies Hypothesis \ref{3fix} and that
$|\Omega| \geq 7$. Let $\omega \in \Omega$ and suppose that
$G_\omega$ is a Frobenius group of odd order with a Frobenius
complement $H$ that is a three point stabilizer. Let $\Lambda :=
G/H$ (with the natural action of $G$ by right multiplication). Then
$(G,\Lambda)$ satisfies Hypothesis \ref{3fix}. Moreover if $h \in
G^\#$ stabilizes $\Lambda$, then $|\FL(h)| = 3$.
\end{cor}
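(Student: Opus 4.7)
The plan is to verify the clauses of Hypothesis~\ref{3fix} for $(G,\Lambda)$ and simultaneously to compute $|\FL(h)|$ for every nontrivial $h \in G$. Transitivity is automatic. Faithfulness follows because the kernel of the action on $\Lambda$ is $\bigcap_{g\in G} H^g$, a normal subgroup of $G$ contained in $H \le G_\omega$ and hence in the (trivial) kernel of the action on $\Omega$. The stabiliser of the trivial coset is $H \ne 1$, so the action is nonregular. What remains is to show that four point stabilisers are trivial and that some three point stabiliser is nontrivial, and both will drop out of a single fixed-point count.

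Set $\Delta := \FO(H)$, which has exactly three elements by Hypothesis~\ref{3fix}, and set $M := N_G(H)$. Since $N_G(H)$ permutes $\FO(H) = \Delta$ and, conversely, the setwise stabiliser $N_G(\Delta)$ normalises the pointwise stabiliser $G_\Delta = H$, one has $M = N_G(\Delta)$. For $h\in G^\#$, a coset $xH$ is fixed by $h$ if and only if $h\in xHx^{-1}$, equivalently if and only if $h$ fixes $x\Delta$ pointwise. Since four point stabilisers on $\Omega$ are trivial, $h$ fixes at most one three-subset of $\Omega$ pointwise, namely $\FO(h)$ when it has size three. Moreover the cosets $xH$ with $x\Delta$ equal to a given three-subset form a family of size $|M/H|$, so $|\FL(h)|\in\{0,\,|M/H|\}$.

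The next step is to pin down $|M/H|$. The intersection $M_\omega := M\cap G_\omega$ fixes $\omega$ and permutes $\{\alpha,\beta\} := \Delta\setminus\{\omega\}$ with kernel $H$, so $M_\omega/H$ embeds into $\Sym_2$; since $|M_\omega|$ divides the odd $|G_\omega|$, this forces $M_\omega = H$ and hence $|M/H| = |M\cdot\omega| \le 3$. By the odd-order argument used for nontrivial $X \le H$ in the proof of Lemma~\ref{cases}, $|M|$ itself is odd, so $|M/H|\in\{1,3\}$.

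The main obstacle is ruling out $M=H$. Suppose it holds. Then orbit-counting in the action of $G$ on three-subsets of $\Omega$ shows that each point lies in $3|G_\omega|/|M| = 3|F|$ $G$-conjugates of $\Delta$, where $|F| := [G_\omega : H]$ is the order of the Frobenius kernel. Via the pointwise-stabiliser map $\Delta' \mapsto G_{\Delta'}$ this bijects onto the $G$-conjugates of $H$ lying in $G_\omega$; each such conjugate has order $|H|$ coprime to $|F|$, hence intersects the Frobenius kernel trivially and complements it, and is therefore a Frobenius complement of $G_\omega$. But the Frobenius complements in $G_\omega$ form the single $G_\omega$-conjugacy class of size $[G_\omega : H] = |F|$, yielding $3|F|\le|F|$, a contradiction. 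So $|M/H|=3$ and $|\FL(h)|\in\{0,3\}$ for every $h\in G^\#$. This delivers at once: trivial four point stabilisers in $\Lambda$; a nontrivial three point stabiliser, witnessed by any $h\in H^\#$ and the three cosets of $H$ in $M$; and the ``moreover'' dichotomy, that any nontrivial $h$ fixing at least one point of $\Lambda$ fixes exactly three.
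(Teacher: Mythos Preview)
Your proof is correct. Both you and the paper reduce everything to showing $|N_G(H):H|=3$ and then reading off the fixed-point count, but you reach that key equality by a different route.

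The paper argues as follows: for every $1\neq X\le H$ one has $N_{G_\omega}(X)=N_H(X)$ (Frobenius complements are ``malnormal'' in $G_\omega$), so $|N_G(X):N_H(X)|\in\{1,3\}$ by Lemma~\ref{tuedel} and oddness; if the index were always $1$ then Lemma~\ref{charfrob} would make $G$ a Frobenius group, which the paper rules out. From the existence of some $X$ with index $3$ it then obtains $N_G(\Delta)>H$ and hence $|N_G(H):H|=3$. The paper finishes by proving $H$ is t.i.\ and invoking that together with the index.

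You instead rule out $N_G(H)=H$ by a clean double count: the $G$-translates of $\Delta$ through $\omega$ biject with the $G$-conjugates of $H$ inside $G_\omega$, and each of these is a complement to the Frobenius kernel $F$ (coprime orders), hence a Frobenius complement by Schur--Zassenhaus; but there are only $|F|$ Frobenius complements, while the count gives $3|F|$. This avoids any appeal to ``$G$ is not a Frobenius group'' and in fact does not use the hypothesis $|\Omega|\ge 7$ at all. Your direct computation $|\FL(h)|\in\{0,|M/H|\}$ via the surjection $xH\mapsto x\Delta$ also replaces the paper's separate t.i.\ argument, since it encodes the same information. Both approaches are short; yours is arguably more self-contained.
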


\begin{proof}
As $G$ is not a Frobenius group by Hypothesis \ref{3fix} and the
point stabilizers have odd order, Lemma \ref{charfrob} implies that
there exists some $1 \neq X \leq H$ such that $|N_G(H):N_H(X)|=3$.
Now $X$ acts semiregularly on $\Omega \setminus \FO(H)$, and since
$|\Omega| \geq 7$, this implies that the set-wise stabilizer of
$\FO(H)$ is properly larger than $H$. Therefore $|N_G(H):H| = 3$.
Also if $h \in H \cap H^g$ and $H \neq H^g$, then $h$ fixes $\FO(H)
\cup \FO(H^g) \neq \FO(H)$ and hence $h = 1$ by Hypothesis
\ref{3fix}. So $H$ is t.i. and $|N_G(H):H| = 3$, which implies our
claim.
\end{proof}

\subsection{Examples}

Here we describe some series of examples for Hypothesis \ref{3fix}.
In particular we classify all possibilities where $\Omega$ has five
or six elements.

\begin{lemma} \label{kleinersechs}
If $(G,\Omega)$ satisfies Hypothesis \ref{3fix} and $|\Omega| \leq
6$, then one of the following is true:
\begin{enumerate}
 \item[(1)] $|\Omega| = 5$ and $G = \Sym_5$,
 \item[(2)] $|\Omega| = 6$ and $G = \Alt_6$,
 \item[(3)] $|\Omega| = 6$ and $\Alt_3 \wr \Sym_2 \leq G \leq (\Sym_3 \wr \Sym_2) \cap \Alt_6$
 (two possibilities in total).
\end{enumerate}

\end{lemma}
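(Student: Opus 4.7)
The plan is to split on $|\Omega|$. Hypothesis~\ref{3fix} gives $|\Omega| \geq 5$, so $|\Omega| \in \{5, 6\}$.

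For $|\Omega| = 5$, pick an element $x \in G^{\#}$ of a nontrivial three-point stabilizer. Then $x$ fixes exactly three of the five points (four-point stabilizers being trivial) and must act as a transposition on the remaining two, so $x$ is a transposition in $\Sym_5$. Transitivity gives $5 \mid |G|$, hence (by Cauchy) a $5$-cycle in $G$. A transposition and a $5$-cycle generate $\Sym_5$, so $G = \Sym_5$.

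For $|\Omega| = 6$ the same argument produces a $3$-cycle $x \in G$ (an element fixing exactly three of six points must act as a $3$-cycle on the other three). Triviality of four-point stabilizers then excludes transpositions from $G$, since the only non-identity elements of $\Sym_6$ fixing four or more points are transpositions. If $G$ is primitive on $\Omega$, Jordan's theorem (a primitive subgroup of $\Sym_n$ containing a $3$-cycle contains $\Alt_n$) together with the no-transposition constraint forces $G = \Alt_6$, giving case~(2). If $G$ is imprimitive, a short cycle-structure argument rules out blocks of size $2$: a $3$-cycle moves only three points, and neither distribution of these across three pairs (one from each pair, or two from one pair and one from another) is compatible with permuting the blocks. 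Hence the block system consists of two blocks $B_1, B_2$ of size $3$ and $G \leq \Sym_3 \wr \Sym_2$.

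Let $K := G \cap (\Sym_{B_1} \times \Sym_{B_2})$; transitivity forces $|G : K| = 2$. The $3$-cycle $x$ lies in $K$ and has the form $((abc), e)$ or $(e, (a'b'c'))$, and $G$-conjugation by any block-swapping element of $G$ puts both forms into $K$. Hence the Goursat ``kernels'' $N_1 := K \cap (\Sym_{B_1} \times \{e\})$ and $N_2 := K \cap (\{e\} \times \Sym_{B_2})$ each contain $\Alt_3$. The no-transposition condition gives $N_i \leq \Alt_3$ (any $((ij), e) \in K$ would be a transposition in $\Sym_6$), so $N_i = \Alt_3$. Goursat's lemma, combined with the dichotomy $\pi_i(K) \in \{\Alt_3, \Sym_3\}$, then leaves only two possibilities: $K = \Alt_3 \times \Alt_3$ (order $9$), or $K = \{(\sigma, \tau) \in \Sym_3 \times \Sym_3 : \mathrm{sgn}(\sigma) = \mathrm{sgn}(\tau)\}$ (order $18$). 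Up to $\Sym_6$-conjugacy and a suitable choice of block-swap, the resulting $G$ is $\Alt_3 \wr \Sym_2$ or $(\Sym_3 \wr \Sym_2) \cap \Alt_6$, yielding case~(3).

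The main obstacle is the Goursat step pinning down $K$ and then identifying $G$ up to conjugacy in $\Sym_6$. Verifying Hypothesis~\ref{3fix} for each of the three listed groups then reduces to a routine cycle-type census: every nontrivial element has cycle type in $\{(3,1,1,1), (3,3), (2,2,1,1), (2,2,2), (4,2), (6)\}$, so no nontrivial element fixes four or more points while the $3$-cycles provide the required nontrivial three-point stabilizer.
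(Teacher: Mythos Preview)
Your overall strategy matches the paper's, and in several places you are more careful: you explicitly rule out blocks of size $2$ (the paper simply asserts that a $3$-cycle forces $G\le\Sym_3\wr\Sym_2$), and your Goursat analysis pinning down $K$ is cleaner than the paper's bare order count.

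However, the step ``up to $\Sym_6$-conjugacy and a suitable choice of block-swap, the resulting $G$ is $\Alt_3\wr\Sym_2$ or $(\Sym_3\wr\Sym_2)\cap\Alt_6$'' is not justified, and in the case $|K|=18$ it is false. Work in the quotient $(\Sym_3\wr\Sym_2)/(\Alt_3\times\Alt_3)\cong D_4$: the image $\overline K$ is the centre, and there are \emph{two} order-$4$ subgroups of $D_4$ containing the centre together with a block-swapping element. The cyclic one gives $G=(\Sym_3\wr\Sym_2)\cap\Alt_6\cong 3^2{:}4$; the Klein four-group gives $G=\langle K,(14)(25)(36)\rangle\cong 3^2{:}2^2$. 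Both are transitive, contain $3$-cycles, and contain no transpositions (block-swapping elements fix no point, and nontrivial elements of $K$ have cycle type $3\cdot 1^3$, $3^2$, or $2^2\cdot 1^2$), so both satisfy Hypothesis~\ref{3fix}. They are not $\Sym_6$-conjugate: any conjugating element would normalise $K$ and hence lie in $\Sym_3\wr\Sym_2$, but the two subgroups of $D_4$ are normal and non-isomorphic. Thus case~(3) actually comprises \emph{three} groups (one of order $18$ and two of order $36$), and the lemma as stated undercounts. The paper's own proof shares this gap, via the unjustified inference ``$G$ contains no $2$-cycles, therefore $G\le(\Sym_3\wr\Sym_2)\cap\Alt_6$'' --- the element $(14)(25)(36)$ is an odd non-transposition in $\Sym_3\wr\Sym_2$. (Minor separate point: your cycle-type list for the verification omits the $5$-cycles in $\Alt_6$; they fix one point, so the conclusion is unaffected.)
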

\begin{proof}
Hypothesis \ref{3fix} implies that some element $g \in G$ has three
fixed points on
$\Omega$ and that $|\Omega| \geq 5$.
In the following we view $G$ as a subgroup of $\Sym_6$.\\

If $|\Omega| = 5$, then $g$ is a $2$-cycle. As $5$ is prime, the
hypothesis that $G$ is transitive implies that $G$ is primitive. Now
$G$ is a primitive permutation group on five points that contains a
transposition, so
$G = \Sym_5$ as stated in (1).\\

If $|\Omega| = 6$, then $g$ is a $3$-cycle. Without loss $g=(456)$,
so $g$ lies in the point stabilizer $G_1$. The $2$-cycles from
$\Sym_6$ have four fixed points, therefore Hypothesis \ref{3fix}
implies that $(1,2)^{\Sym_6} \cap G = \varnothing$. If $G$ acts
primitively on $\Omega$, then it follows that $G=\Alt_6$ which leads
to (2). Possibility (2) does in fact occur as an example, as an
inspection of the conjugacy classes shows. If $G$ is not primitive
on $\Omega$, then, since $G$ contains a $3$-cycle, it is a subgroup
of $\Sym_3 \wr \Sym_2$. Now $|G| = 6 \cdot |G_1| \geq 18$ which
implies that $\Alt_3 \wr \Sym_2 \leq G$. On the other hand $G \neq
\Sym_3 \wr \Sym_2$ as $G$ does not contain $2$-cycles. Therefore $G
\leq (\Sym_3 \wr \Sym_2) \cap \Alt_6$ and (3) follows.
\end{proof}

Having considered small examples we also look at sharply
$4$-transitive permutation groups. We note that any element of such
a group that fixes four points is the identity element. Moreover a
three point stabilizer in such a group is transitive on the set of
points that are not fixed, and in particular it is nontrivial if the
size of the set is at least $5$. The next result is by Jordan and
can be found as Theorem 3.3 in Chapter XII of \cite{HB}.

\begin{lemma}\label{4scharftransitiv}
If $G$ is sharply $4$-transitive, then $G$ is one of $\Sym_4,
\Sym_5, \Alt_6, M_{11}$.
\end{lemma}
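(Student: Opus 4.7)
My plan is to follow Jordan's classical argument, reducing to sharply 3-transitive groups at a point stabilizer and then applying Zassenhaus's classification. Set $n := |\Omega|$, so that sharp 4-transitivity gives $|G| = n(n-1)(n-2)(n-3)$ and the stabilizer of every triple of distinct points acts regularly on the remaining $n-3$ points.

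First I would dispose of the small degrees by comparing orders. For $n = 4$ and $n = 5$, $G$ has order $24$ or $120$ and embeds in $\Sym_n$, forcing $G = \Sym_n$. For $n = 6$, $|G| = 360$; since a transposition in $\Sym_6$ fixes four points, the sharpness hypothesis excludes transpositions, so $G \le \Alt_6$ and $G = \Alt_6$ by comparing orders.

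For $n \ge 7$, I would fix $\omega \in \Omega$ and observe that $G_\omega$ acts sharply 3-transitively on the $n-1$ remaining points. By Zassenhaus's theorem, this forces $n - 1 = q + 1$ for some prime power $q$, and $G_\omega$ is either $\mathrm{PGL}_2(q)$ or one of the exceptional ``twisted'' sharply 3-transitive groups that occur only when $q$ is an odd square; in particular $n = q + 2$.

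It then remains to decide for which $q$ such a $G_\omega$ admits a sharply 4-transitive extension to a group on $q+2$ points. Here I would combine two ingredients. First, the two-point stabilizer acts sharply 2-transitively on $n - 2$ points and so is a Frobenius group with elementary abelian kernel of prime-power order $n-2$ and complement (a three-point stabilizer) of order $n-3$; 4-transitivity produces an involution in this complement, namely the unique element of the two-point stabilizer swapping any prescribed further pair, whose square lies in the trivial four-point stabilizer. Second, for every three-point stabilizer $K$ one has $N_G(K)/K \cong \Sym_3$, since $G$ is 3-transitive and $|\fixO(K)| = 3$, so $6(n-3)$ divides $|G|$. A short diophantine analysis using these conditions then rules out every prime power $q$ with $q + 2 \ge 7$ except $q = 9$, yielding $n = 11$ and $|G| = 7920$; the resulting group is $M_{11}$ by uniqueness of a sharply 4-transitive action of degree $11$. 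I expect the arithmetic elimination in this last step to be the main obstacle, since it has to handle both the generic $\mathrm{PGL}_2(q)$ family and the exceptional Zassenhaus series.
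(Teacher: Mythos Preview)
The paper does not prove this lemma at all: immediately before stating it, the authors attribute the result to Jordan and give a reference (Theorem~3.3 in Chapter~XII of \cite{HB}). There is no proof environment following the statement. So there is nothing in the paper to compare your argument against.

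Your outline follows the standard modern route (reduce to the sharply $3$-transitive point stabilizer and invoke Zassenhaus), and the small cases $n\le 6$ are handled correctly. However, there is a genuine gap in your final step. The two constraints you actually extract are (i) an involution lies in the three-point stabilizer $K$, forcing $n-3$ even, i.e.\ $q$ odd; and (ii) $|N_G(K)/K|=6$, so $6(n-3)$ divides $|G|=n(n-1)(n-2)(n-3)$. Condition~(ii) reduces to $6\mid n(n-1)(n-2)$, which holds for every $n$, so your only surviving restriction is that $q$ is odd. This does not rule out, for instance, $q=7$ (so $n=9$) or $q=11$ (so $n=13$), and hence the promised ``short diophantine analysis'' cannot proceed from the information you have written down. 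You yourself flag this step as the main obstacle, and that is accurate: it is the heart of the proof, not a formality.

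The arguments in the literature use substantially more than arithmetic on $n$. One standard approach (as in Huppert--Blackburn) studies the centralizer in $G$ of an involution lying in a three-point stabilizer and compares it with the known structure of involution centralizers inside $G_\omega\cong\mathrm{PGL}_2(q)$ (or the twisted Zassenhaus groups); this comparison forces strong divisibility conditions that leave only $q=9$. If you want to complete your sketch, that is where the missing idea lies.
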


Thus we see that $\Sym_5, \Alt_6, M_{11}$ in their actions on $5,6$
or $11$ points, respectively, are examples satisfying Hypothesis
\ref{3fix}.

\begin{lemma} \label{maximaleKlasse}
Suppose that $P$ is a $3$-group of order at least $27$ and that $H
\leq P$ is a subgroup of order $3$ such that $|C_P(H)| = 9$. Let
$\Omega$ denote the set of right cosets of $H$ in $P$. Then
$(P,\Omega)$ satisfies Hypothesis \ref{3fix}.
\end{lemma}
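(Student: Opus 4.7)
The plan is to verify each clause of Hypothesis~\ref{3fix} directly from the coset description of the action, using that the stabilizer of $Hx$ under right multiplication is the conjugate $H^x$, so an element $g$ fixes the coset $Hx$ if and only if $g \in H^x$.

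First I would dispose of transitivity (automatic), nonregularity (immediate since $H$ itself is a point stabilizer of order $3$), and faithfulness. The kernel of the action is the core $\bigcap_{x \in P} H^x$, and since $|H|=3$ this is either trivial or equal to $H$. To rule out the latter, I would argue that $H$ is not normal in $P$: if it were, the conjugation action would give a homomorphism $P \to \mathrm{Aut}(H) \cong \Z_2$, which must be trivial as $P$ is a $3$-group; hence $H \le Z(P)$ and so $C_P(H) = P$, contradicting $|C_P(H)|=9$.

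Next I would pin down $N_P(H)$. The standard $N/C$-argument embeds $N_P(H)/C_P(H)$ into $\mathrm{Aut}(H) \cong \Z_2$, and since the left-hand side is a $3$-group this forces $N_P(H) = C_P(H)$, so $|N_P(H)|=9$ and $|N_P(H):H|=3$. A generator of $H$ then fixes precisely the cosets $Hx$ with $H \le H^x$, equivalently (by equality of orders) with $H = H^x$, i.e.\ with $x \in N_P(H)$; the number of such cosets is $|N_P(H):H|=3$. This exhibits the required nontrivial three point stabilizer.

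Finally I would rule out nontrivial four point stabilizers. If $1 \ne g \in P$ fixes four or more cosets, then $g$ lies in some conjugate $H^x$ and hence has order $3$ with $\langle g \rangle = H^x$. The counting of the previous paragraph, now applied to $\langle g \rangle$ (whose normalizer also has order $9$, conjugation being an automorphism of $P$), shows that the cosets fixed by $g$ are exactly those $Hy$ with $H^y = \langle g \rangle$, and there are precisely $3$ such cosets — contradiction. I do not expect any real obstacle here: the entire argument rests on the single structural input $N_P(H) = C_P(H)$ of order $9$, and the only step that requires a moment's care is the bookkeeping in the four-point count.
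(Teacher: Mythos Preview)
Your proof is correct and follows essentially the same route as the paper's: both hinge on the $N/C$-argument giving $N_P(H)=C_P(H)$ of order $9$, and on the standard count that the fixed cosets of a nontrivial element $h\in H$ are exactly the $|N_P(H):H|=3$ cosets lying in $N_P(H)$. The paper's version is terser (it leaves faithfulness implicit, which follows since every nontrivial element fixes at most $3$ points while $|\Omega|=|P:H|\ge 9$), but the content is the same.
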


\begin{proof} Only the conjugates of elements of $H$ have fixed points on $\Omega$.
If $1 \neq h \in H$, then $|\FO(h)| = |N_P(H):H|$. The outer
automorphism group of $H$ has order coprime to $3$, therefore
$N_P(H) = C_P(H)$ and our hypothesis on $|C_P(H)|$ implies that
$|N_P(H):H| = 3$. This proves our claim.
\end{proof}

We recall that a non-abelian $p$-group $P$ is of maximal class if it
possesses a $p$-element $x$ such that $|C_P(x)|=p^2$. Extraspecial
$p$-groups of order $p^3$ are examples of this. The $2$-groups of
maximal class are dihedral, quaternion or semidihedral, whereas for
$p>2$ there are many other examples (see for example in \cite{Hupp},
III.14). Lemma \ref{maximaleKlasse} implies that $3$-groups of
maximal class all give rise to examples for Hypothesis \ref{3fix}.

The next three examples are variants of those introduced in
\cite{MW}.

\begin{lemma} \label{BeispielKoerper}
Let $p$ be a prime and let $A$ denote the additive group, $M$ the
multiplicative group, and ${\mathcal G}$ the Galois group of a
finite field of order $3^p$. Let $G$ be the semidirect product
$(A:M):{\mathcal G}$ and $G_\omega := M:{\mathcal G}$. Let $\Omega$
denote the set of right cosets of $G_\omega$ in $G$. Then $(G,
\Omega)$ satisfies Hypothesis \ref{3fix}.
\end{lemma}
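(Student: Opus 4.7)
The plan is to identify $\Omega$ with the field $F := \mathbb{F}_{3^p}$ through the regular action of $A$, taking $\omega$ to correspond to $0 \in F$ so that the stabilizer of $0$ is $M:\mathcal{G} = G_\omega$. Under this identification, a general element $g = (a,m,\sigma^i)$ of $G = (A:M):\mathcal{G}$ acts on $F$ by $x \mapsto m\, x^{3^i} + a$, where $1 \leq i \leq p$ and $\sigma$ is the Frobenius $x \mapsto x^3$. Transitivity is immediate from $A$ acting regularly, and $G_\omega \neq 1$ since $p \geq 2$, so the action is nonregular. What remains is to show that every nonidentity element has at most three fixed points, and to exhibit an element with exactly three.

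First I would rewrite the fixed-point condition $m x^{3^i} + a = x$ as $\phi(x) = a$, where $\phi : F \to F$ is the additive map $\phi(x) := x - m\, x^{3^i}$. The fixed-point set of $g$ is therefore either empty or a coset of $\ker \phi$, so it suffices to bound $|\ker \phi|$ by $3$ for every choice of $(m, \sigma^i)$ except the identity automorphism of $F$. If $i \equiv 0 \pmod p$, then $\phi(x) = (1-m)x$, and either $m \neq 1$ (yielding $|\ker \phi| = 1$) or $g$ is a pure translation whose fixed-point set is empty unless $g = 1$.

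The main step is the case $i \not\equiv 0 \pmod p$. Here a nonzero $x \in \ker \phi$ satisfies $x^{3^i - 1} = m^{-1}$ in the cyclic group $F^\times$ of order $3^p - 1$, and the number of such $x$ is at most
\[
\gcd(3^i - 1,\, 3^p - 1) \;=\; 3^{\gcd(i,p)} - 1 \;=\; 2,
\]
using that $p$ is prime and $1 \leq i \leq p-1$. Hence $|\ker \phi| \leq 3$, so every nonidentity element of $G$ fixes at most three points of $\Omega$; in particular, four-point stabilizers are trivial.

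To finish, I would exhibit a three-point stabilizer by taking $g = (0,1,\sigma)$, i.e.\ the Frobenius itself. Its fixed points are the solutions of $x^3 = x$, that is, the subfield $\mathbb{F}_3 \subseteq F$, which has exactly three elements; so the pointwise stabilizer of this $3$-subset is nontrivial. Combined with the previous paragraph, this verifies Hypothesis \ref{3fix} for $(G, \Omega)$. The only nonroutine ingredient is the gcd computation, which is elementary but relies essentially on the primality of $p$.
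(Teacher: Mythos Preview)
Your proof is correct and follows essentially the same approach as the paper: both identify $\Omega$ with the field via the regular action of $A$ and then bound the fixed-point set of a stabilizer element. The paper phrases this as $|\FO(g)| = |C_A(g)|$ for $g \in G_\omega$ and simply asserts that $|C_A(g)| \in \{1,3\}$, whereas you make this explicit by writing $C_A(g)$ as $\ker\phi$ and carrying out the $\gcd(3^i-1,3^p-1)=2$ computation; your version is thus a fleshed-out form of the paper's sketch.
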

\begin{proof}
We note first that $A$ is a regular normal subgroup of $G$ in its
action on $\Omega$. Thus if $g \in G_\omega$, then $\FO(g) =
|C_A(g)|$. Our claim follows as $1$ and $3$ are the only possible
values for $|C_A(g)|$.
\end{proof}

\begin{lemma} \label{3malFrobenius}
Let $F$ be a Frobenius group with kernel $K$ and complement $H$ and
let $Z$ be a cyclic group of order $3$. Let $G = Z \times F$ and let
$\Omega$ be the set of right cosets of $H$. Then the pair
$(G,\Omega)$ satisfies Hypothesis \ref{3fix}.
\end{lemma}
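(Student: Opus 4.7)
The plan is to realize the action of $G = Z \times F$ on $\Omega := G/H$ (right cosets, right action) explicitly and then count fixed points. First I would observe that $G_{H} = H$ and that the action is transitive; for faithfulness one needs that the core $\bigcap_{g\in G} H^{g}$ is trivial, and since $Z$ is central, conjugation by $(z,f)\in G$ on $H=\{1\}\times H$ agrees with conjugation by $f\in F$, so the core in $G$ equals the core of $H$ in $F$, which is trivial because $F$ is Frobenius with complement $H$. Nonregularity is immediate because $H\neq 1$, so it only remains to analyze fixed points.

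The key computation is: an element $g=(z,f)\in G$ fixes the coset $H(z',f')$ if and only if $(z',f')(z,f)(z',f')^{-1}=(z,\,f'ff'^{-1})\in H=\{1\}\times H$. This forces $z=1$ and $f'ff'^{-1}\in H$. Hence any element with nontrivial $Z$-component has no fixed points. For $g=(1,f)$ with $f\in F^{\#}$, I would split by the Frobenius structure of $F$: either $f\in K$, in which case $f'ff'^{-1}\in K\cap H=1$ gives a contradiction and $g$ has no fixed points; or $f$ lies in a conjugate of $H$, say $f=f_{0}h f_{0}^{-1}$ with $1\neq h\in H$, and then by the malnormality of $H$ in $F$ the condition $f'ff'^{-1}\in H$ is equivalent to $f'\in Hf_{0}^{-1}$.

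In the second case I would count how many distinct cosets $H(z',f')$ with $f'\in Hf_{0}^{-1}$ arise: two such cosets $H(z',hf_{0}^{-1})$ and $H(z'',h'f_{0}^{-1})$ coincide if and only if $(z'(z'')^{-1},\,hh'^{-1})\in H$, which forces $z'=z''$ and is automatic for the $H$-component. Therefore the number of fixed cosets is exactly $|Z|=3$. Combined with the earlier cases, every nontrivial element of $G$ has either $0$ or $3$ fixed points on $\Omega$, so four point stabilizers are trivial while the element $(1,h)$ with $1\neq h\in H$ has precisely three fixed points, giving a nontrivial three point stabilizer. This is exactly Hypothesis~\ref{3fix}.

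The only mildly delicate step is the coset-equality count at the end, where one must be careful to use that $H\cap Z=1$ (so that distinct $z'\in Z$ produce genuinely distinct cosets) rather than collapsing the three $Z$-translates into one; everything else reduces cleanly to the defining Frobenius property $H\cap H^{f}=1$ for $f\notin H$.
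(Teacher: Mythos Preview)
Your proof is correct. The paper argues differently: rather than computing directly on cosets, it observes that $K$ has three orbits on $\Omega$ (since $K$ acts semiregularly and $|\Omega|=3|K|$), that $Z$ permutes these three orbits transitively while $H$ fixes each of them setwise, and that each $h\in H^{\#}$ has exactly one fixed point on each $K$-orbit (this is just the Frobenius action of $F$ on $F/H$). So the paper reaches the same ``every nontrivial element has $0$ or $3$ fixed points'' conclusion via the orbit decomposition under $K$ rather than via your element-by-element coset calculation. Your approach has the virtue of being completely explicit and of making the faithfulness and the case $z\neq 1$ transparent; the paper's approach is shorter and highlights the structural picture (three copies of the Frobenius action glued by $Z$) that motivates the example in the first place.
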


\begin{proof}
The subgroup $K$ has three orbits on $\Omega$ which are transitively
permuted by $Z$ and fixed set-wise by elements of $H$. If $h \in H$,
then $h$ fixes exactly one point on each $K$-orbit. Our claim
follows.
\end{proof}

We remark that in this last example the number of fixed points of an
element is either $0$ or $3$.

\begin{lemma} \label{3malFrobeniusVerschraenkt}
Let $p,r$ be primes and let $K$ be a field of order $p^{3r}$. Let
$A$ and $M$ be the additive respectively the multiplicative group of
$K$ and let $H$ be a subgroup of the Galois group of $K$ of order
$3$. Let $\Omega$ be the set of right cosets of $M$ in $G:=(A : M):
H$. Then $(G,\Omega)$ satisfies Hypothesis \ref{3fix}.
\end{lemma}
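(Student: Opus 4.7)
The plan is to mimic the proof of Lemma~\ref{3malFrobenius}, with the twist that here $H$ is a subgroup of the automorphism group of $F := A \rtimes M$ rather than centralizing $F$. Note that $F = \mathrm{AGL}_1(K)$ acts on $K$ as a Frobenius group with kernel $A$ and complement $M$, so the structure we want to exploit is already present inside $F$.

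First, I would determine the $F$-orbits on $\Omega = G/M$. Writing $H = \langle h \rangle$, the three cosets $M$, $Mh$, $Mh^2$ lie in distinct $F$-orbits (otherwise $h^i \in M$ for some $i \in \{1,2\}$, contradicting $F \cap H = 1$), and the orbit through $Mh_0$ is $\Omega_{h_0} := \{Mfh_0 : f \in F\}$, of size $|F:M| = |K|$. Since $H$ normalizes both $A$ and $M$, conjugation by $h_0$ carries the natural Frobenius action of $F$ on $F/M$ to the action of $F$ on $\Omega_{h_0}$; hence each $\Omega_{h_0}$ is isomorphic, as an $F$-set, to the natural Frobenius action of $F$ on $K$. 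So a nontrivial translation in $F$ fixes no point of $\Omega_{h_0}$, while any other nontrivial element of $F$ fixes exactly one.

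Next, $H$ permutes the three $F$-orbits regularly via right multiplication on the index, $\Omega_{h_0} \cdot h_1 = \Omega_{h_0 h_1}$; thus $G$ is transitive on $\Omega$, and $M \neq 1$ is a point stabilizer, so the action is nonregular.

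Finally I would count fixed points of a general element $g = fh_1$ with $f \in F$ and $h_1 \in H$. If $h_1 \neq 1$, then $g$ sends $\Omega_{h_0}$ to $\Omega_{h_0 h_1} \neq \Omega_{h_0}$ and hence has no fixed point. If $h_1 = 1$, then $g = f \in F$, and by the Frobenius analysis above $g$ has $0$ or $3$ fixed points on $\Omega$, depending on whether $f$ is a translation. Thus every nontrivial element of $G$ has $0$ or $3$ fixed points, which yields Hypothesis~\ref{3fix} at once: four-point stabilizers are trivial, while a nontrivial element of $M$ fixes the three points $M, Mh, Mh^2$. The only real hazard is keeping track of the twist by $H$ on the non-principal $F$-orbits, and this is harmless because $H$ preserves both $A$ and $M$ setwise.
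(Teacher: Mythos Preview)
Your proof is correct and follows essentially the same outline as the paper's: decompose $\Omega$ into the three $F$-orbits (equivalently $A$-orbits), observe that $H$ permutes them regularly, and conclude that each nontrivial element of $M$ has exactly one fixed point on each orbit while elements outside $F$ have none. The one difference in emphasis is that the paper phrases the key step as ``$\FO(m)$ is $H$-invariant because $H$ normalizes $M$ and $M$ is cyclic'' (so $m^{h}$ is a power of $m$), whereas you deduce directly from $M^{h_0}=M$ that each $\Omega_{h_0}$ is isomorphic to $F/M$ as an $F$-set; your route avoids the appeal to cyclicity and makes the fixed-point count on each orbit explicit, which the paper leaves to the reader.
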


\begin{proof} We first observe that $A$ has three regular orbits in its
action on $\Omega$ which are permuted transitively by $H$; i.e. $A
\rtimes H$ acts regularly on $H$.  If $1 \neq m \in M$ and $\alpha
\in \FO(m)$, then $\alpha^H \subseteq \FO(m)$ because $H$ normalizes
$M$ and $M$ is cyclic. The claim follows.

\end{proof}

We close this section with a result of Fukushima \cite{Fu3} which generalizes a result of Rickman \cite{R},
and leads to yet another fairly general class of examples.

\begin{lemma}\label{Fukushima}
Let $H$ be a finite group and $\alpha \in \au(H)$ of odd prime
order. If the order of $\alpha$ is coprime to $|H|$ and if
$C_H(\alpha)$ is a $3$-group, then $H$ is solvable and more
specifically $H = O_{3,3'}(H)C_H(\alpha)$. If $G :=H \rtimes \<
\alpha \>$, if moreover $\Omega$ is the set of right cosets of $\<
\alpha \>$ in $G$ and $|C_H(\alpha)| = 3$, then the pair
$(G,\Omega)$ satisfies Hypothesis \ref{3fix}.
\end{lemma}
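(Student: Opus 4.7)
The plan is to treat the lemma's two claims separately. The first claim, that $H$ is solvable with $H = O_{3,3'}(H)C_H(\alpha)$, is the content of Fukushima's theorem in \cite{Fu3}, which itself extends Rickman \cite{R}; I would simply invoke that reference, since no new group-theoretic argument is needed beyond what is already in those papers.

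For the second claim, let $K := \langle \alpha \rangle$ and let $p$ denote the odd prime order of $\alpha$. Since $(|H|,p) = 1$ and $G = H \rtimes K$, the intersection $H \cap K$ is trivial and $|H| = |G:K|$, so $H$ is a transversal for the right cosets of $K$ in $G$, and I identify $\Omega$ with $H$ via $h \mapsto Kh$. Writing $\phi \in \mathrm{Aut}(H)$ for the automorphism induced by $\alpha$, the identity $h\alpha = \alpha\phi^{-1}(h)$ gives, for $g = h_0\alpha^k \in G$,
\[
 Kh \cdot g = Khh_0\alpha^k = K\alpha^k \cdot \phi^{-k}(hh_0) = K\,\phi^{-k}(hh_0),
\]
so under the identification $\Omega \leftrightarrow H$ the element $g$ acts as the permutation $h \mapsto \phi^{-k}(hh_0)$. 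The fixed-point count is then immediate: for $g = h_0 \in H\setminus\{1\}$ this is the translation $h \mapsto hh_0$, which is fixed-point free; while for $g = \alpha^k$ with $1 \le k \le p-1$ the map $h \mapsto \phi^{-k}(h)$ has fixed set $C_H(\phi^k)$, and since $\phi$ has prime order $p$ coprime to $k$, this equals $C_H(\alpha)$, of cardinality exactly $3$.

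To conclude, I would observe that any nontrivial $g \in G$ with a fixed point on $\Omega$ must be $G$-conjugate into $K$ and hence of order $p$; such an element is $G$-conjugate to some $\alpha^k$ with $k \neq 0$, and because conjugate elements fix equally many points on a $G$-set, every nontrivial element of $G$ fixes either $0$ or exactly $3$ points of $\Omega$. Hypothesis \ref{3fix} now follows: transitivity holds by construction, the stabiliser of the trivial coset is $K$ of order $p \geq 3$ so the action is nonregular, no nontrivial element fixes four points, and $\FO(\alpha)$ is a triple whose pointwise stabiliser contains $\alpha$ and is therefore nontrivial. The only real obstacle is the first claim, which is why the lemma cites Fukushima; the second claim is a direct calculation with the semidirect product once $\Omega$ is identified with $H$.
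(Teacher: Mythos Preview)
Your proof is correct and follows the same outline as the paper's: cite Fukushima \cite{Fu3} for the structural claim, then verify Hypothesis~\ref{3fix} directly. The paper compresses the second part into the single phrase ``the second is a corollary of the first,'' whereas you spell out the coset calculation explicitly; in fact your argument shows that the verification of Hypothesis~\ref{3fix} uses only the hypothesis $|C_H(\alpha)|=3$ and not the structural conclusion $H=O_{3,3'}(H)C_H(\alpha)$, so the dependence the paper's wording suggests is not actually needed.
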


\begin{proof}
The first statement is the combined content of Theorem 1 and
Proposition 3 of Fukushima \cite{Fu3}, whereas the second is a
corollary of the first.
\end{proof}

\subsection{More general properties following from our hypothesis}

\begin{lemma}\label{center}
Suppose that Hypothesis \ref{3fix} holds. Then $|Z(G)| \in \{1,3\}$.
\end{lemma}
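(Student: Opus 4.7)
The plan is to exploit the elementary fact that the centre of a transitive permutation group acts semiregularly. Combined with Hypothesis~\ref{3fix}, this will force $Z(G)$ to embed into $\Sym_3$ via its action on a suitable three-element fixed-point set, after which a short abelian analysis finishes things off.

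First I would verify that $Z(G)$ acts semiregularly on $\Omega$. If $z \in Z(G)$ satisfies $\omega^z = \omega$, then for every $g \in G$ we have $(\omega^g)^z = \omega^{gz} = \omega^{zg} = \omega^g$; so $z$ fixes the entire orbit $\omega^G = \Omega$, which means $z = 1$ by faithfulness.

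Next I would invoke Hypothesis~\ref{3fix} to produce a nontrivial subgroup $H \le G$ and a three-element set $\Delta \subseteq \Omega$ that is fixed pointwise by $H$. Since every nontrivial $h \in H$ fixes at least $\Delta$ but at most three points, we obtain $\FO(h) = \Delta$ for each such $h$, hence $\FO(H) = \Delta$. Because $Z(G)$ centralizes $H$, it normalizes $H$ and therefore permutes $\FO(H) = \Delta$. The kernel of the resulting representation $Z(G) \to \Sym(\Delta)$ consists of central elements fixing each point of $\Delta$; by semiregularity no nontrivial element of $Z(G)$ fixes any point of $\Omega$, so this representation is faithful and $Z(G)$ embeds into $\Sym_3$.

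Finally, since $Z(G)$ is abelian its image lies in an abelian subgroup of $\Sym_3$, forcing $|Z(G)| \in \{1,2,3\}$. To rule out $|Z(G)| = 2$, I would note that any involution of $\Sym_3$ fixes exactly one of the three letters; pulling back, the nontrivial element of $Z(G)$ would then fix a point of $\Delta \subseteq \Omega$, again contradicting semiregularity. Hence $|Z(G)| \in \{1,3\}$. The only delicate point is ensuring that $\FO(H)$ has exactly three elements (not more), and this is handled cleanly by the triviality of four point stabilizers; I do not anticipate any serious obstacle.
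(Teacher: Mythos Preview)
Your proof is correct and follows essentially the same approach as the paper: both use that $Z(G)$ acts semiregularly (equivalently, $Z(G)\cap G_\alpha=1$) and that $Z(G)$ stabilizes the three-element fixed-point set of a nontrivial three point stabilizer, forcing $|Z(G)|\in\{1,3\}$. The paper compresses this into a one-line appeal to Lemma~\ref{tuedel}(c), whereas you spell out the $\Sym_3$ embedding and the exclusion of $|Z(G)|=2$ explicitly.
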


\begin{proof}
Let $\alpha \in \Omega$. As $G$ acts faithfully on $\Omega$, we know
that $Z(G)$ intersects $G_\alpha$ trivially. Let $x \in G_\alpha$ be
an element with exactly three fixed points. Then $Z(G) \le C_G(x)$
and hence Lemma \ref{tuedel}~(c) implies that $|Z(G)| \in \{1,3\}$.
\end{proof}

\begin{lemma}\label{sylow}
Suppose that Hypothesis \ref{3fix} holds and let $\alpha \in
\Omega$. Then the following hold:

\begin{enumerate}
\item[(a)]
If some $2$-element in $G_\alpha$ has exactly three fixed points on
$\Omega$, then $G_\alpha$ contains a Sylow $2$-subgroup of $G$.

\item[(b)]
If some $3$-element in $G_\alpha$ has exactly three fixed points,
then $3$ divides $\Omega$. In particular, in this case, $G_\alpha$
does not contain a Sylow $3$-subgroup of $G$.

\item[(c)]
For all primes $p \ge 5$ that divide $|G_\alpha|$, some Sylow
$p$-subgroup of $G$ is contained in $G_\alpha$.
\end{enumerate}

\end{lemma}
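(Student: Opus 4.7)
The plan for all three parts rests on a single observation: if $x$ is a $p$-element of $G$ acting on $\Omega$, then every nontrivial orbit of $\langle x \rangle$ has length a positive power of $p$, so $|\Omega| \equiv |\fixO(x)| \pmod{p}$. I will apply this congruence with suitable $p$-elements of $G_\alpha$ provided by the hypotheses of each part, and combine it with the factorisation $|G| = |G_\alpha|\cdot|\Omega|$ to compare $p$-parts.

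For (a), take the hypothesised $2$-element $x \in G_\alpha$ with $|\fixO(x)| = 3$. The congruence gives $|\Omega| \equiv 3 \equiv 1 \pmod{2}$, so $|\Omega|$ is odd. Consequently $|G|_2 = |G_\alpha|_2$, and any Sylow $2$-subgroup of $G_\alpha$ is already a Sylow $2$-subgroup of $G$. For (b), take the hypothesised $3$-element $x \in G_\alpha$ with $|\fixO(x)| = 3$; the same congruence gives $|\Omega| \equiv 3 \equiv 0 \pmod{3}$, so $3$ divides $|\Omega|$. Hence $|G_\alpha|_3$ is strictly smaller than $|G|_3$, which forces $G_\alpha$ to miss a full Sylow $3$-subgroup of $G$.

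For (c), let $p \ge 5$ divide $|G_\alpha|$ and choose $x \in G_\alpha$ of order $p$ (such an $x$ exists since some element of $G_\alpha$ has order a positive power of $p$, and a suitable power then has order exactly $p$). Because $\alpha \in \fixO(x)$ and four point stabilisers are trivial by Hypothesis \ref{3fix}, we have $|\fixO(x)| \in \{1,2,3\}$, so $0 < |\fixO(x)| < p$. The congruence then yields $|\Omega| \not\equiv 0 \pmod{p}$, i.e.\ $p \nmid |\Omega|$, so $|G_\alpha|_p = |G|_p$ and Sylow $p$-subgroups of $G_\alpha$ are Sylow $p$-subgroups of $G$.

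There is no serious obstacle here: each statement is an immediate consequence of the orbit decomposition modulo $p$. The only point worth flagging is that the argument in (c) genuinely needs $p \ge 5$, precisely so that the a priori possible values $1,2,3$ of $|\fixO(x)|$ are all nonzero modulo $p$; the case $p = 3$ with $|\fixO(x)| = 3$ is exactly the failure mode that (b) records, and the case $p = 2$ with $|\fixO(x)| = 2$ is why (a) only gives information under the stated hypothesis.
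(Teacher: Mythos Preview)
Your proof is correct. Parts (a) and (b) are essentially identical to the paper's argument: both count orbits of the given $p$-element to read off $|\Omega| \bmod p$ and then compare $p$-parts via $|G| = |G_\alpha|\cdot|\Omega|$.

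For (c) your route is genuinely different from the paper's, and in fact more elementary. The paper argues structurally: it places a chosen $x \in G_\alpha$ of order $p$ inside some $P \in \text{Syl}_p(G)$ and then uses Lemma~\ref{tuedel} twice. First, since $Z(P) \le C_G(x) \le N_G(\langle x\rangle)$ and $|N_G(\langle x\rangle):N_{G_\alpha}(\langle x\rangle)| \le 3 < p$, the $p$-group $Z(P)$ must already lie in $G_\alpha$; second, the same index bound applied to $N_G(Z(P))$ forces all of $P$ into $G_\alpha$. Your argument bypasses this two-step normaliser climb entirely: you observe directly that $|\fixO(x)| \in \{1,2,3\}$ is nonzero modulo $p$ whenever $p \ge 5$, hence $p \nmid |\Omega|$, and the Sylow containment is immediate from the index formula. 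The paper's approach has the minor advantage of exhibiting a \emph{specific} Sylow $p$-subgroup of $G$ inside $G_\alpha$ (namely any $P$ containing the given $x$), whereas yours only guarantees that \emph{some} conjugate lands there; but for every subsequent use in the paper your conclusion suffices, and your argument is cleaner.
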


\begin{proof}
Suppose that $x \in G_\alpha$ is a $2$-element with exactly three
fixed points. As $x$ has orbits of $2$-power lengths on the set of
points that are not fixed, it follows that $|\Omega|$ is odd.
Therefore $|G:G_\alpha|$ is odd and $G_\alpha$ contains a Sylow
$2$-subgroup of $G$.

For (b) suppose that $y \in G_\alpha$ is a $3$-element with exactly
three fixed points on $\Omega$. The remaining orbits of $y$ on
$\Omega$ have $3$-power lengths and therefore $\Omega$ is divisible
by $3$. This means that $|G:G_\alpha|$ is divisible by $3$ and in
particular $G_\alpha$ does not contain a Sylow $3$-subgroup of $G$.

Finally suppose that $p \in \pi(G_\alpha)$ is such that $p \ge 5$.
Let $x \in G_\alpha$ be an element of order $p$ and let $x \in P \in
\sy_p(G)$. Then Lemma \ref{tuedel}~(b) implies first that $Z(P) \le
G_\alpha$ and then that $P \le G_\alpha$. This finishes the proof.
\end{proof}

\begin{lemma}\label{2cycles}
Suppose that Hypothesis \ref{3fix} holds and that $N \unlhd G$ is
such that all $N$-orbits on $\Omega$ have size $2$. Let
$\ti{\Omega}$ denote the set of $N$-orbits of $\Omega$ and let $K$
denote the kernel of the action of $G$ on $\ti{\Omega}$.

Then $|\Omega| \le 6$ and $(G, \Omega)$ is as in Lemma
\ref{kleinersechs}.
\end{lemma}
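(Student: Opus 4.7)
The plan is to show that, combined with Hypothesis~\ref{3fix}, the existence of such a normal subgroup $N$ is actually impossible, so that the stated conclusion holds vacuously. Hypothesis~\ref{3fix} yields a nontrivial three-point stabilizer $H$; picking any $1 \ne h \in H$ and using the triviality of four-point stabilizers, one sees that $|\fixO(h)| = 3$ exactly, since any further fixed point of $h$ would give a nontrivial four-point stabilizer. I would begin by fixing such an $h$.

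Next I would exploit the normality of $N$. Since $h$ normalizes $N$, it permutes the set $\ti{\Omega}$ of $N$-orbits; so if $h$ fixes a point $\omega \in \Omega$, it preserves $\omega^N = \{\omega, \omega'\}$ setwise, and because this orbit has exactly two elements, $h$ must also fix $\omega'$. Hence $\fixO(h)$ is a disjoint union of complete $N$-orbits of size $2$, forcing $|\fixO(h)|$ to be even.

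This parity contradicts $|\fixO(h)| = 3$, so no configuration $(G, \Omega, N)$ of the kind postulated by the lemma can co-exist with Hypothesis~\ref{3fix}. Both clauses of the conclusion then hold vacuously; consistently, a direct inspection shows that none of the groups actually listed in Lemma~\ref{kleinersechs} admits a normal subgroup with orbits of size $2$ (for instance $\Alt_6$ is simple, and the nontrivial normal subgroups of $\Alt_3 \wr \Sym_2$ all have orbits of size $3$). The whole argument rests on the single parity observation above, so the kernel $K$ named in the statement is not needed in this proof and there is no genuine obstacle to overcome.
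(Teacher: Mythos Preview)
Your argument is correct, and it is genuinely simpler than the paper's. The key parity observation---that any $h$ normalising $N$ has $\fixO(h)$ stable under $N$, hence a union of size-$2$ orbits, hence of even cardinality---immediately collides with the existence of an element with exactly three fixed points guaranteed by Hypothesis~\ref{3fix}. So the hypothesis of the lemma is indeed unsatisfiable and the conclusion is vacuous; your consistency check against the list in Lemma~\ref{kleinersechs} confirms this.

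The paper takes a longer route: it first argues $|N|\ge 4$ via Lemma~\ref{center} (since $|N|=2$ would force $N\le Z(G)$), then picks two distinct nontrivial elements $s,t\in N$ and compares their actions block by block on $\ti\Omega$ to force $s$ or $st$ to have at least four fixed points when $m\ge 4$, concluding $m\le 3$. This yields the bound $|\Omega|\le 6$ directly, but never observes that the situation is in fact empty. Your approach bypasses all of this structural analysis of $N$ with a single line; what the paper's argument buys is an explicit mechanism inside $N$ producing many fixed points, which is perhaps more in the spirit of the surrounding lemmas (e.g.\ the analogous orbit-counting in Lemmas~\ref{syl2} and~\ref{syl3}), but for the statement as written your proof is cleaner.
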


\begin{proof}
By hypothesis $N$ is a $2$-group. If $N$ has order $2$, then $N \le
Z(G)$ and this contradicts Lemma \ref{center}. Hence $N$ has order
at least $4$. We set $m:=|\ti{\Omega}|$ and we simplify notation by
denoting the elements of $\Omega$ by $1,...,2m$ and by expressing
elements of $G$ as elements from $\sym_{2m}$. We write
$\ti{\Omega}=\{\{1,2\},...,\{2m-1,2m\}\}$.

Now it is sufficient to prove that $m \le 3$. Hence we assume
otherwise. Our fixed point hypothesis tells us that all elements
from $N^\#$ are a product of at least $m-1$ disjoint transpositions.
Suppose that $t \in N^\#$ induces $(1,2) \cdots (2m-1,2m)$ on
$\Omega$ and let $s \in N^\#$ be such that $s \neq t$. On each
element of $\ti{\Omega}$, only one nontrivial action of $s$ is
possible, namely the action of the corresponding transposition. If
$t$ and $s$ both induce a transposition on $\{1,2\}$, then $s \cdot
t$ fixes $1$ and $2$. Otherwise $s$ fixes $1$ and $2$ and we have
the same two possibilities on $\{3,4\}$. As $|\Omega|$ is even, all
elements from $N^\#$ can only have zero or two fixed points, so
looking at the remaining elements of $\ti{\Omega}$ yields that $s$
or $s \cdot t$ fixes at least four points on $\Omega$. This is
impossible. A similar argument applies if we choose $t$ to already
have two fixed points on $\Omega$. Hence $m \le 3$ as stated.
\end{proof}

\begin{lemma}\label{syl2}
Suppose that Hypothesis \ref{3fix} holds. Let $S \in \sy_2(G)$ and
$\alpha \in \Omega$. Then one of the following holds:

\begin{enumerate}
\item[(1)]
$G_\alpha$ has odd order.

\item[(2)]
$S$ is dihedral or semidihedral and $|S_\alpha|=2$. In particular
$G_\alpha$ has a normal $2$-complement.

\item[(3)]
$|S| \ge 4$, there is a unique $S$-orbit on $\Omega$ of length $2$,
and all other $S$-orbits have length $|S|$. Then $O_2(G)=1$ or
$O_2(G)$ is a fours group and $|\Omega| \le 6$.

\item[(4)]
$|\Omega|$ is odd.

\end{enumerate}
\end{lemma}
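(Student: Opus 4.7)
I plan to prove the contrapositive: assuming (1) and (4) both fail, I will derive that one of (2), (3) holds. Under these hypotheses $|G_\alpha|$ and $|\Omega|$ are both even. Since an involution fixes at most three points of $\Omega$ and its non-fixed points come in $2$-cycles, the evenness of $|\Omega|$ forces every involution of $G$ to fix exactly $0$ or $2$ points of $\Omega$. Moreover $S$ cannot have a fixed point on $\Omega$ (otherwise $S \le G_\gamma$ would give $|\Omega|$ odd), so every $S$-orbit has length at least $2$. The central technical device will be the inequality $|C_S(t)| \le 2|S_{\alpha'}|$, valid for any involution $t \in S$ with $\FO(t) = \{\alpha', \beta'\}$: indeed $C_S(t)$ permutes $\FO(t)$, its kernel $K$ on this pair has index at most $2$, and any element of $C_S(t)$ fixing $\alpha'$ must also fix $\beta'$, so $K \le S_{\alpha'}$.

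Suppose first that $|S_\alpha| = 2$, with unique involution $t$. The inequality gives $|C_S(t)| \le 4$. Since $\alpha^S$ has length $|S|/2 \ge 2$ we have $|S| \ge 4$; and every $2$-group of order at least $4$ satisfies $|C_S(t)| \ge 4$ (because $C_S(t) \supseteq \langle t \rangle \, Z(S)$, whose order is $|S|$ if $t \in Z(S)$ and $2|Z(S)| \ge 4$ otherwise). Hence $|C_S(t)| = 4$. If $|S| = 4$, then either $S$ is a fours group (dihedral of order $4$), giving (2), or $S$ is cyclic of order $4$; in the cyclic case $S_\alpha = \langle t \rangle$, the orbit $\alpha^S$ has length $2$, and any further length-$2$ $S$-orbit would also have stabilizer $\langle t \rangle$, forcing $|\FO(t)| \ge 4$ and contradicting the parity observation, so all remaining orbits have length $4$ and (3) holds. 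If $|S| \ge 8$, then $S$ must be non-abelian (else $C_S(t) = S$), hence of maximal class by Lemma~\ref{maximaleKlasse}; the generalized quaternion case is excluded because its unique involution is central, which would force $|C_S(t)| = |S| \ge 8$. Therefore $S$ is dihedral or semidihedral, yielding (2).

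Suppose next that $|S_\alpha| \ne 2$. I target case (3). Because $|G_\alpha|$ is even, Sylow conjugation yields some $\beta \in \Omega$ with $S_\beta$ containing an involution $t'$. Applying the centralizer inequality to a central involution of $S_\beta$, together with the observation that $S_\gamma \le S_\delta$ whenever $t' \in Z(S_\gamma)$ and $\delta \in \FO(t')$ (because $S_\gamma \subseteq C_S(t')$ fixes $\gamma$, hence fixes $\delta$), produces an $S$-orbit of length $2$. Uniqueness of this orbit follows since two distinct length-$2$ $S$-orbits would share stabilizers of index $2$ in $S$ whose common elements would fix at least $4$ points of $\Omega$; and $S$-orbits of length $2^k$ with $1 < k < \log_2 |S|$ are excluded by the same counting applied to a central involution of the corresponding intermediate $S_\gamma$. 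Hence $S$ has exactly one orbit of length $2$, with all other orbits of length $|S|$. The assertion on $O_2(G)$ then follows because $O_2(G) \le S$ has orbits of a common size on $\Omega$ (by normality of $O_2(G)$ and transitivity of $G$), each contained in an $S$-orbit, so the common length is $1$ or $2$: length $1$ forces $O_2(G) = 1$ by faithfulness, while length $2$ invokes Lemma~\ref{2cycles} to yield $|\Omega| \le 6$ and $(G, \Omega)$ as in Lemma~\ref{kleinersechs}, where one checks directly from the list that $O_2(G)$ is a fours group.

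The main obstacle lies in the second case $|S_\alpha| \ne 2$: establishing existence and uniqueness of the length-$2$ $S$-orbit while simultaneously ruling out intermediate-length $S$-orbits via involution-counting is the most delicate step, particularly when $|S_\alpha| = 1$ so no involution of $S$ visibly lies in $G_\alpha$ and the length-$2$ orbit must be located via a conjugate Sylow containing an involution from $G_\alpha$.
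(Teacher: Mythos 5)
Your Case 1 ($|S_\alpha|=2$) is essentially sound and runs parallel to the paper's argument: both reduce to the bound $|C_S(t)|\leq 4$ for the involution $t$ in $S_\alpha$ and then invoke the classification of $2$-groups of maximal class, excluding the quaternion case because its unique involution is central. The genuine gap is in Case 2, exactly where you flag the ``main obstacle'': the assertions that the centralizer inequality ``produces an $S$-orbit of length $2$'' and that orbits of intermediate length $2^k$ with $1<k<\log_2|S|$ ``are excluded by the same counting'' are not arguments, and the second cannot be turned into one, because the configuration it tries to exclude actually occurs. Take $G=\Alt_6$ in its natural action on $\Omega=\{1,\dots,6\}$ (an instance of Hypothesis \ref{3fix} recorded in Lemmas \ref{kleinersechs} and \ref{A6}) and $S=\langle (1,2,3,4)(5,6),\,(1,2)(3,4)\rangle$, a dihedral Sylow $2$-subgroup of order $8$ whose orbits are $\{1,2,3,4\}$ and $\{5,6\}$. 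With $\alpha=5$ you are in your Case 2 (here $|S_\alpha|=4$); there is a unique $S$-orbit of length $2$, but the other orbit has length $4=|S|/2$, not $|S|$. Your proposed exclusion applies the inequality to the involution $t'=(2,4)(5,6)$ generating $S_1$: it gives $|C_S(t')|\leq 2|S_1|=4$, which is satisfied (the centralizer of a non-central involution in a dihedral group of order $8$ has order $4$), so no contradiction appears --- and none can.

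In fairness, the published proof makes the same jump (``$S_\alpha$ has regular orbits on the remaining points, hence all other orbits have length $|S|$'' only yields length $|S|/2$ or $|S|$), and the same example shows that conclusions (2) and (3) fail for some literal choices of $S$ and $\alpha$: with $\alpha=1$ one has $|S_\alpha|=2$ and $S$ dihedral, yet $G_\alpha\cong\Alt_5$ has no normal $2$-complement, because $S\cap G_\alpha$ need not be a Sylow $2$-subgroup of $G_\alpha$ --- a point your Case 1 (and the paper) also takes for granted. So your contrapositive set-up, which fixes an arbitrary pair $(S,\alpha)$ with $|G_\alpha|$ and $|\Omega|$ both even, is aiming at a target that is not attainable for every such pair. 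To salvage the lemma in the form its later applications require, you must do what the paper does implicitly: replace $S$ by a conjugate adapted to $\alpha$ (so that $S_\alpha$ is a Sylow $2$-subgroup of $G_\alpha$), and then only the weaker dichotomy --- $|S_\alpha|\leq 2$ with $S$ of maximal class, or $|S:S_\alpha|\leq 2$ --- together with the $O_2(G)$ analysis via Lemma \ref{2cycles}, actually survives.
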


\begin{proof}
Suppose that (1) does not hold. Then with Sylow's Theorem we may
suppose that $S_\alpha \neq 1$.

Let $\Delta:=\alpha^S$ and let $n,m \in \N_0$ be such that
$|S_\alpha|=2^n$ and $|S:S_\alpha|=2^m$. First suppose that $m \ge
2$. Let $d$ denote the number of fixed points of $S_\alpha$ on
$\Delta$ and choose $a \in \N_0$ such that $|\Delta|=d+a \cdot 2^n$.
As $n \ge 1$ and $|\Delta|=2^m \ge 4$, we see that $d=2$ and hence
$2^m=2 \cdot (1+a \cdot 2^{n-1})$. This implies that $n=1$ and that
$a= 2^{m-1} - 1$ and Lemma \ref{tuedel}~(b) forces $|C_S(S_\alpha)|
\leq 4$. Thus either $S$ is of order $2$ or of maximal class. For
(2) we assume that $S$ is quaternion. Then $|S| \ge 8$ and
$|S_\alpha|=2$, in particular $G_\alpha$ contains the unique
involution in $S$. But then Lemma \ref{tuedel} forces a subgroup of
index $2$ of $S$ to be contained in $G_\alpha$, which is impossible.
Now 11.9 in \cite{Hupp} yields that $S$ is dihedral or semidihedral.
Moreover $S_\alpha$ has order $2$ which means that $G_\alpha$ has
cyclic Sylow $2$-subgroups and hence a normal $2$-complement. This
is (2).

Now we suppose that $m \le 1$. Then (4) holds or $S_\alpha$ has
index exactly $2$ in $S$. We look at the second case more closely.
By Lemma \ref{tuedel} we know that there exists $\beta \in \Omega$
such that $\alpha \neq \beta$, $S_\alpha=S_\beta$ and some element
in $S$ interchanges $\alpha$ and $\beta$. (In fact all elements in
$S \backslash S_\alpha$ interchange $\alpha$ and $\beta$.) As
$S_\alpha$ already has two fixed points and $|\Omega|$ is even in
this case, it follows that $S_\alpha$ has exactly two fixed points
and hence it has regular orbits on the remaining points of $\Omega$.
It follows that $\Delta:=\{\alpha, \beta\}$ is the unique $S$-orbit
of length $2$ and all other orbits have length $|S|$.

As $|\Omega| > 2$ by hypothesis, there exists a regular $S$-orbit of
$\Omega$ and this means that we may choose $g \in G$ such that
$\Delta  \cap \Delta^g=\varnothing$. Then $D:=S \cap S^g$ stabilizes
the set $\Delta \cup \Delta^g$ of size $4$. Moreover $D$ acts
faithfully on this set by Hypothesis \ref{3fix} and it fixes the
subsets $\Delta$ and $\Delta^g$. Thus $|D| \le 4$ and in particular
$O_2(G)$ has order at most $4$. The point stabilizers have index $2$
in $O_2(G)$ and hence $O_2(G)$ has orbits on $\Omega$ of length $2$.
Now Lemmas \ref{2cycles} and \ref{center} imply all the remaining
details of (3).
\end{proof}

\begin{lemma}\label{2isttoll}
Suppose that Hypothesis \ref{3fix} holds. Let $\alpha \in \Omega$
and suppose further that $G$ is simple. Then one of the following
holds:

\begin{enumerate}
\item[(1)]
$G_\alpha$ has odd order.

\item[(2)]
$G_\alpha$ contains a Sylow $2$-subgroup of $G$. In particular
$G_\alpha$ contains an involution from every conjugacy class.

\item[(3)]
$G$ has dihedral or semidihedral Sylow $2$-subgroups, in particular
$G$ is isomorphic to $\Alt_7$ or $M_{11}$ or there exists an odd
prime power $q$ such that $G \cong \PSL_2(q),~ \PSU_3(q)$ or
$\PSL_3(q)$.
\end{enumerate}
\end{lemma}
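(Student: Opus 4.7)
The plan is to apply Lemma \ref{syl2} and to match each of its four cases with one of the three conclusions of the present lemma.

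Case (1) of Lemma \ref{syl2} coincides with conclusion (1) here. If case (4) of Lemma \ref{syl2} holds, then $|\Omega|$ is odd, so $|G:G_\alpha|$ is odd and hence $G_\alpha$ contains a Sylow $2$-subgroup $S$ of $G$. Every involution of $G$ is $G$-conjugate into $S \leq G_\alpha$, so every $G$-conjugacy class of involutions meets $G_\alpha$; this gives conclusion (2).

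In case (2) of Lemma \ref{syl2}, $S$ is dihedral or semidihedral and $|S_\alpha|=2$. The possibility $|S|=2$ can be ruled out, since $S$ would then be cyclic and Burnside's transfer theorem combined with simplicity of $G$ would force $|G|=2$, contradicting Hypothesis \ref{3fix}. Hence $|S|\ge 4$, and $G$ is a non-abelian finite simple group with dihedral or semidihedral Sylow $2$-subgroup. I would then invoke the classical classification results: the Gorenstein--Walter theorem (together with Brauer--Suzuki--Wall for the Klein four case) in the dihedral case yields $G \cong \PSL_2(q)$ for some odd prime power $q$ or $G \cong \Alt_7$, while the Alperin--Brauer--Gorenstein theorem in the semidihedral case yields $G \cong \PSL_3(q)$ or $\PSU_3(q)$ for some odd $q$, or $G \cong M_{11}$. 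These are precisely the groups appearing in conclusion (3).

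Finally, in case (3) of Lemma \ref{syl2} one has $|\Omega|\le 6$, and since $G$ is simple, Lemma \ref{kleinersechs} leaves only the possibility $G \cong \Alt_6 \cong \PSL_2(9)$, whose Sylow $2$-subgroups are dihedral of order $8$; so conclusion (3) holds here as well. The main non-trivial ingredient in the argument is the appeal to the classifications of finite simple groups with dihedral or semidihedral Sylow $2$-subgroup, but these are standard classical results and apply directly once Lemma \ref{syl2} has reduced us to the appropriate setting.
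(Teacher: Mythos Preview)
Your handling of case~(3) of Lemma~\ref{syl2} contains a genuine gap. You assert that case~(3) gives $|\Omega|\le 6$, but re-read the statement: the conclusion there is that \emph{either} $O_2(G)=1$ \emph{or} $O_2(G)$ is a fours group and $|\Omega|\le 6$. The bound $|\Omega|\le 6$ is only obtained in the second alternative, via Lemma~\ref{2cycles} applied to $N=O_2(G)$. For a non-abelian simple group one has $O_2(G)=1$, so you are in the first alternative and no bound on $|\Omega|$ is available. Your reduction to Lemma~\ref{kleinersechs} therefore does not go through.

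The paper disposes of case~(3) by a direct parity argument instead. With $S_\alpha$ of index~$2$ in $S$ and $\{\alpha,\beta\}$ the unique short $S$-orbit, take $t\in S\setminus S_\alpha$. Since $G$ is simple, $S$ is not cyclic (Burnside), so $t$ has order strictly less than $|S|$; on each regular $S$-orbit $t$ then decomposes into an even number of cycles and is an even permutation there. But $t$ swaps $\alpha$ and $\beta$, hence $t$ acts as an odd permutation on $\Omega$, producing a normal subgroup of index~$2$ in $G$ and contradicting simplicity. This is the missing idea you need to replace your appeal to $|\Omega|\le 6$. The rest of your argument (cases (1), (2), (4) and the invocation of Gorenstein--Walter and Alperin--Brauer--Gorenstein) matches the paper.
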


\begin{proof}
We go through the cases in Lemma \ref{syl2} with the special
hypothesis that $G$ is simple. The cases (1) and (4) from Lemma
\ref{syl2} give exactly the conclusions (1) and (2) here. If (2)
from the lemma holds, then we use the classification of the
corresponding groups by Gorenstein-Walter and
Alperin-Brauer-Gorenstein, respectively (see \cite{GW} and
\cite{ABG}). This gives the possibilities in (3), so it is only left
to prove that Case (3) of Lemma \ref{syl2} does not occur in a
simple group.

Assume otherwise and let $S \in \sy_2(G)$ be such that $S_\alpha
\neq 1$ and $S$ has order at least $4$. Moreover we assume that $S$
has a unique orbit of length $2$ on $\Omega$ and all other orbits
have length $|S|$. We choose $\beta \in \Omega$ such that $\{\alpha,
\beta\}$ is the $S$-orbit of length $2$, in particular
$S_\alpha=S_\beta$ has index $2$ in $S$.

Let $t \in S \setminus S_\alpha$. Then $t$ interchanges $\alpha$ and
$\beta$ and it fixes all orbits of length $|S|$. As $S$ is not
cyclic by Burnside's Theorem (recall that $G$ is simple), it follows
that $t$ acts as an even permutation on each $S$-orbit and hence on
$\Omega \setminus\{\alpha, \beta\}$. Thus $t$ acts as an odd
permutation on $\Omega$. This means that $G$ possesses a normal
subgroup of index $2$. But $|G| \ge 4$ and $G$ is simple, so this is
impossible.
\end{proof}

\begin{lemma}\label{odd-even}
Suppose that Hypothesis \ref{3fix} is satisfied and that $|\Omega|$
is odd. Then one of the following holds:
\begin{enumerate}

\item[(1)]
$G$ has odd order and $3 \in \pi(G)$.

\item[(2)]
$G$ has a strongly embedded subgroup.

\item[(3)]
$G$ has a normal $2$-complement. In particular $G$ is solvable.

\item[(4)]
$G$ has a normal subgroup $G_0$ of index $2$ that has a strongly
embedded subgroup.
\end{enumerate}

In particular, if $G$ is simple, then $G$ is isomorphic to $\Alt_7$,
to $M_{11}$ or there exists a prime power $q$ such that $G$ is
isomorphic to $\PSL_2(q)$, to $\Sz(q)$, to $\PSU_3(q)$ or to
$\PSL_3(q)$ (with $q$ even).

\end{lemma}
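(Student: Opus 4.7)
My plan is to split the argument according to the parity of $|G|$, and within the even-order case to analyse the fixed-point pattern of involutions using the sign character of $G$ on $\Omega$.

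First I would handle $|G|$ odd. By Lemma \ref{cases}, either $G_\omega$ is a Frobenius group of odd order whose complement is a three-point stabiliser, or $|\fixO(G_\omega)|=3$. In the latter case, writing $\fixO(G_\omega)=\{\omega,\omega_1,\omega_2\}$, the inclusions $G_\omega\le G_{\omega_i}$ together with $|G_\omega|=|G_{\omega_i}|$ force $G_\omega=G_{\omega_i}$, so transitivity supplies an element $g\in N_G(G_\omega)\setminus G_\omega$ sending $\omega$ to $\omega_1$; Lemma \ref{tuedel}(c) then gives $[N_G(G_\omega):G_\omega]=3$. In the Frobenius subcase, a short order count rules out $|\Omega|=5$ (an odd-order transitive group on five points has order dividing $15$, too small for a nontrivial Frobenius stabiliser with a three-point complement), so $|\Omega|\ge 7$ and Corollary \ref{FrobTo3} yields $[N_G(H):H]=3$. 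Either way $3\in\pi(G)$, giving conclusion~(1).

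Now assume $|G|$ is even. Then $G_\omega$ contains a Sylow $2$-subgroup $S$ of $G$, and by Hypothesis \ref{3fix} every involution of $G$ fixes one or three points. Let $A$ and $B$ denote the sets of involutions with one and three fixed points. If $B=\emptyset$, then for any involution $t\in G_\omega$ one has $\fixO(t)=\{\omega\}$, whence $C_G(t)\le G_\omega$ and $G_\omega$ is strongly embedded in $G$, giving conclusion~(2). Otherwise a count of transpositions (an involution with $k$ fixed points contributes $(|\Omega|-k)/2$ transpositions) shows that $A$ and $B$ have opposite signs on $\Omega$. When both are nonempty, the sign character $\pi$ of $G$ on $\Omega$ is surjective, $G_0:=\ker\pi$ has index two, and the parity of $(|\Omega|-1)/2$ determines whether the even involutions are exactly $A$ or exactly $B$; when they are $A$, repeating the centraliser argument inside $G_0$ shows $G_0\cap G_\omega$ is strongly embedded in $G_0$, yielding conclusion~(4).

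I expect the main obstacle to be the remaining subcase, in which every involution of $G$ (or of an index-two subgroup constructed above) fixes exactly three points on $\Omega$. For an involution $t\in Z(S)$, Lemma \ref{tuedel}(c) bounds the image of $C_G(t)$ on $\fixO(t)$ by a subgroup of $\Sym_3$ of order dividing three. If this image is trivial or of order two, then since every three-point stabiliser acts semiregularly on the complement of its fixed set, $t$ is isolated in $S$, and transfer-theoretic arguments of the type of Glauberman's $Z^*$-theorem or Thompson transfer force $G$ to have a normal $2$-complement, completing conclusion~(3). If instead $C_G(t)$ realises a $3$-cycle on $\fixO(t)$, one would reduce to the index-two analysis above and obtain conclusion~(4). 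For the simple-group statement, Feit--Thompson excludes~(1) unless $|G|$ is prime, a simple group of even order cannot have a normal $2$-complement (ruling out~(3)), and simplicity forbids subgroups of index two (ruling out~(4)); hence $G$ either has a strongly embedded subgroup, or its Sylow $2$-subgroups are dihedral or semidihedral and $G$ falls within the scope of Lemma \ref{2isttoll}(3). Bender's classification of simple groups with strongly embedded subgroups yields $\PSL_2(2^n)$, $\Sz(2^{2n+1})$, and $\PSU_3(2^n)$, while Gorenstein--Walter together with Alperin--Brauer--Gorenstein supply $\Alt_7$, $M_{11}$, $\PSL_2(q)$, $\PSU_3(q)$, and $\PSL_3(q)$ for the remaining prime powers; the union of these two classifications is the stated list.
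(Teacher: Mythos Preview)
Your treatment of the odd-order case is correct but needlessly elaborate: the paper simply cites Lemma~\ref{frob}, which says that a transitive nonregular $\{2,3\}'$-group with trivial four-point stabilisers is Frobenius, directly contradicting Hypothesis~\ref{3fix}.

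The genuine gap is your ``remaining subcase'' where every involution has three fixed points. First, your reading of Lemma~\ref{tuedel}(c) is wrong: that lemma bounds the \emph{index} $[N_G(X):N_{G_\alpha}(X)]$ by~$3$, not the image of $C_G(t)$ on $\fixO(t)$, which can perfectly well be all of~$\Sym_3$. Second, even granting that $t$ is isolated in~$S$, the $Z^*$-theorem only yields $t \in Z^*(G)$, not a normal $2$-complement, and Thompson transfer would at best produce a subgroup of index~$2$, not conclusion~(3). So this subcase is not handled.

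The paper avoids this difficulty by organising the even-order analysis around the orbit structure of the full Sylow subgroup $S$ on $\Gamma = \Omega \setminus \{\alpha\}$, rather than around individual involutions. There are three cases: $S$ fixes two further points; no element of $S^\#$ fixes a point of~$\Gamma$; or $S$ has a unique orbit of length~$2$ on~$\Gamma$. The second gives conclusion~(2) with $G_\alpha$ strongly embedded, as in your argument. The crucial point you are missing is the first case: when $S$ itself fixes a three-set~$\Delta$, one takes $M_0$ to be the pointwise stabiliser of~$\Delta$ and shows that $M := N_G(M_0)$ is self-normalising and strongly embedded in~$G$ --- so this case also lands in conclusion~(2), not~(3). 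Only the third case splits further: cyclic~$S$ gives~(3) via Burnside, and non-cyclic~$S$ yields an index-$2$ subgroup $G_0$ via the sign map, after which one checks that $(G_0,\Omega)$ again satisfies Hypothesis~\ref{3fix} with $S_0 = S \cap G_0$ fixing three points, reducing to the first case inside~$G_0$ and giving~(4). Your $A = \emptyset$ case straddles the paper's first and third cases, and you never locate the strongly embedded subgroup~$M$.

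Finally, for the simple-group coda: once (1), (3), (4) are excluded by simplicity, only (2) survives and Bender's theorem alone gives the list. Your appeal to Lemma~\ref{2isttoll}(3) here is extraneous and does not fit the logic of the argument.
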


\begin{proof}
Let $\alpha \in \Omega$. Then the transitivity of $G$ on $\Omega$
yields that $|\Omega|=|\alpha^G|=|G:G_{\alpha}|$ and hence
$|G|=|\Omega| \cdot |G_{\alpha}|$. In particular $G_\alpha$ contains
a Sylow $2$-subgroup of $G$.

Suppose that $G_{\alpha}$ has odd order. Then $G$ has odd order, but
it is not a Frobenius group and therefore Lemma \ref{frob} forces $3
\in \pi(G)$. This is (1).

Next suppose that $G_{\alpha}$ has even order and let $S \in
\sy_2(G)$ be contained in $G_\alpha$. We look at the orbits of $S$
on $\Gamma:=\Omega \setminus \{\alpha\}$. As $|\Omega|$ is odd,
there are three possibilities: $S$ fixes two points on $\Gamma$ or
every element in $S^\#$ is fixed point free on $\Gamma$ or $S$ has a
unique orbit of length $2$ on $\Gamma$. Suppose that every element
of $S^\#$ fixes only $\alpha$ and let $H:=G_{\alpha}$. Let $g \in
G\backslash H$ and suppose that $x \in H \cap H^g$ is a $2$-element.
Then $x$ has at least two fixed points on $\Omega$, namely $\alpha$
and $\alpha^g$, and in the present case this forces $x=1$ (because
without loss $x \in S$). It follows that $H \cap H^g$ has odd order
and hence $H$ is a strongly embedded subgroup of $G$. This is (2).

Next suppose that $S$ fixes three points. Let $\Delta$ denote this
fixed point set. Let $M_0$ denote the point-wise stabilizer of
$\Delta$ and let $M:=N_G(M_0)$. We show that $N_G(M)$ is strongly
embedded:

First Lemma \ref{tuedel} and the fact that $S \le M_0$ yield that
$M$ has index at most $3$ in $N_G(M)$. Moreover $|\Omega|$ is odd,
so in particular $M$ does not have two orbits of length $3$ on
$\Omega$, but it has a unique orbit of length $3$ on $\Omega$.
(Otherwise $S$ has too many fixed points.) Therefore $N_G(M)$
stabilizes $\Delta$ and is hence contained in $M$.

Next we let $g \in G \setminus M$ and we choose a $2$-element $t \in
M \cap M^g$. Without loss $t \in S$. Then $t$ stabilizes $\Delta$
and $\Delta^g$. These sets have size $3$ and therefore $t$ has a
fixed point on both of them, moreover it fixes $\Delta$ point-wise.
But also $x \in S^g$ and therefore $x$ fixes $\Delta^g$ point-wise.
The previous paragraph showed that $\Delta \neq \Delta^g$, therefore
$t$ fixes at least four points and this forces $t=1$. Now we have
that $M=N_G(M)$ is strongly embedded in $G$.

The last case is that $S$ has a unique orbit $\{\beta, \gamma\}$ of
length $2$ on $\Gamma$. Then a subgroup of index $2$ of $S$ fixes
three points and therefore the orbit lengths of $S$ on $\Omega$ are
$1$, $2$ and $|S|$.

If $S$ is cyclic, then by Burnside's Theorem (3) holds. So we
suppose that $S$ is not cyclic. Then, in the action on $\Omega
\setminus \{\alpha, \beta, \gamma\}$, the elements of $S$ are even
permutations. Thus the elements of $S^\#$ are odd permutations in
their action on $\Gamma$ (and on $\Omega$), which means that $G$ has
a subgroup $G_0$ of index $2$. Let $S_0:=S \cap G_0$. If $\Omega
\neq \alpha^{G_0}$, then $G_0$ makes two orbits on $\Omega$ which
are interchanged by an element in $N_G(S_0) \setminus G_0$. But $S
\cap G_0$ has different numbers of fixed points on these orbits,
which is impossible. Thus $\Omega = \alpha^{G_0}$ and so
$(G_0,\Omega)$ satisfies Hypothesis \ref{3fix}. Moreover $S_0$ fixes
three points of $\Omega$ and we already showed that this implies
that $G_0$ has a strongly embedded subgroup.

If $G$ is simple, then $G$ is nonabelian because of its nonregular
action on $\Omega$ and hence only case (2) is possible. Then the
main result in \cite{Ben} leads to the groups listed.
\end{proof}

\begin{lemma}\label{syl3}
Suppose that Hypothesis \ref{3fix} holds and that $P \in \sy_3(G)$.
Let $\alpha \in \Omega$. Then one of the following holds:

\begin{enumerate}
\item[(1)]
$G_\alpha$ is a $3'$-group.

\item[(2)]
$P$ is of maximal class, $|P_\alpha| = 3$ and $P_\alpha$ fixes three
points.

\item[(3)]
$|P:P_\alpha|=3$, $P$ has order at least $9$ and $P$ has exactly one
orbit of size $3$ on $\Omega$, all remaining orbits have size $|P|$.
Moreover, in this case, $O_3(G)$ is elementary abelian of order at
most $9$.

\item[(4)]
$3$ does not divide $|\Omega|$.
\end{enumerate}
\end{lemma}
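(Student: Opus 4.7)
Write $|P_\alpha| = 3^n$ and $|P:P_\alpha| = 3^m$, and set $\Delta := \alpha^P$, so $|\Delta| = 3^m$. The cases $n = 0$ and $m = 0$ give conclusions~(1) and~(4) respectively, so I assume $n, m \ge 1$. The fixed points of $P_\alpha$ on $\Delta$ biject with the cosets in $N_P(P_\alpha)/P_\alpha$: there are at least three (since $P_\alpha$ is proper in the $3$-group $P$, forcing a strictly larger normalizer) and at most three by Hypothesis~\ref{3fix}. So $P_\alpha$ fixes exactly three points $\Delta_0 := \{\alpha, \beta, \gamma\}$ on $\Delta$. Any nontrivial proper subgroup $Y < P_\alpha$ also fixes $\Delta_0$ and so, by the four-point-stabilizer hypothesis, fixes only $\Delta_0$. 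Consequently every nontrivial $P_\alpha$-orbit on $\Delta$ is regular of size $3^n$, giving the identity $3^m = 3 + a\cdot 3^n$ with $a \ge 0$.

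If $m \ge 2$, then $a \ge 1$ and $3^{m-1} - 1 = a\cdot 3^{n-1}$; the left side is coprime to $3$, forcing $n = 1$, so $|P_\alpha| = 3$. Now $\mathrm{Aut}(P_\alpha) \cong \mathbb{Z}_2$ is a $3'$-group, so $N_P(P_\alpha) = C_P(P_\alpha)$; and because $C_P(P_\alpha)$ normalizes $P_\alpha$ and hence permutes $\fixO(P_\alpha) = \Delta_0$, the orbit of $\alpha$ under $C_P(P_\alpha)$ lies in $\Delta_0$, giving $|C_P(P_\alpha):P_\alpha| \le 3$. Combined with $N_P(P_\alpha) > P_\alpha$ (nilpotency of $P$), this forces $|C_P(P_\alpha)| = 9$. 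Since $|P| \ge 27$, a generator $x \in P_\alpha$ satisfies $|C_P(x)| = 9 = 3^2$ in the non-abelian group $P$, which is the defining property of $P$ having maximal class: this is conclusion~(2).

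If $m = 1$, then $|P:P_\alpha| = 3$, $|P| = 3^{n+1} \ge 9$, and $P_\alpha$ fixes $\Delta$ pointwise (acting trivially on the two-point set $\Delta \setminus \{\alpha\}$). For any $\omega \in \Omega \setminus \Delta$, $P_\alpha \cap P_\omega$ must be trivial --- otherwise a nontrivial element fixes the four points $\Delta_0 \cup \{\omega\}$ --- so $|P_\omega| \le 3$ and every non-$\Delta$ orbit of $P$ has size $|P|$ or $3^n$. The core step of conclusion~(3) is to exclude the intermediate size $3^n$: for $n \ge 2$ such an orbit forces $P_\omega$ to complement $P_\alpha$ in $P$ (with $|C_P(P_\omega)| = 9$ by the argument of the previous paragraph, so $P$ is of maximal class and $Z(P) \le P_\alpha$), and then comparing the action of a central element $z \in Z(P)$ --- which fixes $\Delta$ pointwise but acts freely on $\omega^P$ --- with the action of products in the abelian subgroup $Z(P)P_\omega$ on $\fixO(P_\omega)$ yields the contradiction; for $n = 1$ one reduces to the small-$|\Omega|$ configurations already classified in Lemma~\ref{kleinersechs}. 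I expect this exclusion to be the main obstacle, since the intermediate orbits are algebraically plausible and their removal requires a careful interplay between the fixed-point hypothesis and $3$-group structure.

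For the $O_3(G)$ statement in conclusion~(3), note that $O_3(G) \trianglelefteq G$ and $G$ is transitive, so all $O_3(G)$-orbits on $\Omega$ share a common size, which is $1$ or $3$ (forced by $|\Delta| = 3$); the first case gives $O_3(G) = 1$ by faithfulness, and the claim is vacuous. In the second case, each pointwise stabilizer of an $O_3(G)$-orbit is an index-$3$, hence maximal, subgroup of $O_3(G)$ and thus contains $\Phi(O_3(G))$; the intersection of all stabilizers is the kernel of the faithful action, hence trivial, so $\Phi(O_3(G)) = 1$ and $O_3(G)$ is elementary abelian. Finally, stabilizers from distinct $O_3(G)$-orbits intersect trivially (a common nontrivial element would fix six points), and two trivially intersecting hyperplanes in an elementary abelian $3$-group force the rank over $\mathbb{F}_3$ to be at most $2$. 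Hence $|O_3(G)| \le 9$, completing the proof of~(3).
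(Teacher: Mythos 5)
Most of your argument runs parallel to the paper's and is sound: the counting identity $3^m = 3 + a\cdot 3^n$, the deduction of maximal class when $|P:P_\alpha|\ge 9$, and your treatment of $O_3(G)$ via $\Phi(O_3(G))=1$ and two trivially intersecting index-$3$ subgroups. That last step is a genuinely different and perfectly valid route; the paper instead embeds $P\cap P^g$ into the setwise stabilizer of $\Delta\cup\Delta^g$ and reads off ``elementary abelian of order at most $9$'' from $\Sym_3\times\Sym_3$.

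The genuine gap is exactly where you put it: the claim that in case (3) every $P$-orbit other than $\Delta$ is regular. You only sketch this, and the sketch does not close. For $n\ge 2$ you set up $C_P(P_\omega)=Z(P)\times P_\omega$ of order $9$ with $Z(P)\le P_\alpha$ acting regularly on $\FO(P_\omega)$ and then announce that this ``yields the contradiction'' without exhibiting one; no contradiction is visible from that configuration as written. For $n=1$ the proposed reduction to Lemma \ref{kleinersechs} is unavailable, since nothing bounds $|\Omega|$ by $6$ --- and in fact the statement you are trying to prove there is false. Take $G\cong\Alt_6$ acting on the $15$ two-element subsets of $\{1,\dots,6\}$, i.e.\ on the cosets of a subgroup isomorphic to $\Sym_4$; this is one of the paper's own examples (Lemma \ref{A6}, Theorem \ref{simple3fp}). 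With $P=\langle(123),(456)\rangle$ and $\alpha=\{1,2\}$ one has $|P:P_\alpha|=3$, yet $P$ has two orbits of size $3$ and one of size $9$. Since $G_\alpha\cong\Sym_4$ is not a $3'$-group, $3$ divides $|\Omega|=15$, and $P$ is abelian of order $9$ (hence not of maximal class under the paper's stated definition), none of conclusions (1), (2), (4) absorbs this case. So the ``exactly one orbit of size $3$'' part of conclusion (3) cannot be proved in general; the paper's own proof also passes over it in a single unjustified sentence. You have correctly located the weak point, but the repair must come from restating the lemma (weakening the orbit claim in (3), or letting (2) cover $|P|=9$), not from a cleverer argument at this step.
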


\begin{proof}
Suppose that (1) does not hold. Then $3 \in \pi(G_\alpha)$ and so we
may suppose that $P_\alpha \neq 1$.

Set $\Delta = \alpha^P$ and let $n \in \N$ be such that $|\Delta| =
3^n$. Let $m \in \N$ be such that $|P_\alpha| = 3^m$. First we
suppose that $n \geq 2$. We set $d := |\FO(P_\alpha)|$ and note that
$3^n = d + a3^m$ for some integer $a$. The fact that $\alpha \in
\FO(P_\alpha)$ together with Hypothesis \ref{3fix} implies that $1
\leq d \leq 3$. Thus $d =3$ as $P$ is a $3$-group, and this means
that $P_\alpha$ fixes three points of $\Omega$. We obtain that $3^n
= 3 + a3^m $ and thus $n=2$, $m=1$ and $a = 3^{n-1}-1$.

It follows that $|P_\alpha| = 3$ and now Lemma \ref{tuedel} implies
that $|N_P(P_\alpha)| \leq 9$. As stated after Lemma
\ref{maximaleKlasse}, we now have that $P$ has maximal class. So we
proved (2) in this case.

Next suppose that $n \le 1$. Then $|P:P_\alpha| \le 3$ which means
that $G_\alpha$ contains a subgroup of index at most $3$ of $P$.
Therefore (3) or (4) holds, and it is left to analyze Case (3) more
closely. First we notice that $|P| \ge 9$ and $P_\alpha$ fixes three
points, so $P$ has one orbit $\Delta$ of size $3$ (consisting of
these three points) and all other orbits are of size $|P|$. As
$|\Omega| \ge 5$, there exists a regular $P$-orbit and hence we may
choose $g \in G$ such that $\Delta \cap \Delta^g = \varnothing$.
Then $D:=P \cap P^g$ stabilizes the set $\Delta \cup \Delta^g$ of
size $6$ and it acts faithfully on it by Hypothesis \ref{3fix}. It
follows that $D$ is isomorphic to a subgroup of $\sym_6$ and hence
it is elementary abelian of order at most $9$. As $O_3(G) \le D$,
all statements in (3) are proved.
\end{proof}

\begin{lemma}\label{comp2}
Suppose that Hypothesis \ref{3fix} holds and let $\alpha \in
\Omega$. If $E(G) \neq 1$, then $E(G) \cap G_\alpha \neq 1$.
\end{lemma}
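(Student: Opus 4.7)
\medskip
\textbf{Proof plan.} I would argue by contradiction and assume $E(G) \cap G_\alpha = 1$. Since $E(G)$ is normal in $G$ and $G$ is transitive, this forces $E(G)$ to act semiregularly on $\Omega$. The first observation is that $E(G)$ has even order, because it contains a quasisimple component and every quasisimple group has even order; thus $E(G)$ contains an involution $t$ which is semiregular on $\Omega$, and in particular $|\Omega|$ is even.

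Next I would establish that every element of $G$ with exactly three fixed points has odd order. If not, then taking a suitable power we obtain a nontrivial $2$-element $y$ with three fixed points; after conjugating so that $y \in G_\alpha$, Lemma \ref{sylow}(a) gives $G_\alpha \supseteq S \in \sy_2(G)$. But then $S \cap E(G) \le G_\alpha \cap E(G) = 1$, while $|E(G)|$ is even, a contradiction.

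Now pick a $3$-set $\Delta \ni \alpha$ with $G_{(\Delta)} \neq 1$ (Hypothesis \ref{3fix}) and choose $g \in G_{(\Delta)}$ of odd prime order $p$. Consider the $E(G)$-orbit $\Omega_1 := \alpha^{E(G)}$. Since $E(G)$ acts regularly on $\Omega_1$, the map $e \mapsto \alpha^e$ identifies $\Omega_1$ with $E(G)$, and under this identification the action of $g$ on $\Omega_1$ coincides with conjugation on $E(G)$. Hence $|\fixO(g) \cap \Omega_1| = |C_{E(G)}(g)|$, and analogously $|\fixO(g) \cap \Omega_j| = |C_{E(G)}(g)|$ for any other $E(G)$-orbit $\Omega_j$ on which $g$ has a fixed point. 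So $3 = |\fixO(g)| = r \cdot |C_{E(G)}(g)|$, giving $(r, |C_{E(G)}(g)|) \in \{(1,3),(3,1)\}$.

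If $|C_{E(G)}(g)| = 1$ then $g$ is a fixed-point-free automorphism of $E(G)$ of prime order, so by Thompson's theorem $E(G)$ is nilpotent, contradicting the presence of a quasisimple component. In the remaining case $|C_{E(G)}(g)| = 3$, we have $\Delta \subseteq \Omega_1$ and $C_{E(G)}(g) = \langle e \rangle$ for an element $e \in E(G)$ of order $3$ cycling $\Delta$; applying the argument to every prime-order element of $G_{(\Delta)}^\#$ and using that $\langle e \rangle$ is the unique order-$3$ subgroup of $E(G)$ whose orbit through $\alpha$ equals $\Delta$, we conclude that $\langle e \rangle$ centralizes all of $G_{(\Delta)}$, so $G_{(\Delta)} \times \langle e \rangle \le G$.

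The remaining and hardest step is to rule out this last configuration. My plan is to combine the following pieces. First, since $t \in E(G)$ is semiregular, any element $y$ of $G$ commuting with $t$ has $\langle t \rangle$-invariant fixed-point set; because $|\fixO(y)|$ is odd only if equal to $1$ or $3$, and in the latter case $t$ would have a fixed point on the $3$-set, which is impossible. Hence no element with exactly three fixed points commutes with any involution of $E(G)$. Second, for $p \ge 5$ one has $P \in \sy_p(G) \le G_\alpha$ by Lemma \ref{sylow}(c), so $p \nmid |E(G)|$, and the coprime action of $g$ on $E(G)$ with $|C_{E(G)}(g)| = 3$ combined with the fact that orbits of $\langle e \rangle$ on $\Omega$ which are stabilized by $g$ come either from $\Delta$ or from fixed-point sets of $ge^{\pm1}$ should lead to a Sylow/orbit-counting contradiction. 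For $p = 3$, Lemma \ref{syl3}(2) says a Sylow $3$-subgroup of $G$ has maximal class, while $\langle g, e \rangle$ is elementary abelian of order $9$; this restricts the embedding enough that pushing the analysis on orbits of $\langle e \rangle$ should again force an involution of $E(G)$ to commute with some three-fixed-point element, contradicting the first piece. I expect this reconciliation between the component structure of $E(G)$, the existence of the semiregular involution $t$, and the rigid geometry of $\langle e \rangle$-orbits around $\Delta$ to be the principal technical obstacle.
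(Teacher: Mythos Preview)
Your setup is correct and in some ways cleaner than the paper's: working with $E(G)$ rather than a single component lets you avoid the side argument that $x$ normalises $E$, and your orbit--counting identity $3=r\cdot|C_{E(G)}(g)|$ immediately eliminates the possibility $|C_{E(G)}(g)|=2$, which the paper has to handle separately. The case $|C_{E(G)}(g)|=1$ via Thompson is fine, and your derivation of $G_{(\Delta)}\times\langle e\rangle\le G$ is sound.

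The genuine gap is the final step, and your plan for it will not close it. What you need to prove is: a perfect group $E(G)$ admitting an automorphism of prime order $p$ with centraliser of order exactly $3$ must be solvable. This is a hard fact about (quasi)simple groups and is not accessible by the kind of elementary orbit or Sylow bookkeeping you sketch. Concretely, for $p\ge 5$ your coprimeness observation is right, but ``orbits of $\langle e\rangle$ stabilised by $g$'' leads nowhere; the relevant input is Rickman's theorem (a coprime automorphism of prime order whose fixed-point set is a cyclic $3$-group forces solvability). For $p=3$ the maximal-class information from Lemma~\ref{syl3} does not by itself exclude anything, since there are plenty of quasisimple groups with Sylow $3$-subgroups of order $3$; here the paper invokes Theorem~2 of Camina--Collins. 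The paper's proof (which works with a single component $E$ and reaches $|C_E(x)|\in\{2,3\}$ via Lemma~\ref{tuedel}) finishes precisely by citing these results together with Fukushima's theorem for the order-$2$ case. Your argument can be completed the same way: once you have $|C_{E(G)}(g)|=3$, cite Rickman for $p\ge 5$ (using your coprimeness) and Camina--Collins for $p=3$, and you are done. Without those citations the proof is incomplete.
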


\begin{proof}
Assume that $E(G) \neq 1$, but $E(G) \cap G_\alpha=1$ and let $E$ be
a component of $G$. Let $x \in G_\alpha$ be of prime order $p$.
First we show that $x$ normalizes $E$:

Assume otherwise and let $E_1,...,E_p$ denote the $x$-conjugates of
$E$, where $E=E_1$. Then $L:=E_1 \cdots E_p$ is an $x$-invariant
product of components. Let $e \in E$. Then $e \cdots e^{x^{p-1}} \in
C_L(x)$. By Lemma \ref{tuedel} a subgroup of index $2$ or $3$ of
$C_L(x)$ is contained in $G_\alpha$ and so, by assumption, we see
that $e$ has order $2$ or $3$. It follows that $E$ is a
$\{2,3\}$-group. But this is a contradiction because $E$ is not
solvable.

Thus $x$ normalizes $E$ and Lemma \ref{tuedel} yields that
$G_\alpha$ contains a subgroup of index $2$ or $3$ of $C_E(x)$. By
assumption (and as $E$ is not nilpotent) $C_E(x)$ has order $2$ or
$3$. If the order is $2$, then \cite{Fu2} implies that $E$ is
solvable, which is a contradiction. Hence $|C_E(x)|=3$. If $o(x)
\neq 3$, then the main result in \cite{R} yields that $E$ is
solvable again, which is impossible.

We deduce that $o(x)=3$ and now Theorem 2 in \cite{CC} yields that
$E$ is solvable, which is again a contradiction.
\end{proof}

\begin{lemma}\label{onecomp}
Suppose that Hypothesis \ref{3fix} holds and that $E(G) \neq 1$.
Then $G$ has a unique component.
\end{lemma}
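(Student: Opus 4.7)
I would argue by contradiction; suppose $E(G)$ has two distinct components $E_1$ and $E_2$. The first step would be to show $E_i \cap G_\alpha = 1$ for each $i$. Indeed, if $1 \neq x \in E_1 \cap G_\alpha$, commutativity $[E_1,E_2]=1$ gives $E_2 \leq C_G(x) \leq N_G(\langle x \rangle)$, and Lemma~\ref{tuedel} yields $|N_G(\langle x \rangle) : N_{G_\alpha}(\langle x \rangle)| \leq 3$; so $E_2$ has a subgroup of index at most three in $G_\alpha$, and perfectness of $E_2$ together with the faithful transitive action of $G$ on $\Omega$ would force $E_2 = 1$, a contradiction.

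Next, Lemma~\ref{comp2} would provide $1 \neq y \in E(G) \cap G_\alpha$. Writing $y = y_1 y_2 \cdots y_k$ in the central product decomposition, I would first argue each $y_i \notin Z(E_i)$: if $y_i \in Z(E_i)$, then $y$ centralizes $E_i$, and the first-step argument applied to $\langle y \rangle$ would force $E_i \leq G_\alpha$, contradiction. Regularity of $E_j$ on the orbit $\alpha^{E_j}$ (from the first step) shows that the fixed points of $y$ inside $\alpha^{E_j}$ are precisely $\{k\alpha : k \in C_{E_j}(y_j)\}$, so $|C_{E_j}(y_j)| \leq 3$. Since nonabelian simple groups contain no self-centralizing involution, $y_j$ cannot have order two; combined with $y_j \in C_{E_j}(y_j)$, this forces $y_j$ to have order three with $C_{E_j}(y_j) = \langle y_j \rangle$. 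If $k \geq 3$, then $\langle y_1\rangle\alpha$, $\langle y_2\rangle\alpha$, and $\langle y_3\rangle\alpha$ all lie in $\FO(y)$ and pairwise meet only in $\{\alpha\}$ (checked using $E(G) \cap G_\alpha = \langle y \rangle$), giving at least seven fixed points---too many. Thus we may assume $k = 2$; then $\langle y_1 \rangle \alpha = \langle y_2 \rangle \alpha$ (since $y \alpha = \alpha$ gives $y_1 \alpha = y_2^{-1} \alpha$), and Hypothesis~\ref{3fix} forces $\FO(y) = \{\alpha, y_1 \alpha, y_1^2 \alpha\} \subseteq \alpha^{E_1}$.

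For the final contradiction, since $\langle y_1 \rangle$ is a self-centralizing Sylow $3$-subgroup of the perfect $E_1$, Burnside's normal $p$-complement theorem precludes $N_{E_1}(\langle y_1 \rangle) = \langle y_1 \rangle$; hence there is $\tau_1 \in E_1$ with $\tau_1 y_1 \tau_1^{-1} = y_1^{-1}$, and analogously $\tau_2 \in E_2$ inverts $y_2$. The commuting pair $\tau_1 \tau_2$ inverts $y$, so $\tau_1 \tau_2 \in N_G(\langle y \rangle)$ permutes $\FO(y) \subseteq \alpha^{E_1}$; in particular $\tau_1 \tau_2(\alpha) \in \alpha^{E_1}$. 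However, $\tau_1 \tau_2(\alpha) = \tau_2(\tau_1 \alpha)$ lies in the $E_2$-orbit $(\tau_1 \alpha)^{E_2}$, and a direct computation using $E(G) \cap G_\alpha = \langle y \rangle$ shows $\alpha^{E_1} \cap (\tau_1 \alpha)^{E_2} = \{\tau_1 y_1^j \alpha : j = 0,1,2\}$; membership of $\tau_2(\tau_1 \alpha)$ in this intersection would demand $\tau_2 \in \langle y_2 \rangle$, impossible since $\tau_2 \notin C_{E_2}(y_2) = \langle y_2 \rangle$. This contradiction would complete the proof. The main obstacle I expect is the intersection computation in the last step, which requires carefully tracking the central product structure when the centers $Z(E_i)$ are nontrivial.
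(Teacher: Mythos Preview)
Your overall strategy is sound and genuinely different from the paper's. Both proofs begin identically: show each $E_i \cap G_\alpha = 1$, pick $1 \neq y \in E(G) \cap G_\alpha$, and deduce that each component $y_j$ has order $3$ with $C_{E_j}(y_j) = \langle y_j \rangle$. At this point the paper invokes the Feit--Thompson result \cite{FT2} to force $E_j \cong \PSL_2(7)$, then finds an involution $t \in E(G)$ inverting $y$; since $t$ permutes the three-element set $\FO(y)$ it must fix some $\gamma$, and then $E(G) \cap G_\gamma$ contains both the order-$3$ element $y$ and the order-$2$ element $t$, contradicting the fact (valid at every point) that nontrivial elements of $E(G) \cap G_\gamma$ have order $3$. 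Your route avoids \cite{FT2} entirely by using Burnside's normal $p$-complement theorem to produce $\tau_j \in E_j$ inverting $y_j$, then deriving a contradiction from an orbit-intersection computation. This is more elementary, which is a genuine advantage.

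There is, however, a real gap: you repeatedly use $E(G) \cap G_\alpha = \langle y \rangle$ without justification. For the $k \geq 3$ case you don't actually need it (the weaker fact that every nontrivial element of $E(G) \cap G_\alpha$ has all components outside $Z(E_j)$ suffices, and you have that). But for $k = 2$ your final intersection computation $\alpha^{E_1} \cap (\tau_1\alpha)^{E_2} = \{\tau_1 y_1^j \alpha\}$ genuinely requires it. The claim is true: one first observes that the bound $|C_{E_j}(h_j)| \leq 3$ together with $Z(E_j) \leq C_{E_j}(h_j)$ and $h_j \notin Z(E_j)$ forces $Z(E_j) = 1$, so $E(G)$ is a direct product and the projection $\pi_1 \colon E(G) \cap G_\alpha \to E_1$ is an injective homomorphism; its image is a $3$-group in which every nontrivial element is self-centralizing, and taking a central element shows the image has order $3$. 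This also dissolves your stated worry about nontrivial centers. Alternatively, you could shortcut your endgame: since $N_{E_j}(\langle y_j\rangle)/\langle y_j\rangle$ has order $2$ and $C_{E_j}(y_j) = \langle y_j\rangle$, one checks $N_{E_j}(\langle y_j\rangle) \cong \Sym_3$, so $\tau_j$ may be taken to be an involution; then $\tau_1\tau_2$ is an involution permuting $\FO(y)$, hence fixing some $\gamma$, and you finish exactly as the paper does---without ever needing $|E(G) \cap G_\alpha| = 3$.
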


\begin{proof}
We assume otherwise. Let $E$ denote a component of $G$ and let $L$
be a product of components such that $E(G)=E \cdot L$. With Lemma
\ref{comp2} we let $\alpha \in \Omega$ and $1 \neq e \in E(G) \cap
G_\alpha$. Let $a \in E$ and $b \in L$ be such that $e=a \cdot b$.
Lemma \ref{tuedel} implies that a subgroup of index at most $3$ of
$C_E(e)=C_E(a)$ and of $C_L(e)=C_L(b)$, respectively, lies in
$G_\alpha$. Moreover $G_\alpha$ does not contain any normal subgroup
of $G$ and hence $G_\alpha$ does not contain a component, again with
Lemma \ref{tuedel}. As $G$ has more than one component by
assumption, it follows that all components intersect $G_\alpha$
trivially. In particular $a,b \notin G_\alpha$ and the groups
$C_E(a)$ and $C_L(b)$ have order $2$ or $3$. The first case is
impossible by Burnside's Theorem, and in the second case the main
result in \cite{FT2} forces $E \cong L \cong \PSL_2(7)$ and in
particular $G_\alpha \cap E(G)=\langle e \rangle$ and $e$ fixes
three points of $\Omega$. From the structure of $\PSL_2(7)$, there
is an involution $t \in EL$ that inverts $e$ and hence fixes one of
the three fixed points of $e$. Let $\gamma$ denote this fixed point
and let $g \in G$ be such that $\alpha^g=\gamma$. Then $G_\gamma
\cap E(G)$ contains elements of order $3$ and $2$, which is
impossible.
\end{proof}

\begin{lemma}\label{hypcomp}
Suppose that Hypothesis \ref{3fix} holds and that $E$ is a component of $G$.
Then one of the following holds:

\begin{enumerate}
\item[(a)]
There exists a power $q$ of $2$ such that $E \cong \PSL_2(q)$,
$|G:E|$ is prime and every element from $G \setminus E$ induces a
field automorphism on $E$. For all $\alpha \in \Omega$, we have that
$|G_\alpha|=q \cdot (q-1) \cdot |G:E|$ and moreover $E_\alpha$ does
not contain any elements that fix three points.

This includes the special case where $E \cong \A_5$ and $G \cong \sym_5$.

\item[(b)]
There exists $\alpha \in \Omega$ such that $(E,\alpha^E)$ satisfies
Hypothesis \ref{3fix}.
\end{enumerate}

\end{lemma}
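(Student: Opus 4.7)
To prove Lemma \ref{hypcomp}, the plan is to use Lemmas \ref{onecomp} and \ref{comp2} to set up a dichotomy: either some nontrivial element of $E$ has exactly three fixed points on some $E$-orbit, giving (b) directly, or no such element exists and $(E,\alpha^E)$ falls into the setting classified in \cite{MW}, from which (a) will be recovered.

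By Lemma \ref{onecomp}, $G$ has a unique component $E$, and by Lemma \ref{comp2}, $E_\alpha \neq 1$ for every $\alpha \in \Omega$. I would first show that $E$ acts faithfully on $\alpha^E$ and that $|\alpha^E| \geq 5$. The kernel of this action is a normal subgroup of the quasisimple group $E$ contained in $E_\alpha$, hence lies in $Z(E)$; any such central element commutes with all of $E$ and thus fixes $\alpha^E$ pointwise, which Hypothesis \ref{3fix} forbids as soon as $|\alpha^E| \geq 4$. The lower bound $|\alpha^E| \geq 5$ then follows because $E/Z(E)$ is non-abelian simple and must embed in $\mathrm{Sym}(\alpha^E)$.

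The dichotomy is now immediate. If some nontrivial element of $E$ has exactly three fixed points on some $E$-orbit $\Delta$, then choosing $\alpha \in \Delta$ makes $(E,\alpha^E) = (E,\Delta)$ faithful, transitive and nonregular, with trivial four point stabilizers (as subgroups of those of $G$) and a nontrivial three point stabilizer; that is, $(E,\alpha^E)$ satisfies Hypothesis \ref{3fix}, which is conclusion (b). Otherwise no nontrivial element of $E$ fixes three points on any $E$-orbit, and then $(E,\alpha^E)$ is a faithful, transitive, nonregular action of the quasisimple group $E$ in which every nontrivial element has at most two fixed points. This is precisely the setting of \cite{MW}, whose classification in the quasisimple case forces $E \cong \PSL_2(q)$ in its natural action on the $q+1$ points of the projective line, with $E_\alpha$ a Borel subgroup.

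To complete case (a), I would identify $G/E$, force $q$ to be a power of $2$, and show that $E$ is transitive on $\Omega$. Using uniqueness of the component and Lemma \ref{center}, one would control $C_G(E)$ and embed $G$ into $\mathrm{Aut}(E)$, giving $G/E$ cyclic. The element $x \in G_\alpha$ with three fixed points on $\Omega$ guaranteed by Hypothesis \ref{3fix} cannot lie in $E$ in the present case and therefore induces a nontrivial outer automorphism; since a field automorphism of order $d$ on $\PSL_2(2^n)$ fixes $2^{n/d}+1$ points of the projective line, matching this to the three-fixed-point constraint---together with an orbit-counting argument forcing $E$ transitive on $\Omega$---pins down $q = 2^n$ with $n$ prime, yields $|G:E| = n$ prime with every element of $G \setminus E$ inducing a field automorphism, and gives $|G_\alpha| = q(q-1)|G:E|$ by arithmetic. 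I expect the main obstacle to be the clean transfer of the \cite{MW}-classification to the quasisimple setting, followed by the fine analysis of $\mathrm{Out}(E)$ needed to rule out odd $q$ (by observing that diagonal or graph automorphisms there fail to produce exactly three fixed points on the projective line) and to establish the transitivity of $E$ on $\Omega$.
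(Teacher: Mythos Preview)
Your overall strategy matches the paper's: set up the dichotomy via Lemmas \ref{onecomp} and \ref{comp2}, and in the non-(b) case invoke the classification from \cite{MW}. The gap is in what you claim \cite{MW} delivers. You assert that the \cite{MW} classification ``in the quasisimple case forces $E \cong \PSL_2(q)$ in its natural action on the $q+1$ points of the projective line, with $E_\alpha$ a Borel subgroup.'' This is not what \cite{MW} gives. First, the paper uses Lemma 2.8 of \cite{MW} together with Lemma \ref{center} to show $Z(E)=1$, so $E$ is genuinely simple before invoking the classification. Second, and more importantly, Theorem 1.2 of \cite{MW} (with details in their Theorem 5.6) produces a list: $\Alt_5$, $\PSL_3(4)$ with $|E_\alpha|=5$, $\PSL_2(7)$ with $E_\alpha \cong \Alt_4$, $\PSL_2(q)$ with several possible point stabilizers (not only the Borel), and $\Sz(q)$. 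Each of these must be analyzed separately.

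The paper's proof therefore has five cases. In Cases 2, 3 and 5 ($\PSL_3(4)$, $\PSL_2(7)$, $\Sz(q)$) one must produce a contradiction by choosing $x \in G_\alpha \setminus E$ with three fixed points, applying Lemma \ref{tuedel} to force a large piece of $C_E(x)$ into $E_\alpha$, and checking this is incompatible with the known structure of $E_\alpha$. In Case 4 ($\PSL_2(q)$), one must separately rule out diagonal automorphisms (via Lemma \ref{sylow}(a), which would force a full Sylow $2$-subgroup of $E$ into $E_\alpha$) and field automorphisms when $q$ is odd (a fours group from $C_E(x)$ lands in $E_\alpha$, which is impossible). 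Your sketch of the endgame---counting fixed points of a field automorphism on the projective line---is along the right lines once you are down to $\PSL_2(q)$ with the Borel stabilizer, but you have skipped the work needed to get there.
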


\begin{proof}
As $E(G) \neq 1$ by hypothesis, we know from Lemmas \ref{comp2} and
\ref{onecomp} that $E$ is the unique component of $G$ and that $E$
intersects the points stabilizers nontrivially. Hence let $\alpha
\in \Omega$ and $\Delta:=\alpha^E$. Then $E_\alpha \neq 1$ and hence
$E$ acts transitively and nonregularly on $\Delta$. Moreover $E$
acts faithfully because $E \unlhd G$. As $E$ is a component and thus
not solvable, we know that $|\Delta| \ge 5$ and therefore $(E,
\Delta)$ satisfies Hypothesis \ref{3fix}.

Suppose that $E$ does not have any element that fixes three points
on $\Delta$. Then $(E, \Delta)$ satisfies Hypothesis 1.1 from
\cite{MW} and in particular $Z(E)=1$ by Lemma 2.8 in \cite{MW} and
Lemma \ref{center}. Thus $E$ is simple and Theorem 1.2 in \cite{MW}
applies. We refer to Theorem 5.6 in the same paper for details on
the possible action of $E$ on $\Delta$. We also note that Lemmas
\ref{tuedel}~(a) and (b) and
\ref{center} force $F(G)=1$.\\

\textbf{Case 1:} $E \cong \A_5$.

We know that $E = F^*(G)$ and hence $G$ acts faithfully on $E$.
As $(E, \Delta)$ does not satisfy Hypothesis \ref{3fix}, but
$(G, \Omega)$ does, it follows that $G \neq E$ and hence
$G \cong \sym_5$ as stated.\\

\textbf{Case 2:} $E \cong \PSL_3(4)$.

Here the only possibility for the action is that $E_\alpha$ has order $5$. In particular $E_\alpha$ is a Sylow subgroup of $E$.
A Frattini argument yields that $G=EN_G(E_\alpha)$. As $G \neq E$ and $|N_E(E_\alpha)|=10$, Lemma \ref{tuedel} implies that some $g \in G \setminus E$ is contained in $G_\alpha$.
Therefore $2$ or $3$ is contained in $\pi(G_\alpha)$.
If $2 \in \pi(G_\alpha)$, then by Lemma \ref{syl2} an index $2$ subgroup of a Sylow $2$-subgroup of $G$ is contained in $G_\alpha$. But this is impossible because $E_\alpha$ has odd order.
If $3 \in \pi(G_\alpha)$, then also $2 \in \pi(G_\alpha)$ by Lemma \ref{tuedel}. (For information about $\au(\PSL_3(4))$ see for example \cite{ATLAS}.)
We already excluded this.\\

\textbf{Case 3:} $E \cong \PSL_2(7)$.

We recall that $E_\alpha \cong \Alt_4$. The point stabilizers in
$\PGL_2(7)$ grow by a factor of either $2$ or $1$. Inspection of the
maximal subgroups of $\PGL_2(7)$ shows that the former case does not
happen. In the latter case the centralizer order of the inner
involution grows by a factor of $2$ while the order of the point
stabilizer does not. This implies that the number of fixed points of
the involution on $\Omega$ is $4$, and this violates Hypothesis
\ref{3fix}.\\

\textbf{Case 4:} There exists a prime power $q$ such that $E \cong
\PSL_2(q)$.

Using Hypothesis \ref{3fix} choose $x \in G_\alpha$ such that $x$ fixes three points on $\Omega$ and induces an outer automorphism on $E$. Lemma \ref{tuedel} implies that
a subgroup of index $3$ of $C_E(x)$ is contained in $E_\alpha$.

First suppose that $x$ induces a field automorphism. Then it follows from the possible structure of point stabilizers that $C_E(x)$ is a solvable subfield subgroup and we see that $2 \in \pi(E_\alpha)$. Moreover $q$ is a power of $2$ or of $3$.
If $q$ is odd, then $E_\alpha$ contains a fours group from $C_E(x)$ and this is impossible.
If $q$ is even, then $E_\alpha$ has order $q \cdot (q-1)$. Moreover $x$ induces an automorphism of prime order. Hence (b) holds in this case.

Next suppose that $x$ induces a diagonal automorphism.
Then $G_\alpha$ contains an involution that fixes three points, and hence Lemma \ref{sylow}~(a) forces $G_\alpha$ to contain a Sylow $2$-subgroup of $G$, and in particular of $E$.
This is impossible because $E_\alpha$ does not contain a Sylow $2$-subgroup of $E$.\\

\textbf{Case 5:} There exists a prime power $q$ such that $E \cong
\Sz(q)$.

Let $x \in G \setminus E$ be such that $x \in G_\alpha$ and $x$
fixes three points on $\Omega$. Then $x$ induces a field
automorphism on $E$ and hence $C_E(x)$ is a subfield subgroup. Now
any subfield group contains $\Sz(2)$, a group of order $20$, and
then Lemma \ref{tuedel} implies that $E_\alpha$ has an element of
order $5$. But we know from \cite{MW} that  $|E_\alpha|_{2'}
=(q-1)$. Since $(q-1)$ is not divisible by $5$ (because $q$ is a
power of $2$ with odd exponent), we
see that $E$ cannot be a Suzuki group. \\

These are all possible cases by \cite{MW}, hence the proof is complete.
\end{proof}

\begin{thm}\label{minimalnormal}
Suppose that Hypothesis \ref{3fix} holds and that $N$ is a minimal normal subgroup of $G$. Let $\alpha \in \Omega$. Then one of the following holds:

\begin{enumerate}

\item[(a)]
All Sylow subgroups of $G_\alpha$ have rank 1.

\item[(b)]
$N$ is a $2$-group. Moreover $N$ is a fours group whose involutions
act without fixed points on $\Omega$ or $|N:N_\alpha|=2$ and
$N_\alpha$ fixes two points.

\item[(c)]
$N$ is a $3$-group. Moreover $G$ has Sylow $3$-subgroups of maximal
class or $|N:N_\alpha|=3$, $N_\alpha$ fixes three points and $|N|
\le 9$.

\item[(d)]
$N=E(G)$ and either $N \cong \A_5$ or there exists a $2$-power $q$
such that $N \cong \PSL_2(q)$ or $(N,\alpha^N)$ satisfies Hypothesis \ref{3fix}.
\end{enumerate}
\end{thm}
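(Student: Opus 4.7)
My approach follows the standard dichotomy for a minimal normal subgroup: $N$ is either elementary abelian of some prime exponent $p$, or a direct product of isomorphic nonabelian simple groups. In the nonabelian case each simple factor is a component of $G$, so Lemma \ref{onecomp} forces $G$ to have a unique component, hence $N = E(G)$ is simple. Feeding $E = N$ into Lemma \ref{hypcomp} produces exactly the alternatives of (d): outcome (b) of that lemma says $(N,\alpha^N)$ satisfies Hypothesis \ref{3fix}, while outcome (a) gives $N \cong \PSL_2(q)$ with $q$ a $2$-power, which covers the $N \cong \A_5$ subcase via $\A_5 = \PSL_2(4)$.

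So assume $N$ is elementary abelian of exponent $p$, and first treat $N_\alpha \neq 1$. For any $1 \neq X \leq N_\alpha$, abelianness gives $N \leq C_G(X) \leq N_G(X)$, so Lemma \ref{tuedel} forces $|N:N_\alpha| \leq |\FO(X)| \leq 3$. Faithful transitivity excludes $N \leq G_\alpha$, so $|N:N_\alpha| \in \{2,3\}$ and $p \in \{2,3\}$. For $p = 2$ the orbit $\alpha^N$ has two points, both fixed by $N_\alpha$; a hypothetical third fixed point $\gamma$ would satisfy $N_\gamma = N_\alpha$ and create a second size-two $N$-orbit inside $\FO(N_\alpha)$, giving at least four fixed points for $N_\alpha$ and contradicting Hypothesis \ref{3fix}. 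This yields the second alternative of (b). For $p = 3$ the analogous argument forces $\FO(N_\alpha) = \alpha^N$ of size three, and since $N \leq O_3(G)$, Lemma \ref{syl3} applies to a Sylow $3$-subgroup $P$ of $G$: its cases (1) and (4) are ruled out by $N_\alpha \neq 1$ and $3 \mid |\Omega|$, while case (2) yields Sylow 3-subgroups of maximal class and case (3) forces $|O_3(G)| \leq 9$ and hence $|N| \leq 9$, both matching (c).

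Finally treat $N$ abelian with $N_\alpha = 1$, so $N$ acts semiregularly on $\Omega$. Lemma \ref{center} excludes $|N| = 2$, and if $p = 2$ with $|N| = 4$ we are in the first alternative of (b). In the remaining subcases the aim is to establish (a) by excluding a rank-two elementary abelian subgroup $A \cong \Z_q \times \Z_q$ of $G_\alpha$. Since $A$ normalizes $N$, its action on the regular orbit $\alpha^N$ agrees with its conjugation action on $N$, giving $|C_N(L)| = |\FO(L) \cap \alpha^N| \leq 3$ for every $1 \neq L \leq A$. For $q \neq p$ this is a coprime action, and the absence of faithful one-dimensional characters of $\Z_q \times \Z_q$, combined with the bound $|C_N(L)| \leq 3$ on each of the $q+1$ lines, forces $|N|$ small enough to return to (b) or to give an outright contradiction. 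For $q = p$ the $p$-group action produces $C_N(A) \neq 1$, and the same bound together with Lemmas \ref{syl2} and \ref{syl3} constrains things to the remaining small configurations.

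The principal obstacle lies in this last paragraph: the detailed tracking of the $\mathbb{F}_p[A]$-module structure of $N$ and the interplay between coprime and non-coprime actions of $A$ on $N$, in order to pin down precisely which semiregular abelian minimal normal subgroups can coexist with rank-$\geq 2$ Sylow subgroups of $G_\alpha$.
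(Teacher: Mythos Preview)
Your nonabelian case matches the paper exactly. Your treatment of the abelian case with $N_\alpha \neq 1$ is a genuinely different and rather clean route: the observation that $N \leq C_G(X) \leq N_G(X)$ for any $1 \neq X \leq N_\alpha$ immediately bounds $|N:N_\alpha| \leq 3$ via Lemma~\ref{tuedel}, forcing $p \in \{2,3\}$ and leading quickly into (b) or (c).

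The difficulty you flag in the $N_\alpha = 1$ case is real for your chosen route but entirely avoidable. The paper does not split on whether $N_\alpha$ is trivial; it simply assumes (a) fails, takes $X \cong \Z_p \times \Z_p \leq G_\alpha$, and argues uniformly. If the prime $r$ of $N$ satisfies $r \geq 5$, then Lemma~\ref{sylow}(c) gives $r \notin \pi(G_\alpha)$, so $p \neq r$ and coprime action yields $N = \langle C_N(x) : x \in X^\# \rangle$. The key observation you are missing is that each $C_N(x)$ is an $r$-group permuting the set $\FO(x)$ of size at most $3 < r$, and hence must fix $\FO(x)$ pointwise; thus $C_N(x) \leq G_\alpha$ outright (not merely $|C_N(x)| \leq 3$), and $N \leq G_\alpha$ follows --- a contradiction, with no module theory needed. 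For $r \in \{2,3\}$ one either has $p = r \in \pi(G_\alpha)$ trivially, or $p \neq r$ and the same coprime generation together with Lemma~\ref{tuedel} forces $r \in \pi(G_\alpha)$; then Lemmas~\ref{syl2} and~\ref{syl3} already encode the dichotomies of (b) and (c), including the bounds on $|O_r(G)|$.

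So the contrast is this: you bound $|C_N(L)| \leq 3$ and then try to reassemble $N$ from those bounds, which drags you into the $\mathbb{F}_p[A]$-module structure; the paper instead observes that $C_N(x)$ lands inside a point stabilizer directly, short-circuiting that analysis entirely.
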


\begin{proof}
The faithful action of $G$ on $\Omega$ yields that $N \nleq G_\alpha$.
We begin with the case where $N$ is elementary abelian. Let $r$ be a prime such that $N$ is an $r$-group and suppose that (a) does not hold.
Let $p \in \pi(G_\alpha)$ and suppose that $G_\alpha$ contains an elementary abelian subgroup $X$ of order $p^2$.

If $r \ge 5$, then Lemma \ref{sylow}~(c) yields that $r \notin \pi(G_\alpha)$ and hence
the coprime action of $X$ on $N$ yields that
$N=\langle C_N(x) \mid x \in X^\#\rangle$. It follows with Lemma \ref{tuedel} that
$N \le G_\alpha$. This is a contradiction. Therefore $r \in \{2,3\}$.

First suppose that $r=2$. Then $|N| \ge 4$ by Lemma \ref{center}. If
$p=2$, then $2 \in \pi(G_\alpha)$. If $p$ is odd, then $N=\langle
C_N(x) \mid x \in X^\#\rangle$ by coprime action and so, applying
Lemma \ref{tuedel}, it follows again that $2 \in \pi(G_\alpha)$. Let
$S \in \sy_2(G)$ and suppose that $|N:N_\alpha| \neq 2$. Then Lemma
\ref{syl2} yields that $S$ is dihedral or semidihedral. As $G$ has
no normal subgroup of order $2$, we see that $N$ is not cyclic, so
it follows that $N$ is a fours group, that the involutions in $N$
act without fixed points on $\Omega$ and that $G/C_G(N)$ is
isomorphic to $\Sym_3$. This is one of the cases in (b). Otherwise
$|N:N_\alpha|=2$ and we let $t \in N$ be such that $t \notin
G_\alpha$. As $t$ normalizes $N_\alpha$, but does not fix $\alpha$,
there must be a second point $\beta \in \Omega$ that is fixed by
$N_\alpha$ and such that $t$ interchanges $\alpha$ and $\beta$. This is the other case in (b).\\

Next suppose that $r=3$. If $|N|=3$, then the second case in (c)
holds. So we suppose that $|N| \ge 9$ and we argue as in the
previous paragraph. If $p=3$, then $3 \in G_\alpha$, and if $p \neq
3$, then $3 \in G_\alpha$ by coprime action and Lemma \ref{tuedel}.
Now Lemma \ref{syl3} yields the possibilities in (c). We note that,
if $|N:N_\alpha|=3$ and $y \in N$ is such that $y \notin N_\alpha$,
then $N_\alpha$ must fix three points and $y$ interchanges these
three points in a $3$-cycle.

This concludes the case where $N$ is solvable.\\

Next suppose that $N$ is a product of components. Then $E(G) \neq 1$ and hence Lemmas
\ref{onecomp} and \ref{comp2} yield that $N$ is the unique component of $G$.
Then (d) holds by Lemma \ref{hypcomp}.
\end{proof}

When we study simple groups satisfying Hypothesis \ref{3fix} (using
the Classification of Finite Simple Groups), we adapt some of
Aschbacher's notation from Section 9 of \cite{A}. We introduce it
here and use it throughout the following sections.

\begin{definition}
Suppose that $p,q \in \pi(G)$ are prime numbers and let $H \le G$ be
a point stabilizer in $G$.

\begin{itemize}
\item
We write $p \vdash q$ if and only if one of the following holds:

-- $q \ge 5$ and there exists a nontrivial $p$-subgroup $X \le G$
such that $q \in \pi(N_G(X))$.

-- $q=2$ and and there exists a nontrivial $p$-subgroup $X \le G$
such that $4$ divides $|N_G(X)|$.

-- $q=3$ and and there exists a nontrivial $p$-subgroup $X \le G$
such that $9$ divides $|N_G(X)|$.

\item
We write $\rightarrow$ for the transitive extension of $\vdash$.
\end{itemize}
\end{definition}

\begin{lemma}\label{key}
Suppose that Hypothesis \ref{3fix} holds and that $H \le G$ is a
point stabilizer. Suppose further that $q \in \pi(G)$ and $p \in
\pi(H)$. If $p \ge 5$ and $p \rightarrow q$, then $q \in \pi(H)$.
\end{lemma}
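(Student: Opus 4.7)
The plan is to prove the single-step version of the lemma (if $p \geq 5$, $p \in \pi(H)$ and $p \vdash q$, then $q \in \pi(H)$) and then iterate along a $\vdash$-chain realizing $p \rightarrow q$. Fix $\alpha \in \Omega$ with $H = G_\alpha$.

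For the single-step statement, I would first invoke Lemma \ref{sylow}(c): since $p \geq 5$ and $p \in \pi(H)$, a Sylow $p$-subgroup $P$ of $G$ is contained in $H$. The relation $p \vdash q$ provides a nontrivial $p$-subgroup $X$ of $G$ whose normalizer $N_G(X)$ satisfies the appropriate divisibility condition, namely $q \mid |N_G(X)|$ if $q \geq 5$, $4 \mid |N_G(X)|$ if $q = 2$, or $9 \mid |N_G(X)|$ if $q = 3$. By Sylow's theorem, a $G$-conjugate of $X$ lies in $P \leq H$; since both the order of the normalizer and the divisibility conditions are conjugation invariant, I may simply replace $X$ by this conjugate and assume $X \leq H$. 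In particular $\alpha \in \FO(X)$, so Lemma \ref{tuedel} bounds $|N_G(X) : N_H(X)| \leq 3$.

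The single-step conclusion then follows from a short arithmetic observation: dividing $|N_G(X)|$ by an integer in $\{1, 2, 3\}$ cannot reduce the $2$-part by more than a factor of $2$, cannot reduce the $3$-part by more than a factor of $3$, and leaves untouched every prime $\geq 5$. Thus if $q \geq 5$ then $q \mid |N_H(X)|$, if $q = 2$ then at least $4/2 = 2$ still divides $|N_H(X)|$, and if $q = 3$ then at least $9/3 = 3$ still divides $|N_H(X)|$. In every case $q \in \pi(N_H(X)) \subseteq \pi(H)$.

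The full statement follows by induction on the length of a $\vdash$-chain $p = p_0 \vdash p_1 \vdash \cdots \vdash p_k = q$, each step promoting $p_i \in \pi(H)$ to $p_{i+1} \in \pi(H)$ via the same argument. The point requiring the most care — and the main technical obstacle — is that after the first step the active prime $p_i$ may drop to $2$ or $3$, where Lemma \ref{sylow}(c) no longer automatically places a Sylow $p_i$-subgroup of $G$ inside $H$; to push the conjugation step through in that regime one must track the fixed point structure of the witnessing $p_i$-subgroup more carefully, so that Lemma \ref{tuedel} continues to bound the index $|N_G(X) : N_H(X)|$ by $3$ at every stage and the divisibility argument runs as before.
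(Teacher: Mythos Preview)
Your single-step argument is exactly the paper's proof: Lemma \ref{sylow}(c) puts a Sylow $p$-subgroup of $G$ into $H$, Sylow's theorem lets you conjugate the witnessing $X$ into $H$, and Lemma \ref{tuedel} bounds $|N_G(X):N_H(X)| \le 3$, after which the divisibility conditions built into the definition of $\vdash$ force $q \in \pi(H)$.

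Where you diverge from the paper is in the transitive step. The paper disposes of it in one line (``by definition of $\rightarrow$ it suffices to consider the case where $p \vdash q$''), whereas you correctly flag that an intermediate prime in a $\vdash$-chain could be $2$ or $3$, where Lemma \ref{sylow}(c) is unavailable and the conjugation step genuinely breaks. However, your final paragraph does not resolve this: you promise to ``track the fixed point structure more carefully'' without saying how, and in fact there is no general fix along these lines. So as written your proof has a gap in exactly the place where the paper's terse reduction is also not fully justified. In every application in the paper the chains either have length one or pass only through primes $\ge 5$, so the issue is harmless in context; the clean fix is to state and prove the induction only for chains whose intermediate primes are all at least $5$, and note that this is all that is ever used.
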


\begin{proof}
By definition of $\rightarrow$ it suffices to consider the case
where $p \vdash q$. Lemma \ref{sylow}~(c) gives that $H$ contains a
Sylow $p$-subgroup of $G$. Then by Sylow's Theorem there exists a
nontrivial $p$-subgroup $X$ of $H$ such that $q$ (or $4$ or $9$)
divides $|N_G(X)|$ and therefore Lemma \ref{tuedel} yields that $q
\in \pi(H)$.
\end{proof}

\section{Alternating Groups}

In this chapter we discuss what alternating or symmetric groups
appear as examples for Hypothesis \ref{3fix} and if so, then with
what actions. We begin with some small cases and then bring Lemma
\ref{2isttoll} into play. We use the notation that has been
introduced at the end of the previous section.

\begin{lemma}\label{S4}
Suppose that $G$ is isomorphic to $\Alt_4$ or to $\Sym_4$.
Then there is no set $\Omega$ such that
$(G, \Omega)$ satisfies Hypothesis \ref{3fix}.
\end{lemma}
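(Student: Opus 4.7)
The plan is a direct case analysis of the possible faithful transitive nonregular actions of $G$. Hypothesis \ref{3fix} forces $|\Omega| \geq 5$ (as noted after the hypothesis), and since $|G| \leq 24$, the stabilizer $H := G_\omega$ satisfies $|H| \leq 4$. Hence $\Omega$ may be identified with the coset space $G/H$ for some core-free subgroup $H$ of $G$ of order in $\{2,3,4\}$.

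First I would list the candidates up to conjugacy. In $\Alt_4$ the only possibility is $H = \langle (12)(34) \rangle$ of order $2$: the order-$3$ and order-$4$ subgroups either give $|\Omega| < 5$ or fail to be core-free. In $\Sym_4$ the candidates are $\langle (12) \rangle$, $\langle (12)(34) \rangle$, $\langle (123) \rangle$, $\langle (1234) \rangle$, and the non-normal Klein four-group $\langle (12),(34) \rangle$; the normal $V_4$ is ruled out because it has non-trivial core.

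For each such $H$ I would use the elementary counting formula
$$|\FO(g)| \;=\; \frac{|g^G \cap H| \cdot |C_G(g)|}{|H|}$$
to compute the fixed-point count for every conjugacy class in $G^\#$. A brief tabulation (using the standard centralizer orders in $\Alt_4$ and $\Sym_4$) shows that in five of the six cases the maximum value of $|\FO(g)|$ over $g \in G^\#$ is $2$, so Hypothesis \ref{3fix} fails because no three-point stabilizer is non-trivial. The one exceptional case is $G = \Sym_4$ with $H = \langle (12)(34) \rangle$: here $C_G((12)(34))$ has order $8$, so $(12)(34)$ fixes $4$ cosets and the four-point stabilizer is non-trivial, which again contradicts Hypothesis \ref{3fix}.

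No step presents a serious obstacle; the argument is a finite enumeration. The only mild point to verify is core-freeness, needed so that the coset action is faithful, which excludes only the normal Klein four-group of $\Sym_4$ from consideration.
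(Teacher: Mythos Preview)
Your enumeration is correct and complete: the six core-free subgroups you list are exactly the ones to check, and the fixed-point counts come out as you claim (max $2$ in five cases, max $4$ in the $\Sym_4$-with-$\langle(12)(34)\rangle$ case).

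The paper takes a different and much shorter route. Rather than enumerating actions, it fixes an element $x\in G_\alpha^\#$ with three fixed points and branches on whether $x$ is a $2$-element or a $3$-element. If $x$ is a $2$-element, Lemma~\ref{sylow}(a) forces $|\Omega|$ odd, hence $|\Omega|\le 3$, contradicting $|\Omega|\ge 5$. If $x$ is a $3$-element, then $G_\alpha$ already contains a Sylow $3$-subgroup of $G$ (since $|G|_3=3$), contradicting Lemma~\ref{sylow}(b). So the paper leverages the structural Sylow lemmas it has just proved, while you do a self-contained coset-action tabulation. Your argument is more elementary and independent of the earlier machinery; the paper's argument is quicker and illustrates how Lemma~\ref{sylow} will be used repeatedly in the sequel.
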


\begin{proof}
Assume otherwise and let $\alpha \in \Omega$ and $x \in G_\alpha^\#$
be such that $|\FO(x)|=3$. If $x$ is a $2$-element, then Lemma
\ref{tuedel}~(a) yields that $|\Omega|$ is odd and hence
$|\Omega|=3$. This is too small for Hypothesis \ref{3fix}. If $x$ is
a $3$-element, then $G_\alpha$ contains a Sylow $3$-subgroup of $G$
(because this has only order $3$) and this contradicts Lemma
\ref{sylow}~(b).
\end{proof}

\begin{lemma}\label{S5}
Suppose that $G$ is isomorphic to $\Alt_5$ or $\Sym_5$ and that $\Omega$ is a set such that $(G, \Omega)$
satisfies Hypothesis \ref{3fix}. Then $|\Omega|=15$ and the action of $G$ is as on the set of cosets of a
Sylow $2$-subgroup, or $G \cong \Sym_5$, $|\Omega|=5$ and $G$ acts naturally.
\end{lemma}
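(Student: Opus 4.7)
The plan is to determine the faithful transitive actions of $G$ satisfying Hypothesis~\ref{3fix} by enumerating the conjugacy classes of subgroups $H = G_\omega$ and, for each candidate, computing the permutation-character values
\[
|\fixO(g)| \;=\; \frac{|g^G \cap H|\cdot|C_G(g)|}{|H|}
\]
for every conjugacy class of $g \in G^\#$. Faithfulness forces $H$ to contain no nontrivial normal subgroup of $G$, while nonregularity forces $H \neq 1$, and the remaining content of Hypothesis~\ref{3fix} is just that the maximum of $|\fixO(g)|$ over $g \in G^\#$ equals exactly $3$.

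For $G \cong \Alt_5$ the candidate isomorphism types for $H$ are $\Z_2$, $\Z_3$, $V_4$, $\Z_5$, $\Sym_3$, $D_{10}$, and $\Alt_4$. Using the formula together with the character data for $\Alt_5$ (each $3$-cycle and each $5$-cycle has centralizer equal to the cyclic group it generates, each involution has centralizer $V_4$, and the two $\Alt_5$-classes of $5$-cycles each meet $\Z_5$ in two elements), I obtain maximum fixed-point count at most $2$ for every candidate except $H = V_4$, where each double transposition fixes $3\cdot 4/4 = 3$ cosets. This yields the single action on the $15$ cosets of a Sylow $2$-subgroup.

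For $G \cong \Sym_5$ faithfulness rules out $H \supseteq \Alt_5$. If $|H|$ is odd then $|H| \in \{3,5\}$, and in both cases the formula produces a nontrivial element fixing $4$ cosets (for instance $|(123)^G \cap \langle (123)\rangle|\cdot|C_G((123))|/3 = 2\cdot 6/3 = 4$), contradicting Hypothesis~\ref{3fix}. Hence $|H|$ is even, and I would then work through every conjugacy class of even-order subgroup of $\Sym_5$, namely the two classes of $\Z_2$, the two classes of $V_4$, $\Z_4$, $\Sym_3$, $\Z_6$, $D_8$, $D_{10}$, $\Alt_4$, $D_{12}$, $F_{20}$, and $\Sym_4$. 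The formula shows that every choice other than $D_8$ and $\Sym_4$ either yields an element with at least $4$ fixed points or fails to produce any element with exactly $3$. For $H \cong D_8$ the two transpositions in $H$ each fix $2\cdot 12/8 = 3$ cosets and the three double transpositions each fix $3\cdot 8/8 = 3$ cosets while $4$-cycles fix only $1$, giving $|\Omega|=15$; for $H \cong \Sym_4$ the natural action on $5$ points has every transposition fixing $3$ points and nothing fixing $4$.

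The main obstacle here is purely organisational: the list of conjugacy classes of subgroups of $\Sym_5$ is long, and one must carefully distinguish the two classes of involutions (transpositions versus double transpositions) and the two classes of fours groups. The consoling feature is that the fixed-point formula rapidly produces $\ge 4$ fixed points for any \emph{small} $H$ meeting a large conjugacy class of $G$, so the majority of candidates are eliminated at once and only a short list of cases needs careful verification.
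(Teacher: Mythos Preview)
Your approach is correct and the computations you outline are accurate, but it is genuinely different from the paper's argument. The paper first takes an element $x \in G_\alpha^\#$ with three fixed points and uses the earlier structural Lemma~\ref{sylow} to rule out $o(x)=3$ (since the Sylow $3$-subgroup of $G$ has order $3$) and $o(x)=5$ (since $|\Omega|\equiv 3\pmod 5$ forces $|\Omega|=3$). This pins down $x$ as a $2$-element, whence Lemma~\ref{sylow}(a) gives that $|\Omega|$ is odd, leaving only $|G_\alpha|\in\{4,12\}$ for $\Alt_5$ and $|G_\alpha|\in\{8,24\}$ for $\Sym_5$; a short counting argument then eliminates $|G_\alpha|=12$ in $\Alt_5$. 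Your route via the permutation-character formula is more elementary and entirely self-contained, at the cost of a longer case check (fourteen subgroup classes in $\Sym_5$); the paper's route is shorter here precisely because it reuses the Sylow lemmas that drive the rest of the classification, so it illustrates the general method rather than treating $\Alt_5$ and $\Sym_5$ as isolated small cases.
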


\begin{proof}
The action of $G$ on the set of cosets of a Sylow $2$-subgroup satisfies Hypothesis \ref{3fix}, as does
the natural action of $\Sym_5$ on a set with five elements, so we need to show that these are the only
possibilities.
Suppose that $(G, \Omega)$ satisfies Hypothesis \ref{3fix} and let $\alpha \in G_\alpha$ and $x \in G_\alpha^\#$
be such that $|\FO(x)|=3$.

Assume that $x$ is a $5$-element. The nontrivial orbits of $x$ have lengths divisible by $5$ and hence
$|\Omega| \equiv 3$ modulo $5$. The only divisor of $|G|$ satisfying this property is $3$, but then $\Omega$
is too small.
The Sylow $3$-subgroups of $G$ have order $3$ and hence Lemma \ref{sylow}~(b) yields that
$x$ does not have order $3$. Thus $x$ is a $2$-element.

It follows from Lemma \ref{sylow}~(a) that $|\Omega|$ is odd, hence $G_\alpha$ has order $4$ or $12$ in the
$\Alt_5$-case and order $8$ or $24$ in the $\Sym_5$-case.
If $G \cong \Sym_5$ and $|G_\alpha|=24$, then this is the natural action of $\Sym_5$.

Assume that $G \cong \Alt_5$ and that $|G_\alpha|=12$. Then we first
note that every double transposition in $G$ has exactly three fixed
points on $\Omega$. As $|\Omega|=5$, there are only $10$
possibilities for fixed point sets for $x$. But there are 15 double
transpositions in $G$ and hence we find an involution $y \in G$ such
that $x \neq y$ and $\FO(x)=\FO(y)$. Then $x$ and $y$ interchange
the remaining two points and hence $xy$ fixes all of $\Omega$
point-wise, which is a contradiction.

Therefore, if $G_\alpha$ is not a $2$-group, then the only example
is $\Sym_5$ in its natural action. If $G_\alpha$ is a $2$-group,
then it is a Sylow $2$-subgroup and $G$ acts as stated.
\end{proof}

\begin{lemma}\label{A6}
Suppose that $G \cong \Alt_6$ and that $\Omega$ is a set such that $(G, \Omega)$ satisfies Hypothesis \ref{3fix}.
Then $|\Omega| \in \{6,15 \}$. The action of $G$ is
natural as $\Alt_6$ on six points in the first case, and $G$ acts as on the set of cosets of a subgroup of
order $24$ in the second case, respectively.
\end{lemma}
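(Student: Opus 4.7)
Plan: Let $\alpha \in \Omega$ and $H := G_\alpha$. Since $|G| = 360$ and $|\Omega| \ge 5$, we have $|H| \le 72$; the subgroup orders occurring in $\Alt_6$ are $1, 2, 3, 4, 5, 6, 8, 9, 10, 12, 18, 24, 36$ and $60$. The goal is to show that only $|H| = 24$ (giving $|\Omega| = 15$) and $|H| = 60$ (giving $|\Omega| = 6$) are compatible with Hypothesis \ref{3fix}, and then to identify those actions.

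My central tool is the orbit counting formula: for any $y \in G^\#$,
\[
|\FO(y)| = \frac{|C_G(y)| \cdot |y^G \cap H|}{|H|}.
\]
Hypothesis \ref{3fix} is equivalent to requiring this to be at most $3$ for every $y$ and equal to $3$ for at least one $y$. In $\Alt_6$ the centralizer orders are $8$, $9$, $5$ and $4$ for involutions, order-$3$ elements ($3$-cycles and double $3$-cycles), $5$-cycles and cycle-type-$(4,2)$ elements respectively.

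Following the pattern of Lemma \ref{S5}, I would fix an $x \in H^\#$ with $|\FO(x)| = 3$ and argue according to the order of $x$. A $5$-element is impossible: its nontrivial orbits have length $5$, so $|\Omega| \equiv 3 \pmod 5$, but the only divisors of $360$ in that residue are $3, 8, 18$, corresponding to $|H| \in \{120, 45, 20\}$, none of which is a subgroup order of $\Alt_6$. A $(4,2)$-element cannot produce three fixed points either, since the required equation $|x^G \cap H| = 3|H|/4$ exceeds the actual count of $(4,2)$-elements inside any subgroup of $\Alt_6$ containing such elements. By Lemma \ref{sylow}(a) a $2$-element $x$ with three fixed points forces $8 \mid |H|$, so $|H| \in \{8, 24\}$; and by Lemma \ref{sylow}(b) a $3$-element $x$ with three fixed points forces $|H|_3 \le 3$, so $|H| \in \{3, 6, 12, 24, 60\}$.

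Each candidate is then checked directly. For $|H| = 8 \cong D_8$ the five involutions give $8 \cdot 5/8 = 5$ fixed points; for $|H| = 3$ the $3$-element gives $9 \cdot 2/3 = 6$; for $|H| = 6 \cong \Sym_3$ the three involutions give $8 \cdot 3/6 = 4$; for $|H| = 12 \cong \Alt_4$ the eight $3$-cycles give $9 \cdot 8/12 = 6$. All violate $4$-point triviality. For $|H| = 24 \cong \Sym_4$ the nine double transpositions and eight $3$-cycles in $H$ each yield three fixed points while the six $(4,2)$-elements yield one, giving Hypothesis \ref{3fix} with $|\Omega| = 15$. For $|H| = 60 \cong \Alt_5$ the twenty $3$-cycles give three fixed points while the fifteen involutions and twenty-four $5$-cycles give two each; double $3$-cycles and $(4,2)$-elements do not meet $H$. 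This is the natural action on six points. The two $\Alt_6$-classes of $\Sym_4$ (respectively $\Alt_5$) are interchanged by the outer automorphism, producing the two claimed actions in each case. For the remaining subgroup orders $\{1, 2, 4, 5, 9, 10, 18, 36\}$ a similar check shows either no element reaches three fixed points (the small cases $1, 4, 5$) or some element exceeds three (e.g.\ $|H| = 2$: an involution gives $8 \cdot 1/2 = 4$; $|H| = 9$: a $3$-cycle gives $9 \cdot 4/9 = 4$; $|H| = 10 \cong D_{10}$: an involution gives $8 \cdot 5/10 = 4$; $|H| = 18$ and $|H| = 36$ similarly). The main obstacle is the number of subgroup orders to treat, but the analysis by cycle type of the three-fix element reduces the work to a handful of arithmetic checks; the only subtlety is in computing $|x^G \cap H|$ correctly, using that $\Alt_6$ has two conjugacy classes of order-$3$ elements ($3$-cycles and double $3$-cycles) interchanged by the outer automorphism.
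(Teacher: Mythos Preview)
Your argument is correct and follows the same overall case split as the paper --- by the order of a three-fix element $x$, using Lemma~\ref{sylow} to constrain $|H|$ --- but you execute it via the explicit counting formula $|\FO(y)|=|C_G(y)|\cdot|y^G\cap H|/|H|$, whereas the paper leans on Lemma~\ref{tuedel} to force extra prime divisors into $|G_\alpha|$. For instance, in the $3$-element case the paper argues via normalizer structure that $|G_\alpha|$ is even and then divisible by $12$, landing directly on $G_\alpha\in\{\Alt_4,\Sym_4,\Alt_5\}$, while you list all $|H|$ with $|H|_3=3$ and check each by hand. Your route is more elementary and self-contained; the paper's is a little shorter.

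One remark: your final paragraph treating the ``remaining subgroup orders'' $\{1,2,4,5,9,10,18,36\}$ is redundant. Since element orders in $\Alt_6$ are at most $5$, your reduction already shows that a three-fix element must have order $2$, $3$, $4$ or $5$; you have excluded orders $4$ and $5$ outright, and orders $2$ and $3$ force $|H|\in\{8,24\}$ or $|H|\in\{3,6,12,24,60\}$ respectively. So none of those remaining orders can host a three-fix element at all, and the extra checks can be dropped. This matters because a couple of those checks are mis-stated: for $H\cong V_4$ an involution has $8\cdot 3/4=6$ fixed points (too many, not too few), and for $|H|=36\cong 3^2{:}4$ every nontrivial element has at most $2$ fixed points (the action on the ten $3+3$ partitions), so the obstruction there is that \emph{no} element reaches three. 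These slips are harmless since the cases are already excluded, but you should remove or correct that paragraph.
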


\begin{proof}
Let $\alpha \in \Omega$ and let $x \in G_\alpha$ be such that $|\FO(x)|=3$.
If $x$ is a $5$-element, then the subgroup structure of $G$ allows $G_\alpha$ to be of order $5$, $10$ or $60$.
However, this means that $|\Omega| \in \{72, 36, 6\}$ and these numbers are not congruent to $3$ modulo $5$.

Next suppose that $x$ is a $3$-element. Then Lemmas \ref{tuedel}~(c)
and \ref{sylow}~(b) imply that $G_\alpha$ has even order and that
$|\Omega|$ is divisible by $3$. Applying Lemma \ref{tuedel} to a
$2$-element in $G_\alpha$ yields that $|G_\alpha|$ is divisible by
$4$, hence by $12$. This leads to the cases $G_\alpha \cong \Alt_4,
\Sym_4$ or $\Alt_5$. Hence $|\Omega| \in \{30, 15, 6\}$. However the
first case is impossible as an element of order $3$ will fix six
points on $\Omega$. The other two possibilities give the examples in
the conclusion.

If $x$ is a $2$-element, then $G_\alpha$ has order $8$ or $24$ by
Lemma \ref{sylow}~(a). The former case leads to the possibility that
$|\Omega|=45$. However in this case an involution fixes five points,
which is impossible. The second case is that $G_\alpha \cong
\Sym_4$, which is one of our conclusions.
\end{proof}

\begin{lemma}\label{A6comp}
Suppose that $G$ is almost simple, but not simple and that
$F^*(G) \cong \Alt_6$. There does not exist a set $\Omega$ such that $(G,
\Omega)$ satisfies Hypothesis \ref{3fix}.
\end{lemma}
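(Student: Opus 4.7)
The plan is to argue by contradiction: assume such an $\Omega$ exists, and set $E := F^*(G)\cong\Alt_6$ (so $G \neq E$); fix $\alpha\in\Omega$. First, I would apply Lemma \ref{hypcomp} to the unique component $E$. Because $\Alt_6\not\cong\Alt_5$ and $\Alt_6\cong\textrm{PSL}_2(9)$ is not $\textrm{PSL}_2(q)$ for any $2$-power $q$, case~(a) of Lemma \ref{hypcomp} is ruled out; so case~(b) holds and $(E,\alpha^E)$ itself satisfies Hypothesis \ref{3fix}. Lemma \ref{A6} then pins down the $E$-action on $\alpha^E$: either $|\alpha^E|=6$ with $E_\alpha\cong\Alt_5$ (Case~I), or $|\alpha^E|=15$ with $E_\alpha\cong\Sym_4$ (Case~II).

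Next I would reduce to $\Omega=\alpha^E$. A double transposition $t\in E$ fixes exactly $2$ points in either of the two $6$-point actions of $\Alt_6$, and exactly $3$ points in either of the two $15$-point actions; these counts agree for both actions of each type because the outer automorphism group of $\Alt_6$ preserves the class of double transpositions. Letting $k$ be the number of $E$-orbits on $\Omega$, $t$ fixes $2k$ points in Case~I and $3k$ in Case~II, so Hypothesis \ref{3fix} forces $k=1$ in both. Consequently $G_\alpha E=G$, $|G_\alpha|=|G:E|\cdot|E_\alpha|\ge 2|E_\alpha|$, and $G_\alpha$ properly normalizes $E_\alpha$.

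The key step is a normalizer computation inside $\textrm{Aut}(\Alt_6)\cong\textrm{P}\Gamma\textrm{L}_2(9)$. The two $\Alt_6$-conjugacy classes of subgroups isomorphic to $\Alt_5$ (respectively $\Sym_4$) are fused by every outer automorphism of $\Alt_6$ that does not come from $\Sym_6$, whereas $\Sym_6$ preserves each class. Hence any outer automorphism of $\Alt_6$ normalizing $E_\alpha$ lies in $\Sym_6$, so $N_{\textrm{Aut}(\Alt_6)}(E_\alpha)=N_{\Sym_6}(E_\alpha)$; this normalizer is the natural point stabilizer $\Sym_5$ of order $120$ in Case~I, and the duad stabilizer $\Sym_2\times\Sym_4$ of order $48$ in Case~II. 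Combined with $|G_\alpha|\ge 2|E_\alpha|$, this forces $G_\alpha=N_{\Sym_6}(E_\alpha)$ and $G=\Sym_6$, acting either naturally on $6$ letters (Case~I) or on the $15$ unordered pairs of $\{1,\dots,6\}$ (Case~II).

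To finish, I would exhibit a bad element of $G_\alpha$. In Case~I, a transposition of $G_\alpha\cong\Sym_5$ is a transposition of $\Sym_6$ and so fixes $4$ of the $6$ points. In Case~II, the transposition generating the $\Sym_2$ factor of $G_\alpha\cong\Sym_2\times\Sym_4$ fixes the distinguished duad together with the $\binom{4}{2}=6$ duads of its complementary $4$-set, i.e.\ $7$ duads in all. Both contradict Hypothesis \ref{3fix}. The main obstacle in the plan is the normalizer identity $N_{\textrm{Aut}(\Alt_6)}(E_\alpha)=N_{\Sym_6}(E_\alpha)$, which rests on the class-fusion behaviour of $\textrm{Out}(\Alt_6)\cong(\mathbb{Z}/2)^2$; this is standard and can be read off from the \textsc{Atlas}.
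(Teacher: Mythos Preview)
Your argument is correct and follows essentially the same route as the paper: reduce to case~(b) of Lemma~\ref{hypcomp}, invoke Lemma~\ref{A6}, eliminate the multi-orbit case by counting fixed points of a double transposition, and in the one-orbit case pass to $\Sym_6$ and exhibit a transposition with too many fixed points. The paper simply asserts that in the one-orbit case $G\cong\Sym_6$; your normalizer computation $N_{\textrm{Aut}(\Alt_6)}(E_\alpha)=N_{\Sym_6}(E_\alpha)$ supplies the justification the paper omits, so your write-up is in fact more complete at that step.
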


\begin{proof}
Let $E:=F^*(G)$. Then
Lemma \ref{hypcomp} is applicable and we see that (a) and (b) cannot hold. So (c) holds and we let
$\alpha \in \Omega$ be such that $(E, \alpha^E)$ satisfies Hypothesis \ref{3fix}.
In particular we know that $H:=E_\alpha \cong \Alt_5$ or $\Sym_4$ from Lemma \ref{A6}.

In the former case, $|\Omega| = 6$ or $12$ whereas in the second
case $|\Omega| = 15$ or $30$. If the action is on $6$ or $15$
points, then $G \cong S_6$ and one of the outer involutions has too
many fixed points.

If the action is on $12$ or $30$ points, then an inner involution
has four respectively six fixed points, ruling out these
possibilities as well.
\end{proof}

\begin{lemma}\label{A7}
Suppose that $G \cong \Alt_7$ and that $\Omega$ is a set such that
$(G, \Omega)$ satisfies Hypothesis \ref{3fix}. Then either
$|\Omega|=15$ and the action of $G$ is as on the set of cosets of a
subgroup isomorphic to $\PSL_2(7)$, or $|\Omega| =360$ and $G$ acts
on the set of cosets of a Sylow $7$-subgroup. In the first case the
three point stabilizer contains a Sylow $2$-subgroup of $G$.
\end{lemma}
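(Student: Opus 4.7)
Let $\alpha \in \Omega$ and $H := G_\alpha$. By Hypothesis~\ref{3fix} there is $x \in H$ with $|\FO(x)|=3$; replacing $x$ by a suitable prime-power one may assume $x$ has prime order $p \in \{2,3,5,7\}$, since these are the prime-power element orders in $\Alt_7$. The plan is a case analysis on $p$. In each case I would use Lemma~\ref{sylow} to constrain $|H|$, and the orbit structure of $\langle x\rangle$ on $\Omega$ (which gives $|\Omega|\equiv 3\pmod p$ when $p\in\{5,7\}$) to constrain $|\Omega|$. I would then eliminate surviving candidates either by the subgroup structure of $\Alt_7$ (nonexistence of subgroups of the required order) or by the fixed-point formula $|\FO(g)| = |C_G(g)|\cdot|g^G\cap H|/|H|$ (too many fixed points for some $g \in H$).

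If $p=5$, then $H$ contains a Sylow $5$-subgroup of $G$ by Lemma~\ref{sylow}(c), so $|\Omega|$ divides $504$; the surviving values $|\Omega|\in\{3,8,18,28,63,168\}$ are eliminated because $|\Omega|\geq 5$ by Hypothesis~\ref{3fix} and the required $|H|\in\{315,140,90,40,15\}$ do not occur as subgroup orders of $\Alt_7$ (using the absence of an element of order $15$ in $\Alt_7$ and $|N_{\Alt_7}(\text{Syl}_5)|=20$). If $p=7$, then similarly $|\Omega|\equiv 3\pmod 7$ divides $360$, leaving $\{3,10,24,45,360\}$; all but $|\Omega|=360$ are excluded ($3$ is too small; no subgroup of orders $252$, $105$, or $56$ exists in $\Alt_7$, the last because such a subgroup would force a normal Sylow $7$-subgroup while $|N_{\Alt_7}(P_7)|=21$). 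This yields the second conclusion with $H$ a Sylow $7$-subgroup of $G$.

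If $p=2$, then Lemma~\ref{sylow}(a) gives that $|\Omega|$ is odd and that $H$ contains a Sylow $2$-subgroup of order $8$, so $|H|\in\{8,24,40,56,72,120,168,280,360,504\}$. The orders $504$, $280$, $56$, $40$ are not subgroup orders of $\Alt_7$; for $|H|=360$ ($\cong \Alt_6$, the natural $7$-point action) a $3$-cycle fixes four points and for $|H|=120$ ($\cong \Sym_5$, the $21$-pair action) a $3$-cycle fixes six pairs, each contradicting Hypothesis~\ref{3fix}; for $|H|\in\{72,24,8\}$ the fixed-point formula applied to a double transposition in $H$ gives more than three fixed cosets (for instance $24\cdot 21/72 = 7$ when $|H|=72$, taking $H=(\Sym_4\times\Sym_3)\cap \Alt_7$). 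Only $|H|=168\cong\PSL_2(7)$ survives, giving $|\Omega|=15$; since $|\PSL_2(7)|_2 = 8 = |\Alt_7|_2$, the point stabilizer contains a Sylow $2$-subgroup of $G$, as required. The remaining possibility, that only elements of order $3$ fix three points, is handled via Lemma~\ref{sylow}(b) ($3\mid|\Omega|$, $|H|_3\leq 3$) together with Lemma~\ref{syl3}: running through the finitely many resulting candidates for $|H|$ shows that each either fails to occur in $\Alt_7$ or supports a nonidentity element with more than three fixed cosets, contradicting Hypothesis~\ref{3fix}. The principal obstacle is the case $p=2$, where each of the ten candidate values of $|H|$ must be treated individually via a combination of nonexistence arguments for subgroups of $\Alt_7$ and explicit character-theoretic fixed-point counts.
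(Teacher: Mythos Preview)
Your prime-by-prime case analysis is the same strategy as the paper's, and your handling of $p\in\{5,7\}$ matches it closely. However, two parts of your argument are underdeveloped compared to the paper. First, your $p=3$ case is only a sketch: you assert that ``running through the finitely many resulting candidates'' eliminates each, but you neither list the candidates nor carry out any eliminations. The paper's argument here is short and worth knowing: from an order-$3$ element with three fixed points, Lemmas~\ref{tuedel} and~\ref{sylow}(b) force $G_\alpha$ to have even order and $3\mid|\Omega|$; then, applying Lemma~\ref{tuedel} to the centraliser of an involution in $G_\alpha$, one obtains $\Alt_4\le G_\alpha$. Together with $|G_\alpha|_3=3$ this leaves only $G_\alpha\in\{\Alt_4,\Sym_4,\Alt_5,\Sym_5,\PSL_2(7)\}$, and the first four are eliminated by brief normaliser arguments (for $\Alt_4$ and $\Alt_5$: the normaliser in $G$ of a fours group contains a subgroup of order $9$, forcing $9\mid|G_\alpha|$; for $\Sym_4$ and $\Sym_5$: an involution with a unique fixed point would force a full Sylow $2$-subgroup into $G_\alpha$). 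This also streamlines your $p=2$ case, since once $24\mid|G_\alpha|$ the only possibility not already covered by the $p=3$ analysis is $G_\alpha\cong\Alt_6$. Your independent $p=2$ treatment is not wrong, but for $|H|\in\{8,24\}$ you must in principle check every $\Alt_7$-class of subgroups of that order containing a Sylow $2$-subgroup, not just one representative.

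Second, you have not proved the final sentence of the lemma. You observe that $|\PSL_2(7)|_2=8=|\Alt_7|_2$ and conclude that the \emph{point} stabiliser contains a Sylow $2$-subgroup of $G$, but the lemma asserts this for the \emph{three point} stabiliser, a proper subgroup of $G_\alpha$. The paper derives this from Lemma~\ref{tuedel} together with $9\nmid|G_\alpha|$; you should supply an explicit argument for the stated claim rather than conflating the two stabilisers.
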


\begin{proof}
Let $\alpha \in \Omega$ and $x \in G_\alpha$ be such that
$|\FO(x)|=3$. First assume that $x$ has order $7$. Then $|\Omega|
\equiv 3$ modulo $7$ and, as $|\Omega| \ge 4$, this only leaves the
possibilities $10$, $24$, $45$ or $360$. There are no subgroups of
$G$ of index $10$, $24$ or $45$, ruling out these cases. The
normalizer of a Sylow $7$-subgroup has index $360$ and this yields
the second example.

Next assume that $x$ has order $5$. Then Lemma \ref{tuedel} yields that some point stabilizer contains a
subgroup of order $20$, so we may suppose that $20$ divides $|G_\alpha|$. Moreover $|\Omega| \equiv 3$ modulo
$5$ and $7 \notin \pi(G_\alpha)$ by the subgroup structure of $\Alt_7$. In particular $7$ divides $|\Omega|$.
This only leaves the possibility $|\Omega|=63$ and $|G_\alpha|=40$.
But $G$ does not have a subgroup of this order.

We continue with the case where $x$ has order $3$.
Then Lemmas \ref{tuedel} and \ref{sylow}~(b) yield that $G_\alpha$ has even order and that $|\Omega|$ is
divisible by $3$.
From the centralizer of an involution in $G_\alpha$ and Lemma \ref{tuedel} we obtain that $G_\alpha$ contains
a subgroup isomorphic to $\Alt_4$. Thus $G_\alpha$ is isomorphic to $\Alt_4$, $\Sym_4$, $\Alt_5$, $\Sym_5$
or $\PSL_2(7)$. Correspondingly, $|\Omega| \in \{210, 105, 42, 21, 15\}$.

Assume that $G_\alpha \simeq \A_4$ and $|\Omega|=210$. Let $V \le G_\alpha$ be a fours group. Then $N_G(V)$
contains a subgroup of order $9$ of which a subgroup $A$ of order $3$ centralises $V$. Each involution in
$V$ has exactly two fixed points, hence $A$ fixes these two points and therefore $A \le G_\alpha$.
It follows that $9$ divides $G_\alpha$, which is contradiction.
With the same argument we exclude the case where $G_\alpha \simeq \A_5$ and $|\Omega|=42$.

Next assume that $G_\alpha \simeq \Sym_4$ and $|\Omega|=105$.
Then every involution has one or three fixed points. The second case will be treated below.
In the first case Lemma \ref{tuedel} forces $G_\alpha$ to contain a Sylow $2$-subgroup of $G$, which is
impossible.
With the same argument we exclude the case where $G_\alpha \simeq \Sym_5$ and $|\Omega|=21$.

Finally suppose that $x$ is a $2$-element. Then $G_\alpha$ contains a Sylow $2$-subgroup of $G$ by
Lemma \ref{sylow}~(a) and hence $3 \in \pi(G_\alpha)$ by Lemma \ref{tuedel}.
This means that $24$ divides $|G_\alpha|$ and the only new case is $G_\alpha \cong \Alt_6$.
But then $G$ acts as it does naturally on seven points; this is impossible because
one conjugacy class of $3$-elements has four fixed points in this action.

It follows that the only possibility is that $G_\alpha \cong \PSL_2(7)$.
Then Lemma \ref{tuedel} and the fact that $9$ does not divide $|G_\alpha|$ imply that
the three point stabilizer contains a Sylow $2$-subgroup of $G$.
\end{proof}

\begin{cor}\label{A7comp}
Suppose that $G \cong \Sym_7$. Then there is no set $\Omega$ is a set such that $(G,
\Omega)$ satisfies Hypothesis \ref{3fix}.
\end{cor}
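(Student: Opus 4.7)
The plan is to apply Lemma \ref{hypcomp} to the unique component $E := F^*(G) \cong \Alt_7$ of $G$. Since $\Alt_7$ is not isomorphic to $\PSL_2(q)$ for any power of $2$, possibility (a) of that lemma is ruled out, and therefore (b) holds: there is $\alpha \in \Omega$ such that $(E, \alpha^E)$ satisfies Hypothesis \ref{3fix}. By Lemma \ref{A7} the only two possibilities for the $E$-action on $\alpha^E$ are on $15$ points with $E_\alpha \cong \PSL_2(7)$, or on $360$ points with $E_\alpha$ a Sylow $7$-subgroup of $E$. Since $|G:E| = 2$, the group $E$ has either one or two orbits on $\Omega$, giving the four cases $(|\Omega|, |G_\alpha|) \in \{(15, 336), (30, 168), (360, 14), (720, 7)\}$ to dispose of.

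The case $|\Omega|=15$ is killed by an order argument: the maximal subgroups of $\Sym_7$ are $\Alt_7$, $\Sym_6$, $\Sym_5\times\Sym_2$ and $\Sym_4\times\Sym_3$, of orders $2520$, $720$, $240$ and $144$, and $336$ divides none of these, so no subgroup of $\Sym_7$ has order $336$. For each remaining case the plan is to identify $\Omega$ with $G/G_\alpha$ and exhibit a nontrivial element $g$ whose number of fixed points, computed by $|C_G(g)| \cdot |g^G \cap G_\alpha|/|G_\alpha|$, exceeds $3$.

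For $|\Omega|=30$: $G_\alpha = E_\alpha \cong \PSL_2(7)$, and an element $g$ of cycle type $(3,3,1)$ realises the unique order-$3$ class of $\PSL_2(7)$ (of size $56$); with $|C_{\Sym_7}(g)| = 18$ this gives $18 \cdot 56/168 = 6$ fixed points. For $|\Omega|=360$: since $\Sym_7$ has no element of order $14$, the only option is $G_\alpha \cong D_{14} = \langle \sigma, \tau \rangle$ with $\sigma$ a $7$-cycle and $\tau$ an inverting involution of cycle type $(2,2,2,1)$; all seven reflections in $D_{14}$ are of this type, so with $|C_{\Sym_7}(\tau)| = 48$ we obtain $48\cdot 7/14 = 24$ fixed points for $\tau$. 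For $|\Omega|=720$: $G_\alpha = \langle \sigma \rangle$ is a Sylow $7$-subgroup, $|C_{\Sym_7}(\sigma)| = 7$, and the six nonidentity elements of $G_\alpha$ are all $\Sym_7$-conjugate to $\sigma$, giving $7\cdot 6/7 = 6$ fixed points. In each subcase Hypothesis \ref{3fix} is violated and the corollary follows.

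The main technical point is the correct execution of the three fixed-point computations; the inputs (centralizer orders in $\Sym_7$, intersection sizes with $\PSL_2(7)$ and $D_{14}$, and the nonexistence of order-$14$ elements in $\Sym_7$) are all standard, but must be assembled carefully.
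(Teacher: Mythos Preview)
Your proof is correct, modulo one small omission: your list of maximal subgroups of $\Sym_7$ is missing $\mathrm{AGL}(1,7) = 7{:}6$ of order $42$, but since $42 < 336$ this does not affect the conclusion that no subgroup of order $336$ exists.

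Your route differs from the paper's. The paper splits only by the two possibilities for $E_\alpha$ from Lemma~\ref{A7} and disposes of each via Lemma~\ref{tuedel}. In the $\PSL_2(7)$ case it uses that a Sylow $2$-subgroup of $E$ is a three-point stabilizer, forcing (via Lemma~\ref{sylow}(a)) $|\Omega|$ to be odd and $G_\alpha$ to contain a Sylow $2$-subgroup of $G$; then a transposition $t \in G_\alpha \setminus E$ has $5 \mid |C_G(t)|$ while $5 \nmid |G_\alpha|$, contradicting Lemma~\ref{tuedel}. In the Sylow-$7$ case Lemma~\ref{tuedel} applied to $N_G(E_\alpha)$ forces an involution into $G_\alpha$, and then the centralizer of that involution is too large relative to $|G_\alpha| = 14$. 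So the paper never explicitly treats the two-orbit cases $|\Omega| \in \{30,720\}$; they are killed implicitly by the parity/normalizer arguments. Your approach instead enumerates all four possible values of $|\Omega|$ and kills each by a direct fixed-point count (or the order argument for $|\Omega|=15$). This is more elementary and self-contained, bypassing the machinery of Lemmas~\ref{sylow} and~\ref{tuedel}, at the price of doing four explicit computations instead of two structural ones.
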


\begin{proof}
Let $E:=F^*(G) \cong \A_7$. Then
Lemma \ref{hypcomp} is applicable and we see that (c) holds. Let $\alpha \in \Omega$ be such that
$(E, \alpha^E)$ satisfies Hypothesis \ref{3fix}. Then Lemma \ref{A7} yields that $E_\alpha \simeq \PSL_2(7)$
and that a Sylow $2$-subgroup of $E$ is contained in a three point stabilizer, or that $E_\alpha$ is a Sylow
$7$-subgroup of $G$.
In the first case, as $|G:E|=2$, Lemma \ref{tuedel} implies that a Sylow $2$-subgroup of $G$ is contained in a point stabilizer.
Therefore $|G_\alpha|=2 \cdot |E_\alpha|=2^4 \cdot 3 \cdot 7$. Let $t \in G_\alpha \setminus E_\alpha$ be
an involution. Then $|C_G(t)|$ is divisible by $5$, and
this contradicts Lemma \ref{tuedel} because $5 \notin \pi(G_\alpha)$.

In the second case, as $|G:E|=2$, Lemma \ref{tuedel} implies that
$G_\alpha$ contains an involution $t$. However then Lemma
\ref{tuedel} yiels that $C_G(t) \cap E_\alpha \neq 1$, contradicting
the fact that $E_\alpha \in \sy_7(G)$.

\end{proof}

\begin{lemma}\label{tollalt}
Suppose that Hypothesis \ref{3fix} holds and that $G$ is an
alternating group of degree at least $8$. Let $\alpha \in \Omega$.
Then $G_\alpha$ has odd order or it contains a Sylow $2$-subgroup of
$G$.
\end{lemma}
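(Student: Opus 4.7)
The plan is to deduce this essentially immediately from Lemma \ref{2isttoll}, once we observe that an alternating group of degree at least $8$ sits in exactly the ``generic'' case of that trichotomy. Since $G \cong \Alt_n$ with $n \geq 8$ is simple and nonabelian, Lemma \ref{2isttoll} applies and leaves exactly three possibilities for $G_\alpha$: either (i) $G_\alpha$ has odd order, or (ii) $G_\alpha$ contains a Sylow $2$-subgroup of $G$, or (iii) $G$ has dihedral or semidihedral Sylow $2$-subgroups and is isomorphic to one of $\Alt_7$, $M_{11}$, $\PSL_2(q)$, $\PSU_3(q)$, or $\PSL_3(q)$ (with $q$ odd). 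Outcomes (i) and (ii) are precisely the two conclusions we want, so it only remains to rule out outcome (iii).

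To exclude (iii), I would argue that $\Alt_n$ with $n \geq 8$ simply does not appear on that list. The cleanest way is via the $2$-rank: for $n \geq 8$ the group $\Alt_n$ contains two commuting Klein fours groups supported on disjoint quadruples, for instance $\langle (1,2)(3,4),(1,3)(2,4)\rangle$ and $\langle (5,6)(7,8),(5,7)(6,8)\rangle$, giving an elementary abelian subgroup of rank $4$ inside any Sylow $2$-subgroup of $G$. On the other hand, dihedral and semidihedral $2$-groups have $2$-rank at most $2$. Thus the Sylow $2$-subgroups of $G$ are neither dihedral nor semidihedral, and case (iii) of Lemma \ref{2isttoll} is incompatible with $G \cong \Alt_n$, $n \geq 8$.

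There is no real obstacle here; the content of the lemma is essentially a translation of Lemma \ref{2isttoll} to the alternating case, and the only thing to check is the elementary fact about the $2$-rank. (As a sanity check one could additionally note that $\Alt_n$ for $n \geq 8$ has order not equal to any $|\PSL_2(q)|$, $|\PSU_3(q)|$, or $|\PSL_3(q)|$, and is not isomorphic to $\Alt_7$ or $M_{11}$, but the rank argument is more transparent and avoids an order-by-order comparison.)
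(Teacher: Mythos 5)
Your argument is correct and is essentially the paper's: the paper proves this lemma simply by citing Lemma \ref{2isttoll}, leaving implicit the fact that $\Alt_n$ for $n \geq 8$ cannot fall into case (3) there. Your $2$-rank computation (an elementary abelian group of order $16$ inside $\Alt_8$ versus $2$-rank at most $2$ for dihedral and semidihedral groups) is a clean way to make that implicit step explicit.
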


\begin{proof}
This follows immediately from Lemma \ref{2isttoll}.
\end{proof}

\begin{lemma}\label{A8}
Suppose that $G \cong \Alt_8$ and that $\Omega$ is a set such that
$(G, \Omega)$ satisfies Hypothesis \ref{3fix}. Then $|\Omega| = 2880$. The action of $G$ is as on the set of cosets of
a Sylow $7$-subgroup.
\end{lemma}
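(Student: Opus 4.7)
The plan is to follow the strategy of the preceding lemmas: fix an element $x \in G_\alpha$ of prime order $p$ with $|\FO(x)|=3$, and rule out each possibility for $p$ and $|G_\alpha|$ by computing normalizers in $\Alt_8$. By Lemma \ref{tollalt}, either $G_\alpha$ has odd order or $G_\alpha$ contains a Sylow $2$-subgroup of $G$. In the latter case $|\Omega|$ is odd, and Lemma \ref{odd-even} would force $G$ to be isomorphic to one of $\Alt_7$, $M_{11}$, $\PSL_2(q)$, $\Sz(q)$, $\PSU_3(q)$ or $\PSL_3(q)$ with $q$ even; none of these is $\Alt_8$ (although $\Alt_8$ and $\PSL_3(4)$ have the same order, they are not isomorphic). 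Hence $|G_\alpha|$ is odd and divides the odd part of $|G|$, namely $315=3^2\cdot 5\cdot 7$.

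Replacing $x$ by a suitable power if necessary, I may assume $p\in\{3,5,7\}$. Since nontrivial powers of $x$ still fix the three fixed points of $x$, and four-point stabilizers are trivial, $\langle x\rangle$ fixes exactly three points, so Lemma \ref{tuedel}(c) gives $|N_{G_\alpha}(\langle x\rangle)|\geq |N_G(\langle x\rangle)|/3$. A direct computation in $\Alt_8$ yields $|N_G(\langle x\rangle)|=360,\,36,\,60$ when $x$ is a $3$-cycle, a $(3,3)$-element, or a $5$-cycle, respectively. In each case the resulting lower bound $120,\,12,\,20$ on $|N_{G_\alpha}(\langle x\rangle)|$ exceeds the largest odd divisor of $|N_G(\langle x\rangle)|$, which is $45,\,9,\,15$, contradicting $G_\alpha$ being of odd order.

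This leaves $p=7$, so $x$ is a $7$-cycle. I would first show $|N_{\Alt_8}(\langle x\rangle)|=21$: any $g\in\Alt_8$ normalizing $\langle x\rangle$ preserves the unique fixed point of $x$ on $\{1,\ldots,8\}$ and thus restricts to an element of $N_{\Alt_7}(\langle x\rangle)$; since a generator of $N_{\Sym_7}(\langle x\rangle)/C_{\Sym_7}(\langle x\rangle)\cong\Z_6$ is realized by an odd $6$-cycle on $\{1,\ldots,7\}$, only its even powers lie in $\Alt_7$. Combined with $C_{\Alt_8}(\langle x\rangle)=\langle x\rangle$ and the absence of elements of order $9$ in $\Alt_8$, a short inspection shows that the only odd-order subgroups of $\Alt_8$ containing a Sylow $7$-subgroup have order $7$ or $21$. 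If $|G_\alpha|=21$, then $G_\alpha=\langle x\rangle\rtimes\langle y\rangle$ with $o(y)=3$, and all $14$ elements of order $3$ in $G_\alpha$ are $(3,3)$-cycles; since the cycle type $(3,3,1,1)$ has repeated parts, these lie in a single $\Alt_8$-conjugacy class. The fixed-point formula then gives $|\FO(y)|=|y^G\cap G_\alpha|\cdot|C_G(y)|/|G_\alpha|=14\cdot 18/21=12$, contradicting triviality of four-point stabilizers. Therefore $|G_\alpha|=7$ and $|\Omega|=2880$, as claimed.

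The main obstacle will be the $p=7$ case: establishing $|N_{\Alt_8}(\langle x\rangle)|=21$ and then carrying out the fixed-point count that rules out $|G_\alpha|=21$, for which the fact that the $(3,3,1,1)$-conjugacy class does not split in $\Alt_8$ is crucial.
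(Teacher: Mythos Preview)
Your proof is correct, and the overall skeleton (Lemma~\ref{tollalt} to split into the two cases, then prime-by-prime analysis) matches the paper. The details, however, differ in two places worth noting.

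For the case where $G_\alpha$ contains a Sylow $2$-subgroup, you invoke Lemma~\ref{odd-even} directly: since $|\Omega|$ is odd and $G$ is simple, $G$ must appear on Bender's list, and $\Alt_8$ does not. The paper instead argues hands-on: a double transposition forces $32\mid |G_\alpha|$, the normalizer of a fours group forces $3\in\pi(G_\alpha)$, and then one is pushed into the maximal subgroups $2^3{:}\PSL_3(2)$ or $2^4{:}(\Sym_3\times\Sym_3)$, where an involution has at least six fixed points. Your shortcut is cleaner, though it leans on the classification of groups with a strongly embedded subgroup, which the paper's local argument avoids.

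For $p=7$ the arguments diverge more substantially. The paper uses the congruence $|\Omega|\equiv 3\pmod 7$ to cut the list of possible odd $|G_\alpha|$ down immediately (in particular $|G_\alpha|=21$ is excluded because $960\equiv 1\pmod 7$), leaving only $|G_\alpha|\in\{7,105\}$, and the latter is impossible in $\Alt_8$. You bypass the congruence and instead show that any odd-order subgroup of $\Alt_8$ containing a Sylow $7$-subgroup has order $7$ or $21$ (this follows since a minimal normal subgroup would be centralized by the $7$-element, producing an element of order $21$ or $35$, neither of which exists in $\Alt_8$), and then eliminate $21$ via the permutation-character formula $\pi(y)=|y^G\cap G_\alpha|\cdot|C_G(y)|/|G_\alpha|=14\cdot 18/21=12$. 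Both endgames work; yours trades an arithmetic observation for a structural one plus a fixed-point count.
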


\begin{proof}
Let $\alpha \in \Omega$ and $x \in G_\alpha$ be such that
$|\FO(x)|=3$. First we suppose that $G_\alpha$ has odd order and we
choose $x$ of prime order $p$. Then $p \not \vdash 2$ by Lemma
\ref{key}. If $p=7$, then $|\Omega| \equiv 3$ modulo $7$ and
$|\Omega| \ge 5$, so in this case we only have the possibilities
that $|G_\alpha|=45$ or $|G_\alpha|=2880$. The former is impossible,
whereas the latter yields our example.

As $5 \vdash 2$ and $3 \vdash 2$, we see that $p \neq 5$ and $p \neq
3$, so this case is finished.

Using Lemma \ref{tollalt} we now have that $G_\alpha$ contains a
double transposition $t$. Then Lemma \ref{tuedel} yields that $32$
divides $|G_\alpha|$. Now we look at the normalizer of a fours group
in $G_\alpha$ and deduce that $3 \in \pi(G_\alpha)$. This gives two
possibilities: $G_\alpha$ is contained in a subgroup isomorphic to
$2^3:\PSL_3(2)$ or to $2^4:(\Sym_3 \times \Sym_3)$. Hence $|\Omega|
\in \{35, 105, 210\}$. However in all of these cases the involutions
have at least six fixed points, so this does not occur.
\end{proof}

\begin{cor}\label{A8comp}
Suppose that $G \cong \Sym_8$. Then there is no set $\Omega$ such
that $(G, \Omega)$ satisfies Hypothesis \ref{3fix}.
\end{cor}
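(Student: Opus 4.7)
My plan is to mimic the proof of Corollary \ref{A7comp}, using Lemma \ref{hypcomp} to pass to the action of $E := F^*(G) \cong \Alt_8$ on an orbit, and then exploiting Lemma \ref{A8} together with small index/centralizer calculations inside $\Sym_8$ to eliminate the two possibilities for $|G_\alpha|$. Assuming for contradiction that a suitable $\Omega$ exists, I first observe that Lemma \ref{hypcomp}(a) cannot hold since $\Alt_8$ is not isomorphic to $\PSL_2(2^p)$ for any prime $p$ (a quick comparison of orders rules this out). Therefore (b) holds, giving some $\alpha \in \Omega$ such that $(E,\alpha^E)$ satisfies Hypothesis \ref{3fix}. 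Lemma \ref{A8} applied to this restricted action pins down $|\alpha^E| = 2880$ and $E_\alpha = G_\alpha \cap E \in \sy_7(E)$, so $|E_\alpha| = 7$ and hence $|G_\alpha| \in \{7,14\}$ since $|G:E|=2$.

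If $|G_\alpha| = 7$, then $G_\alpha = E_\alpha$. The three fixed points of $E_\alpha$ inside $\alpha^E$ together with Hypothesis \ref{3fix} force $|\FO(E_\alpha)| = 3$, so Lemma \ref{tuedel}(c) bounds $|N_G(E_\alpha) : N_{G_\alpha}(E_\alpha)|$ by $3$. Direct inspection in $\Sym_8$ shows that the normalizer of a Sylow $7$-subgroup is the Frobenius group of order $42$, while $N_{G_\alpha}(E_\alpha) = G_\alpha$ has order $7$, giving index $6$ -- a contradiction.

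If instead $|G_\alpha| = 14$, then $G_\alpha = E_\alpha \rtimes \langle t \rangle$ for some involution $t \in G \setminus E$ that inverts $E_\alpha$. I would then compute $|C_G(t)|$ in $\Sym_8$: since $E_\alpha$ is generated by a $7$-cycle that fixes a single point of $\{1,\dots,8\}$, the inverting involution $t$ necessarily has cycle type $(2^3,1^2)$, so $|C_G(t)| = 96$. Applying Lemma \ref{tuedel} to $\langle t \rangle$ yields $|C_G(t) : C_{G_\alpha}(t)| \le 3$, hence $|C_{G_\alpha}(t)| \ge 32$, which is impossible because $|G_\alpha| = 14$. This final contradiction completes the proof. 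The main obstacle is identifying the correct cycle structure of $t$ in $\Sym_8$ so that its centralizer order can be computed; once this is in hand, Lemma \ref{tuedel} finishes the job in both subcases.
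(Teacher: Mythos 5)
Your argument is correct and follows essentially the same route as the paper: reduce via Lemma \ref{hypcomp} and Lemma \ref{A8} to $E_\alpha \in \sy_{7}(E)$, use Lemma \ref{tuedel} on the normalizer $N_G(E_\alpha)$ of order $42$ to force an involution into $G_\alpha$ (equivalently, to kill the case $|G_\alpha|=7$), and then use Lemma \ref{tuedel} on that involution's centralizer of order $96$ to contradict $|G_\alpha|\le 14$. You have merely made explicit the numerical computations that the paper's two-line proof leaves implicit.
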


\begin{proof}
Let $E:=F^*(G) \cong \A_8$. First we note that Lemma
\ref{hypcomp}~(c) holds and we let $\alpha \in \Omega$ be such that
$(E, \alpha^E)$ satisfies Hypothesis \ref{3fix}. Moreover $G_\alpha
\cap E$ is a Sylow $7$-subgroup of $G$.

Thus, as $|G:E|=2$, Lemma \ref{tuedel} implies that $G_\alpha$
contains an involution $t$. However then Lemma \ref{tuedel} implies
that $C_G(t) \cap E_\alpha \neq 1$, contradicting the fact that
$E_\alpha \in \sy_7(G)$.
\end{proof}

\begin{lemma}\label{A9}
Suppose that $G$ is isomorphic to $\Alt_9$ or $\Sym_9$. Then there is no set $\Omega$ such that
$(G, \Omega)$ satisfies Hypothesis \ref{3fix}.
\end{lemma}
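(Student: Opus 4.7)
The plan is to first reduce the $\Sym_9$ case to the $\Alt_9$ case via Lemma \ref{hypcomp}: since $F^*(\Sym_9) = \Alt_9$ is not of the form $\PSL_2(q)$, option (a) of that lemma cannot hold, so we would obtain $\alpha \in \Omega$ with $(\Alt_9, \alpha^{\Alt_9})$ satisfying Hypothesis \ref{3fix}, and it suffices to rule out $G = \Alt_9$. By Lemma \ref{tollalt}, $G_\alpha$ either has odd order or contains a Sylow $2$-subgroup of $G$.

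In the odd case, I would first compute that the normalizer of a Sylow $5$-subgroup $P_5$ in $\Alt_9$ has order $240$ (arising from $F_{20} \times \Sym_4 \leq \Sym_9$ intersected with $\Alt_9$) and the normalizer of a Sylow $7$-subgroup $P_7$ has order $42$. The Hall $2'$-subgroup of $N_G(P_5)$ has order $15$, so if $5 \in \pi(G_\alpha)$, Lemma \ref{tuedel}(c) would force $|N_G(P_5) : N_{G_\alpha}(P_5)| \geq 240/15 = 16 > 3$, a contradiction. Hence $\pi(G_\alpha) \subseteq \{3, 7\}$. If $7 \in \pi(G_\alpha)$, then Lemmas \ref{sylow}(c) and \ref{tuedel}(c) force $G_\alpha$ to contain the Frobenius group of order $21$ (the odd part of $N_G(P_7)$); since $|G_\alpha|$ divides $567 = 3^4 \cdot 7$ and is a multiple of $21$, we obtain $|G_\alpha| \in \{21, 63, 189, 567\}$. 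An order-$7$ element in $G_\alpha$ fixing three points forces $|\Omega| \equiv 3 \pmod 7$, ruling out all cases except $|G_\alpha| = 63$. But a subgroup of order $63$ in $\Alt_9$ would have $P_7$ normal with $C_G(P_7) = P_7$, forcing a faithful action of a group of order $9$ on $P_7$, which is impossible since $\mathrm{Aut}(P_7)$ has order $6$. If $7 \notin \pi(G_\alpha)$, then $G_\alpha$ is a nontrivial $3$-subgroup of the Sylow $3$-subgroup $P \cong C_3 \wr C_3$, which is of maximal class. Applying the orbit-counting formula $|\FO(x)| = |C_G(x)| \cdot |x^G \cap G_\alpha| / |G_\alpha|$ to each nontrivial three-element $x \in G_\alpha$ (with $|C_G(x)| \in \{1080, 54, 81\}$ according to whether $x$ is a single, double, or triple $3$-cycle) and counting three-elements of each type inside candidate subgroups of $P$ of orders $3, 9, 27$, or $81$, one verifies that some nontrivial element of $G_\alpha$ always has more than three fixed points on $\Omega$, violating the triviality of four-point stabilizers.

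In the even case, I would first check from the list of maximal subgroups of $\Alt_9$, namely $\Alt_8$, $\Sym_7$, $(\Sym_3 \wr \Sym_3) \cap \Alt_9$, $\PSL_2(8){:}3$, and $\Alt_7$ in its $15$-point action, that only $\Alt_8$ has order divisible by $|\Alt_9|_2 = 64$. Hence $G_\alpha \leq \Alt_8$. If $G_\alpha = \Alt_8$, then $\Omega$ is the natural $9$-point action and any $5$-cycle fixes four points, contradicting Hypothesis \ref{3fix}. Otherwise $G_\alpha$ is a proper overgroup of a Sylow $2$-subgroup of $\Alt_8 \cong \PSL_4(2)$, and is therefore contained in one of the maximal parabolic subgroups $P_1, P_2, P_3$ of orders $1344, 576, 1344$. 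For each such $G_\alpha$, I would determine the fixed-point count of the central involution of a Sylow $2$-subgroup (of cycle type $(2^4, 1)$ on nine letters) by decomposing $\Omega$ along the nine $\Alt_8$-orbits and transporting the corresponding $\Alt_8$-calculations to each orbit. The main obstacle is completing this case analysis uniformly: explicitly exhibiting, in each parabolic subcase, some element of $G_\alpha$ with at least four fixed points on $\Omega$, thereby violating Hypothesis \ref{3fix}.
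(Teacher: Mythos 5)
Your overall skeleton (reduce $\Sym_9$ to $\A_9$ via Lemma \ref{hypcomp}, then split via Lemma \ref{tollalt} into the odd-order and Sylow-$2$ cases) matches the paper's, and your elimination of $5$ from $\pi(G_\alpha)$ in the odd case is sound. But there are genuine gaps. First, in the subcase $7 \in \pi(G_\alpha)$ you assert that ``an order-$7$ element in $G_\alpha$ fixing three points forces $|\Omega|\equiv 3 \pmod 7$, ruling out all cases except $|G_\alpha|=63$.'' Hypothesis \ref{3fix} only guarantees that \emph{some} prime-order element fixes three points, and here that element may have order $3$; the $7$-element itself need only fix $1$, $2$ or $3$ points, so a priori $|\Omega|\equiv 1,2$ or $3 \pmod 7$. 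The congruence $|\Omega|\equiv 2$ leaves $|G_\alpha|=21$ alive, and killing it needs an extra argument (Lemma \ref{tuedel}~(a) disposes of one fixed point, while for $G_\alpha=F_{21}$ one must compute that a double $3$-cycle fixes $54\cdot 14/21=36$ points of $\Omega$). Second, and more seriously, the even-order case is not actually carried out: you yourself flag that ``the main obstacle is completing this case analysis uniformly'' over the parabolic overgroups in $\A_8$. That computation does succeed --- for instance, for $G_\alpha=2^3{:}\PSL_3(2)$ the $49$ fixed-point-free involutions of that affine group give an involution with $192\cdot 49/1344=7$ fixed points --- but as written the proof is incomplete; your list of maximal subgroups of $\A_9$ is also inaccurate (there is no maximal $\A_7$, and several genuine maximals are missing), although the conclusion that $\A_8$ is the unique odd-index maximal happens to be correct. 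Likewise the $3$-group subcase is a described plan rather than a verification.

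It is worth contrasting this with the paper's route, which avoids almost all of this arithmetic with one structural observation $(\ast)$: $G_\alpha$ contains no $3$-cycle (else Lemma \ref{tuedel} forces an $\A_6$, and then far too much of $|G|$, into $G_\alpha$). In the odd case the only surviving prime is $3$ and each cycle type of $3$-element drags either a $3$-cycle or an even-order subgroup into $G_\alpha$; in the even case the centralizer of a double transposition contains a perfect $\A_5$, which Lemma \ref{tuedel} forces into $G_\alpha$ together with its $3$-cycles, again contradicting $(\ast)$. If you keep your computational approach you must close the $|G_\alpha|=21$ loophole and actually execute the fixed-point counts in the $3$-group and parabolic subcases; otherwise proving the ``no $3$-cycle'' statement first is the shorter path.
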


\begin{proof}
First suppose that $G \cong \Alt_9$ and assume that $\Omega$ is a
set such that $(G, \Omega)$ satisfies Hypothesis \ref{3fix}. We let
$\alpha \in \Omega$ and begin as follows:

\smallskip

$(\ast)$ \hspace{1cm} $G_\alpha$ does not contain a $3$-cycle.

\begin{proof}
Assume otherwise. Then $G_\alpha$ contains an $\A_6$ (by Lemma
\ref{tuedel}), in particular Lemma \ref{tollalt} yields that $G$
contains involutions from both conjugacy classes. Then Lemma
\ref{tuedel} implies that $2^5 \cdot 3^3 \cdot 5$ divides
$|G_\alpha|$. But there is no maximal subgroup of $G$ that could
contain $G_\alpha$ now.
\end{proof}

Suppose first that $G_\alpha$ has odd order. Let $x \in G_\alpha$ be
of prime order $p$ and such that $|\FO(x)|=3$. We will use that $p
\not \vdash 2$ by Lemma \ref{key}. Then $p \neq 7$ because $7 \vdash
2$ and similarly $p \neq 5$. Hence $p=3$ and $x$ is not the product
of two $3$-cycles, by Lemma \ref{tuedel}. If $x$ is the product of
three $3$-cycles, then $x$ is $3$-central and therefore $G_\alpha$
contains a subgroup of order $3^3$. In particular $G_\alpha$
contains a $3$-cycle, contrary to $(\ast)$.

Lemma \ref{tollalt} yields that $G_\alpha$ contains a double
transposition. Applying Lemma \ref{tuedel} to its centralizer gives
that $G_\alpha$ has a subgroup isomorphic to $\A_5$, contrary to
$(\ast)$.

Now suppose that $G \cong \Sym_9$ and let $E:=F^*(G) \cong \Alt_9$.
Then by Lemma \ref{hypcomp} there is some $\alpha \in \Omega$ such
that $(E, \alpha^E)$ also satisfies Hypothesis \ref{3fix}. But this
is impossible by the previous paragraph.
\end{proof}

\begin{lemma}\label{A10}
Suppose that $G$ is isomorphic to $\Alt_{10}$ or $\Sym_{10}$.
Then there is no set $\Omega$ such that $(G, \Omega)$ satisfies Hypothesis \ref{3fix}.
\end{lemma}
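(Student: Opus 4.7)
The plan is to mirror the structure of Lemma~\ref{A9}. If $G \cong \Sym_{10}$, apply Lemma~\ref{hypcomp} with $F^*(G) = \Alt_{10}$ as the unique component: since $\Alt_{10}$ is not isomorphic to $\mathrm{PSL}_2(q)$ for any prime power $q$, case (a) is excluded, and case (b) provides some $\alpha \in \Omega$ for which $(\Alt_{10}, \alpha^{\Alt_{10}})$ satisfies Hypothesis~\ref{3fix}. Hence it suffices to derive a contradiction assuming $G \cong \Alt_{10}$. The central preliminary result is the analogue of $(\ast)$ from Lemma~\ref{A9}: \emph{$G_\alpha$ contains no $3$-cycle.}

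To prove $(\ast)$, suppose $(1,2,3) \in G_\alpha$. The normalizer $H := N_G(\langle (1,2,3)\rangle) = (\Sym_3 \times \Sym_7) \cap \Alt_{10}$ is the stabilizer of $\{1,2,3\}$, maximal in $\Alt_{10}$ of order $15120$. Since $\Alt_7$ is simple and $\Sym_3$ acts on $\langle(1,2,3)\rangle$ by inversion, a short computation shows that $H$ has a unique index-$2$ subgroup, namely $\langle(1,2,3)\rangle \times \Alt_7$, and no subgroup of index $3$. Thus Lemma~\ref{tuedel}(c) forces $G_\alpha \cap H \in \{H, \langle(1,2,3)\rangle \times \Alt_7\}$. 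If $G_\alpha \supseteq H$ then $G_\alpha = H$ by maximality, $|\Omega| = 120$, and a direct count shows that $(4,5)(6,7)$ fixes $32$ three-subsets of $\{1,\ldots,10\}$. Otherwise $G_\alpha = \langle(1,2,3)\rangle \times \Alt_7$ (since $\Alt_7$ is characteristic in this subgroup, any group of $\Alt_{10}$ containing it lies in $H$), so $|\Omega| = 240$, and the formula $|\FO(g)| = |C_G(g)|\cdot|g^G \cap G_\alpha|/|G_\alpha|$ gives $|\FO((4,5)(6,7))| = 2880 \cdot 105 / 7560 = 40$. Either way Hypothesis~\ref{3fix} fails.

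If $|G_\alpha|$ is odd, note $3 \vdash 2$ and $5 \vdash 2$ (from centralizers of a $3$-cycle and a $5$-cycle) and $7 \vdash 3$ (since $|N_G(P_7)| = 126$ is divisible by $9$); hence $5 \to 2$ and $7 \to 2$, so Lemma~\ref{key} yields $\pi(G_\alpha) \subseteq \{3\}$. Choose $x \in G_\alpha$ of order $3$; by $(\ast)$ it is a double or triple $3$-cycle. If $x = (1,2,3)(4,5,6)$ then $|N_G(\langle x\rangle)| = 432$ whose odd part is $27$, contradicting the lower bound $|N_{G_\alpha}(\langle x\rangle)| \geq 144$ from Lemma~\ref{tuedel}(c). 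If $x = (1,2,3)(4,5,6)(7,8,9)$ then $|N_G(\langle x\rangle)| = 162$, and the odd divisor $|N_{G_\alpha}(\langle x\rangle)|$ must be at least $54$, so it equals $81$; then $G_\alpha$ contains a Sylow $3$-subgroup of $G$, which has wreath product structure $C_3 \wr C_3$ and visibly contains the $3$-cycle $(1,2,3)$, contradicting $(\ast)$.

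If $|G_\alpha|$ is even, Lemma~\ref{tollalt} places a Sylow $2$-subgroup $S$ of $G$ inside $G_\alpha$, and one checks that $S$ always contains a double transposition $t$ (for example, the Sylow $2$-subgroup built as $(\Sym_2 \wr \Sym_2 \wr \Sym_2 \times \Sym_2) \cap \Alt_{10}$ visibly contains $(1,2)(3,4)$). Applying Lemma~\ref{tuedel}(c) to $\langle t\rangle$, $G_\alpha$ contains an index-$\leq 3$ subgroup of $C_G(t) = (D_8 \times \Sym_6) \cap \Alt_{10}$ of order $2880$. Using the exact sequence $1 \to \Alt_6 \to C_G(t) \to D_8 \to 1$ and the simplicity of $\Alt_6$, every index-$\leq 3$ subgroup contains the normal $\Alt_6$ factor acting on the six fixed points of $t$; hence $G_\alpha$ contains a $3$-cycle such as $(5,6,7)$, contradicting $(\ast)$. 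The main technical obstacle is the explicit low-index subgroup analysis in $\Alt_{10}$, in particular in $H$, in $N_G(\langle x\rangle)$ for $x$ a triple $3$-cycle, and in $C_G(t)$ for $t$ a double transposition, to force the claimed containments.
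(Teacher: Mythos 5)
Your proof is correct in substance and follows the paper's overall skeleton: reduce $\Sym_{10}$ to $\Alt_{10}$ via Lemma \ref{hypcomp}, establish that $G_\alpha$ contains no $3$-cycle, then split into the odd-order and Sylow-$2$ cases via Lemma \ref{tollalt}. Your proof of $(\ast)$, however, takes a genuinely different route. The paper deduces from a $3$-cycle in $G_\alpha$ that $G_\alpha$ contains a copy of $\Alt_7$, and then invokes Lemma 3.5 of \cite{MW} (there is no nonregular transitive $\Alt_7$-action in which every nontrivial element fixes at most two points), applied to the action of that $\Alt_7$ on an orbit not containing $\alpha$. You instead pin $G_\alpha$ down to one of two explicit subgroups and exhibit a double transposition with $32$ resp.\ $40$ fixed points; this is more self-contained and avoids the external reference, at the cost of an explicit low-index subgroup analysis. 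Your treatment of the odd-order case via the normalizer orders $432$ and $162$, and of the even-order case via $C_G(t)$ and the normal $\Alt_6$, matches the paper's logic in spirit, with concrete computations replacing the paper's appeals to $3$-centrality and to $p \not\vdash 2$.

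One intermediate claim is false, though harmlessly so: the group $H = (\Sym_3\times\Sym_7)\cap\Alt_{10}$ \emph{does} have subgroups of index $3$. Since $1\times\Alt_7$ is simple and normal in $H$, any transitive action of $H$ on three points factors through $H/(1\times\Alt_7)\cong\Sym_3$, and the preimages of the three reflections are index-$3$ subgroups, e.g.\ $(\langle(1,2)\rangle\times\Sym_7)\cap\Alt_{10}$ of order $5040$. Your conclusion that $G_\alpha\cap H\in\{H,\ \langle(1,2,3)\rangle\times\Alt_7\}$ nevertheless survives, because none of these index-$3$ subgroups contains $(1,2,3)$, whereas $(1,2,3)\in G_\alpha\cap H$ by assumption; you should replace the assertion ``no subgroup of index $3$'' with this observation.
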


\begin{proof}
Assume otherwise and let $\alpha \in \Omega$.

As in the previous lemma, we begin with the case where $G \cong
\A_{10}$. The special role of $3$-cycles will be key once more.

\smallskip

$(\ast)$ \hspace{1cm} $G_\alpha$ does not contain a $3$-cycle.

\begin{proof}
Assume otherwise. Then $G_\alpha$ contains a subgroup $H \cong \A_7$
(by Lemma \ref{tuedel}). In particular $|\Omega| \le 2^4 \cdot 3^2
\cdot 5$. Let $\beta \in \Omega$ be such that $\beta \neq \alpha$ and
let $\Delta:=\beta^H$. In its action on $\Delta$, every nontrivial
element of $H$ has at most two fixed points, and moreover $H$ does
not act regularly. But we proved in Lemma 3.5 in \cite{MW} that
$\A_7$ does not allow such an action. Hence this is impossible.
\end{proof}

Now we suppose that $G_\alpha$ has odd order and we let $x \in
G_\alpha$ be of prime order $p$. Then $p \not \vdash 2$ by Lemma
\ref{key}. In particular $p \neq 7$ and $p \neq 5$. If $p=3$, then
we first look at the case where $x$ is a product of three
$3$-cycles. Here $x$ is $3$-central and therefore $G_\alpha$
contains a subgroup of order $3^3$. In particular $G_\alpha$
contains a $3$-cycle, contrary to $(\ast)$.

If $x$ is the product of two $3$-cycles, then Lemma \ref{tuedel}
yields that $G_\alpha$ has even order, contrary to our assumption in
this case. By $(\ast)$ $x$ is not a $3$-cycle. So this case is
finished and by Lemma \ref{tollalt} it remains to consider the case
where $G_\alpha$ contains a Sylow $2$-subgroup of $G$. Then Lemma
\ref{tuedel}, applied to a double transposition, yields that
$G_\alpha$ contains an $\A_6$. But this is impossible by $(\ast)$.

If $G \cong \Sym_9$, then the previous paragraph and Lemma \ref{hypcomp} give the result.
\end{proof}

\begin{lemma} \label{no3}
Suppose that $n \ge 11$, that $G \cong \Sym_n$ or $\A_n$ and that
$\Omega$ is a set such that $(G,\Omega)$ satisfies Hypothesis
\ref{3fix}.

Then the order of a point stabilizer in $G$ is not divisible by $3$.
\end{lemma}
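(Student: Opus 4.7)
The plan is to assume $3 \mid |G_\alpha|$ and derive a contradiction, by first showing that $G_\alpha$ must contain a 3-cycle and then iterating the centralizer argument on 3-cycles. First, I reduce to the case $G \cong \A_n$ simple: if $G \cong \Sym_n$, Lemma \ref{hypcomp} applied to $E = F^*(G) \cong \A_n$ (noting $E \not\cong \PSL_2(2^p)$ for $n \geq 7$) yields some $\alpha \in \Omega$ for which $(E, \alpha^E)$ satisfies Hypothesis \ref{3fix}, and since $|G_\alpha|/|E_\alpha| \in \{1, 2\}$ the conclusion $3 \nmid |E_\alpha|$ transfers to $G_\alpha$.

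The reusable step I would establish is: if $G_\alpha$ contains any 3-cycle $c$, then $C_G(c) = \langle c\rangle \times \A_{n-3}$ with $\A_{n-3}$ acting on the $n-3$ fixed points of $c$; Lemma \ref{tuedel}(c) applied to $\langle c\rangle$ gives $[C_G(c) : C_{G_\alpha}(c)] \leq 3$, and since $\A_{n-3}$ is normal in $C_G(c)$ (as a direct factor) with no proper subgroup of index $\leq 3$ for $n \geq 8$, we conclude $\A_{n-3} \leq G_\alpha$. Picking a second 3-cycle $c' \in \A_{n-3}$ and repeating the argument produces another copy of $\A_{n-3}$ inside $G_\alpha$, whose support together with that of the first copy has intersection of size $\geq n-6 \geq 5$ and union equal to $\{1,\ldots,n\}$; these two copies generate $\A_n$, so $G_\alpha = G$, contradicting $|\Omega| \geq 5$.

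By Lemma \ref{2isttoll}, either $|G_\alpha|$ is odd or $G_\alpha$ contains a Sylow $2$-subgroup of $G$ (the dihedral/semidihedral alternative is excluded for $n \geq 11$). In the latter case, $G_\alpha$ contains a double transposition $t$; applying Lemma \ref{tuedel} to $\langle t\rangle$ with $\A_{n-4}$ normal in $C_G(t)$ and $n-4 \geq 5$ forces $\A_{n-4} \leq G_\alpha$, which contains a 3-cycle, and the reusable step finishes this case.

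In the remaining case $|G_\alpha|$ odd, let $x \in G_\alpha$ have order 3 with cycle type $(3^k, 1^{n-3k})$. Lemma \ref{tuedel} gives $[C_G(x) : C_{G_\alpha}(x)] \leq 3$, and combined with $|C_{G_\alpha}(x)|$ odd this forces $|C_G(x)|_2 \leq 2$. Using that the $\Sym_n$-class $(3^k, 1^{n-3k})$ does not split in $\A_n$ (its parts are not all distinct), one has $|C_{\A_n}(x)|_2 = \tfrac{1}{2}|k!|_2 \cdot |(n-3k)!|_2$, and a direct check shows this exceeds 2 in every instance with $n \geq 11$, with the sole exceptions $(k, n) \in \{(3, 11), (3, 12)\}$. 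These two exceptional cases are the main obstacle: here $|C_G(x)| \in \{162, 486\}$ has 2-part 2, so the odd-order constraint forces $|C_{G_\alpha}(x)|$ to equal the odd part of $|C_G(x)|$, which is a Sylow 3-subgroup of $C_G(x)$ isomorphic to $\Z_3 \wr \Z_3$ or $(\Z_3 \wr \Z_3) \times \Z_3$. In both situations this Sylow contains the base $(\Z_3)^3 = \langle c_1, c_2, c_3\rangle$ of the wreath (where $x = c_1 c_2 c_3$), so the 3-cycle $c_1$ lies in $G_\alpha$; one final application of the reusable step produces $\A_{n-3} \leq G_\alpha$ of even order, contradicting $|G_\alpha|$ odd.
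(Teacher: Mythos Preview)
Your proof is correct and follows essentially the same strategy as the paper's: reduce to showing a $3$-cycle in $G_\alpha$ forces $G_\alpha \supseteq \A_n$, split via Lemma~\ref{2isttoll} into the Sylow-$2$ case (handled through a double transposition) and the odd-order case, and in the latter pin down the exceptional configurations $(k,n)\in\{(3,11),(3,12)\}$ before extracting a $3$-cycle from the Sylow $3$-subgroup of $C_G(x)$. The only differences are cosmetic: you reduce $\Sym_n$ to $\A_n$ upfront via Lemma~\ref{hypcomp} (arguably cleaner than the paper, whose appeal to Lemma~\ref{tollalt} is literally stated only for $\A_n$), and you isolate the exceptional $(k,n)$ by an explicit $2$-part count rather than the paper's structural detection of fours groups and copies of $\A_5$ inside $C_G(x)$.
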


\begin{proof}
Assume otherwise and let $\alpha \in \Omega$. We show that our
hypothesis implies that $G_\alpha$ contains a $3$-cycle. Throughout
we use that, if $m \ge 5$, then $\A_m$ does not have subgroups of
index $2$ or $3$. This will play a role when applying Lemma
\ref{tuedel}.

We first note that $G_\alpha$ contains a $3$-cycle if it contains a
double transposition, by Lemma \ref{tuedel}, because the centralizer
of a double transposition in $G$ contains $\A_7$. Thus it is left to
prove our statement in the case where $G_\alpha$ has odd order, by
Lemma \ref{tollalt}.

Let $x \in G_\omega$ be an element of order $3$ and suppose that $k
\ge 2$ is such that $x$ is a product of $k$ cycles of length $3$. If
$n-3 \cdot k \ge 4$, then $C_G(x)$ contains a fours group or a
subgroup isomorphic to $\A_5$, which is impossible. Therefore $n- 3
\cdot k \le 3$. The structure of $C_{\A_n}(x)$ is $((3^k:\sym_k)
\times \sym_{n-3 \cdot k}) \cap \A_n$ and thus, if $k \ge 4$, then
again $C_G(x)$ contains a fours group. Thus $k \le 3$ and we obtain
that $n \le 3+3k \le 12$.

If $n=12$, then $C_G(x)$ contains a subgroup of structure
$((3^3:\sym_3) \times \sym_3) \cap \A_{12}$ and hence Lemma
\ref{tuedel} implies that $G_\alpha$ contains a $3$-cycle or a
double $3$-cycle. In the second case we change $x$ to such a double
$3$-cycle. Its centralizer contains an $\A_5$, so this is a
contradiction. If $n=11$, then $C_G(x)$ still contains a subgroup of
structure $(3^3:\sym_3)$ and thus, with Lemma \ref{tuedel}, it
follows once more that $G_\alpha$ contains a $3$-cycle.

As $G$ contains a subgroup isomorphic to $\A_{11}$, it is ninefold
transitive, and so we may suppose that $x = (1,2,3)$. It follows
from Lemma \ref{tuedel} that $G_\alpha$ contains a subgroup
isomorphic to $\A_{n-3}$ and hence, without loss, the $3$-cycle
$y:=(4,5,6)$. The same argument yields that $C_G(y) \le G_\alpha$. Now
we deduce that $G_\alpha \geq \< C_G(x), C_G(y)\> \cong \A_n$, which
contradicts the fact that $G$ acts faithfully on $\Omega$.
\end{proof}

\begin{thm}\label{bigalt}
Suppose that $n \geq 11$ and that $G \cong \A_n$ or $\Sym_n$. Then
there is no set $\Omega$ such that $(G,\Omega)$ satisfies Hypothesis
\ref{3fix}.
\end{thm}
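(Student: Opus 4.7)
The plan is first to reduce to $G \cong \Alt_n$ using Lemma \ref{hypcomp}: if $G \cong \Sym_n$ with $n \geq 11$, then $E = F^*(G) \cong \Alt_n$ is neither $\Alt_5$ nor a $\PSL_2(q)$ with $q$ a power of $2$, so Lemma \ref{hypcomp}(b) produces some $\alpha \in \Omega$ with $(E, \alpha^E)$ satisfying Hypothesis \ref{3fix}. It therefore suffices to reach a contradiction under the assumption $G = \Alt_n$.

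So assume $G = \Alt_n$ and fix $\alpha \in \Omega$. Lemma \ref{no3} gives $3 \nmid |G_\alpha|$, while Lemma \ref{tollalt} says $G_\alpha$ either has odd order or contains a Sylow $2$-subgroup of $G$. In the latter case I pick a double transposition $t \in G_\alpha$; its $G$-centralizer contains a copy of $V \times \Alt_{n-4}$ with $V$ a fours group, and for $n \geq 9$ the group $\Alt_{n-4}$ is simple nonabelian with no proper subgroup of index at most $3$. Lemma \ref{tuedel} then places a subgroup of index at most $3$ of $C_G(t)$ into $G_\alpha$, which must contain $\Alt_{n-4}$, forcing $3 \mid |G_\alpha|$ and contradicting Lemma \ref{no3}. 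Hence $G_\alpha$ has odd order, and every prime in $\pi(G_\alpha)$ is at least $5$.

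For each such $p$, Lemma \ref{sylow}(c) places a Sylow $p$-subgroup $P$ of $G$ inside $G_\alpha$; I choose a single $p$-cycle $\sigma \in P$ (any Sylow $p$-subgroup of $\Alt_n$ contains one) and set $X := \langle \sigma \rangle$. By Hypothesis \ref{3fix} and Lemma \ref{tuedel} one has $|N_G(X) : N_{G_\alpha}(X)| \leq 3$, and combining this with $|N_{G_\alpha}(X)|$ being odd shows that the $2$-part of $|N_G(X)|$ is at most $2$. But $N_{\Sym_n}(X) = (\Z_p \rtimes \Z_{p-1}) \times \Sym_{n-p}$, and for $n - p \geq 4$ the factor $\Sym_{n-p}$ alone contributes $2$-part at least $8$ to $|N_{\Sym_n}(X)|$, which persists as $2$-part at least $4$ in $|N_{\Alt_n}(X)|$. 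Hence every $p \in \pi(G_\alpha)$ lies in $\{n-3, n-2, n-1, n\}$, and the Sylow $p$-subgroup of $G$ is cyclic of prime order $p$.

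Among four consecutive integers $\geq 8$ at most two are prime, and if $p_1 < p_2$ are two such primes then $p_2 - p_1 \leq 3$ forces $p_1 \nmid p_2 - 1$; therefore the only group of order $p_1 p_2$ is cyclic, but $\Alt_n$ has no element of order $p_1 p_2$ because $p_1 + p_2 > n$ rules out disjoint $p_1$- and $p_2$-cycles and $p_1 p_2 > n$ rules out a single $p_1 p_2$-cycle. Thus $|G_\alpha| = p$ for some prime $p \in \{n-3, n-2, n-1, n\}$ with $p \geq 11$, and $G_\alpha = \langle x \rangle$ for a $p$-cycle $x$. Plugging into the standard formula $|\FO(x)| = |G_\alpha \cap x^G| \cdot |C_G(x)| / |G_\alpha|$ and running through the four subcases $n - p \in \{0, 1, 2, 3\}$ (keeping in mind that the $\Sym_n$-class of $x$ splits in $\Alt_n$ precisely when $n - p \leq 1$), one finds $|\FO(x)| \geq (p-1)/2 \geq 5$ in every subcase, contradicting Hypothesis \ref{3fix}. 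I expect the main obstacle to be the normalizer analysis in paragraph three, which must rule out every prime $p \leq n - 4$ by a single calculation uniform in $n$ and $p$.
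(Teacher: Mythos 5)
Your proposal is correct, and its skeleton coincides with the paper's: reduce to odd-order point stabilizers using Lemma \ref{tollalt} together with a double transposition whose centralizer contains a large alternating group (the paper uses $\A_7 \le C_G(t)$, you use $\A_{n-4}$ -- same effect via Lemma \ref{no3}); then show that every prime $p \in \pi(G_\alpha)$ satisfies $n-p \le 3$ by exhibiting too much $2$-part near a $p$-cycle in $G_\alpha$ (the paper bounds the $2$-part of $C_G(y)$, you bound that of $N_G(\langle\sigma\rangle)$ -- equivalent computations). The genuine divergence is the endgame. The paper never pins down $|G_\alpha|$: it observes that $|N_G(\langle x\rangle):\langle x\rangle| \ge (p-1)/2 \ge 5$ forces, via Lemma \ref{tuedel}, a prime $r \le (p-1)/2 \le n/2$ into $\pi(G_\alpha)$, contradicting $n-r\le 3$. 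You instead eliminate the possibility of two primes in $\{n-3,\dots,n\}$ dividing $|G_\alpha|$ (which is clean here, since both Sylow subgroups have prime order, so $|G_\alpha|=p_1p_2$ would itself be cyclic and $\A_n$ has no element of that order), conclude $G_\alpha=\langle x\rangle$ of order $p$, and count fixed points directly via $|\FO(x)| = |G_\alpha\cap x^G|\cdot|C_G(x)|/|G_\alpha| \ge (p-1)/2 \ge 5$. Your route costs one extra step (ruling out the twin-prime configuration) but buys a more self-contained contradiction that does not feed back into the earlier $n-r\le 3$ constraint; the paper's is shorter because the same constraint is recycled. Both are sound.
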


\begin{proof}
Assume that $\Omega$ is a set such that $(G,\Omega)$ satisfies
Hypothesis \ref{3fix}. Let $\alpha \in \Omega$, let $p$ be a prime
and let $x \in G_\alpha$ be a $p$-element. Then there exists $k \in
\N$ such that $x$ is a product of $k$ cycles of length $p$. Now
$C_G(x)$ contains a subgroup of structure $p^k: \Sym_k  \times
\A_{n-p \cdot k}$ if $p$ is odd and of structure $(2^k: \Sym_k
\times \Sym_{n-2\cdot k})\cap \A_n$ otherwise.

Assume that
$n-p \cdot k \geq 3$. Then Lemma
\ref{tuedel} implies that $C_G(x) \cap G_\alpha$ contains a
$3$-cycle, contrary to Lemma \ref{no3}.

Therefore $n-p \cdot k \le 2$, so $11 \le n \le 2+p \cdot k$. First
we assume that $p = 2$. Then $G_\alpha$ contains a double
transposition $t$ by Lemma \ref{tollalt}. As $n \geq 11$, we see
that $C_G(t)$ contains a subgroup isomorphic to $\A_7$, which is a
perfect group of order divisible by $3$. Together with Lemma
\ref{tuedel} this contradicts Lemma \ref{no3}. This means that
$G_\alpha$ has odd order.

With Lemma \ref{no3} it follows that $p > 3$. Then Lemma \ref{sylow}~(c)
implies that $G_\alpha \cap \A_n$ contains a full Sylow $p$-subgroup
$P$ of $G$. Thus $G_\alpha \cap \A_n$ contains a $p$-cycle, say $y$.
If $n-p>3$, then $C_{G_\alpha}(y)$ contains a double transposition by Lemma
\ref{tuedel}, and this contradicts the fact that $G_\alpha$ has odd order.

Therefore $n-p \leq 3$ and this property holds for all prime divisors
$p$ of $|G_\omega|$.

As $n \geq 11$, the above property forces $p \geq 8$. But $p$ is prime and so we have that $p \geq 11$. In
particular $|N_G(\langle x \rangle):\langle x \rangle| \geq
\frac{p-1}{2} \geq 5$ and it follows that $|G_\omega \cap
N_G(\langle x \rangle)|$ is divisible by a prime $r$ such that $2 \cdot r
\leq p-1 \leq n$. This implies $r \leq n-r$. We know that $r \neq 2$
and $r \neq 3$ (by Lemma \ref{no3} and because $G_\alpha$ has odd order), so $5 \leq r \leq n-r$.
We proved in the previous paragraph
that $r$ satisfies $n-r \leq 2$. Now this is impossible.
\end{proof}

The next result collects all the information from this chapter.

\begin{thm}\label{altmain}
Let $n \in \N$ and suppose that $G$ is isomorphic to $\A_n$ or to $\Sym_n$.
If $\Omega$ is a set such that $(G, \Omega)$ satisfies Hypothesis \ref{3fix}, then
$n \in \{5,6,7,8\}$ and one of the following holds:

\begin{enumerate}

\item[(1)]
$n=5$, $G \cong \A_5$,
$|\Omega|=15$ and the action of $G$ is as on the set of cosets of a Sylow $2$-subgroup.

\item[(2)]
$n=5$, $G \cong \Sym_5$, $|\Omega|=5$ and $G$ acts naturally.

\item[(3)]
$n=6$, $G \cong \A_6$, $|\Omega| = 6$ and $G$ acts naturally.

\item[(4)]
$n=6$, $G \cong \A_6$, $|\Omega| = 15$ and $G$ acts as on the set of cosets of a subgroup of order $24$.

\item[(5)]
$n=7$, $G \cong \A_7$, $|\Omega|=15$ and the action of $G$ is
as on the set of cosets of a subgroup isomorphic to $\PSL_2(7)$.

\item[(6)]
$n=7$, $G \cong \A_7$, $|\Omega|=360$ and the action of $G$ is
as on the set of cosets of a Sylow $7$-subgroup.

\item[(7)]
$n=8$, $G \cong \A_8$, $|\Omega|=2880$ and the action of $G$ is
as on the set of cosets of a Sylow $7$-subgroup.

\end{enumerate}
\end{thm}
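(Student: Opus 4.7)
The plan is to assemble Theorem \ref{altmain} directly from the case-by-case analyses established earlier in this section, by splitting according to the value of $n$. Since Hypothesis \ref{3fix} requires $|\Omega| \ge 5$ together with a nontrivial three point stabilizer, the very small cases are handled at once: for $n \le 3$ the group $G$ has order at most six and so cannot satisfy the hypothesis, and for $n = 4$ both $\A_4$ and $\Sym_4$ are ruled out by Lemma \ref{S4}. So I may assume $n \ge 5$.

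For $5 \le n \le 8$, each value is covered by dedicated lemmas that list precisely the transitive actions compatible with Hypothesis \ref{3fix}, and I would simply read off the conclusions. Lemma \ref{S5} handles $n = 5$ and yields cases (1) and (2) of the theorem. For $n = 6$, Lemma \ref{A6} gives cases (3) and (4) when $G \cong \A_6$, and Lemma \ref{A6comp} shows that $G \cong \Sym_6$ is impossible. For $n = 7$, Lemma \ref{A7} gives cases (5) and (6), while Corollary \ref{A7comp} rules out $G \cong \Sym_7$. For $n = 8$, Lemma \ref{A8} gives case (7), and Corollary \ref{A8comp} rules out $G \cong \Sym_8$.

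For the remaining values, $n = 9$ and $n = 10$ are excluded outright by Lemmas \ref{A9} and \ref{A10}, and for every $n \ge 11$ Theorem \ref{bigalt} eliminates both $\A_n$ and $\Sym_n$. The only work left is bookkeeping: checking that the partition by $n$ is exhaustive, that no admissible action has been omitted, and that the seven listed conclusions are stated in a form matching the preceding lemmas. I expect no genuine obstacle here, since all of the real difficulty was absorbed into the earlier case analyses; this final theorem is essentially a summary and serves to package the chapter's output for use in the proofs of Theorems \ref{simple3fp}, \ref{almostsimple3fp}, and \ref{main}.
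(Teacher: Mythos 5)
Your proposal is correct and matches the paper's own proof, which likewise assembles the theorem by citing Lemma \ref{S4} and Theorem \ref{bigalt} to restrict $n$, the lemmas on $\Alt_9$, $\Alt_{10}$ and the symmetric groups to exclude the remaining cases, and Lemmas \ref{S5}, \ref{A6}, \ref{A7} and \ref{A8} to list the surviving actions. If anything, your version is slightly more careful than the paper's, since you explicitly invoke Lemmas \ref{A9} and \ref{A10} for $n=9,10$, which the paper's one-line proof uses implicitly without citing.
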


\begin{proof}
Theorem \ref{bigalt} and Lemma \ref{S4} imply that $n \in \{5,6,7,8\}$.
Moreover $\Sym_7$ and $\Sym_8$ do not occur by Lemmas \ref{A6comp}, \ref{A7comp} and \ref{A8comp}.

The possibilities are then listed in Lemmas \ref{S5}, \ref{A6},  \ref{A7} and \ref{A8}.
\end{proof}

\section{Lie type groups}

We organize our analysis around Lemma \ref{2isttoll} and begin with
the almost simple groups where the normalizers of Sylow
$2$-subgroups are strongly $2$-embedded. Then we consider groups
with dihedral or semidihedral Sylow $2$-subgroups and finally those
groups where we know from the outset that $|G_\omega|$ is odd.

We record a general lemma, which is a consequence of Lemma
\ref{charfrob} for use in this section.

\begin{lemma}\label{2or3}
Suppose that $(G,\Omega)$ satisfies Hypothesis \ref{3fix} with
$|\Omega| \geq 7$ and suppose that $\alpha, \beta, \gamma \in
\Omega$ are pair-wise distinct and such that $1 \neq H := G_\alpha
\cap G_\beta \cap G_\gamma$. Then there exists a subgroup $1  \neq X
\leq H$ and an element $g \in N_G(X) \setminus X$ such that $3$
divides $o(g)$, or $|G_\alpha|$ has even order.
\end{lemma}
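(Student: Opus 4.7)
Assuming $|G_\alpha|$ has odd order, I would split into the two remaining possibilities of Lemma \ref{cases}: either $G_\alpha$ is a Frobenius group of odd order (case (2)), or $|\FO(G_\alpha)| = 3$ (case (3)). Throughout, $H$ is the pointwise stabilizer of $\Delta := \{\alpha,\beta,\gamma\}$, so $\FO(H) = \Delta$ and $H$ is a three point stabilizer.

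In the Frobenius case, running the argument in the proof of Lemma \ref{cases}(2) with our specific $\Delta$ shows that $H$ itself is a Frobenius complement of $G_\alpha$. Since $|\Omega| \geq 7$, Corollary \ref{FrobTo3} applies and yields $|N_G(H):H| = 3$. Setting $X := H$, for any $g \in N_G(H) \setminus H$ the coset $gH$ has order $3$ in the quotient, so $3 \mid o(g)$.

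In the fixed-point case, $\FO(G_\alpha)$ and $\FO(H)$ both have size $3$, the former is contained in the latter, and the latter contains $\Delta$; hence all three coincide and $\FO(G_\alpha) = \Delta$. Then $G_\alpha \leq G_\beta \cap G_\gamma$, and by transitivity these groups have the same order, so $G_\alpha = G_\beta = G_\gamma = H$. Since $G$ is nonregular with a nontrivial three point stabilizer it is not a Frobenius group, so Lemma \ref{charfrob} yields $1 \neq X \leq H$ with $N_G(X) \not\leq G_\alpha$. Such an $X$ has $\FO(X) = \Delta$, so Lemma \ref{tuedel}(c) gives $|N_G(X):N_{G_\alpha}(X)| \in \{2,3\}$.

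The main obstacle is to exclude index $2$, since an element outside $N_{G_\alpha}(X)$ would then only be forced to have even order. This is resolved by observing that the collapse $G_\alpha = G_\beta = G_\gamma$ forces $N_{G_\alpha}(X) = N_{G_\beta}(X) = N_{G_\gamma}(X)$, so all $N_G(X)$-orbits on $\Delta$ have a common size; this size divides $|\Delta|=3$, hence is $1$ or $3$. Size $1$ would place $N_G(X)$ inside $H = G_\alpha$, contrary to the choice of $X$, so the index equals $3$ and any $g \in N_G(X) \setminus N_{G_\alpha}(X)$ satisfies $3 \mid o(g)$; as $X \leq N_{G_\alpha}(X)$, one also has $g \notin X$, completing the proof.
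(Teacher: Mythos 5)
Your argument is correct, but it takes a genuinely different route from the paper's. The paper argues directly and locally: for every $1 \neq X \leq H$ the normalizer $N_G(X)$ acts on $\Delta = \{\alpha,\beta,\gamma\}$ with kernel $N_H(X)$ and index at most $3$ by Lemma \ref{tuedel}~(c); if all these indices equal $1$, then Lemma \ref{charfrob} makes $G$ a Frobenius group, contradicting Hypothesis \ref{3fix}; if some index is $2$, then $|G_\alpha|$ is even; and if some index is $3$, the desired element of order divisible by $3$ appears. You instead assume $|G_\alpha|$ odd and route the two remaining cases of Lemma \ref{cases} through previously established structure: in the Frobenius case you invoke Corollary \ref{FrobTo3} to obtain $|N_G(H):H|=3$ and take $X=H$, while in the case $\FO(G_\alpha)=\Delta$ you exploit the collapse $G_\alpha = G_\beta = G_\gamma = H$ to make all $N_G(X)$-orbits on $\Delta$ have equal length, which excludes the index-$2$ possibility outright and identifies $N_G(X)/N_{G_\alpha}(X) \cong \Z_3$. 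Both arguments are sound; the paper's is shorter and self-contained, whereas yours reuses the earlier machinery and, in the second case, extracts the slightly sharper fact that index $2$ cannot occur at all once the point stabilizers have odd order. Two small points of care: the two odd-order alternatives of Lemma \ref{cases} are not mutually exclusive, and the re-run of its proof that identifies your specific $H$ as a Frobenius complement requires $H < G_\alpha$, i.e. $|\FO(G_\alpha)| \neq 3$, so the case division should really be read as either $|\FO(G_\alpha)| = 3$ or not; also, the equality $|N_G(H):H|=3$ is established inside the proof of Corollary \ref{FrobTo3} rather than in its statement. Neither point affects correctness.
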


\begin{proof}
The nonidentity subgroups $X$ of $H$ fix the elements of
 $\Delta:=\{\alpha,\beta,\gamma\}$ and act semiregularly on $\Omega
\setminus \Delta$.  Thus for every such $X$ we see that $N_G(X)$
acts on $\Delta$ with kernel $N_H(X)$, and $|N_G(X):N_H(X)| \le 3$
by Lemma \ref{tuedel}~(c).

Suppose, for all nontrivial subgroups $X$ of $H$, that
$(|N_G(X):N_H(X)|,3) =1$.

If $1 \neq X \leq H$ is such that $|N_G(X):N_H(X)| = 2$, then
$N_G(X)$ has even order and a fixed point on $\Delta$ which is one
of our conclusions.

If $H$ has no nontrivial subgroup $X$ such that $|N_G(X):N_H(X)| =
2$, then for all these subgroups $N_G(X) \le H$. Since $1 \neq H
\neq G$, it follows with Lemma \ref{charfrob} that $G$ is a
Frobenius group. But this contradicts Hypothesis \ref{3fix}.
\end{proof}

\subsection{Groups with strongly embedded Sylow $2$-subgroup normalizers}

The simple groups of Lie type considered in this section are those
where the normalizers of Sylow $2$-subgroups are strongly embedded.
We consider them in individual lemmas.

\begin{lemma} \label{PSL2e}
Suppose that $q$ is power of $2$ and that $G = \PSL_2(q)$. Then
there is no set $\Omega$ such that $(G, \Omega)$ satisfies
Hypothesis \ref{3fix}.
\end{lemma}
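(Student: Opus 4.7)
The plan is to assume for a contradiction that $(G, \Omega)$ satisfies Hypothesis \ref{3fix} and then to work through the surviving possibilities for $G_\alpha$ offered by Lemma \ref{2isttoll}. The small cases $q \in \{2, 4\}$ are handled separately: $\PSL_2(2) \cong \Sym_3$ has no transitive faithful action on five or more points, and $\PSL_2(4) \cong \Alt_5$ is already covered by Theorem \ref{altmain}. So I would assume $q = 2^a$ with $a \geq 3$. Then a Sylow $2$-subgroup $Q$ of $G$ is elementary abelian of order at least $8$, hence neither dihedral nor semidihedral, which rules out case (3) of Lemma \ref{2isttoll}. Fix $\alpha \in \Omega$.

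\textbf{Case A: $G_\alpha$ has odd order.} Then $G_\alpha$ is cyclic and sits in a unique maximal torus $T$ of $G$ of order $q - 1$ or $q + 1$. Since the Weyl involution in $N_G(T)$ inverts every element of $T$, it normalizes every subgroup of $T$, giving $N_G(G_\alpha) = N_G(T)$, a dihedral group of order $2(q \pm 1)$. Lemma \ref{cases} rules out that the cyclic group $G_\alpha$ is a Frobenius group, so $|\FO(G_\alpha)| = 3$. Lemma \ref{tuedel}(c) then bounds $|N_G(G_\alpha) : G_\alpha|$ by $3$, but this index is even, so it equals $2$, forcing $G_\alpha = T$. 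Every fixed point of $T$ has a stabilizer of order $|T|$ containing $T$, hence equal to $T$. The action of $N_G(T)/T \cong \Z_2$ on the three-element set $\FO(T)$ has at least one fixed point $\gamma$, whence $N_G(T) \leq G_\gamma = T$, a contradiction.

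\textbf{Case B: $G_\alpha$ contains $Q$.} The Borel subgroup $B = Q \rtimes C_{q-1}$ is the unique maximal subgroup of $G$ containing $Q$, so $G_\alpha = Q \rtimes C_d$ for some $d \mid q - 1$. For an involution $t \in Q$, the orbit-counting identity $|\FO(t)| = |C_G(t)| \cdot |t^G \cap G_\alpha|/|G_\alpha|$, together with $|C_G(t)| = q$ and the fact that all $q - 1$ involutions in $G_\alpha$ lie in $t^G$, gives $|\FO(t)| = (q-1)/d$. Hypothesis \ref{3fix} forces this to be at most $3$; since $q - 1$ is odd, the only options are $d = q - 1$ and $d = (q-1)/3$. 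In the first case $G_\alpha = B$ and $\Omega$ is the projective line, on which $\PSL_2(q) = \PGL_2(q)$ acts sharply $3$-transitively, so no nonidentity element has three fixed points, contradicting Hypothesis \ref{3fix}. In the second case $a$ must be even, so $q \geq 16$ and $d \geq 5$, and I pick $y \in C_d$ of odd prime order $p$. Applying the same formula to $y$, with $|C_G(y)| = q - 1$ and $|y^G \cap G_\alpha| = 2q$, gives $|\FO(y)| = 6$, again contradicting Hypothesis \ref{3fix}.

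The technical heart of the argument is the count $|y^G \cap G_\alpha| = 2q$ in Case B. One has to verify that $y^G \cap C_d = \{y, y^{-1}\}$, which uses that the Weyl group of $T$ acts on $T$ only by inversion, and then that $y$ and $y^{-1}$ lie in distinct $G_\alpha$-classes each of size $q$, a consequence of the Frobenius structure of $G_\alpha$ (so $C_{G_\alpha}(y) = C_d$). Everything else is a direct application of Lemma \ref{2isttoll}, Lemma \ref{cases}, and Lemma \ref{tuedel}(c), or standard facts about the subgroup and conjugacy structure of $\PSL_2(q)$ for $q$ even.
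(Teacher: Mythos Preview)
Your argument for $q \ge 8$ is correct and takes a genuinely different route from the paper. The paper splits into $|\Omega|$ odd versus even, and in each branch reduces to a specific action ($G_\omega = N_G(S)$ in the odd case, $G_\omega$ a full torus in the even case) and then invokes the main theorem of \cite{MW} to say that no nonidentity element can have three fixed points. Your proof is self-contained: in Case~A you pin down $G_\alpha = T$ via Lemma~\ref{cases} and the parity of $|N_G(T):T|$, then obtain a contradiction from the order-$2$ group $N_G(T)/T$ acting on the three-element set $\FO(T)$; in Case~B you use the class-counting formula $|\FO(g)| = |C_G(g)|\cdot|g^G\cap G_\alpha|/|G_\alpha|$ to compute fixed-point numbers directly. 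The price you pay is the extra subcase $d=(q-1)/3$, which the paper never sees because its normalizer argument forces $G_\omega=N_G(S)$ outright; you instead detect that subcase and eliminate it with the count $|\FO(y)|=6$. Your approach buys independence from \cite{MW}; the paper's approach buys brevity.

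There is, however, a real gap at $q=4$. You write that $\PSL_2(4)\cong\Alt_5$ ``is already covered by Theorem~\ref{altmain}'', but that theorem says the \emph{opposite} of what you need: it records that $\Alt_5$ acting on the $15$ cosets of a Sylow $2$-subgroup \emph{does} satisfy Hypothesis~\ref{3fix}. So the lemma as stated is actually false for $q=4$. The paper's own proof makes the same slip (asserting that $\Alt_5$ ``never satisfies Hypothesis~\ref{3fix}''). The lemma is only invoked inside Theorem~\ref{liemain}, whose hypothesis excludes groups isomorphic to alternating groups, so the intended range is $q\ge 8$; you should state that restriction explicitly rather than claim to dispose of $q=4$.
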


\begin{proof}
We note that $\PSL_2(4) \cong \PSL_2(5) \cong \Alt_5$ and that
$\PSL_2(2) \cong \Sym_3$. These groups never satisfy Hypothesis
\ref{3fix}, so we may suppose that $q \geq 8$. We assume that the
lemma is false and let $\Omega$ be such that $(G, \Omega)$ satisfies
Hypothesis \ref{3fix}. Let $\omega \in \Omega$.

First we suppose that $|\Omega|$ is odd. Then $G_\omega$ contains a
Sylow $2$-subgroup $S$ of $G$. Now $|N_G(S)/S| = q-1 \geq 7 > 3$, so
Lemma \ref{tuedel} implies that $G_\omega$ contains an element $x$
of order $(q-1)/(q-1,3)$. If $(q-1,3) = 1$, then $G_\omega =
N_G(S)$, as $N_G(S)$ is maximal in $G$. If $(q-1,3) \neq 1$, then we
note that $N_G(\langle x \rangle)$ is dihedral of order $2(q-1)$.
Lemma \ref{tuedel} implies that either $|G_\omega \cap N_G(\langle x
\rangle)| = q-1$, in which case $G_\omega = N_G(S)$, or $|G_\omega
\cap N_G(\langle x \rangle)| = 2(q-1)/3$, in which case $G_\Omega$
contains an involution which does not lie in $S$. As $S$ together
with any involution $t \not \in S$ generates $G$, we see that the
latter cannot happen and that $G_\omega = N_G(S)$. Thus $(G,\Omega)$
appears in the conclusion of the main theorem of \cite{MW}, and in
particular no nonidentity element of $G$ has three fixed points on
$\Omega$. This is a contradiction.

Thus we may now suppose that $|\Omega|$ is even. If $S \in
\sy_2(G)$, then $S$ is elementary abelian of order at least $8$ and
thus Lemma \ref{2isttoll} implies that $G_\omega$ has odd order.
Inspection of the maximal subgroups of $G$ yields that $G_\omega$ is
cyclic of order dividing $q-1$ or $q+1$. This means that, if $x \in
G_\omega$, then $|F_\Omega(x)| = |N_G(\< x \>):G_\omega| \geq
|N_G(G_\omega):G_\omega|  \in \{2 \cdot \frac{q-1}{|G_\omega|}, 2
\cdot \frac{q+1}{|G_\omega|}\}$ and hence $|F_\Omega(x)| \leq 3$.
Now $|G_\omega| \in \{q-1,q+1\}$ and so $(G, \Omega)$ appears in the
conclusion of the main theorem of \cite{MW}. In particular no
nonidentity element of $G$ has three fixed points on $\Omega$,
contrary to our assumption.
\end{proof}

As a corollary of Lemma \ref{3strich} we obtain:

\begin{lemma} \label{Szq}  Let $q$ be a power of $2$, $q \geq 8$, and suppose that
$G = \Sz(q)$. Then there is no set $\Omega$ such that $(G, \Omega)$
satisfies Hypothesis \ref{3fix}.
\end{lemma}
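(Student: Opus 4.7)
The plan is to reduce the claim to a short arithmetic observation about $|\Sz(q)|$ together with a single appeal to Lemma \ref{3strich}.

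First I would verify that $3 \nmid |\Sz(q)|$. Writing $q = 2^{2m+1}$ with $m \geq 1$, we have $q \equiv 2 \pmod 3$, so $q-1 \equiv 1 \pmod 3$ and $q^2+1 \equiv 2 \pmod 3$. Since $|\Sz(q)| = q^2(q-1)(q^2+1)$, every factor is coprime to $3$, and hence $G$ is a $3'$-group and contains no element of order $3$.

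Next I would assume for contradiction that some $\Omega$ makes $(G,\Omega)$ satisfy Hypothesis \ref{3fix}, and invoke Lemma \ref{3strich}. In the present context that lemma provides, under Hypothesis \ref{3fix}, the existence of a non-trivial $3$-element interacting with a three-point stabilizer (equivalently, forces $3 \mid |G|$). Either way the conclusion is incompatible with $G$ being a $3'$-group, and the contradiction is immediate.

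The main obstacle lies entirely in Lemma \ref{3strich} itself, not in the corollary: once the arithmetic of $|\Sz(q)|$ is in hand, no case analysis of the exotic Sylow $2$-subgroups of order $q^2$, nor of the list of maximal subgroups (the Borel $q^{1+1}:C_{q-1}$, the dihedral-type $D_{2(q-1)}$, the normalisers $C_{q\pm\sqrt{2q}+1}\rtimes C_4$, or the subfield subgroups $\Sz(q_0)$), needs to be carried out. I would close by briefly noting that Lemma \ref{2isttoll}(3) is irrelevant here because Suzuki $2$-groups are neither dihedral nor semidihedral, which rules out that possibility without further work and confirms that the $3'$-obstruction is the only one that matters.
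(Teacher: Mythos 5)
Your proposal is correct and is essentially the paper's own argument: the authors give Lemma \ref{Szq} no separate proof but introduce it with the words ``As a corollary of Lemma \ref{3strich} we obtain,'' which is precisely your reduction via the observation that $q \equiv 2 \pmod 3$ forces $3 \nmid |G|$ for $G = \Sz(q)$. The one caveat, which applies equally to the paper's presentation, is that the proof of Lemma \ref{3strich} itself cites Lemma \ref{Szq} when ruling out Suzuki components, so strictly one must verify that this appeal is needed only for components properly contained in the minimal counterexample (where minimality applies); that is a defect of the paper's ordering rather than of your argument.
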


Prior to proving our next lemma we note that the group $\PSU_3(2)$ is a Frobenius group of order $72$, and in particular it does not lead to any examples for Hypothesis \ref{3fix}.

\begin{lemma} \label{PSU3e}  Let $q \ge 4$ be a power of $2$ and let
$G = \PSU_3(q)$.

Let $\Lambda$ be the set of cosets of a cyclic subgroup of order
$q^2-q+1/(3,q+1)$ of $G$. Then
$(G, \Lambda)$ satisfies Hypothesis \ref{3fix},
and this is the unique example for $G$.
\end{lemma}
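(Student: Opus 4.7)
\emph{Existence.} I begin with the claimed example. The subgroup $H$ is a Singer-type maximal torus of $\PSU_3(q)$, so: $H$ is self-centralizing (every nontrivial element of $H$ is regular semisimple with centralizer $H$); $H$ is a TI-subgroup of $G$; and $N_G(H) = H \rtimes \langle \sigma \rangle$ with $\sigma$ of order $3$. Moreover $\sigma$ acts on $H$ without nontrivial fixed points, since $h^\sigma = h$ for some $1 \neq h \in H$ would put $\sigma$ into the self-centralizing $H$. I then compute fixed points directly: $g \in G$ fixes $xH \in \Lambda$ if and only if $x^{-1}gx \in H$, which by the TI property forces $x \in N_G(H)$. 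Hence each nontrivial $g \in H$ fixes exactly the three cosets $H$, $\sigma H$, $\sigma^2 H$, while any $g \in G$ not conjugate into $H$ fixes none. Every element of $G^\#$ therefore has $0$ or $3$ fixed points on $\Lambda$, verifying Hypothesis \ref{3fix}.

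\emph{Reduction for uniqueness.} Suppose $(G,\Omega)$ also satisfies Hypothesis \ref{3fix}. Since $q \geq 4$ is a power of $2$, the Sylow $2$-subgroup of $G$ has order $q^3$ and is neither dihedral nor semidihedral, so Lemma \ref{2isttoll} leaves two cases: (i) $G_\omega$ contains a Sylow $2$-subgroup $S$ of $G$, or (ii) $G_\omega$ has odd order. In case (i), every proper subgroup of $G$ containing $S$ lies in the unique maximal overgroup $N_G(S) = S \rtimes T$, which is strongly embedded. Combining this with Lemma \ref{tuedel}~(c) applied to an involution $t \in S$, I would show $|C_G(t) : C_{G_\omega}(t)| \leq 3$, and using the structure of involution centralizers in $\PSU_3(q)$ in characteristic $2$ conclude that every involution in $G_\omega$ fixes only $\omega$. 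A three-point stabilizer in $\Omega$ must then have odd order; Lemma \ref{2or3} together with the parabolic structure of $N_G(S)$ rules case (i) out.

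\emph{The odd-order case.} This is the main obstacle. Inspecting the maximal subgroups of $\PSU_3(q)$, every odd-order subgroup of $G$ lies, up to conjugacy, inside one of: the Singer normalizer $N_G(H) = H : 3$; the cyclic Levi torus $T$ of order $(q^2-1)/(3,q+1)$; the odd part of the stabilizer of a non-isotropic $1$-space, contained in $C_{(q+1)^2/(3,q+1)} : \Sym_3$; or a subfield subgroup $\PSU_3(q_0)$. For the two torus-based possibilities, Lemma \ref{tuedel}~(c) on a well-chosen cyclic subgroup of $G_\omega$ produces an involution in its $G$-normalizer that must fix $\omega$, contradicting oddness. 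The subfield case is handled via Lemma \ref{key} applied to a Zsigmondy primitive prime divisor $p$ of $q^6-1$: such $p$ divides $|N_G(H)|$ but not $|\PSU_3(q_0)|$, forcing $p \in \pi(G_\omega)$ and hence $G_\omega$ outside any subfield subgroup. This pins $G_\omega$ inside $N_G(H) = H : 3$. A final fixed-point computation (analogous to the existence argument) combined with Lemma \ref{tuedel}~(c) rules out both $G_\omega$ being a proper subgroup of $H$ (which would yield too many fixed points for elements of $H$) and $G_\omega$ containing the order-$3$ complement $\langle \sigma \rangle$ (which exceeds the three-point budget of Lemma \ref{tuedel}~(c)). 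Hence $G_\omega = H$ and $\Omega = \Lambda$ up to equivalence.
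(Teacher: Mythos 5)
Your existence argument is sound and is essentially the paper's: the cyclic torus $H$ of order $(q^2-q+1)/(3,q+1)$ is self-centralizing and t.i.\ with $|N_G(H):H|=3$, so every nontrivial element of $G$ fixes $0$ or $3$ cosets. The uniqueness half, however, has two genuine gaps.

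First, your treatment of the case $S\le G_\omega$ is only a sketch (``I would show\dots''), and the tool you reach for does not apply: Lemma \ref{2or3} concludes \emph{either} that some normalizer contains an element of order divisible by $3$ \emph{or} that $|G_\alpha|$ is even, and in this case the second disjunct holds vacuously, so the lemma yields no information at all. The paper's argument here is different and complete: since $S$ fixes at most three points, Lemma \ref{tuedel} puts a large cyclic subgroup of $N_G(S)$ (of order essentially $(q^2-1)/(3,q+1)$) into $G_\omega$, a second application of Lemma \ref{tuedel} to an element of order dividing $(q+1)/(3,q+1)$ then forces a non-cyclic subgroup of order $((q+1)/(3,q+1))^2$ into $G_\omega$; as this cannot lie in $N_G(S)$ and $N_G(S)$ is strongly embedded (so every proper overgroup of $S$ lies in $N_G(S)$), one gets $G_\omega=G$, a contradiction. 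You need some argument of this shape; what you have written would not close the case.

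Second, in the odd-order case your elimination of subfield subgroups asserts that a Zsigmondy prime divisor $p$ of $q^6-1$ is ``forced'' into $\pi(G_\omega)$, but you give no reason why $G_\omega$ should contain such a prime \emph{before} you know $G_\omega$ sits in $N_G(H)$ — as written the step is circular. The gap is repairable, and in fact the whole maximal-subgroup case division is unnecessary: since $q$ is even and $|G_\omega|$ is odd, every nontrivial element of $G_\omega$ is semisimple and hence lies in a maximal torus of order $(q^2-1)/(3,q+1)$, $(q+1)^2/(3,q+1)$ or $(q^2-q+1)/(3,q+1)$. This is how the paper proceeds: it shows via the $\vdash$ relation and Lemma \ref{key} that if any prime of $\pi(G_\omega)$ divided $q^2-1$, then $((q+1)/(3,q+1))^2$ would divide $|G_\omega|$, whence $G_\omega$ would contain a centralizer section isomorphic to $\PSU_2(q)$ and so have even order. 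Hence all primes of $\pi(G_\omega)$ divide $(q^2-q+1)/(3,q+1)$, and the identity $C_G(x)=H$ with $|N_G(\langle x\rangle):C_G(x)|=3$, $3\nmid|G_\omega|$, pins $G_\omega=H$ directly — no separate argument about proper subgroups of $H$ or about adjoining the order-$3$ complement is then needed. Your closing fixed-point count for those two subcases is fine as far as it goes, but the route to ``$G_\omega\le N_G(H)$'' is where your proof is not yet watertight.
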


\begin{proof}
Let $\Omega$ be such that $(G, \Omega)$ satisfies Hypothesis \ref{3fix}. We
show that the point stabilizers are cyclic of order $q^2-q+1/(3,q+1)$ and that the action described in the lemma does in fact give an example.

Let $\omega \in \Omega$. We first consider the situation where $|\Omega|$ is odd; i.e.
$S \leq G_\omega$ for some $S \in \sy_2(G)$.

The group $N_G(S)/S$ is cyclic of order $q^2-1$, which implies by
Lemma \ref{tuedel} that $G_\omega$ also contains a subgroup of order
$((q+1)/(q+1,3))^2 \neq 1$. However $N_G(S)$ is strongly embedded in
$G$, so the proper overgroups of $S$ in $G$ are contained in
$N_G(S)$. Thus $G_\omega = G$, which is impossible.
Now $|\Omega|$ is even.

Next we note that $S$ is neither dihedral nor semidihedral, so Lemma
\ref{2isttoll} implies that $G_\omega$ has odd order. The elements
of $G$ of odd order are conjugate to elements of tori of orders
$q^2-1$, $(q+1)^2/(q+1,3)$ or $(q^2 - q + 1)/(q+1,3)$.

First suppose that $p \in \pi(G_\omega)$ is such that $p$ divides $q
- 1$. Then $p \vdash r$ for all divisors $r$ of $q+1/(3,q+1)$ and
Lemma \ref{key} yields that all these primes $r$ divide
$|G_\omega|$. Thus if $p \in \pi(G_\omega)$ divides $(q^2-1)$, then
Lemma \ref{tuedel} implies that $((q+1)/(q+1,3))^2 $ divides
$|G_\omega|$. This means that $G_\omega$ contains an element $y$
with $C_G(y)$ of structure $(q+1) \times \PSU_2(q)$ and hence
$G_\omega$ contains a subgroup isomorphic to $\PSU_2(q)$,
contradicting the fact that $|G_\omega|$ is odd.

Thus no $p \in \pi(G_\omega)$ divides $q^2-1$, which implies that
all $p \in \pi(G_\omega)$ divide $q^2-q+1 / (3,q+1)$.  Now if $x \in
G_\omega$ has prime order $p$, then $C_G(x)$ is cyclic of order
$q^2-q+1 / (3,q+1)$, and $|N_G(\langle x \rangle):C_G(x)| = 3$. As
$3$ divides $q^2-1$, but not $|G_\omega|$, this yields that $G_\omega
= C_G(x)$.\\

The previous arguments show that there is at most one possibility for the action of $G$ on $\Omega$.
Now let $\Lambda$ be the set of cosets of $C_G(x)$ in $G$.
We show that this actually gives an example.
Since $(q+1,q^3+1) = 3(q+1)$, we see that $C_G(y) =
C_G(x)$ for all $y \in C_G(x)^\#$ and thus $|\FL (y)| = |N_G(\<
y \>):C_G(x)| = |N_G(C_G(x)):C_G(x)| = 3$. This shows that
$(G,\Lambda)$ satisfies Hypothesis \ref{3fix} as claimed.
\end{proof}

\subsection{Groups with dihedral or semidihedral Sylow 2-subgroups}

The simple groups of Lie type considered in this section are those
whose Sylow $2$-subgroups are dihedral or semidihedral. Again we
look at the corresponding series of groups in individual lemmas.

\begin{lemma} \label{PSL2o}
Suppose that $q$ is a power of an odd prime and that $G =
\PSL_2(q)$. Then $(G, \Omega)$ satisfies Hypothesis \ref{3fix} if
and only if one of the following is true:

\begin{enumerate}
\item  $G \cong \PSL_2(7) \cong \PSL_3(2)$ with $|\Omega| = 7$ and
$G_\omega \cong \Sym_4$.
 \item $G \cong \PSL_2(7) \cong \PSL_3(2)$ with $|\Omega| = 24$ and
 $G_\omega$ is cyclic of order $7$.
\item  $G \cong \PSL_2(11)$ with $|\Omega| = 11$ and $G_\omega \cong \Alt_5$.
\end{enumerate}

\end{lemma}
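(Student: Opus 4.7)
My plan is first to verify that the three listed actions satisfy Hypothesis \ref{3fix} and then to rule out everything else by a case analysis on $S \cap G_\omega$ with $S \in \sy_2(G)$. For the verification: in case (1), $\PSL_2(7) \cong \PSL_3(2)$ acts on the Fano plane with stabilizer $\Sym_4$, and a $3$-element in $\Sym_4$ fixes exactly the three collinear points on the corresponding line while involutions fix at most two; in case (2), $|N_G(G_\omega):G_\omega|=3$ for the Sylow $7$-subgroup $G_\omega$, so every nonidentity element has exactly three fixed points and t.i.-ness of $G_\omega$ trivializes four-point stabilizers; in case (3), the $2$-transitive action of $\PSL_2(11)$ on $11$ points with stabilizer $\Alt_5$ has permutation-character values showing that elements of order $3$ or $5$ fix exactly three points and involutions fewer.

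For the converse, assume $(G,\Omega)$ satisfies Hypothesis \ref{3fix}. Since $G$ is simple with dihedral Sylow $2$-subgroups, the classification in Lemmas \ref{syl2} and \ref{2isttoll} restricts us to three mutually exclusive cases: (a) $|G_\omega|$ is odd, (b) $|S_\omega|=2$ with $G_\omega$ having a normal $2$-complement, or (c) $S \leq G_\omega$ (equivalently $|\Omega|$ odd). I would then invoke Dickson's classification of the subgroups of $\PSL_2(q)$ to enumerate candidates for $G_\omega$: cyclic of order dividing $p$ or $(q\pm 1)/(2,q-1)$; dihedral of order dividing $q\pm 1$; one of $\Alt_4$, $\Sym_4$, $\Alt_5$; the Borel; or a subfield $\PSL_2(q_0)$ or $\PGL_2(q_0)$. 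The main tool applied uniformly is Lemma \ref{tuedel}(c): for any $x \in G_\omega$ with $|\FO(x)|=3$ we get $|N_G(\langle x\rangle) : N_{G_\omega}(\langle x\rangle)| \in \{1,3\}$, a strong divisibility constraint. In case (a), Corollary \ref{FrobTo3} further yields that $G_\omega$ is t.i.\ with $|N_G(G_\omega):G_\omega|=3$; combined with the fact that maximal tori of $\PSL_2(q)$ have index $2$ in their normalizer, this forces $q=7$ and $G_\omega$ a Sylow $7$-subgroup, giving case (2). In case (c), I would apply Lemma \ref{tuedel}(b) to $Z(S)$, which restricts $G_\omega$ to a maximal overgroup of $S$---dihedral of order $q\pm 1$, $\Sym_4$, $\Alt_5$, or $\PGL_2(q_0)$---and then involution fixed-point counts single out $q=7$ with $G_\omega\cong \Sym_4$ and $q=11$ with $G_\omega\cong \Alt_5$. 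Case (b) places $G_\omega$ dihedral of order $2d$ with $d$ odd inside a torus normalizer, and the three-fixed-point count together with Lemma \ref{tuedel} rules it out.

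The main obstacle will be uniformly eliminating the infinite families of candidates in case (c), particularly the subfield subgroups $\PSL_2(q_0)$ and $\PGL_2(q_0)$ for $q = q_0^m$, and also ruling out the large dihedral normalizers in cases (b) and (c). The key lever is the main theorem of \cite{MW}, which classifies the transitive $\PSL_2(q)$-actions in which every nontrivial element fixes at most two points: any candidate stabilizer producing such an \cite{MW}-action contradicts the existence of a three-fixed-point element, while any candidate outside that classification must satisfy the Lemma \ref{tuedel} divisibility constraints, which force $q \in \{7,11\}$ after an involution count. Coordinating these two eliminations cleanly across all $q$ and all subfield candidates is the technical heart of the lemma.
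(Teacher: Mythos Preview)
Your approach is workable but differs substantially from the paper's. The paper does not split on the Sylow $2$-intersection or invoke Dickson's list. Instead, after disposing of $q \in \{7,11\}$ by inspecting the GAP table of marks, it argues for $q \geq 13$ directly by prime divisors of $|G_\omega|$: if some $r \in \pi(G_\omega)$ divides $(q+1)/2$, the dihedral normalizer of an $r$-element together with Lemma~\ref{tuedel} forces $G_\omega$ to be cyclic of order $(q\pm 1)/2$, which is an \cite{MW} action and hence has no three-fixed-point element; if $r = p$ then $r \vdash s$ for every $s \mid (q-1)/2$, reducing to the last case; and if $r \mid (q-1)/2$, one shows that an inverting involution in $G_\omega$ would give $\langle C_{G_\omega}(x), C_{G_\omega}(t)\rangle = G$, so $G_\omega$ is the Borel or cyclic of order $(q-1)/2$, both again \cite{MW} actions. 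This sidesteps entirely the subfield-subgroup casework you identify as the main obstacle, and never needs a separate even/odd split on $|G_\omega|$.

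One genuine slip in your case (a): Corollary~\ref{FrobTo3} does not assert that $G_\omega$ is t.i.\ with $|N_G(G_\omega):G_\omega| = 3$; it asserts this for the three-point stabilizer $H$, under the additional hypothesis that $G_\omega$ is a Frobenius group with complement $H$. You need Lemma~\ref{cases} first: either $|\FO(G_\omega)| = 3$, in which case $G_\omega$ itself is the three-point stabilizer and a direct argument (any involution in $N_G(G_\omega)$ would fix one of the three points, contradicting $|G_\omega|$ odd) gives the t.i.\ and index-$3$ conclusions; or $G_\omega$ is Frobenius, in which case the complement $H$ lies in a split torus and hence has even-order normalizer, contradicting $|N_G(H):H| = 3$ from Corollary~\ref{FrobTo3}. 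So the Frobenius branch is vacuous here, but you should say why. Once this is patched, your odd-order analysis (torus subgroups excluded by even normalizer, unipotent subgroups forcing $q = 7$) goes through.
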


\begin{proof}
We note that $\PSL_2(5) \cong \Alt_5$, $\PSL_2(9) \cong \Alt_6$ and
that $\PSL_2(3)$ solvable. Therefore we may assume that $q = 7$ or
$q \geq 11$. We also assume that $(G, \Omega)$ satisfies Hypothesis
\ref{3fix}.

The full table of marks of $\PSL_2(7)$ and $\PSL_2(11)$ is available
in GAP (see \cite{GAP}) and these confirm our claim. Thus we may
assume that $q \geq 13$. Let $\omega \in \Omega$.

If $r \in \pi(G_\omega)$ is a divisor of $(q+1)/2$ and if $x \in
G_\omega$ has order $r$, then $N_G(\<x\>)$ is dihedral of order
$q+1$. As in the proof of Lemma \ref{PSL2e} this implies that
$G_\omega$ is cyclic of order $(q \pm 1)/2$. This action occurs in
the conclusion of the main theorem of \cite{MW}, contradicting
Hypothesis \ref{3fix}.

Now if $r \in \pi(G_\omega)$ and $(q,r) \neq 1$, then $r \vdash p$
for all divisors $p$ of $(q-1)/2$. Thus we assume that $G_\omega$
contains an element $x$ of order dividing $(q-1)/2$. As $N_G(\<x \>
)$ is dihedral of order $(q-1)$, Lemma \ref{tuedel} implies that
$G_\omega$ contains a subgroup of index at most $3$ of this
normalizer. Now assume that $G_\omega$ contains an involution $t$
inverting $x$. Then $G_{G_\omega}(x)$ and $C_{G_\omega}(t)$ generate
$G$ (by the subgroup structure of $G$). This is impossible.
The only overgroups of $\<x\>$ are conjugates of $B$,
the Borel subgroup of $G$, or the dihedral group of order $(q-1)$.
The latter possibility is ruled out because no
involution in $G_\omega$ inverts $x$, and the possibility $G_\omega = B$ is ruled out because the action of $G$ on the set of
cosets of $B$ occurs in the conclusion of the main theorem of
\cite{MW}. Thus $G_\omega$ is cyclic of order $(q-1)/2$
but again this possibility occurs in the conclusion of the main
theorem of \cite{MW}. This proves that $\PSL_2(q)$ for $q \geq 13$
does not yield examples satisfying Hypothesis \ref{3fix}.
\end{proof}

\begin{lemma} \label{PSU3o}
Suppose that $p$ is an odd prime, that $q=p^a$ with $a \in \N$ and
that $G = \PSU_3(q)$.
Let $\Lambda$ be the set of cosets of a cyclic subgroup of order
$q^2-q+1/(3,q+1)$ of $G$. Then
$(G, \Lambda)$ satisfies Hypothesis \ref{3fix},
and this is the unique example for $G$.
\end{lemma}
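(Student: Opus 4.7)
The plan is to follow the template of Lemma \ref{PSU3e} with an additional initial reduction. When $q$ is odd the Sylow $2$-subgroups of $G$ are semidihedral (or dihedral in small cases) rather than elementary abelian, so Lemma \ref{2isttoll} does not immediately force $|G_\omega|$ to be odd. Let $(G,\Omega)$ satisfy Hypothesis \ref{3fix} and let $\omega \in \Omega$; I must first reduce to the case where $|G_\omega|$ is odd, then recover the exact structure of $G_\omega$, and finally verify that the coset action described does give an example.

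For the reduction, suppose that $|G_\omega|$ is even and apply Lemma \ref{syl2}. Since $S \in \sy_2(G)$ is semidihedral and $G$ is large enough to exclude the tiny cases, the surviving possibilities are that $|S_\omega|=2$ with $G_\omega$ having a normal $2$-complement, or that $|\Omega|$ is odd and $G_\omega \supseteq S$. In the latter situation I would use the known subgroup lattice of $\PSU_3(q)$: the overgroups of $S$ are contained either in $N_G(S)$ or in the stabilizer of a non-isotropic point, of structure essentially $\GU_2(q)$ modulo the centre. Iteratively applying Lemma \ref{tuedel} inflates $G_\omega$ until $(G,\Omega)$ matches one of the actions appearing in the main theorem of \cite{MW}, in which case no nontrivial element fixes exactly three points on $\Omega$, contradicting Hypothesis \ref{3fix}. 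In the subcase $|S_\omega|=2$ the same toolkit together with Lemma \ref{2or3} forces $G_\omega$ into an involution centralizer, and an analogous inflation argument leads to the same dichotomy.

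Once $|G_\omega|$ is odd, the argument closely parallels the proof of Lemma \ref{PSU3e}. Using Lemma \ref{key}, I would show that no prime $p \in \pi(G_\omega)$ divides $q-1$: otherwise the relations $p \vdash r$ for all primes $r$ dividing $(q+1)/(q+1,3)$ would pull all such $r$ into $\pi(G_\omega)$, yielding via Lemma \ref{tuedel} a subgroup of $G_\omega$ of order $((q+1)/(q+1,3))^2$ and hence, by centralizer structure, a copy of $\PSU_2(q)$ inside $G_\omega$, which contradicts oddness. A parallel argument (using the torus of order $(q+1)^2/(q+1,3)$) rules out primes dividing $q+1$. Thus every $p\in\pi(G_\omega)$ divides $(q^2-q+1)/(q+1,3)$, and picking $x\in G_\omega$ of such prime order gives $C_G(x)$ cyclic of order $(q^2-q+1)/(q+1,3)$ with $|N_G(\langle x \rangle):C_G(x)|=3$. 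Since $3 \mid q^2-1$ but $3 \nmid |G_\omega|$, it follows that $G_\omega=C_G(x)$.

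To verify that the coset space $\Lambda$ of $C_G(x)$ gives an example, I would observe that $(q+1,q^3+1)=3(q+1)$ still holds for odd $q$, so $C_G(y)=C_G(x)$ for every $y\in C_G(x)^\#$; hence $|\FL(y)|=|N_G(\langle y \rangle):C_G(x)|=|N_G(C_G(x)):C_G(x)|=3$, and $(G,\Lambda)$ satisfies Hypothesis \ref{3fix}. The main obstacle will be the first step: the detailed bookkeeping in the semidihedral Sylow $2$ case to confirm that no even-order stabilizer can arise, using Lemmas \ref{tuedel}, \ref{syl2}, \ref{2or3} and \ref{key} together with the subgroup lattice of $\PSU_3(q)$, which differs slightly between the cases $q\equiv 1$ and $q\equiv -1\pmod 4$.
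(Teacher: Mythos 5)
Your treatment of the odd-order case and the final verification essentially coincides with the paper's: every prime in $\pi(G_\omega)$ is forced to divide $(q^2-q+1)/(3,q+1)$, the normalizer index $3$ pins down $G_\omega = C_G(x)$, and the coset action on $\Lambda$ is checked to give exactly three fixed points. That part is sound.

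The genuine gap is in your reduction to $|G_\omega|$ odd. You propose a case split via Lemma \ref{syl2} followed by an unspecified ``inflation'' of $G_\omega$ through the subgroup lattice of $\PSU_3(q)$ until the action matches one from \cite{MW}; you assert without proof that the overgroups of a Sylow $2$-subgroup lie in $N_G(S)$ or in a non-isotropic point stabilizer, and you give no argument that the inflation terminates in a recognizable action rather than in some intermediate subgroup. As you yourself note, this is ``the main obstacle,'' and as written it is a plan, not a proof. The paper's reduction is much more direct and avoids the Sylow $2$ case analysis entirely: if $t \in G_\omega$ is an involution, then $C_G(t)$ contains a copy of $\SL_2(q)$, which is perfect for $q \geq 5$ and hence has no subgroup of index $2$ or $3$; Lemma \ref{tuedel} therefore forces $\SL_2(q) \leq G_\omega$. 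Taking $P \leq G_\omega$ a Sylow $p$-subgroup of that $\SL_2(q)$, Lemma \ref{tuedel} applied to $N_G(P)$ (together with Lemma \ref{sylow}(c) for $p>3$, or a direct torus computation for $p=3$) puts a full Sylow $p$-subgroup of $G$ inside $G_\omega$; but $\SL_2(q)$ together with a Sylow $p$-subgroup generates $G$, a contradiction. You should adopt this argument (or supply the missing lattice analysis in full). Separately, you never handle $q=3$, where $\SL_2(3)$ is not perfect and several of your generic steps fail; the paper disposes of $\PSU_3(3)$ by inspecting its table of marks in GAP, and some such base-case treatment is required.
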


\begin{proof}
Let $\Omega$ be such that $(G, \Omega)$ satisfies Hypothesis \ref{3fix}. We
show that the point stabilizers are cyclic of order $q^2-q+1/(3,q+1)$ and that the action described in the lemma does in fact give an example.

Let $\omega \in \Omega$. For $q=3$ our claim follows
from inspection of the table of marks in GAP (see \cite{GAP}). So we
may suppose that $q \geq 5$.

If $t \in G_\omega$ is an involution, then $C_G(t)$ contains a
subgroup isomorphic to $\SL_2(q)$. But $\SL_2(q)$ is perfect because
$q \ge 5$, and therefore Lemma \ref{tuedel} implies that $G_\omega$
has a subgroup isomorphic to $\SL_2(q)$. Let $P \le G_\omega$ be
such that $P$ is isomorphic to a Sylow $p$-subgroup of $\SL_2(q)$.
Then $G_\omega$ contains an index three subgroup of $N_G(P)$, again
by Lemma \ref{tuedel}.

If $p > 3$, then Lemma \ref{sylow}~(c) implies that $G_\omega$
contains a Sylow $p$-subgroup of $G$, and if $p = 3$, then a
straightforward computation shows that a torus in $N_G(P)$ acts
transitively on the commutator factor group of $\sy_p(N_G(P))$. In
this case $G_\omega$ contains a Sylow $p$-subgroup of $N_G(P)$, and
hence of $G$, again. This is impossible because a subgroup of $G$
isomorphic to $\SL_2(q)$ together with a Sylow $p$-subgroup
generates all of $G$.

So we may now suppose that $|G_\omega|$ is odd. If $p \in
\pi(G_\omega)$, then $p \vdash r$ for every prime divisor $r$ of
$q^2-1/(9,q^2-1)$ and hence Lemma \ref{key} implies that all these
primes $r$ divide $|G_\omega|$. From the existence of tori of order
$(q^2-1)/(3,q+1)$ and $(q+1)^2/(3,q+1)$ it follows that
$(\frac{(q+1)}{(3,q+1)})^2$ divides $|G_\omega|$, whenever $p$ or a
divisor of $q^2-1$ divides $|G_\omega|$. Thus there exist commuting elements
$x_1,x_2 \in G_\omega$ with centralizers containing a subgroup
isomorphic to $\SL_2(q)$
and such that $G = \<C_G(x_1)', C_G(x_2)'\>$. However, Lemma \ref{tuedel} then forces $G= G_\omega$. This is a
contradiction.

Thus no prime $p \in \pi(G_\omega)$ divides $q^2-1$. This means that
they all divide $q^2-q+1 / (3,q+1)$. Let $x \in G_\omega$ be of
prime order $p$. Then $C_G(x)$ is cyclic of order $q^2-q+1 /
(3,q+1)$, and $|N_G(\langle x \rangle):C_G(x)| = 3$. But $3$ does
not divide $G_\omega$, so this implies that $G_\omega = C_G(x)$. \\

These arguments show that there
is at most one possibility for the action of $G$ on $\Omega$.
Now let $\Lambda$ be the set of cosets of $C_G(x)$ in $G$.

As
$(q+1,q^3+1) = 3(q+1)$ it follows that $C_G(y) = C_G(x)$ for all $y
\in C_G(x)^\#$ and thus $|\FL (y)| = |N_G(\< y \>):C_G(x)| =
|N_G(C_G(x)):C_G(x)| = 3$ which shows that $(G,\Lambda)$
satisfies Hypothesis \ref{3fix}.
\end{proof}

\begin{lemma} \label{PSL3o}  Let $G = \PSL_3(q)$  with $q$ odd. If
$(G, \Omega)$ satisfies Hypothesis \ref{3fix}, then for all $\omega
\in \Omega$ the group $G_\omega$ is cyclic of order $(q^2 + q + 1) /
(3,q-1).$ Moreover $|N_G(G_\omega)| = 3 \cdot |G_\omega|$ and
$(|G_\omega|,3) = 1$.
\end{lemma}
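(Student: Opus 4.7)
The plan is to follow closely the strategy of Lemma \ref{PSU3o}, replacing the Singer torus of order $(q^2-q+1)/(3,q+1)$ in $\PSU_3(q)$ with the Singer torus $T$ of $\PSL_3(q)$ of order $(q^2+q+1)/(3,q-1)$. Assume $(G,\Omega)$ satisfies Hypothesis \ref{3fix} and fix $\omega \in \Omega$. The smallest cases (notably $q=3$) will be handled by direct inspection via the table of marks in GAP, so afterwards we may assume $q \ge 5$ and in particular that $\SL_2(q)$ is perfect.

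The first main step is to prove that $|G_\omega|$ is odd. If $t \in G_\omega$ is an involution, then $C_G(t)$ contains a subgroup isomorphic to $\SL_2(q)$. Since $\SL_2(q)$ is perfect for $q \ge 5$, Lemma \ref{tuedel} forces $\SL_2(q) \le G_\omega$. Let $P$ be a Sylow $p$-subgroup of this $\SL_2(q)$, where $p$ is the defining characteristic; Lemma \ref{tuedel} then gives that a subgroup of index at most $3$ of $N_G(P)$ lies in $G_\omega$. For $p \ge 5$, Lemma \ref{sylow}~(c) implies that $G_\omega$ already contains a full Sylow $p$-subgroup of $G$. For $p = 3$, a direct computation in $N_G(P)$, mirroring the one in Lemma \ref{PSU3o}, shows that a complementing torus acts transitively on the nontrivial elements of $P/\Phi(P)$, so that once more a full Sylow $p$-subgroup of $G$ sits inside $G_\omega$. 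In each case $G_\omega$ contains $\SL_2(q)$ together with a Sylow $p$-subgroup of $G$, and these jointly generate $G$ — contradiction.

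With $|G_\omega|$ now odd, the second step is to rule out prime divisors outside the Singer torus. Every prime $r \in \pi(G_\omega)$ lies in a maximal torus of $\PSL_3(q)$ of order dividing one of $(q-1)^2/(3,q-1)$, $(q^2-1)/(3,q-1)$ or $(q^2+q+1)/(3,q-1)$. If some $r \in \pi(G_\omega)$ divides $q^2-1$, then $r \vdash s$ for every prime $s$ dividing $(q-1)^2/(3,q-1)$ and every divisor of $q+1$, so Lemma \ref{key} forces enough primes into $\pi(G_\omega)$ to produce commuting elements $x_1, x_2 \in G_\omega$ whose centralizers each contain a subgroup isomorphic to $\SL_2(q)$, with $\langle C_G(x_1)', C_G(x_2)' \rangle = G$. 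Lemma \ref{tuedel} then yields $G_\omega = G$, a contradiction.

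Thus every $r \in \pi(G_\omega)$ divides $(q^2+q+1)/(3,q-1)$, which is coprime to $3$. Choose any $1 \neq x \in G_\omega$; then $C_G(x) = T$ is cyclic of order $(q^2+q+1)/(3,q-1)$ and $|N_G(\langle x\rangle) : C_G(x)| = 3$. Since $3 \notin \pi(G_\omega)$, Lemma \ref{tuedel} gives $G_\omega = T$, which is cyclic of the required order and coprime to $3$. As $C_G(y) = T$ for every $y \in T^{\#}$, the normalizer equality $|N_G(G_\omega)| = 3 \cdot |G_\omega|$ follows from the fact that $N_G(T)/T \cong \Z_3$ is induced by the Frobenius of the cubic extension underlying the Singer cycle. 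The main obstacle I expect is the characteristic-three case in the first step, where a careful matrix computation inside $N_G(P)$ is needed to verify that the torus acts transitively on $P/\Phi(P) \setminus \{0\}$; once that is secured, the rest of the argument runs in direct parallel with the unitary case of Lemma \ref{PSU3o}.
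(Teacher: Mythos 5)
There are two genuine problems with your argument, both stemming from transplanting the rank-one reasoning of Lemma \ref{PSU3o} into the rank-two group $\PSL_3(q)$. First, the contradiction that closes your opening step --- ``$G_\omega$ contains $\SL_2(q)$ together with a Sylow $p$-subgroup of $G$, and these jointly generate $G$'' --- is false here. The $\SL_2(q)$ you produce inside $C_G(t)$ is a Levi factor, and it lies together with a full Sylow $p$-subgroup inside a maximal parabolic (the stabilizer of a line of the natural module), so $\<\SL_2(q),U\>$ need not be $G$. In $\PSU_3(q)$ the analogous claim holds only because the proper overgroups of a Sylow $p$-subgroup are the solvable Borel subgroups, which cannot contain a perfect $\SL_2(q)$; that is exactly the feature that disappears in rank $2$. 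Second, your elimination of the remaining primes starts from ``every prime $r\in\pi(G_\omega)$ lies in a maximal torus,'' which silently assumes all nontrivial elements of $G_\omega$ are semisimple. When the defining characteristic $p$ is odd and $p\in\pi(G_\omega)$, a unipotent element of $G_\omega$ lies in no torus, and nothing in your first step (which only treats involutions) excludes this case.

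The paper's actual proof avoids both pitfalls by not separating out involutions at all: it shows that if $r\in\pi(G_\omega)$ divides $q(q^2-1)$ --- so $r=2$, $r=p$ and all divisors of $q^2-1$ are covered in one stroke --- then $r\vdash s$ for every prime $s$ dividing $q-1$, forcing an index-at-most-$3$ subgroup of the \emph{split} torus of order $(q-1)^2/(3,q-1)$ into $G_\omega$. That torus contains two commuting elements $x_1,x_2$ whose derived centralizers are the two ``adjacent'' $\SL_2(q)$ Levi subgroups, and \emph{those} do generate $G$, whence $G=G_\omega$ by Lemma \ref{tuedel}, a contradiction. Your endgame (all primes of $G_\omega$ divide $(q^2+q+1)/(3,q-1)$, then $G_\omega=C_G(x)$ is the Singer torus with normalizer of index $3$) matches the paper and is fine, but to reach it you need to repair the two steps above, e.g.\ by adopting the split-torus argument in place of the Sylow-$p$ generation claim.
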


\begin{proof} Inspection of the table of marks in GAP establishes our
claim for $q=3$.
Thus we may assume that $q \geq 5$. Let $\omega \in \Omega$.

If $r \in \pi(G_\omega)$ and $r$ is a divisor of $q(q^2-1)$, then $r
\vdash s$ for all prime divisors $s$ of $q-1$. Thus in every such
case a subgroup of index at most $3$ of a split torus $T$ of order
$(q-1)^2/(3,q-1)$ will be contained in $G_\omega$. But this implies,
as in the proof of Lemma \ref{PSU3o}, that $G_\omega$ has
commuting elements $x_1, x_2$ with centralizers containing a subgroup isomorphic to
$\SL_2(q)$ and so that $G = \<C_G(x_1)', C_G(x_2)'\>$. But then Lemma \ref{tuedel} forces  $G=G_\omega$, which is a contradiction.

Thus the only possibilities for $r \in \pi(G_\omega)$ are divisors
of $(q^2+q+1)/(3,q-1)$. If $x \in G_\omega$ has order $r$
dividing $(q^2+q+1)/(3,q-1)$, then $C_G(x)$ is cyclic of order
$q^2+q+1 / (3,q+1)$, and $|N_G(\langle x \rangle):C_G(x)| = 3$. As
$3$ divides $p(q^2-1)$, but not $G_\omega$, this implies that
$G_\omega = C_G(x)$. Moreover $(q-1,q^3-1) = 3(q-1)$ and so we see
that $C_G(y) = C_G(x)$ for all $y \in C_G(x)^\#$. Thus
$|F_\Omega(y)| = |N_G(\< y \>):G_\omega| = |N_G(G_\omega):G_\omega|
= 3$. This shows all assertions of the lemma.
\end{proof}

\subsection{Point Stabilizers of odd order}

The groups treated in the previous sections were those whose Sylow
$2$-subgroups fell into conclusions (2) or (3) of Lemma
\ref{2isttoll}. In what follows, we therefore work under the
following hypothesis:

\begin{hyp}\label{newlie}
$(G,\Omega)$ satisfies Hypothesis \ref{3fix}. Moreover $G$ is a
simple group of Lie type, but neither $\PSL_2(q)$, $\Sz(q)$ or
$\PSU_3(q)$ where $q$ is even, nor $\PSL_2(q)$, $\PSU_3(q)$ or
$\PSL_3(q)$ where $q$ is odd. Moreover $|G_\omega|$ has odd order.
\end{hyp}

\begin{lemma}\label{Sp43}
If $(G,\Omega)$ satisfies Hypothesis \ref{newlie}, then $G \not \cong \Sp_4(3)$.
\end{lemma}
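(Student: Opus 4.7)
The plan is to argue by contradiction, supposing that $G \cong \Sp_4(3)$, that is, the simple group $\PSp_4(3) \cong \PSU_4(2)$ of order $25920 = 2^6\cdot 3^4\cdot 5$. Since $|G_\omega|$ is odd by Hypothesis \ref{newlie}, $|G_\omega|$ divides $3^4\cdot 5$. The first step will be to eliminate $5$ from $\pi(G_\omega)$: a Sylow $5$-subgroup $Z$ of $G$ is self-centralizing of order $5$ and has normalizer a Frobenius group of order $20$, so $4 \mid |N_G(Z)|$ and $5 \vdash 2$. By Lemma \ref{key}, having $5 \in \pi(G_\omega)$ would force $2 \in \pi(G_\omega)$, contradicting oddness. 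Hence $|G_\omega|$ is a nontrivial $3$-group dividing $81$, and Lemma \ref{syl3} then leaves $|G_\omega| \in \{3, 27, 81\}$ (cases (2), (3), (4) of that lemma).

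I would dispose of the cases $|G_\omega| \in \{27, 81\}$ by a Burnside orbit-counting argument. Lemma \ref{cases}(3) applies, giving $|\FO(G_\omega)| = 3$; since $G_\omega$ fixes these three points of $\Omega$ pointwise and Hypothesis \ref{3fix} caps fixed-point sets at three, every non-identity element of $G_\omega$ fixes exactly three points. Hence the number of $G_\omega$-orbits on $\Omega$ equals $(|\Omega| + 3(|G_\omega|-1))/|G_\omega|$. With $|\Omega| = |G|/|G_\omega|$, this evaluates to $1038/27$ when $|G_\omega|=27$ and to $560/81$ when $|G_\omega|=81$; neither is an integer, contradicting Burnside.

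For $|G_\omega| = 3$ with generator $x$, Lemma \ref{tuedel}(c) gives $|N_G(\langle x \rangle)| \leq 9$. The argument then concludes once one shows that every element of order $3$ in $\PSp_4(3)$ has centralizer of order at least $27$; this is the step I expect to be the main obstacle. It can be read off the ATLAS character table of $\PSU_4(2)$, or verified structurally: every $3$-element of $G$ is unipotent (in characteristic $3$), and for a short-root element $u_\alpha$ of the ambient $\Sp_4(3)$ the mutually commuting root subgroups $U_\alpha$, $U_{2\alpha+\beta}$, $U_{-\beta}$ generate an abelian subgroup of order $27$ centralising $u_\alpha$ and meeting $Z(\Sp_4(3)) = \{\pm I\}$ trivially, so its image in $\PSp_4(3)$ still has order $27$; the remaining unipotent $3$-classes are handled by analogous root-subgroup configurations. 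Either way $|N_G(\langle x \rangle)| \geq |C_G(x)| \geq 27 > 9$, a contradiction. Since all three possibilities fail, $G \not\cong \Sp_4(3)$.
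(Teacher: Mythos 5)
Your proof is correct, and it shares its skeleton with the paper's: the opening move (a Sylow $5$-normalizer of order $20$ gives $5 \vdash 2$, so Lemma \ref{key} and the oddness of $|G_\omega|$ force $G_\omega$ to be a nontrivial $3$-group) is exactly the paper's first sentence, and your treatment of the case $|G_\omega|=3$ — every order-$3$ element of $\PSp_4(3)\cong\PSU_4(2)$ has centralizer of order at least $27$, which is incompatible with $|N_G(\langle x\rangle)|\le 9$ from Lemma \ref{tuedel}~(c) — is essentially the paper's second step (the paper phrases it as: centralizers of order-$3$ elements are divisible by $27$, so Lemmas \ref{tuedel} and \ref{syl3} force $|G_\omega|\ge 27$). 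Where you genuinely diverge is in disposing of $|G_\omega|\in\{27,81\}$: the paper observes that the Sylow $3$-subgroup is $3\wr 3$, so a subgroup of index at most $3$ contains a $3$-central element, whose centralizer has order divisible by $4$, and Lemma \ref{tuedel} then makes $|G_\omega|$ even; you instead use the Burnside orbit count $(|\Omega|+3(|G_\omega|-1))/|G_\omega|$, which fails to be an integer for both $1038/27$ and $560/81$. Your counting argument is more elementary and self-contained (it needs only the group order and the fact that every nontrivial element of $G_\omega$ fixes exactly the three points of $\FO(G_\omega)$, which follows from Lemma \ref{cases}~(3)), but it is tied to the specific numerology of this group, whereas the paper's "find an element whose normalizer has order divisible by $4$" is the template reused throughout the rest of Section 4. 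Both routes are valid.
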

\begin{proof}
We first observe that $5 \vdash 2$ and thus $G_\omega$ is a
$3$-group by Lemma \ref{key}. The centralizers of elements of order
$3$ in $G$ have order divisible by $27$, so Lemmas \ref{tuedel} and
\ref{syl3} imply that $|G_\omega| \geq 27$. Moreover the Sylow
$3$-subgroups of $G$ are isomorphic to $3 \wr 3$, therefore we see
that $G_\omega$ contains $3$-central elements whose centralizer
order is divisible by $4$. Together with Lemma \ref{tuedel} this
contradicts Hypothesis \ref{newlie}.
\end{proof}

\begin{lemma}\label{3musssein}
Suppose that $(G,\Omega)$ satisfies Hypothesis \ref{newlie} and let
$\alpha, \beta, \gamma \in \Omega$ be pair-wise distinct and such
that $1 \neq H := G_\alpha \cap G_\beta \cap G_\gamma$. Then
$|N_G(X):N_H(X)| \in \{1,3\}$ for all $1 \neq X \in H$ and there
exists a nontrivial subgroup $X$ of $H$ such that $|N_G(X):N_H(X)| =
3$.
\end{lemma}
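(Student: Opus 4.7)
The plan is to exploit the fact that under Hypothesis \ref{newlie} the point stabilizers are of odd order, which forces any normalizer action on the common fixed-point set $\Delta = \{\alpha,\beta,\gamma\}$ to be through a subgroup of $\Sym_3$ of odd order.

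First I would fix an arbitrary nontrivial subgroup $X \leq H$ and note that $\Delta \subseteq \FO(X)$. Since four-point stabilizers are trivial by Hypothesis \ref{3fix}, we get equality $\FO(X) = \Delta$. Consequently $N_G(X)$ acts on the three-element set $\Delta$ and the kernel of this action is exactly $N_G(X) \cap H = N_H(X)$, so $N_G(X)/N_H(X)$ embeds into $\Sym_3$.

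The key step is to rule out any element whose image in $\Sym_3$ is a transposition. Suppose $g \in N_G(X)$ maps to a transposition. Then $g$ fixes precisely one point of $\Delta$, say $\omega$, so $g \in G_\omega$. But Hypothesis \ref{newlie} forces $|G_\omega|$ to be odd, hence $o(g)$ is odd, which contradicts the fact that the image of $g$ in $\Sym_3$ has order $2$. Therefore the image of $N_G(X)/N_H(X)$ in $\Sym_3$ is a subgroup of odd order, so it is either trivial or the cyclic group of order $3$. This establishes $|N_G(X):N_H(X)| \in \{1,3\}$ for every nontrivial $X \leq H$.

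For the existence claim, I would argue by contradiction: assume $|N_G(X):N_H(X)| = 1$, i.e.\ $N_G(X) \leq H$, for all $1 \neq X \leq H$. Note that $H$ is a proper subgroup of $G$, since $G$ acts transitively on $\Omega$ with $|\Omega| \geq 5 > 1$ and $H$ fixes $\alpha$. Lemma \ref{charfrob} then forces $G$ to be a Frobenius group with Frobenius complement $H$. However, Hypothesis \ref{newlie} asserts that $G$ is a nonabelian simple group of Lie type, so $G$ has no proper nontrivial normal subgroup and in particular cannot be a Frobenius group. This contradiction yields the desired nontrivial $X \leq H$ with $|N_G(X):N_H(X)| = 3$. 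The only potential obstacle is the verification that the transposition case is truly ruled out by oddness of point stabilizers, but the short argument above settles this cleanly.
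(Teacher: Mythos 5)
Your proof is correct and follows essentially the same route as the paper: let $N_G(X)$ act on $\Delta=\FO(X)$ with kernel $N_H(X)$, use the odd order of point stabilizers to exclude any transposition in the image (hence any even index), and invoke Lemma \ref{charfrob} together with the fact that $G$ cannot be a Frobenius group to produce some $X$ with $N_G(X)\nleq H$. Your direct verification that the image in $\Sym_3$ has odd order is in fact slightly more careful than the paper's appeal to Lemma \ref{tuedel}, since it also disposes of the full $\Sym_3$ case explicitly, but the underlying argument is the same.
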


\begin{proof}
Hypothesis \ref{newlie} implies that $|\Omega| \geq 7$. For all
nontrivial subgroups $X$ of $H$, we know by Lemma \ref{tuedel} that
$|N_G(X):N_H(X)| \le 3$. There exists some $1 \neq X \le H$ such
that $N_G(X) \nleq H$ by Lemma \ref{charfrob}, and for this subgroup
$|N_G(X):N_H(X)| \in \{2,3\}$. However, index $2$ cannot occur
because otherwise some $2$-element in $N_G(X)$ fixes one of $\alpha,
\beta,\gamma$, contrary to Hypothesis \ref{newlie}.
\end{proof}

We recall that, in a simple group $G$ of Lie type of characteristic
$p$, an element $g$ is called semisimple if and only if its order is
coprime to $p$. A semisimple element is called regular semisimple if
and only if $(|C_{G}(g)|,p) = 1$. We note that the centralizer of a
nonregular semisimple element contains a subgroup which is
isomorphic to either $\SL_2(q)$ or $\PSL_2(q)$ and is generated by
root elements of $G$. Recall that $\SL_2(q)$ is a perfect group when
$q \geq 4$, and hence does not contain subgroups of index less than
or equal to $4$. Moreover $\SL_2(3) \cong Q_8:3$ and $\SL_2(2) =
\PSL_2(2) \cong \Sym_3$.

\begin{lemma} \label{general}
If $(G,\Omega)$ satisfies Hypothesis \ref{newlie}, then all
non-identity elements in point stabilizers are regular semisimple
elements.

\end{lemma}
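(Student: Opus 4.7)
The plan is to argue by contradiction: suppose $1 \neq x \in G_\omega$ is not regular semisimple, and use the Jordan decomposition $x = x_s x_u$ (both factors in $\langle x \rangle \leq G_\omega$) to split into two cases. In Case A the unipotent part $x_u$ is nontrivial; replacing $x$ by $x_u$ we may assume $x$ is itself a nontrivial $p$-element, and $p$ is forced to be odd since $|G_\omega|$ is odd. In Case B we have $x = x_s$ semisimple with $|C_G(x)|$ divisible by $p$, and the structural remark stated before the lemma furnishes a subgroup $L \leq C_G(x)$ isomorphic to $\SL_2(q)$ or $\PSL_2(q)$ generated by root elements. The goal in each case is to exhibit a subgroup $M \leq C_G(x) \leq N_G(\langle x\rangle)$ of even order: in Case B take $M := L$, which has even order for every $q \geq 2$; in Case A take $M := \langle t \rangle$ for some involution $t \in C_G(x)$.

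Once such an $M$ is in hand, a uniform argument completes the proof. By Lemma \ref{cases}, since $|G_\omega|$ is odd, the nonidentity element $x$ fixes either exactly one point of $\Omega$ (namely $\omega$) or exactly three points. In the first subcase Lemma \ref{tuedel}(a) forces $N_G(\langle x\rangle) \leq G_\omega$, so $M \leq G_\omega$, contradicting the fact that $M$ has even order while $|G_\omega|$ is odd. In the second subcase, let $H$ denote the pointwise stabilizer of $\fixO(x)$; then $x \in H \leq G_\omega$, and Lemma \ref{3musssein} applies to give $|N_G(\langle x\rangle):N_H(\langle x\rangle)| \in \{1,3\}$. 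Since $N_H(\langle x\rangle)$ is the kernel of the action of $N_G(\langle x\rangle)$ on $\fixO(x)$, it is normal in $N_G(\langle x\rangle)$ with cyclic quotient of order $1$ or $3$. Hence $|M:M \cap N_H(\langle x\rangle)|$ divides $3$; but $M$ has even order while $M \cap N_H(\langle x\rangle) \leq H \leq G_\omega$ has odd order, so this index must be even and at most $3$, forcing it to equal $2$, which contradicts its membership in $\{1,3\}$.

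The main obstacle is the verification in Case A that $C_G(u)$ contains an involution for every nontrivial unipotent element $u$ in every simple Lie-type group $G$ admitted by Hypothesis \ref{newlie}. The exclusions in Hypothesis \ref{newlie} remove precisely the small-rank families ($\PSL_2, \PSU_3, \Sz$ in characteristic $2$ and $\PSL_2, \PSU_3, \PSL_3$ in odd characteristic) where the centralizer of a unipotent could conceivably fail to contain an involution, so every remaining $G$ has semisimple rank at least two over a field of odd characteristic. For such $G$, the reductive Levi factor appearing in the centralizer of any nontrivial unipotent element contains a semisimple involution (for instance the image of $-1$ in a suitable Levi subgroup), supplying the required $t$. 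Carrying out this verification family by family through the Lie-type classification is the bulk of the remaining work; Case B then follows immediately from the common argument.
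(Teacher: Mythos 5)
Your overall skeleton (Jordan decomposition, the dichotomy ``unipotent part nontrivial'' versus ``semisimple but not regular'', and the use of the remark preceding the lemma to produce an $\SL_2(q)$ or $\PSL_2(q)$ inside the centralizer of a non-regular semisimple element) matches the paper's, and your Case B for $q \geq 3$ is essentially the paper's argument. But there are two genuine gaps. First, your ``uniform argument'' rests on the claim that a nonidentity $x \in G_\omega$ fixes exactly one or exactly three points, which you attribute to Lemma \ref{cases}. That lemma does not say this: in its case (2), where $G_\omega$ is a Frobenius group whose complements are the three point stabilizers, an element of the Frobenius kernel may a priori fix exactly two points. In that situation Lemma \ref{tuedel}~(b) only bounds $|N_G(\langle x \rangle):N_{G_\omega}(\langle x \rangle)|$ by $2$, and an index of $2$ produces no contradiction when $M$ is a single involution (your Case A) or $\SL_2(2) \cong \Sym_3$ (your Case B with $q=2$), since $M \cap G_\omega$ may then be the odd-order subgroup of index $2$. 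This is precisely why the paper never argues with subgroups whose $2$-part is only $2$: it always manufactures inside a relevant centralizer either a perfect group (which has no proper subgroup of index at most $4$) or a group of $2$-part at least $4$, so that the index-at-most-$3$ bound from Lemma \ref{tuedel} already forces even order into $G_\omega$; and for $q=2$ it still needs the extra rank-at-least-$3$ argument via the exceptional isomorphisms.

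Second, Case A is broken at its core. It is false that the centralizer of every nontrivial unipotent element contains an involution: for a regular unipotent element the reductive part of the centralizer is just the (trivial) centre of the simple group --- for instance in $\PSL_4(3)$ the centralizer of a regular unipotent element is a $3$-group --- and in $^2G_2(q)$, which is \emph{not} excluded by Hypothesis \ref{newlie} (so your ``semisimple rank at least two'' reduction is also incorrect), the centralizers of nontrivial unipotent elements are $3$-groups as well. The paper's route here is genuinely different and is the step you are missing: for $p \geq 5$ it uses Lemma \ref{sylow}~(c) to put a full Sylow $p$-subgroup, hence a long root element $r$, into $G_\omega$, and then works with $C_G(r)$, which contains a perfect $\SL_2(q)$; for $p = 3$ it uses Lemma \ref{syl3} and the Chevalley commutator relations to force $Z(P) \leq G_\omega$ for $P \in \sy_3(G)$ and then exploits $|C_G(Z(P))|_2 \geq 8$, with a separate computation in $N_G(Z(P)) = P:(q-1)$ for $^2G_2(q)$. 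Without some such bootstrapping from $x$ to a root element or to $Z(P)$, no contradiction can be extracted from $C_G(x)$ itself.
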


\begin{proof}
Let $\omega \in \Omega$ and suppose that some non-identity element
$g \in G_\omega$ is not regular and semisimple. Then either $g$
is semisimple and $C_G(g)$ contains a subgroup isomorphic to
 $\SL_2(q)$ or $\PSL_2(q)$ and is generated by root elements of $G$, or
$g$ is not semisimple.

If $g \in G_\omega$ is not semisimple, then $g$ powers to a
$p$-element, so Hypothesis \ref{newlie} implies that $p$ is odd. If
$p \neq3 $, then Lemma \ref{sylow} yields that $G_\omega$ contains a
full Sylow $p$-subgroup of $G$. Thus $G_\omega$ contains a long root
element $r$ and also $C_G(r)$ which, under Hypothesis \ref{newlie},
is a perfect group containing an $SL_2(q)$. Thus $G_\omega$ has even
order which is a contradiction to Hypothesis \ref{newlie}.

If $p=3$ and $G_\omega$ contains a $3$-element, then Lemma
\ref{syl3} yields that $G_\omega$ either contains an index $3$
subgroup of a Sylow $3$-subgroup of $G$, or the Sylow $3$-subgroup
of $G$ is of maximal class. The latter is excluded by Hypothesis
\ref{newlie}, so we may assume that $G_\omega$ contains an index $3$
subgroup of a Sylow $3$-subgroup $P$ of $G$. The Chevalley
commutator relations imply that any index $3$ subgroup of $P$
must contain $Z(P)$ and thus $Z(P) \leq
G_\omega$.  If $G \not \cong ^2G_2(q)$, then it follows from Lemma
\ref{tuedel} that an index $3$ subgroup of $C_G(Z(P))$ is contained
in $G_\omega$. But $|C_G(Z(P))|_2 \geq 8$, so now $|G_\omega|$ has
even order, contradicting Hypothesis \ref{newlie}. Finally if $G
\cong ^2G_2(q)$, then $N_G(Z(P))$ has structure $P:(q-1)$ which
implies that an element $h$ of order $(q-1)/2$ lies in $G_\omega$.
Now $|N_G(\< h \>)|_2 = 4$ which by Lemma \ref{tuedel} forces
$|G_\omega|$ to be even, again contradicting Hypothesis
\ref{newlie}.

Thus we have shown that the elements of $G_\omega$ are semisimple.

If $q \geq 4$ and $g$ is semisimple, but not regular, then $C_G(g)$
contains a subgroup isomorphic to
 $\SL_2(q)$ or $\PSL_2(q)$ which is perfect. Hence Lemma \ref{tuedel} forces
 $|G_\omega|$ to be even, which
violates Hypothesis \ref{newlie}.

If $q =3$ and $g$ is semisimple, but not regular, then $C_G(g)$
contains a subgroup isomorphic to $\SL_2(3)$.
As $|\SL_2(3)|_2 = 8$, Lemma \ref{tuedel} implies that
$|G_\omega|_2 \neq 2$, contradicting Hypothesis \ref{newlie}.

If $q =2$, and $g$ is semisimple, but not regular, then $C_G(g)$ has
a subgroup isomorphic to $\SL_2(2)$. Since this group has order $6$,
Lemma \ref{tuedel} shows that $(|G_\omega|,6) \neq 1$. So under
Hypothesis \ref{newlie} this means that $G_\omega$ contains a
$3$-element whose centralizer contains the centralizer of a root
subgroup $\SL_2(2)$, say $R$. Hypothesis \ref{newlie} implies that
$G$ is not of rank $2$, because $\PSL_3(2) \cong \PSL_2(7)$,
$\Sp_4(2) \cong \PSL_2(9) \cong \Alt_6$, $\PSU_4(2) \cong
\PSp_4(3)$,
 and $G_2(2)' \cong \PSU_3(2)$.
As $G$ has rank at least $3$, we see that $|R|_2 \geq 4$, which
implies that $|G_\omega|_2 \neq 2$. Again this contradicts
Hypothesis \ref{newlie}.

\end{proof}

Having established that every element in a point stabilizer is
regular, we now consider centralizers of regular semisimple elements
in groups satisfying Hypothesis \ref{newlie}. We note that such a
centralizer is a torus.
Moreover the order of a torus is a polynomial in $q$ of degree equal
to the untwisted Lie rank of $G$.

\begin{lemma}\label{2G2}
If $q$ is a prime power, $q>3$ and $G=^2G_2(q)$, then there is no
set $\Omega$ such that $(G,\Omega)$ satisfies Hypothesis \ref{3fix}.
\end{lemma}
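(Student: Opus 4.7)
The plan is to suppose, for contradiction, that $(G,\Omega)$ satisfies Hypothesis \ref{3fix} with $G \cong {}^2G_2(q)$, $q > 3$, and to derive a contradiction. First I would reduce to Hypothesis \ref{newlie}. The Sylow $2$-subgroups of ${}^2G_2(q)$ are elementary abelian of order $8$, hence neither dihedral nor semidihedral, so case~(3) of Lemma \ref{2isttoll} cannot arise. If case~(2) of that lemma held, then $G_\omega$ would contain a Sylow $2$-subgroup of $G$ and $|\Omega|$ would therefore be odd; but then Lemma \ref{odd-even} would force $G$ to be one of $\Alt_7$, $M_{11}$, $\PSL_2(q)$, $\Sz(q)$, $\PSU_3(q)$ or $\PSL_3(q)$ (with $q$ even in the last three), none of which is ${}^2G_2(q)$. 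Hence case~(1) of Lemma \ref{2isttoll} holds, $|G_\omega|$ is odd, and we are in the setting of Hypothesis \ref{newlie}.

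Applying Lemma \ref{cases} now produces a non-trivial three-point stabilizer $H \le G_\omega$. I pick any $x \in H^\#$; by Lemma \ref{general} the element $x$ is regular semisimple in $G$, so $C_G(x) = T$ is a maximal torus of $G$ that contains $\langle x \rangle$.

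The central step will be to show that $|N_G(\langle x \rangle)|$ is even. The maximal tori of ${}^2G_2(q)$ fall into four conjugacy classes, with orders $q-1$, $q+1$, $q+\sqrt{3q}+1$ and $q-\sqrt{3q}+1$. In the first two cases $|T|$ is itself even and $T$ is abelian, so $T \le N_G(\langle x \rangle)$; hence $|T|$ divides $|N_G(\langle x \rangle)|$ and the latter is even. In the last two cases $T$ is cyclic of odd order, so every subgroup of $T$ is characteristic; in particular $\langle x \rangle$ is characteristic in $T$, which gives $N_G(T) \le N_G(\langle x \rangle)$. Since the relative Weyl group $N_G(T)/T$ of each of these Ree tori is cyclic of order $6$, the order $|N_G(T)|$, and therefore $|N_G(\langle x \rangle)|$, is again even.

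With this established, Lemma \ref{3musssein} gives $|N_G(\langle x \rangle):N_H(\langle x \rangle)| \in \{1,3\}$, so $|N_H(\langle x \rangle)|$ equals $|N_G(\langle x \rangle)|$ or $|N_G(\langle x \rangle)|/3$; in either case it is even. But $N_H(\langle x \rangle) \le H \le G_\omega$, and $|G_\omega|$ is odd by Hypothesis \ref{newlie}, a contradiction. The step I expect to require the most care is the identification of the relative Weyl groups of the two Ree tori of orders $q \pm \sqrt{3q}+1$; however, for the argument it already suffices to know the weaker fact that the normalizer of every maximal torus of ${}^2G_2(q)$ has even order, realized by the ``inversion'' element of the Weyl group.
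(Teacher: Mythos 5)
Your proof is correct and follows essentially the same route as the paper's: after reducing to the case of odd point stabilizers, both arguments identify $C_G(x)$ as a maximal torus via Lemma \ref{general} and then use the even order of torus normalizers in ${}^2G_2(q)$ together with the index-at-most-$3$ restriction (Lemma \ref{tuedel} / Lemma \ref{3musssein}) to contradict the oddness of $|G_\omega|$. Your explicit reduction to Hypothesis \ref{newlie} and your uniform treatment of all four classes of maximal tori (the paper's proof lists only $q-1$ and $q\pm\sqrt{3q}+1$ as possible centralizer orders) add care but do not change the substance.
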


\begin{proof}
Assume otherwise and let $\omega \in \Omega$. If $g \in
G_\omega^\#$, then $|C_G(g)| = q-1$, $q+ \sqrt{3q} +1$ or $q-
\sqrt{3q} +1$.

If $|C_G(g)| = q-1$, then $|N_G(\<g\>)|_2 = 4$ which implies that
$|G_\omega|$ is even, a contradiction to Hypothesis \ref{newlie}. If
$|C_G(g)| = q+ \sqrt{3q} +1$ or $q- \sqrt{3q} +1$, then Lemma
\ref{tuedel} implies that $C_G(g) \leq G_\omega$. Next we recall
that $|N_G(C_G(g))/C_G(g)| = 6 $, so Lemma \ref{tuedel} yields that
$2$ or $3$ divides $|G_\omega|$. The former contradicts Hypothesis
\ref{newlie} whereas the latter contradicts Lemma \ref{general}.
\end{proof}

\begin{hyp}\label{betterlie}
From now on until the end of this subsection we suppose that
$(G,\Omega)$ satisfies Hypothesis \ref{newlie} and that $G$ is of
Lie rank at least $2$, but not isomorphic to
$\PSL_3(2),G_2(2),\Sp_4(2),\PSU_4(2) \cong \Sp_4(3)$ or $\PSL_4(2)
\cong \Alt_8$. Moreover all nonidentity elements of $G_\omega$ are
regular and semisimple.
\end{hyp}

We denote the natural module of $G$ by $N$. By $\phi_d(x)$ we denote
the irreducible cyclotomic polynomial dividing $x^d-1$, but not
$x^k-1$ for all $k < d$.

\begin{lemma}\label{torus}
Suppose that $(G,\Omega)$ satisfies Hypothesis \ref{betterlie} and
let $\omega \in \Omega$.  If $g \in G_\omega^\#$, then the following
are true:
\begin{enumerate}
\item  $C_G(g)$ is a maximal torus of $G$.

\item If $G$ is a classical group, then $\dim(N) - \dim([N,g^i]) \leq 2$
for all $ i < o(g)$. Moreover if $\phi_d(q)$ is a divisor of
$|C_G(g)|$, then $|N_G(T)/T|$ is divisible by $d$.

\item If $G$ is a classical group, then $(3,|G_\omega|) = 1$.

\item If $G$ is an exceptional group and $C_G(g)$ is not a $3$-group,
then $4$ divides $|N_G(C_G(g))/C_G(g)|$.

\item If $G$ is an exceptional group, then for all
$3$-elements the centralizer has order divisible by $8$.

\end{enumerate}
\end{lemma}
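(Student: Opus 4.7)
The plan is to establish assertions (1)--(5) in order, with (1) and (2) reducing to standard facts about regular semisimple elements and the parametrization of tori, and with (3)--(5) requiring case analysis inside the appropriate Lie family.

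For (1), Hypothesis \ref{betterlie} says every nonidentity element of $G_\omega$ is regular semisimple; the standard fact that the centralizer of a regular semisimple element in a finite group of Lie type is a maximal torus immediately gives the claim. For (2), first note that for $0 < i < o(g)$ the power $g^i$ is also a nonidentity element of $G_\omega$ and hence regular semisimple, so by (1) its centralizer is a maximal torus. In a classical group this torus condition translates to a bound on the dimensions of the eigenspaces of $g^i$ on $N$: dimension $1$ in the linear case, dimension at most $2$ in the symplectic/orthogonal cases (where eigenvalues pair as $\{\lambda,\lambda^{-1}\}$ and only the exceptional pairs $\{\pm 1\}$ can contribute dimension $2$). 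Specializing to the $1$-eigenspace yields $\dim N - \dim [N, g^i] = \dim C_N(g^i) \leq 2$. The cyclotomic statement follows from the standard parametrization of maximal tori of classical groups by conjugacy classes $[w]$ in the Weyl group $W$: $|T_w|$ factors as a product of cyclotomic factors $\phi_{d_j}(q)$ indexed by the cycle type of $w$ on the character lattice, and $N_G(T_w)/T_w \cong C_W(w)$. A factor $\phi_d(q)$ of $|T|$ forces $w$ to have a $d$-cycle, hence $o(w)$ and therefore $|C_W(w)|$ are divisible by $d$.

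For (3) I argue by contradiction. If $3$ divides $|G_\omega|$, pick $g \in G_\omega$ of order $3$; by (1) the group $T := C_G(g)$ is a maximal torus. Lemma \ref{cases} and oddness of $|G_\omega|$ force $g$ to fix exactly three points, so Lemma \ref{tuedel}(c) yields $|N_G(\langle g \rangle) : N_{G_\omega}(\langle g \rangle)| \leq 3$. Since $N_G(\langle g \rangle) = T \langle s \rangle$ for an inverting involution $s$, comparing orders with $|G_\omega|$ odd forces $T \leq G_\omega$ and $|T|$ to be odd. Because $T$ is abelian of odd order and fixes $\omega$, it also fixes each of the three fixed points of $g$, so Lemma \ref{tuedel}(c) applied to $T$ bounds $|N_G(T) : N_{G_\omega}(T)| \leq 3$. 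Using (2), I then walk through each classical family and each cyclotomic factorization of $|T|$ that produces the divisor $3$ (coming from $\phi_1, \phi_2, \phi_3$ or $\phi_6$ evaluated at $q$, depending on $q$ modulo $3$), and verify that the corresponding $C_W(w) = N_G(T)/T$ always contains a nontrivial $2$-element outside $T$, contradicting the oddness of $|G_\omega|$ together with the index bound.

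For (4) and (5), which concern the exceptional families $G_2(q), {}^3D_4(q), F_4(q), E_6(q), {}^2E_6(q), E_7(q), E_8(q)$ remaining after Hypothesis \ref{newlie} and Lemmas \ref{Sp43} and \ref{2G2}, I proceed by inspection of the tabulated Weyl groups and of centralizer structures of semisimple $3$-elements in exceptional groups. For (4), a non-$3$-group maximal torus has a cyclotomic factor $\phi_d(q)$ with $d \geq 2$, and the corresponding Weyl centralizers $C_W(w)$ have order divisible by $4$ in every exceptional Weyl group. For (5), the centralizer of a semisimple $3$-element in an exceptional group is an explicit (quasi-)subsystem subgroup whose $2$-part is at least $8$ in every case. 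The principal obstacle is (3): although the uniform set-up via (1), Lemma \ref{cases}, and Lemma \ref{tuedel} makes the front-end clean, closing the contradiction requires walking through all classical Lie types of rank at least $2$ (after the exclusions in Hypothesis \ref{betterlie}) together with all cyclotomic structures of their maximal tori that contain an element of order $3$.
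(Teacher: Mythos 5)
Your parts (1), (4) and (5) are in line with the paper: (1) is immediate from the regularity of the elements of $G_\omega^\#$ built into Hypothesis \ref{betterlie}, and (4), (5) are read off from tables (the paper cites Tables 5.1 and 5.2 of Liebeck--Saxl--Seitz and L\"ubeck's tables). For (2), your route through the Weyl-group parametrization of maximal tori ($N_G(T_w)/T_w \cong C_W(w)$, with a cyclotomic factor $\phi_d(q)$ of $|T_w|$ forcing $d \mid o(w)$) is a legitimate alternative to the paper's explicit overfield embeddings $\SL_2(q^d){:}d$ and $O_2^{\pm}(q^d){:}d$, and deriving $\dim(N)-\dim([N,g^i])\le 2$ directly from the regularity of $g^i$ is, if anything, cleaner than what the paper does.

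The genuine gap is in (3), which you yourself identify as the principal obstacle but then leave as an unexecuted case walk, and whose stated contradiction mechanism does not close. You propose to contradict the oddness of $|G_\omega|$ by exhibiting \emph{a} nontrivial $2$-element of $N_G(T)\setminus T$; but Lemma \ref{tuedel} only bounds $|N_G(T):N_{G_\omega}(T)|$ by $3$, so an index of exactly $2$ --- that is, $|N_G(T)/T|_2=2$ with $N_{G_\omega}(T)$ equal to the odd part --- is perfectly consistent with everything you have set up. To force a contradiction along these lines you would need $4$ to divide $|N_G(\langle g\rangle)|$, and that is precisely the kind of divisibility you never verify. The paper's actual mechanism is different and more robust: for generic $q$ it uses Lemma \ref{tuedel} to push an index-at-most-$3$ subgroup of the torus $T^{\pm}$ containing the $3$-element into $G_\omega$, hence all elements of order $(q\mp 1)/3$ of $T^{\pm}$, and then observes that in rank at least $2$ some such element is not regular, contradicting Hypothesis \ref{betterlie} directly. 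Moreover, the cases your sketch silently skips are exactly the hard ones: when $q\in\{2,4\}$ the quantity $(q\mp 1)/3$ is $1$ and the generic argument collapses, and the paper has to argue separately (the $\Sym_3\wr\Sym_{[\dim(N)/2]}$ normalizer and the $\PSL_5(2)$ exception for $q=2$; the fusion of $3$-elements through $\PSp_4(4)\le \PSU_4(4)\le \PSU_5(4)$ together with the computation $|N_G(\langle t\rangle)|=18$ in $\PSL_3(4)$ for $q=4$). Without these cases, and without repairing the parity argument, assertion (3) is not proved.
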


\begin{proof}
The conclusion of Lemma \ref{general} is that $g$ is regular and
semisimple. This implies that $C_G(g)$ is a maximal torus; i.e. (1)
follows.

Suppose now that $G$ is classical and that $C_G(g)$ is not cyclic.
Recall that $N$ denotes the natural module of $G$. If $\phi_d(q)$ is
a divisor of $|G|$, then $G$ possesses an element $x_d$ such that
$\dim([N,x_d]) \in \{d,2d\}$ and $[N,x_d]$ is nondegenerate with
respect to the form defining $G$.

This is clear if  $G = {\rm SL}(N)$ as there exists a $d \times d$
matrix with characteristic polynomial $\phi_d(x)$. For ${\rm Sp}(N)$
and ${\rm SU}(N)$ we embed the element via the overfield groups
$SL_2(q^d):d$, and if $G$ is orthogonal, then we use the overfield groups
$O_2^{\pm 1}(q^d):d$. The embeddings show that $d$ is a divisor of
$N_G(\langle x_d \rangle)/C_G(x_d)$. Next we note that, if $G$ is
not orthogonal and $\dim(C_N(x_d)) \geq 2$, or if $G$ is orthogonal
and $\dim(C_N(x_d)) \geq 3$, then $|C_G(x_d)|$ is divisible by $4$.

So if $ r > 3 $ is a prime divisor of $(|C_G(g)|,\phi_d(q))$, then
Lemma \ref{tuedel} implies that $G_\omega$ contains a Sylow
$r$-subgroup of $G$ and thus a conjugate of a suitable power of the
element $x_d$ above. In light of Lemma \ref{tuedel} we must have
that $|C_G(x_d)|_2 \leq 2$ which implies that $\dim(N) -
\dim([N,x_d]) = 0$ if $G$ is symplectic, $\dim(N) - \dim([N,x_d]) \leq
1$ if $G$ is linear or unitary, and  $\dim(N) - \dim([N,x_d]) \leq 2$
if $G$ is orthogonal. If $\dim(N) - \dim([N,g^i]) > 2$ for some
proper power $i$ of $g$, then the element $g^i$ is not regular,
contradicting Hypothesis \ref{betterlie}. Thus (2) is proved.

If $r = 3$ and $G_\omega$ contains an element $t$ of order $3$, then
Hypothesis \ref{betterlie} implies that $t$ is semisimple, and hence
$(3,q) = 1$. Thus if $q \equiv 1 \ \mbox{mod}~3$, then $t$ is
contained in a maximal split torus $T^+$ of $G$, and if $q \equiv \
-1 \ \mbox{mod}~3$, then $t$ is contained in a torus $T^-$ of order
$(q+1)^{\dim(N)/2}$. If $q \equiv 1 \ \mbox{mod}~3$ and $q-1 > 3$,
then Lemma \ref{tuedel} implies that $T^+ \cap G_\omega$ contains
every element of order $(q-1)/3$ of $T^+$. If $q \neq 4$, then, as
the rank of $G$ is at least $2$, some element of $T^+$ of order
$(q-1)/3$ is not regular and contained in $G_\omega$; contradicting
Hypothesis \ref{betterlie}. Similarly If $q \equiv -1 \ \mbox{mod}
~3$ and $q+1 > 3$, then Lemma \ref{tuedel} implies that $T^+ \cap
G_\omega$ contains every element of order $(q+1)/3$ of $T^+$. If $q
\neq 2$, then, as the rank of $G$ is at least $2$, some element of
$T^+$ of order $(q+1)/3$ is not regular and contained in $G_\omega$;
contradicting Hypothesis \ref{betterlie}.

If $q = 2$, then either $|T^-| \geq 27$ and $N_G(T^-)$
has a subgroup isomorphic to $\Sym_3 \wr
\Sym_{[\dim(N)/2]}$
 or $G$ is $\PSL_5(2)$. In all cases
$|N_G(\langle t \rangle)|_2 \geq 4$ for every element $t \in T^-$
and hence Lemma \ref{tuedel} implies that $|G_\omega|$ is even; a
contradiction.

If $q = 4$, then either $|T^+| \geq 27$ or the rank of $G$ is $2$.
In the former case $G_\omega$ contains elements $t$ with
$|N_G(\langle t \rangle)|_2 \geq 4$ (choose $t$ in a suitable rank $2$
subgroup of $T^+$). So Lemma \ref{tuedel} forces that $|G_\omega|$
is even, again contradicting Hypothesis \ref{betterlie}. The
classical groups of rank $2$ over the field of $4$ elements are
$\PSL_3(4)$, $\PSp_4(4)$, $\PSU_4(4)$ and $\PSU_5(4)$. The group
$\PSp_4(4)$ is a subgroup of $\PSU_4(4) = \SU_4(4)$ and $\SU_4(4)$
is isomorphic to a subgroup of $\PSU_5(4)$.  As $|\PSp_4(4)|_3 =
|\PSU_4(4)|_3 = |\PSU_5(4)|_3 = 9$ we see that every $3$-element of
$\PSU_4(4)$ and $\PSU_5(4)$ fuses to a $3$-element in $\PSp_4(4)$.

If $t$ is a $3$-element in $\PSp_4(4)$, then its centralizer in
$\PSp_4(4)$ has order divisible by $4$.

If $G = \PSL_3(4)$, then for every all $3$-elements $t \in G$ we
have that $|N_G(\langle t \rangle)| = 18$.

Now we suppose that $t \in G_\omega$ is of order $3$. Then Lemma
\ref{tuedel} implies that $|G_\omega|$ is even (see previous
paragraph) or that $G_\omega$ contains a Sylow $3$-subgroup of $G$.
The first case contradicts Hypothesis \ref{betterlie}. If, in the
second case, $T$ is a Sylow $3$-subgroup of $G$ with $T \leq
G_\omega$, then, as $|N_G(T)/T|_2 = 4$, Lemma \ref{tuedel} implies
that $|G_\omega|$ is even. This is again a contradiction. We
conclude that $3 \notin \pi(G_\omega)$ and hence (3) is proved.

Statement (4) can be deduced from Tables 5.1 and 5.2 of \cite{LSS},
whereas Statement (5) can be deduced from the tables in \cite{L}.
\end{proof}

\begin{cor} \label{notexceptional}
If $(G,\Omega)$ satisfies Hypothesis \ref{betterlie}, then $G$ is
not an
 exceptional group.
\end{cor}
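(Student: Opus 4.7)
The strategy is to contradict Hypothesis \ref{betterlie} by producing an element of even order in $G_\omega$. Fix $\omega \in \Omega$ and pick $g \in G_\omega$ of prime order $p$. By Lemma \ref{general} and Lemma \ref{torus}(1), $T := C_G(g)$ is a maximal torus of $G$.

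The first step is the dichotomy furnished by Lemma \ref{torus}(4). If $T$ is a $3$-group, then $g \in T$ is a $3$-element, so Lemma \ref{torus}(5) forces $8 \mid |C_G(g)| = |T|$, contradicting that $T$ is a $3$-group. Hence $4 \mid |N_G(T)/T|$, and one must leverage the $2$-content of the Weyl group $W := N_G(T)/T$.

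Under Hypothesis \ref{betterlie}, every nontrivial element of $G_\omega$ is regular semisimple, so any $1 \ne X \le G_\omega \cap T$ satisfies $C_G(X) = T$ and hence $N_G(X) \le N_G(T)$; consequently $|N_G(X)| = |T| \cdot |\mathrm{Stab}_W(X)|$. Applying Lemma \ref{tuedel}(c) to $X$ together with the oddness of $|G_\omega|$ forces $|N_G(X):N_{G_\omega}(X)| \in \{1,3\}$, so $|N_G(X)|$ is odd. In particular $|T|$ must be odd and $\mathrm{Stab}_W(X)$ must be of odd order for every such $X$; by Lemma \ref{3musssein} at least one nontrivial $X$ of this shape exists in $G_\omega \cap T$.

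The main obstacle is then the case-by-case verification that, for each exceptional type $G_2(q),\,{}^3D_4(q),\,{}^2F_4(q),\,F_4(q),\,E_6(q),\,{}^2E_6(q),\,E_7(q),\,E_8(q)$ (and each admissible $q$), every maximal torus $T$ with $4 \mid |N_G(T)/T|$ admits some nontrivial odd-order subgroup whose $W$-stabilizer is of even order, contradicting the previous paragraph. This reduces to an inspection of the tables of maximal tori and their Weyl groups in \cite{LSS} (especially Tables 5.1 and 5.2) together with the structural data in \cite{L}. As expected, the bookkeeping for the large-rank groups $E_7$ and $E_8$ — where the Weyl groups are vast but the odd-order subgroup lattice is rich enough to locate a subgroup stabilized by a $2$-element of $W$ — is the most involved part of the argument.
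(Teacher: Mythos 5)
Your reduction runs parallel to the paper's up to a point: identifying $T = C_G(g)$ as a maximal torus, disposing of the case where $T$ is a $3$-group via Lemma \ref{torus}(5), and noting that $N_G(X) \le N_G(T)$ for every nontrivial $X \le G_\omega \cap T$ are all correct and are part of the intended argument. But the proof has a genuine gap at exactly the decisive point: you reduce everything to a ``case-by-case verification'' over all exceptional types and all maximal tori with $4 \mid |N_G(T)/T|$, asserting that each admits a nontrivial odd-order subgroup with even-order stabilizer in $W := N_G(T)/T$, and you never carry this verification out. Since that is where the entire content of the corollary lives, the argument is incomplete as written. (There is also a smaller slip: Lemma \ref{tuedel} only gives $|N_G(X):N_{G_\omega}(X)| \le 3$, and when $X$ has exactly two fixed points part (b) does not exclude index $2$; so what you may legitimately conclude is that $4 \nmid |N_G(X)|$, not that $|N_G(X)|$ is odd. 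The weaker statement still suffices for the contradiction you are aiming at.)

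The case-by-case search is moreover unnecessary, and this is precisely what the paper's proof exploits. Let $r$ be a prime divisor of $o(g)$; it is odd since $|G_\omega|$ is odd. If $r \ge 5$, take $X = R$ to be the Sylow $r$-subgroup of the abelian group $T$: this is characteristic in $T$, so $N_G(T) \le N_G(R)$, i.e.\ $\mathrm{Stab}_W(R) = W$, and hence $4$ divides $|N_G(R)|$ by Lemma \ref{torus}(4). Furthermore $R \le G_\omega$ --- by Lemma \ref{sylow}(c) as the paper argues, or alternatively because Lemma \ref{tuedel} applied to $\langle g \rangle$ places a subgroup of index at most $3$ of $T = C_G(g)$ inside $G_\omega$, and any such subgroup of the abelian group $T$ contains its full Sylow $r$-subgroup for every $r \ge 5$. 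Applying Lemma \ref{tuedel} to $R$ then forces $|N_{G_\omega}(R)|$, and hence $|G_\omega|$, to be even, contradicting Hypothesis \ref{betterlie}. If instead $o(g)$ is a power of $3$, Lemma \ref{torus}(5) gives $8 \mid |C_G(g)|$ and Lemma \ref{tuedel} again makes $|G_\omega|$ even. In short, the subgroup with ``even-order Weyl stabilizer'' that you propose to hunt for torus by torus can always be taken to be a full Sylow subgroup of $T$ itself, whose stabilizer is all of $W$; no inspection of the tables in \cite{LSS} is needed.
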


\begin{proof}
Assume otherwise and let $\omega \in \Omega$. If $g \in
G_\omega^\#$, then Lemma \ref{torus}~(1) says that $T:= C_G(g)$ is a
maximal torus. Let $r$ be an odd prime and let $R$ be a Sylow
$r$-subgroup of $T$. If $r \neq 3$, then $R \leq G_\omega$ by
Lemma \ref{sylow}~(c) and thus $|N_G(R):(G_\omega \cap N_G(R)| \leq
3$ by Lemma \ref{tuedel}. It follows with Lemma \ref{torus}~(4) that
$|N_G(R)|$ is divisible by $4$ and hence $G_\omega$ has even order,
contrary to Hypothesis \ref{betterlie}. If $r = 3$, then $G_\omega$
contains a $3$-element and so Lemma \ref{torus}~(5) implies that
$|G_\omega|_2 \geq 2$, which is again a contradiction.
\end{proof}

\begin{lemma}\label{classicallarge}
If $(G,\Omega)$ satisfies Hypothesis \ref{betterlie} and $\omega \in
\Omega$, then one of the following is true:
\begin{enumerate}
\item[(1)] $G = \PSL_3(q)$ with $q$ even, $G_\omega$ is cyclic of order
$(q^2+q+1)/(3,q-1)$ (in particular of order coprime to $3$) and
$|\Omega| = (q-1)^2(q+1)q^3$. Moreover $|N_G(G_\omega):G_\omega|=3$.
\item[(2)] $G = \PSL_4(3) $, $G_\omega$ is cyclic of order $13$, and
$|\Omega| = 2^7 \cdot 3^6 \cdot 5 $.
\item[(3)] $G = \PSL_4(5)$, $G_\omega$ is cyclic of order $31$, and
$|\Omega| = 2^7 \cdot 3^2 \cdot 5^6 \cdot 13$.
\item[(4)] $G = \PSU_4(3)$, $G_\omega$ is cyclic of order $7$, and
$|\Omega| =  2^7 \cdot 3^6 \cdot 5 $.
\end{enumerate}
\end{lemma}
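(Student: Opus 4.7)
The plan is to combine Corollary \ref{notexceptional}, which has already eliminated the exceptional groups, with the detailed torus information in Lemma \ref{torus} to pin down the classical groups satisfying Hypothesis \ref{betterlie}. Fix $\omega \in \Omega$ and a nonidentity $g \in G_\omega$. By Lemma \ref{torus}(1), $T := C_G(g)$ is a maximal torus of $G$, and by Lemma \ref{torus}(3) together with Hypothesis \ref{newlie}, $|G_\omega|$ is coprime to $6$. Lemma \ref{tuedel}(c) further yields $[N_G(T) : N_{G_\omega}(T)] \leq 3$.

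The decisive constraint is the codimension bound $\dim C_N(g^i) \leq 2$ for every $i < o(g)$ from Lemma \ref{torus}(2). Since $g$ is regular semisimple, its eigenvalues on $N$ are distinct and split into Galois orbits whose sizes partition $\dim N$; the codimension bound forces these orbits to be concentrated in essentially a single large orbit of length $d$ with $\dim N - d \leq 2$. Equivalently, $T$ is of Coxeter or near-Coxeter type and $\phi_d(q)$ divides $|T|$. The second half of Lemma \ref{torus}(2) then forces $d$ to divide $|N_G(T)/T|$. Comparing this divisibility with the index bound from Lemma \ref{tuedel}(c) and the coprimality of $|G_\omega|$ to $6$ severely restricts both the rank of $G$ and the torus type.

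From this point I would work through the classical families. In the symplectic and orthogonal cases, the relative Weyl group $N_G(T)/T$ of a Coxeter-type torus always carries an extra factor of $2$ from sign or graph symmetries, which together with $|G_\omega|$ being odd forces contradictions except in low rank; the small-rank exceptions are excluded either by Hypothesis \ref{betterlie} directly (which rules out \Sp$_4(2)$, $\PSU_4(2) \cong \PSp_4(3)$, $G_2(2)$ and $\PSL_4(2) \cong \Alt_8$) or by Lemma \ref{Sp43}. For $G \cong \PSL_n(q)$ the Coxeter torus has relative Weyl group of order $n$, and the coprimality $(|G_\omega|,6) = 1$ combined with $d \mid |N_G(T)/T|$ forces either $n = 3$ with $q$ even, giving case (1) (odd $q$ is already excluded by Hypothesis \ref{newlie}), or $n = 4$ with a near-Coxeter torus of type $(1,3)$, for which the surviving arithmetic possibilities reduce to $q \in \{3,5\}$, yielding cases (2) and (3). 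The analogous analysis for $\PSU_n(q)$ using a $(1,3)$-type torus with factor $(q^3+1)/(q+1)$ isolates only $\PSU_4(3)$ with $|G_\omega| = 7$, giving case (4).

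The main obstacle is the arithmetic bookkeeping in this case analysis: for each candidate torus type one must verify that the joint constraints on divisibility and coprimality really do force $q$ and $n$ into the listed small cases, and then compute the precise orders $|G_\omega|$ and $|\Omega|$ recorded in the statement, taking care with the centre and the $(n,q \pm 1)$ scalar factors when passing between \GL$$-type tori and their images in $G$.
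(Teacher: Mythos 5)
Your strategy is the paper's strategy: regular semisimple elements (Hypothesis \ref{betterlie}), centralizers are maximal tori (Lemma \ref{torus}~(1)), the codimension bound of Lemma \ref{torus}~(2) together with the divisibility of $|N_G(T)/T|$ by $d$, the index-at-most-$3$ bound from Lemma \ref{tuedel}, and coprimality of $|G_\omega|$ to $6$, followed by a case split over the classical families; the symplectic and orthogonal exclusions and the final identification of $\PSL_3(q)$, $\PSL_4(3)$, $\PSL_4(5)$, $\PSU_4(3)$ all proceed as you describe. However, there is one substantive ingredient missing, and it is exactly the one that makes the rank reduction work. Your stated constraints do not by themselves force $\dim(N) \leq 4$ in the linear and unitary cases. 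Concretely, for $G \cong \PSL_5(q)$ the Coxeter torus has relative Weyl group cyclic of order $5$, every nontrivial element of which acts as a Galois automorphism with one-dimensional fixed space on $N$; so the codimension-$\leq 2$ test, the index-$\leq 3$ test and coprimality to $6$ are all passed, and nothing in your sketch produces a contradiction. (The $(1,4)$-torus in $\PSL_5(q)$ is fine, since the order-$2$ Weyl element has a $3$-dimensional fixed space, but the Coxeter torus is not.) The same issue recurs for any $n$ with $n \equiv 2 \bmod 3$.

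The paper closes this hole with Lemma \ref{3musssein}: since $G$ is not a Frobenius group, some nontrivial subgroup $X$ of a three point stabilizer satisfies $|N_G(X):N_H(X)| = 3$, so $3$ divides $|N_G(\langle g \rangle)|$ for a suitable $g \in G_\omega$. Combined with $(3,|G_\omega|)=1$ and $|C_G(g)|_3 \leq 3$ this forces $\dim(N) \equiv 0$ or $1 \bmod 3$ (killing $\PSL_5(q)$ outright), and then the observation that an element $h$ of order divisible by $3$ in $N_G(\langle g \rangle) \setminus C_G(g)$ satisfies $\dim(C_N(h^3)) \geq 3$, hence is not regular and cannot lie in $G_\omega$, yields $|N_G(\langle g \rangle): G_\omega \cap N_G(\langle g \rangle)| \geq \dim(N)-1$, whence $\dim(N) \leq 4$ by Lemma \ref{tuedel}. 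You should incorporate this use of the non-Frobenius condition explicitly; without it the "arithmetic bookkeeping" you defer is not routine, since for the surviving Coxeter tori one would instead have to analyse the centralizers of the Weyl-group elements themselves and show they contain non-regular elements. The remainder of your outline (isolating $q \in \{3,5\}$ for $\PSL_4$, $q=3$ for $\PSU_4$, and $q$ even for $\PSL_3$ via the regularity of every element of $C_G(g) \leq G_\omega$ and the scalar factors) matches the paper.
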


\begin{proof}
By (1) of Lemma \ref{torus} $C_G(g)$ is a maximal torus for every $1
\neq g \in G_\omega$. Also Lemma \ref{tuedel} and (3) of Lemma
\ref{torus} imply that $|C_G(g)|_3 \leq 3$.

If $G$ is symplectic, then the proof of (2) of Lemma \ref{torus}
showed that $C_N(g) = 0$ for every $ 1 \neq g \in G_\omega$. On the
other hand, using the fact that $g$ is contained in a subgroup of
$G$ isomorphic to $\SL_2(q^{d}):d$, where $d = \dim(N)/2$, we see
that $|N_G(\langle g \rangle)/C_G(g)| = 2d$.

By Hypothesis \ref{betterlie} we see that $|N_G(\langle g \rangle):
G_\omega \cap N_G(\langle g \rangle)|$ is even, which implies that
$G_\omega$ contains an element $h$ of order $d$ which induces a
Galois automorphism of order $d$ on $\langle g \rangle$. Now
$\dim(C_N(h)) = 2$, which means that $h$ is not regular, contrary to
Hypothesis \ref{betterlie}. So $G$ is not symplectic.

We observe that Lemma \ref{3musssein} yields that $3$ must divide
$N_G(\langle X \rangle)$ for some $X \leq G_\omega$. It follows with
Lemma \ref{tuedel} that $X$ lies inside some three point stabilizer
$H$.

Now if for all $g \in H$ and all $1 \neq h \in \langle g \rangle$ we
have that $N_G(\langle h \rangle)  \le H$, then Lemma \ref{charfrob}
implies that $G$ is a Frobenius group, contrary to our main
hypothesis. Therefore we find $1 \neq g \in H$ such that
$|N_G(\langle g \rangle ): N_H(\langle g \rangle)| = 3 $.

If $G$ is linear or unitary, then the elements $g \in G_\omega^\#$
satisfy $\dim(N) - \dim([N,g^i]) \leq 1$. Thus the order of every
such $g$ is a divisor of $q^{\dim(N)}-1$ or $(q^{\dim(N) -1}-1)$.
Now using the fact that some nontrivial subgroup of $H$ has to have
a normalizer whose order is divisible by $3$ implies that either
$\dim(N) \ \equiv \ 0 \ \mbox{mod}~3$ or $\dim(N) -1 \ \equiv \ 0 \
\mbox{mod}~3$. On the other hand Lemma \ref{torus} shows that if
$o(g)$ is a divisor of $q^{\dim(N)}+1$, of $q^{\dim(N)}-1$ or of
$(q^{\dim(N) -1} \pm 1)$, then $|N_G(\langle g \rangle)/C_G(g)|$ is
divisible by $\dim(N)$ or $\dim(N)-1$, respectively.

An element $h$ of order $\dim(N)$ or $\dim(N)-1$ and divisible by
$3$ that lies in $N_G(\langle g \rangle) \setminus C_G(g)$ has the
property that $\dim(C_N (h^3)) \geq 3$. Therefore it does not lie in
$G_\omega$ by Hypothesis \ref{betterlie}. Thus $|N_G(\langle g
\rangle): G_\omega \cap N_G(\langle g \rangle)| \geq \dim(N) -1$
which implies that $\dim(N) \leq 4$ and $o(g)$ is a divisor of $q^3
+ 1$ or $q^3-1$. Then also $C_G(g) \leq G_\omega$.

If $\dim(N) = 4$ and $G$ is linear, then $|C_G(g)| =
(q^3-1)/(4,q-1)$. If $q \neq 3,5$, then $C_G(g)$ contains elements
of order dividing $(q-1)$ whose centralizer in $G$ contains a
subgroup isomorphic to $\SL_3(q)$. But then Lemma \ref{tuedel}
implies that $G_\omega$ contains such a subgroup, which is a
contradiction. When $q = 3$ or $5$, then $C_G(g)$ is cyclic of order
$(q^3-1)/(4,q-1) = q^2 + q + 1$ and $N_G(\langle g \rangle)/C_G(g)$
is cyclic of order $3$. Thus we obtain examples (2) and (3) from the lemma.

If $\dim(N) = 3$ and $G$ is linear, then $q$ is even, $|C_G(g)| =
(q^2+q+1)/(3,q-1)$ and $N_G(\langle g \rangle)/C_G(g)$ is cyclic of
order $3$.

As $3$ divides $(q^2-1)$, but not $|C_G(g)|$, this implies that
$G_\omega = C_G(g)$. Moreover $(q-1,q^3-1) = 3(q-1)$ and so we see
that $C_G(y) = C_G(g)$ for all $y \in C_G(g)^\#$.

Thus $|F_\Omega(y)| = |N_G(\< y \>):G_\omega| =
|N_G(G_\omega):G_\omega| = 3$ as in (1).

If $G$ is unitary, then $\dim(N) \geq 4$ and $q > 2$ by Hypothesis
\ref{betterlie}, and hence $\dim(N) = 4$. In this case $|C_G(g)| =
(q^3+1)/(4,q+1)$. If $q > 3$, then $C_G(g)$ contains elements of
order dividing $(q+1)$ whose centralizer in $G$ has a subgroup
isomorphic to $\SU_3(q)$. But then Lemma \ref{tuedel} implies that
$G_\omega$ contains a subgroup isomorphic to $\SU_3(q)$, which is a
contradiction. Note that $\PSU_4(2) \cong \PSp_4(3)$ does not give
any examples by Lemma \ref{Sp43}. Finally if $G = \PSU_4(3)$, then
$|C_G(g)| = (3^3+1)/(4,3+1) = 7$ and $|N_G(\< g\>)/C_G(g)| =3$,
which yields example (4) in the conclusion of our lemma.

If $G$ is orthogonal, then $\dim(N) \geq 7$ and $\dim([N,g])$ is
even. Therefore $|N_G(\< g\>)/C_G(g)| \geq 6$, which implies that
$G_\omega = C_G(g)$. This means that $G_\omega$ contains an
involution, contradicting our hypothesis that $|G_\omega|$ is odd.
\end{proof}

\subsection{Summary}

\begin{thm}\label{liemain}
Suppose that $G$ is simple and of Lie type and that $G$ is not
isomorphic to an alternating group. Suppose further that
$(G,\Omega)$ satisfies Hypothesis \ref{3fix}. Then one of the
following is true:
\begin{enumerate}
\item[(1)] $G = \PSL_3(q)$, $G_\omega$ is cyclic of order $(q^2+q+1)/(3,q-1)$,
and $|\Omega| = (q-1)^2(q+1)q^3$.
\item[(2)] $G = \PSL_4(3) $, $G_\omega$ is cyclic of order $13$, and
$|\Omega| = 2^7 \cdot 3^6 \cdot 5 $.
\item[(3)] $G = \PSL_4(5)$, $G_\omega$ is cyclic of order $31$, and
$|\Omega| = 2^7 \cdot 3^2 \cdot 5^6 \cdot 13$.
\item[(4)] $G = \PSU_3(q)$ with $q \geq 3$, $G_\omega$ is cyclic of order
$(q^2-q+1)/(3,q+1)$ and $|\Omega| = (q-1)(q+1)^3q^3$.
\item[(5)] $G = \PSU_4(3)$, $G_\omega$ is cyclic of order $7$, and
$|\Omega| =  2^7 \cdot 3^6 \cdot 5$.
\item[(6)] $G = \PSL_2(7) \cong \PSL_3(2)$ with $|\Omega| = 7$ and
$G_\omega \cong \Sym_4$.
\item[(7)] $G = \PSL_2(11)$ with $|\Omega| = 11$ and $G_\omega \cong \Alt_5$.
\end{enumerate}
We note that in (1) and (4) the point stabilizers have order coprime to $6$.
\end{thm}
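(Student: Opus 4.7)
The proof will be essentially a bookkeeping exercise that collects the results of Section~4 into a single classification. My plan is to organize it using Lemma~\ref{2isttoll}, which splits the analysis according to whether $|G_\omega|$ is even or odd, and then invoke the results of the three subsections in turn.

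If $|G_\omega|$ is even, I would first dispose of the case in which $G$ has dihedral or semidihedral Sylow $2$-subgroups. Since $G$ is of Lie type and not alternating, Lemma~\ref{2isttoll}(3) restricts $G$ to $\PSL_2(q)$, $\PSU_3(q)$ or $\PSL_3(q)$ with $q$ odd, and Lemmas~\ref{PSL2o}, \ref{PSU3o}, \ref{PSL3o} directly produce items (6), (7), (4) and (1) of the theorem. The remaining even-$|G_\omega|$ possibility is that $G_\omega$ contains a Sylow $2$-subgroup, in which case $|\Omega|$ is odd. Then Lemma~\ref{odd-even} restricts $G$ to $\PSL_2(q), \Sz(q), \PSU_3(q),$ or $\PSL_3(q)$ with $q$ even; Lemmas~\ref{PSL2e}, \ref{Szq} and \ref{PSU3e} dispose of the first three and contribute to item (4), while the $\PSL_3(q)$ case falls under the next case.

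If $|G_\omega|$ is odd, then outside the groups already handled we are in the setting of Hypothesis~\ref{newlie}. Lemma~\ref{general} guarantees that every nontrivial element in a point stabilizer is regular semisimple. Lemmas~\ref{Sp43} and \ref{2G2} rule out $\Sp_4(3)$ and $^2G_2(q)$ with $q>3$, after which Hypothesis~\ref{betterlie} applies. Corollary~\ref{notexceptional} eliminates all exceptional groups, and Lemma~\ref{classicallarge} then leaves exactly $\PSL_3(q)$ with $q$ even, $\PSL_4(3)$, $\PSL_4(5)$, and $\PSU_4(3)$, which match items (1) (with $q$ even), (2), (3) and (5).

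The main obstacle is not one of proof but of bookkeeping: one must check that every output of the preparatory lemmas lands in precisely one of the seven items of the theorem, with no case overlooked and no duplication between the even and odd subcases. In particular, the $\PSL_2(7)$ example of Lemma~\ref{PSL2o} with $|\Omega|=24$ and cyclic point stabilizer of order $7$ must be identified with the specialization $q=2$ of item (1) via $\PSL_2(7)\cong \PSL_3(2)$, checked by $(q^2+q+1)/(3,q-1)=7$ and $(q-1)^2(q+1)q^3=24$; and the $\PSL_3(q)$ case with $q$ even should be routed through Hypothesis~\ref{betterlie} and Lemma~\ref{classicallarge} rather than through Section~4.1, since the latter does not produce it directly.
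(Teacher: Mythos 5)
Your proposal is correct and is essentially the paper's own proof: the paper likewise assembles the theorem from the strongly-embedded case (Lemmas \ref{PSL2e}, \ref{Szq}, \ref{PSU3e}), the dihedral/semidihedral case (Lemmas \ref{PSL2o}, \ref{PSU3o}, \ref{PSL3o}), and the residual case governed by Hypotheses \ref{newlie} and \ref{betterlie} together with Lemmas \ref{Sp43} and \ref{2G2}, Corollary \ref{notexceptional} and Lemma \ref{classicallarge}, and it records, as you do, that $\PSL_2(7)\cong \PSL_3(2)$ occurs in both (1) and (6). The one step you should tighten is the claim that, in the branch where $G_\omega$ contains a Sylow $2$-subgroup, ``the $\PSL_3(q)$ case falls under the next case'': in that branch $|G_\omega|$ is even, so it cannot literally be deferred to the odd-stabilizer analysis; rather, that branch is vacuous, because for simple $G$ only conclusion (2) of Lemma \ref{odd-even} (a strongly embedded subgroup) is compatible with simplicity, and Bender's theorem excludes $\PSL_3(q)$ with $q \geq 4$ even from the groups possessing one --- this is precisely the observation that lets the paper assert that every simple group of Lie type not treated in Sections 4.1 and 4.2 satisfies Hypothesis \ref{newlie}, i.e.\ has point stabilizers of odd order.
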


\begin{proof}
The groups with strongly $2$-embedded subgroups were considered in
Lemma \ref{PSL2e}, Lemma \ref{Szq} and Lemma \ref{PSU3e}. The only
examples arising here are the groups  $\PSU_3(q)$ where $q$ is even,
as described in (4).

The groups with dihedral or semidihedral Sylow $2$-subgroups were
considered in Lemma \ref{PSL2o}, Lemma \ref{PSU3o} and Lemma
\ref{PSL3o}. The examples arising here are the groups  $\PSU_3(q)$
with $q$ odd, which are accounted for in (4), the groups $\PGL_3(q)$
with $q$ odd, which appear in (1), and the groups $\PSL_2(7)$ and $
\PSL_2(11) $ which are listed in (6) and (7).

The groups for which the normalizer of a Sylow $2$-subgroup is not
strongly embedded and where the Sylow $2$-subgroups are neither
semidihedral nor dihedral satisfy Hypothesis \ref{newlie}. In fact
all but $\PSp_4(3)$ and $^2G_2(q)$ satisfy Hypothesis
\ref{betterlie}. Lemma \ref{Sp43} shows that the group $\PSp_4(3)$
does not give any example and Lemma \ref{2G2} shows the same for the
groups $^2G_2(q)$.

The exceptional groups of Lie type which satisfy Hypothesis
\ref{betterlie} do not lead to examples and this is the content of
Corollary \ref{notexceptional}. The classical groups of Lie type
which satisfy Hypothesis \ref{betterlie} are treated in Lemma
\ref{classicallarge} and here the examples involving $\PSL_3(q)$
with $q$ even, $\PSL_4(3), ~\PSL_4(5), ~\PSU_4(3)$ arise. These are
accounted for in (1), (2), (3) and (5), respectively.
\end{proof}

For convenience we remind the reader that $\PSL_2(7) \cong
\PSL_3(2)$ gives rise to two examples which are listed in (1) and
(6) above. The next results deal with almost simple groups that play
a role for Theorems \ref{almostsimple3fp} and \ref{main}.

Before analyzing the almost simple groups with socle $\PSL_3(q)$ and
$\PSU_3(q)$, we need a preparatory lemma.

\begin{lemma} \label{autprep}
Suppose that $p$ is a prime and let $a \in \N$ and $q := p^a > 4$,
and let $E:=\PSL_3(q)$. Then $|\Out(E)|=2a \cdot (q-1,3)$.

Now suppose that $G$ is a group such that $E < G \leq \au(E)$ and
that $(G,\Omega)$ satisfies Hypothesis \ref{3fix}. Let $\omega \in
\Omega$ and suppose that $E_\omega$ is cyclic of order
$q^2+q+1/(q-1,3)$. Then the following are true:

\begin{enumerate}
\item[(1)]
$N_G(E_\omega)/N_E(E_\omega) \cong G/E$.
\item[(2)]
If $q > 4$, then $(q-1,3) = 3$, $G/E$ is cyclic of order $3$, and no
element of $G \setminus E$ induces a field automorphism on $E$.
\end{enumerate}
\end{lemma}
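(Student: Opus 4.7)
My plan splits into the order formula $|\Out(E)|=2a(q-1,3)$, a standard fact for $\PSL_n(q)$ that I would simply cite (the three factors come from the field automorphism of order $a$, the diagonal automorphisms of order $(q-1,3)$, and the graph automorphism of order $2$), and the two numbered assertions. For (1), I use a Frattini argument. Since $q > 4$, Zsygmondy's theorem yields a primitive prime divisor $r$ of $q^3-1$; such $r$ satisfies $r \equiv 1 \pmod 3$ (so $r \geq 7$) and $r \nmid q^i-1$ for $i=1,2$. Hence $r$ divides $|E_\omega|=(q^2+q+1)/(q-1,3)$ and, for all but a few small $q$, is coprime to $|\Out(E)|$; the exceptional cases can be handled by picking a different prime divisor of $|E_\omega|$. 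Let $R$ be the Sylow $r$-subgroup of $E_\omega$; it is then a Sylow $r$-subgroup of both $E$ and $G$. Since $E_\omega$ is a self-centralizing cyclic maximal torus and every element of order $r$ lies in a unique such torus, $C_E(R) = E_\omega$; as $R$ is characteristic in the cyclic group $E_\omega$, I obtain $N_G(R) = N_G(E_\omega)$. The Frattini argument now gives $G = E \cdot N_G(R) = E \cdot N_G(E_\omega)$, and the second isomorphism theorem delivers (1).

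For (2), Part (1) combined with the transitivity of $G$ on $\Omega$ forces $G = E \cdot G_\omega$, so $G_\omega / E_\omega \cong G/E$ and every coset of $E$ in $G$ admits a representative in $G_\omega$. I then argue by contradiction. Suppose some $\tau \in G_\omega$ has image in $\Out(E)$ with nontrivial field-automorphism component. After multiplying by elements of $E_\omega$ and passing to a power, I may assume $\tau$ has prime order $k$ dividing $a$, so Steinberg's centralizer theorem describes $C_E(\tau)$ as containing a subgroup isomorphic to $\PSL_3(q_0)$ (or $\PSU_3(q_0)$ in the mixed field-graph case) with $q_0 = q^{1/k} \geq 2$. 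This is a nonabelian simple group with no subgroup of index $2$ or $3$, as one verifies directly from its maximal subgroup list. Applying Lemma \ref{tuedel} to $\tau \in G_\omega$ yields that a subgroup of $C_G(\tau)$ of index at most $3$ lies in $G_\omega$, which forces $\PSL_3(q_0) \leq G_\omega \cap E = E_\omega$, contradicting the fact that $E_\omega$ is cyclic. A parallel argument handles graph automorphisms, since the centralizer of a graph automorphism in $E$ contains the nonabelian simple group $\PSL_2(q)$. Consequently $G/E$ lies entirely in the diagonal subgroup of $\Out(E)$, and since $G > E$ it follows that $(q-1,3) = 3$ and $G/E$ is cyclic of order $3$.

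The principal obstacle is controlling $C_E(\tau)$ when $\bar\tau \in \Out(E)$ has mixed field, diagonal, and graph components: $\tau$ need not be a pure field automorphism, and it takes some care to adjust it to a clean representative whose centralizer in $E$ is a recognizable Lie-type subgroup containing a provably simple nonabelian part. A secondary technical point is checking that the primitive prime divisor $r$ used in (1) is coprime to $|\Out(E)|$ for every $q > 4$, which is routine but requires inspecting a handful of exceptional pairs $(p,a)$.
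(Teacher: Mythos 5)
Your treatment of the order formula and of part (1) is essentially sound. For (1) you run the Frattini argument through a Zsygmondy prime $r$ dividing $(q^2+q+1)/(q-1,3)$ and the Sylow $r$-subgroup $R$ of the self-centralizing torus $E_\omega$, rather than through the conjugacy class of point stabilizers as the paper does; both work, and in fact your worry about $r$ being coprime to $|\Out(E)|$ is unnecessary, since $G = E\,N_G(R)$ only requires $R \in \sy_r(E)$, not $R \in \sy_r(G)$.

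The genuine gap is in part (2), and it sits exactly where you flag ``the principal obstacle.'' Your plan is to take $\tau \in G_\omega$ whose image in $\Out(E)$ has nontrivial field (or graph) component and, ``after multiplying by elements of $E_\omega$ and passing to a power,'' assume $\tau$ is a pure field, graph or graph-field automorphism of prime order, so that Steinberg's theorem gives a centralizer containing a nonabelian simple subgroup. This reduction fails for diagonal-field elements of order $3$: when $3 \mid (q-1)$ and $3 \mid a$, the subgroup of $\Out(E)$ generated by $\mathrm{Outdiag}(E)$ and the field automorphism of order $3$ is elementary abelian of order $9$, and it has subgroups of order $3$ (such as $\langle \bar d\bar\phi\rangle$) every one of whose preimages in $G \setminus E$ has order divisible by $9$. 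The cube of such an element already lies in $E$, and multiplying by $E_\omega$ does not change the image in $\Out(E)$, so no adjustment produces an outer element of prime order to which Steinberg applies. Correspondingly, the conclusion you draw --- that $G/E$ lies in the diagonal subgroup of $\Out(E)$ --- is strictly stronger than what Lemma \ref{autprep}(2) asserts and is not available at this stage: the group $\PGL_3^*(q) = \langle E, d\rangle$ with $d$ a diagonal-field automorphism survives Lemma \ref{autprep} and is only eliminated in Lemma \ref{AutPSL3}, via the separate observation that every element of $\PGL_3^*(q) \setminus E$ has order divisible by $9$, which would force $3$ to divide $|E_\omega|$. The paper's proof of (2) is structured precisely to sidestep this: it first shows $N_G(E_\omega)/E_\omega$ is a $3$-group (a prime $k \neq 3$ dividing $|G/E|$ does yield a genuine field/graph/graph-field automorphism, because $\mathrm{Outdiag}(E)$ has order dividing $3$), then deduces $(3,q-1)=3$, and finally excludes only genuine field automorphisms of order $3$ --- deliberately leaving the diagonal-field possibility open, as the statement of (2) requires. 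To repair your argument you would need the order-divisible-by-$9$ computation (or an equivalent), not merely ``some care'' in choosing representatives.
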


\begin{proof} The order of the outer automorphism group of $E$
is well known and is as claimed. The outer automorphisms are
diagonal, field or graph automorphisms and their products. All of
this can be found in Theorem 2.5.12 of \cite{GLS3}.

Now (1) follows from a Frattini argument, using the fact that
$G$ acts transitively on the set of point stabilizers in $E$. \\

We know from Lemmas \ref{PSL3o} and \ref{classicallarge} that
$|N_E(E_\omega)| = 3 \cdot |E_\omega|$ and that $(|E_\omega|,3) =
1$. Thus (1) implies that $|G_\omega:E_\omega| = |G/E|$.

To prove (2) we suppose to the contrary that $1 \neq
|N_G(E_\omega)/N_E(E_\omega)| =: b$. Then Lemma \ref{tuedel} implies
that $G_\omega$ contains a subgroup of order $b$. Now if $(b,3) =
1$, then all elements $h \in N_G(E_\omega) \setminus N_E(E_\omega)$
of prime order dividing $b$ are either graph, field or graph-field
automorphisms. Thus, as $q > 4$, it follows that $C_E(h)' \cong
~\PSO_3(q), ~\PSL_3(q_0)$ or $\PSU_3(q_0)$, where $q_0$ divides $q$.
As no proper subgroup of $E$ contains both $E_\omega$ and $C_E(h)'$,
we see that $N_G(E_\omega)/E_\omega$ is a $3$-group.

Next we note that if $(3,q-1) =1$, then $\PGL_3(q) \cong \PSL_3(q) =
E$ and hence every element $t$ of order $3$ in $G \setminus E$ is a
field automorphism such that $C_E(t)' \cong \PSL_3(q_0)$. Now (1)
forces a conjugate of $t$ into $G_\omega$. However, as no proper
subgroup of $E$ can contain $E_\omega$ and $C_E(t)'$ we see that $3
= (q-1,3)$.

Thus $N_G(E_\omega)/E_\omega$ is a $3$-group and $(3,q-1) =3$ and, with $l$ denoting the highest power of $3$ dividing $a$ (from our hypothesis), we see that $G/E$ is a $3$-group of order at most $3 \cdot l.$

In case $G/E$ contains field automorphisms of order $3$, then (1)
implies that $G_\omega$ contains a field automorphism $t$ of order
$3$ such that $C_E(t) \cong \PSL_3(q_0)$. As before Lemma
\ref{tuedel} implies $E \leq G_\omega$, which is impossible. The
fact that no element of $G \setminus E$ is allowed to induce a field
automorphism of $E$ implies that $|G/E| = 3$, our claim.
\end{proof}

\begin{lemma}\label{AutPSL3}
Suppose that $G$ is almost simple and not simple and that $E = F^*(G) \cong \PSL_3(q)$.
If $(G,\Omega)$ satisfies Hypothesis \ref{3fix},
then $(3,q-1) = 3$, $G = \PGL_3(q)$ and $G_\omega$ is cyclic of order $(q^3-1)/(q-1)$.
\end{lemma}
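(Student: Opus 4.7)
The plan is to reduce $G$ to $\PGL_3(q)$ via the component analysis, and then identify $G_\omega$ as a Coxeter torus. The first step is to apply Lemma~\ref{hypcomp} to the unique component $E = F^*(G) \cong \PSL_3(q)$. Case~(a) of that lemma requires $E \cong \PSL_2(q')$ for a $2$-power $q'$, which is incompatible with $\PSL_3(q)$ (the only overlap would be $\PSL_3(2)\cong\PSL_2(7)$, and $7$ is not a $2$-power), so case~(b) gives some $\alpha\in\Omega$ for which $(E,\alpha^E)$ satisfies Hypothesis~\ref{3fix}. Theorem~\ref{liemain}~(1) then describes $E_\alpha$ as cyclic of order $(q^2+q+1)/(3,q-1)$, with $|N_E(E_\alpha):E_\alpha|=3$. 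The small cases $q\in\{2,3,4\}$ (where the conclusion $(q-1,3)=3$ either already fails or $|G|$ is small enough) would be dispatched by direct inspection, e.g.\ via the table of marks in GAP.

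For $q>4$, my next step is to invoke Lemma~\ref{autprep}~(2), whose hypotheses are satisfied with $E_\alpha$ as above. It yields $(q-1,3)=3$, $G/E\cong\Z_3$, and that no element of $G\setminus E$ induces a field automorphism on $E$. Since $\mathrm{Out}(E)$ is generated by the diagonal, field, and graph automorphisms and has order $6a$, the only order-$3$ subgroup avoiding field content is the one generated by the diagonal automorphism; hence $G=\PGL_3(q)$. I will then pin down $|G_\omega|$, which is either $|E_\omega|$ or $3|E_\omega|$ since $|G/E|=3$. The first option would give three $E$-orbits on $\Omega$; but Lemma~\ref{autprep}~(1) produces $G=E\cdot N_G(E_\omega)$, so every $G$-conjugate of $E_\omega$ is $E$-conjugate to $E_\omega$. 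This forces the stabilizer in $E$ of a base point on each $E$-orbit to be itself $E$-conjugate to $E_\omega$, whence $E_\omega$ fixes $3$ points on each of the three $E$-orbits, a total of $9$, contradicting Hypothesis~\ref{3fix}. So $|G_\omega|=q^2+q+1$.

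The last and main step is to show $G_\omega$ is cyclic. Let $T$ denote the Coxeter torus of $\PGL_3(q)$, so $T$ is cyclic of order $q^2+q+1$, contains $E_\omega$ with index $3$, and $N_G(E_\omega)=N_G(T)=T\rtimes\langle w\rangle$ with $w$ of order $3$ acting on $T$ by the Frobenius $x\mapsto x^q$. Since $G_\omega\le N_G(T)$ has order $|T|$, either $T\le G_\omega$ (and then $G_\omega=T$ is cyclic, as desired) or $G_\omega\cap T=E_\omega$. In the latter ``twisted'' case I will pick, via Schur--Zassenhaus, an element $g\in G_\omega\setminus E_\omega$ of order $3$; it lies in $N_G(T)\setminus T$ and therefore (since $(q-1,3)=3$) has the three cube roots of unity -- all in $\mathbb{F}_q$ -- as eigenvalues on the natural module, so $C_G(g)$ is the maximal split torus, of order $(q-1)^2$. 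The gcd computation
\[
\gcd\bigl(q^2+q+1,\,(q-1)^2\bigr) \;=\; 3,
\]
combined with $\langle g\rangle\le C_{G_\omega}(g)\le G_\omega\cap C_G(g)$, forces $C_{G_\omega}(g)=\langle g\rangle$; the Burnside-type formula
\[
|\FO(g)| \;=\; \frac{|g^G\cap G_\omega|\,|C_G(g)|}{|G_\omega|} \;\ge\; \frac{|C_G(g)|}{|C_{G_\omega}(g)|} \;=\; \frac{(q-1)^2}{3}
\]
then exceeds $3$ for $q>4$, contradicting Hypothesis~\ref{3fix}. The hard part will be precisely this last step: correctly identifying $C_G(g)$ as the split torus and exploiting the near-coprimality between $q^2+q+1$ and $q-1$ to push $|\FO(g)|$ past the threshold of $3$.
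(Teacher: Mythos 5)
Your overall architecture matches the paper's: reduce to $G=$ \PGL$_3(q)$ via Lemmas \ref{hypcomp}, \ref{autprep} and Theorem \ref{liemain}, identify $N_G(E_\omega)$ modulo $E_\omega$ as elementary abelian of order $9$, and eliminate the ``twisted'' candidates for $G_\omega$ by showing an order-$3$ element would have too many fixed points. However, there are two genuine gaps. First, your reduction to $G=$ \PGL$_3(q)$ is too quick. Lemma \ref{autprep}~(2) only rules out elements of $G\setminus E$ that induce \emph{pure field} automorphisms; when $3$ divides $a$ (where $q=p^a$) the Sylow $3$-subgroup of \Out$(E)$ contains, besides the diagonal subgroup, order-$3$ subgroups generated by \emph{diagonal-field} products $d\phi$, and $\langle E, d\phi\rangle$ contains no pure field automorphism. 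So ``the only order-$3$ subgroup avoiding field content is the diagonal one'' is false in general. The paper must (and does) treat this extra group $\textrm{PGL}_3^*(q)=\langle E,d\rangle$ separately, ruling it out by an order computation on the elements of the outer cosets. Your proof is missing this case entirely.

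Second, the key step of your endgame --- ``$g$ has the three cube roots of unity, all in ${\rm GF}(q)$, as eigenvalues, so $C_G(g)$ is the maximal split torus'' --- is not justified. An element $g$ of order $3$ in \PGL$_3(q)$ lifts to $\hat g\in$ \GL$_3(q)$ with $\hat g^3=\lambda$ a scalar, and the eigenvalues of $\hat g$ are the cube roots of $\lambda$, not of $1$. Since $3\mid q-1$, the cubes have index $3$ in ${\rm GF}(q)^\times$, and if $\lambda$ is a non-cube the three eigenvalues form a Galois orbit in ${\rm GF}(q^3)\setminus {\rm GF}(q)$; then $C_G(g)$ is a \emph{Coxeter} torus (of order $q^2+q+1$), not the split torus, and your gcd computation $\gcd(q^2+q+1,(q-1)^2)=3$ no longer controls $|C_{G_\omega}(g)|$. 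Elements of both types genuinely occur in $N_G(T)\setminus T$ (every element of the coset $wT$ has order $3$, and the norm map is surjective), so you cannot assume the split case. The argument is repairable --- in the Coxeter case $C_G(g)$ contains a torus $T''\neq T$ with $T\cap T''=1$, so $C_{G_\omega}(g)$ is still small and $|\FO(g)|$ still exceeds $3$ --- but as written the centralizer identification, which you yourself flag as the hard part, does not hold. The paper sidesteps this by bounding $|C_{\textrm{GL}_3(q)}(\hat g)|\geq (q-1)^2$ for \emph{any} semisimple $3$-element, a bound valid for all maximal tori, rather than pinning down which torus occurs.
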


\begin{proof}
If $F^*(G) =\PSL_3(q)$ with $q \leq 4$, then the table of marks for
the almost simple groups of this type are in \cite{GAP}. Inspection
of these tables yields exactly our claimed example; i.e. $\PGL_3(4)$
acting on the cosets of a cyclic group of order $21$.

So without loss we may assume that $q > 4$.
Let $\omega \in \Omega$.
First we note that $E_\omega$ is cyclic of order $(q^2+q+1)/(3,q-1)$
 by Theorem \ref{liemain}~(1).
Moreover by Lemma
\ref{autprep} we know that $3 = (q-1,3)$ and either $G \cong
\PGL_3(q)$ or $q = q_0^3$ and $G \cong \PGL_3^*(q) = \< E, d \>$
where $d$ induces a diagonal-field automorphism on $E$.

In the latter case we see, by direct computation, that any $g \in
\PGL_3^*(q) \setminus E$ has order divisible by $9$. Thus if $g \in
G_\omega \setminus E_\omega$, then $g$ has order $9$ which implies
that $3$ divides $|E_\omega|$, contradicting Theorem \ref{liemain}.
Hence $G \cong
\PGL_3(q)$.
We note
that $G_\omega \le N_G(E_\omega)$, but that $G_\omega \nleq E$ by Lemma \ref{autprep}~(1).\\

We note that a Singer cycle in $\GL_3(q)$ has order $(q^3-1)$ and maps
via the natural projection to a cyclic subgroup $C$ of order
$(q^3-1)/(q-1)=q^2+q+1$ of
$\PGL_3(q)$. It follows from the subgroup structure of $\PGL_3(q)$ that
$C \cap E$ is conjugate to $E_\omega$. (They have the same order and are both cyclic.)
So we may suppose that $E_\omega \leq C \leq N_G(E_\omega) = \< C, t\>$ where
$t \in N_E(E_\omega)$
is an element of order $3$.
We note that $N_G(E_\omega)/E_\omega$ is elementary abelian of order $9$ and now we let
$d \in C$ be of order $3$ and such that
$\< d,t\>$ is a Sylow
$3$-subgroup of $N_G(E_\omega)$. In particular $C=\<E_\omega,d\>$. \\

Now we see four possibilities for $G_\omega$ ($\leq N_G(E_\omega))$ because $\<d,t\>$ has four subgroups of order $3$.

The first possibility is that $G_\omega=\<E_\omega,t\>$. But this is impossible because
$t \in E$ and $G_\omega \nleq E$.
Now we assume that $G_\omega \in \{\<E_\omega, dt \>, \< E_\omega, d^{-1}t \>\}$.

Let $h \in \{dt, d^{-1}t\}$ (depending on $G_\omega$) and choose
$g \in K:=\GL_3(q)$ to be a
$3$-element that projects onto $h$. Then
$|C_{K}(g)| \geq (q-1)^2$, which implies that $|C_G(h)| \geq (q-1) >
3$ (because $q > 4$). Now $h \in G \setminus E$ and so $N_G(\< h \>)
= C_G(h)$. Hence $|N_G(\< h \> )|= |C_G(\< h \>)| = 3|C_G(h)| \geq
3(q-1) > 9$ and thus Lemma \ref{tuedel} implies that $N_G(\< h \>)
\leq N_G(C)$ and thus that $C_G(h) = C_H(h)$. On the other hand
$|C_C(h)| = 3$, as $t$ acts fixed point freely on $E_\omega$ and
thus $9 = |C_H(h)| < |C_G(h)| $, which is a contradiction.

Now there is only one possibility left, namely that $G_\omega=\<E_\omega,d\>=C$.\\

Finally we observe that the possibility $G_\omega = C$ leads to an
example. To see this it suffices to observe that $N_G(\< c \>) \leq
N_G(C)$ for all $1 \neq c \in C \setminus E_\omega$. The latter is
clear as $C_G(c) \leq C_G(d) = C$.
\end{proof}

\begin{lemma}\label{AutPSU3}
Suppose that $q$ is a prime power, $q \neq 2$, and that $G$ is
almost simple, but not simple, with $F^*(G) \cong \PSU_3(q)$. If
$(G,\Omega)$ satisfies Hypothesis \ref{3fix}, then $(3,q+1) = 3$, $G
= \PGU_3(q)$ and $G_\omega$ is cyclic of order $(q^3+1)/(q+1)$.
\end{lemma}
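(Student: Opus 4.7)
The plan is to mirror the argument of Lemma \ref{AutPSL3} step by step, using the unitary analogues of the tools developed there. First I would dispose of the small case $q = 3$ by appealing to the tables of marks in \cite{GAP}, which directly yield the claimed example $\PGU_3(3)$ acting on the cosets of a cyclic group of order $7$. So I may assume $q \geq 4$. Fix $\omega \in \Omega$. By Theorem \ref{liemain}~(4) (or equivalently Lemmas \ref{PSU3e}, \ref{PSU3o}) we know that $E_\omega$ is cyclic of order $(q^2-q+1)/(3,q+1)$, and that $|N_E(E_\omega):E_\omega| = 3$ with $(|E_\omega|,3) = 1$.

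Next I would prove an analogue of Lemma \ref{autprep} for $\PSU_3(q)$. The outer automorphism group of $\PSU_3(q)$ has order $2a \cdot (q+1,3)$, where $q = p^a$, and its elements are diagonal and field automorphisms and their products. By a Frattini argument $N_G(E_\omega)/N_E(E_\omega) \cong G/E$, so $|G_\omega : E_\omega| = |G/E|$. Arguing as in Lemma \ref{autprep}~(2), any element $h$ of $N_G(E_\omega) \setminus N_E(E_\omega)$ of prime order coprime to $3$ would induce a field automorphism with $C_E(h)' \cong \PSU_3(q_0)$, and no proper subgroup of $E$ contains both $E_\omega$ and $C_E(h)'$; this forces $N_G(E_\omega)/E_\omega$ to be a $3$-group. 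Similarly, field automorphisms of order $3$ would centralize a subsystem $\PSU_3(q_0)$ and, combined with $E_\omega$, would generate $E$; this rules them out. We conclude that $(3,q+1) = 3$ and $|G/E| = 3$. In particular $G$ cannot contain the twisted analogue $\PGU_3^*(q) = \langle E, d\rangle$ (for $q = q_0^3$) because such an element has order divisible by $9$, forcing $3 \mid |E_\omega|$, a contradiction. Hence $G \cong \PGU_3(q)$.

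Now I would set up the unitary Singer cycle. A Singer-type subgroup $C$ of order $(q^3+1)/(q+1) = q^2-q+1$ in $\PGU_3(q)$ arises from the image in $\PGU_3(q)$ of a cyclic subgroup of $\GU_3(q)$ of order $q^3+1$, and $C \cap E$ is a $\PGU_3(q)$-conjugate of $E_\omega$. So I may arrange that $E_\omega \leq C \leq N_G(E_\omega) = \langle C, t\rangle$, where $t \in N_E(E_\omega)$ has order $3$. Then $N_G(E_\omega)/E_\omega$ is elementary abelian of order $9$; picking $d \in C$ of order $3$ with $\langle d,t\rangle \in \syl_3(N_G(E_\omega))$ gives four candidates for $G_\omega$.

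The possibility $G_\omega = \langle E_\omega, t\rangle$ is excluded because $t \in E$ while $G_\omega \not\leq E$. For the two mixed candidates $G_\omega = \langle E_\omega, d^{\pm 1} t\rangle$, the argument from the PSL case transfers verbatim: lifting $h := d^{\pm 1} t$ to $\GU_3(q)$ one has $|C_{\GU_3(q)}(\hat h)| \geq (q+1)^2$, so $|C_G(h)| \geq q+1 > 3$ for $q \geq 4$; then $|N_G(\langle h\rangle)| = 3|C_G(h)| > 9$ forces $N_G(\langle h\rangle) \leq N_G(C)$ via Lemma \ref{tuedel}, giving $C_G(h) = C_C(h)$, and this contradicts $|C_C(h)| = 3$ (since $t$ acts fixed point freely on $C$). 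This leaves only $G_\omega = C$. Finally, to check that this does give an example I verify that $N_G(\langle c\rangle) \leq N_G(C)$ for every $1 \neq c \in C \setminus E_\omega$: such an element $c$ projects nontrivially to $C/E_\omega$, its centralizer inside $G$ coincides with $C_G(d) = C$, and hence $N_G(\langle c\rangle)/C_G(c)$ has order $3$ and lies in $N_G(C)$. The hardest point in this plan is the analogue of Lemma \ref{autprep}, in particular excluding the twisted diagonal-field case $\PGU_3^*(q)$; everything else is a direct transcription of the linear argument with $q-1$ replaced by $q+1$ and $q^2+q+1$ replaced by $q^2-q+1$.
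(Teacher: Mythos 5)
Your proposal is correct and follows essentially the same route as the paper: the published proof simply states that the computations of Lemma \ref{autprep} and Lemma \ref{AutPSL3} transfer to the unitary case (via the embedding of $\GU_3(q)$ into $\GL_3(q^2)$, so that $C$ sits inside the Singer-type torus of $\PSL_3(q^2)$) and omits the details, whereas you have written out those details directly in $\PGU_3(q)$ with $q-1$, $q^2+q+1$ replaced by $q+1$, $q^2-q+1$. The substance — the Frattini argument, the exclusion of field and diagonal-field automorphisms, the four candidates for $G_\omega$ inside $N_G(E_\omega)/E_\omega$, and the centralizer estimate ruling out the mixed cases — is exactly what the authors intend.
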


\begin{proof} Theorem \ref{liemain} in combination with the main
theorem of \cite{MW} implies that the only possible action for
$F^*(G)$ is the action on the set of cosets of a cyclic group $C$ of
order $(q^2-q+1)/(3,q+1)$.
By observing that $\GU_3(q)$ lies naturally in $\GL_3(q^2)$ such that
the group $C$ lies naturally in the cyclic group $E_\omega \leq \PSL_3(q^2)$
of order $(q^4+q^2+1)/(3,q^2-1)$ from Lemma \ref{AutPSL3} above, we may
use the computations from Lemma \ref{autprep} and Lemma \ref{AutPSL3}
to establish our claim. We omit the details.
\end{proof}

\begin{lemma} \label{autsingles} Suppose that
$(G,\Omega)$ satisfies Hypothesis \ref{3fix} and that $G$ is almost
simple such that $F^*(G)$ is one of $ \PSL_4(3), ~\PSL_4(5)$ or
$\PSU_4(3)$. Then $G$ is simple.
\end{lemma}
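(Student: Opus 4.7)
The plan is to assume for contradiction that $G \neq E := F^*(G)$ and to derive an incompatibility between the smallness of $G_\alpha$ and the largeness of the $G$-centraliser of any outer involution. I first invoke Lemma \ref{hypcomp}: option (a) fails since $E$ is not isomorphic to $\PSL_2(2^p)$, so case (b) holds and there is $\alpha \in \Omega$ with $(E,\alpha^E)$ satisfying Hypothesis \ref{3fix}. By Theorem \ref{liemain}, $E_\alpha$ is cyclic of prime order $p$, with $p=13$, $31$, $7$ according as $E = \PSL_4(3),\ \PSL_4(5),\ \PSU_4(3)$. A short check of orders shows $|E|_p = p$ in each case, so $E_\alpha$ is a Sylow $p$-subgroup of $E$; as $|\Out(E)|$ is a $2$-power (of order $4$, $8$, $8$ respectively), $E_\alpha$ is also a Sylow $p$-subgroup of $G$.

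A Frattini argument then gives $G = E \cdot N_G(E_\alpha)$, so that the natural map $N_G(E_\alpha)/N_E(E_\alpha) \to G/E$ is an isomorphism of order $|G:E|$. Because every element of $N_G(E_\alpha)$ permutes the three fixed points $\fixO(E_\alpha) = \{\alpha,\beta,\gamma\} \subseteq \alpha^E$, $N_G(E_\alpha)$ stabilises $\alpha^E$ set-wise, and coupled with $G = E\cdot N_G(E_\alpha)$ this forces $\Omega = \alpha^E$, so $E$ is transitive on $\Omega$. Applying Lemma \ref{tuedel}(c) to $E_\alpha$ and using the $3$-cycle already present in $N_E(E_\alpha)$ yields $|N_G(E_\alpha):G_\alpha| = 3$, whence $|G_\alpha| = |G:E|\cdot p$. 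The subgroup $E_\alpha$ is normal in $G_\alpha$, so $G_\alpha/E_\alpha$ embeds into $G/E$ and is a nontrivial $2$-group as soon as $G > E$. Since $E \cap G_\alpha = E_\alpha$ has odd order, any involution from this $2$-group lies in $G \setminus E$; pick such an outer involution $s \in G_\alpha$.

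Applying Lemma \ref{tuedel} to $\langle s \rangle$, which has at most three fixed points on $\Omega$, gives $|C_G(s):C_{G_\alpha}(s)| \leq 3$, hence
\[
|C_G(s)| \;\leq\; 3\,|G_\alpha| \;=\; 3\,|G:E|\cdot p \;\leq\; 3\,|\Out(E)|\cdot p,
\]
which is at most $156$, $744$ and $168$ respectively in the three cases. The main obstacle is then the remaining case analysis: for every outer involution $s$ in each of the three groups, one must verify that $|C_E(s)|$ exceeds the corresponding bound. For $E = \PSL_4(q)$ with $q \in \{3,5\}$ the graph involution has $C_E(s) \cong \PSp_4(q)$, of orders $25\,920$ and $4\,680\,000$ respectively, while the diagonal and graph-diagonal involutions centralise Levi-type sections of shape $(\GL_k(q) \times \GL_{4-k}(q))/Z$; for $E = \PSU_4(3)$ the analogous subgroups appear, including a $\PSp_4(3)$ from the graph involution and a $(\GU_2(3) \times \GU_2(3))/Z$ from the diagonal ones. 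In every case $|C_E(s)|$ vastly exceeds the numerical bound above, yielding the required contradiction and forcing $G = E$.
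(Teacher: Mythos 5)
Your argument is correct and is essentially the paper's own proof, just written out in more detail: both identify $E_\alpha = G_\alpha \cap F^*(G)$ as a cyclic Sylow $p$-subgroup ($p = 13, 31, 7$), use a Frattini argument together with Lemma \ref{tuedel} to force an outer involution into $G_\alpha$, and then contradict the resulting bound on $|G_\alpha|$ (equivalently, on $G_\alpha \cap F^*(G)$) via the large centraliser of that involution. The explicit numerical bounds $156$, $744$, $168$ and the list of outer involution centralisers are a welcome elaboration of the paper's terser final sentence.
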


\begin{proof}
Let $\omega \in \Omega$ and suppose that $F^*(G)$ is one of
$\PSL_4(3), ~\PSL_4(5)$ or $\PSU_4(3)$. Then $P:=G_\omega \cap
F^*(G)$ is a Sylow $13$-, $7$- or $31$-subgroup of $G$,
respectively. Now we note that $P<N_{F^*(G)}(P)$ by Theorem \ref{liemain}, but also 
$G=F^*(G) \cdot N_G(P)$ by Frattini. Hence Lemma \ref{tuedel} forces an involution $t \in N_G(P)$ into $G_\omega$.
However the structure of $C_{F^*(G)}(t)$ and Lemma \ref{tuedel} then imply
that $F^*(G) \cap G_\omega \neq P$, which is a contradiction.
\end{proof}

\section{The sporadic simple groups}

In this section we adapt the notation in the ATLAS (\cite{ATLAS})
for the names of the sporadic groups.

\begin{lemma}\label{M11M12}
Suppose that $G$ is $M_{11}$ or $M_{12}$ and that ($G,\Omega$)
satisfies Hypothesis \ref{3fix}. Then $G=M_{11}$ in its $4$-fold
sharply transitive action on $11$ points.
\end{lemma}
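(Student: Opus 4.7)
The plan is to apply Lemmas \ref{2isttoll} and \ref{syl2} to severely restrict the possibilities for $G_\omega$ in each case, and then to dispose of the surviving candidates using the fixed-point formula
\[
|\FO(g)| \;=\; \frac{|C_G(g)|\cdot|g^G \cap G_\omega|}{|G_\omega|}
\]
together with the conjugacy class data of $M_{11}$ and $M_{12}$ (from the \textsc{Atlas}, or equivalently the GAP table of marks).

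For $M_{12}$ the Sylow $2$-subgroup has order $64$ and is neither dihedral nor semidihedral, so Lemma \ref{2isttoll} forces either $|G_\omega|$ odd or $G_\omega \supseteq S$ for some $S \in \sy_2(G)$. In the odd-order case $|G_\omega|$ divides $3^3 \cdot 5 \cdot 11$, and for each admissible candidate (cyclic of order $3,5,9,11$, or elementary abelian of order $27$) a prime-order element of $G_\omega$ is seen to have more than three fixed points via the formula above. In the case $S\le G_\omega$ the group $G_\omega$ lies inside one of the two $2$-local maximal subgroups of $M_{12}$ (both of order $192$) or equals $S$ itself, and in every such instance an involution or a $3$-element provides at least four fixed points relative to the size of $G_\omega$.

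For $M_{11}$ the Sylow $2$-subgroup $S\cong SD_{16}$ is semidihedral of order $16$, and Lemma \ref{syl2} leaves three live cases: case~(3) is excluded since $|M_{11}|>|\Sym_6|$. If $|G_\omega|$ is odd then $G_\omega\in\{C_3,C_5,C_3^2,C_{11}\}$, and using the centralizer orders $18,5,18,11$ respectively each candidate yields $|\FO(g)|>3$ for some nontrivial $g$. If $|S\cap G_\omega|=2$ and $G_\omega$ has a normal $2$-complement, then this $2$-complement is one of $1,C_3,C_5,C_3^2$ (the option $C_{11}$ is ruled out because $N_{M_{11}}(C_{11})$ has odd order $55$), and the fixed-point formula again disposes of every extension by $C_2$ that actually embeds in $M_{11}$. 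Finally, if $|\Omega|$ is odd then $G_\omega\supseteq S$, which forces $G_\omega\in\{SD_{16},\ 2\cdot\Sym_4,\ M_9{:}2,\ M_{10}\}$ with $|\Omega|\in\{495,165,55,11\}$ respectively. The case $G_\omega=M_{10}$ is the sharply $4$-transitive action of $M_{11}$ on $11$ points, which satisfies Hypothesis \ref{3fix} by Lemma \ref{4scharftransitiv} and the remark following it. In each of the other three, an involution $g$ of $M_{11}$ (with $|C_G(g)|=48$) meets $G_\omega$ in enough conjugates to give $|\FO(g)|>3$; equivalently, already the natural interpretation of the action on $55$ points as $2$-subsets of $\{1,\ldots,11\}$ shows that an involution with cycle type $2^4\cdot 1^3$ fixes $\binom{3}{2}+4=7$ subsets, and a direct analogue disposes of the $165$- and $495$-point actions.

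The main obstacle is simply the volume of case work, and the cleanest practical route is to consult the GAP table of marks of $M_{11}$ (and $M_{12}$), which records $|\FO(g)|$ for every conjugacy class of $G$ on every transitive permutation representation of $G$. One then observes that the unique row in which every nontrivial class contributes at most three fixed points, with some class contributing exactly three, is the sharply $4$-transitive action of $M_{11}$ on $11$ points.
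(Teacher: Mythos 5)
Your overall strategy is sound and reaches the right conclusion, but by a genuinely different route from the paper: you enumerate candidate point stabilizers and kill them with the class-fusion fixed-point count $|\FO(g)|=|C_G(g)|\,|g^G\cap G_\omega|/|G_\omega|$ (or, ultimately, the GAP table of marks), whereas the paper never touches the table of marks for the Mathieu groups and instead runs an iterated local analysis: starting from an element $x\in G_\alpha$ of each possible prime order, it uses Lemma \ref{tuedel} on $N_G(\langle x\rangle)$ and on centralizers to force ever larger subgroups into $G_\alpha$ until no maximal subgroup can contain it. Your method is more mechanical and, if executed completely, just as conclusive; the paper's is longer but needs only normalizer orders rather than full fusion data.

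There are, however, concrete holes in your enumeration. First, both $M_{11}$ and $M_{12}$ contain the Frobenius group $11{:}5$ of order $55$ (the normalizer of a Sylow $11$-subgroup), which is an odd-order subgroup missing from your candidate lists $\{C_3,C_5,C_3^2,C_{11}\}$ resp.\ $\{3,5,9,11,27\}$; in $M_{11}$ this candidate is not excluded by any of the elements you test elsewhere -- one must actually compute that an element of order $5$ in $11{:}5$ fixes exactly $\tfrac{5\cdot 44}{55}=4$ points of the $144$-point action, which violates the hypothesis only via the four-point-stabilizer condition. Second, $M_{12}$ has no elements of order $9$ and its Sylow $3$-subgroup is extraspecial $3^{1+2}$, not elementary abelian of order $27$; the candidate still dies (a $3$-central element already has too many fixed points), but the stated list is wrong. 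Third, your dismissal of case (3) of Lemma \ref{syl2} for $M_{11}$ via ``$|M_{11}|>|\Sym_6|$'' misreads that case: its conclusion is the disjunction ``$O_2(G)=1$ \emph{or} ($O_2(G)$ is a fours group and $|\Omega|\le 6$)'', so for a simple group nothing about $|\Omega|$ follows; the correct exclusion is the parity argument in the proof of Lemma \ref{2isttoll}, which shows case (3) forces a subgroup of index $2$. All of these are reparable (and your final appeal to the table of marks would paper over them), but as written the case analysis is not complete.
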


\begin{proof}
Let $\alpha \in \Omega$ and $H:=G_\alpha$. Let $x \in H$ and
$X:=\langle x \rangle$. For maximal subgroups of $G$ and information
about local subgroups we refer to Tables 5.3a and 5.3b in
\cite{GLS3}.

First assume that $x$ has order $11$.
Then $N_G(X)$ has order $11 \cdot 5$ and therefore
Lemma \ref{tuedel} yields that $H$ contains a subgroup $Y$ of order $5$.
In both cases, $|N_G(Y)|$ is divisible by $4$ and hence $H$ has even order, again by Lemma \ref{tuedel}.
Then let $t \in H$ be an involution.
Lemma \ref{tuedel} implies that $H$ contains a subgroup of index at most $3$ of $C_G(t)$.
As $|H|$ is also divisible by $11$ and by $5$, the lists of maximal subgroups yield that $H=G$.
This is impossible.

If $x$ has order $5$, then $N_G(X)$ has order divisible by $4$ and
hence $H$ contains a subgroup of index at most $3$ of an involution
centralizer (applying Lemma \ref{tuedel} twice). This is possible if
$G=M_{11}$ and $\Omega$ has $11$ elements, and we already know that
this is in fact an example for Hypothesis \ref{3fix}. In $M_{12}$,
we see that $H$ lies in the centralizer of an involution from class
$2A$ and hence $H$ contains a $3$-element. Lemma \ref{tuedel}
implies that $9$ divides $|H|$, but this is false.

So from now on we consider the case where $H$ is a $\{2,3\}$-group.\\

Let us assume that $x$ is an involution and that
$|\FO(x)| \in \{1,3\}$. Then all involutions have an odd number of fixed points and hence
Lemma \ref{sylow} (or \ref{tuedel}~(a)) yields that $H$ has odd index. In $M_{12}$ we
immediately have $3 \in \pi(H)$ via $C_G(x)$ and Lemma \ref{tuedel}. In $M_{11}$ we look at a
fours group in $H$ and apply Lemma \ref{tuedel} to it in order to see that $3 \in \pi(H)$.
Let $Y \le H$ be a subgroup of order $3$.

If $G=M_{11}$, looking at the list of maximal subgroups, we see that
$H$ does not contain a Sylow $3$-subgroup of $G$ in this case. So we
may suppose that $|\FO(Y)|=3$ by Lemma \ref{tuedel}~(a) and (b). It
follows that $H=C_G(x)$. Let $a \in H$ be an element of order $8$.
As $|\Omega|=165$, we see that $x$ has either one fixed point, one
orbit of length $4$ and regular orbits or three fixed points, one
orbit of length 2 and regular orbits on $\Omega$. In both cases
$a^4$ is an involution that has too many fixed points.

If $G$ is $M_{12}$, then $H$ contains a full involution centralizer.
This implies that $5 \in \pi(H)$, which is a contradiction.

Suppose that $o(x)=3$. Then $N_G(X)$ has order divisible by $4$ and
hence Lemma \ref{tuedel} yields that $H$ has even order. Let $t \in
H$ be an involution. We already treated the case where some
involution in $H$ has one or three fixed points, so $|\FO(t)|=2$ and
in particular $|\Omega|$ is even. Lemma \ref{tuedel}~(b) yields that
$H$ contains an index two subgroup of an involution centralizer,
which in the case $M_{12}$ implies that $H$ contains involutions
from all conjugacy classes (see Lemma \ref{2isttoll}). In particular
$H$ contains a Sylow $2$-subgroup of $G$, contrary to the fact that
$|\Omega|$ is even. If $G=M_{11}$, then $H$ contains subgroups of
structure $\SL_2(3)$ and $\sym_3 \times 2$, which is also
impossible.

This finishes the proof.
\end{proof}

The remaining sporadic groups do not have dihedral or semidihedral
Sylow $2$-subgroups. This makes Lemma \ref{2isttoll} very useful
again.

\begin{lemma}\label{tollspor}
Suppose that Hypothesis \ref{3fix} holds and that $G$ is a sporadic
group, but not $M_{11}$. Let $\alpha \in \Omega$. Then $G_\alpha$
contains a Sylow $2$-subgroup of $G$ or it has odd order. In the
second case there exists no prime $p \in G_\alpha$ such that $p
\vdash 2$.
\end{lemma}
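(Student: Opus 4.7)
The first assertion is essentially a direct appeal to Lemma \ref{2isttoll}. That lemma exhibits three possibilities for $G_\alpha$, the third of which forces $G$ to have dihedral or semidihedral Sylow $2$-subgroups and so, by Gorenstein--Walter and Alperin--Brauer--Gorenstein, to be isomorphic to $\Alt_7$, $M_{11}$, or to $\PSL_2(q)$, $\PSU_3(q)$ or $\PSL_3(q)$ with $q$ odd. Among the sporadic simple groups only $M_{11}$ appears here, and it is excluded by hypothesis. Hence $G_\alpha$ has odd order or contains a full Sylow $2$-subgroup of $G$.

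For the second assertion, suppose $|G_\alpha|$ is odd and that some $p \in \pi(G_\alpha)$ satisfies $p \vdash 2$; I aim to derive a contradiction. The main case is $p \ge 5$: by Lemma \ref{sylow}(c), $G_\alpha$ contains a Sylow $p$-subgroup $P$ of $G$. Because $p \vdash 2$, there is a nontrivial $p$-subgroup $X \le G$ with $4 \mid |N_G(X)|$, and Sylow's theorem lets us replace $X$ by a $G$-conjugate lying in $P \le G_\alpha$. Lemma \ref{tuedel} then gives $|N_G(X) : N_{G_\alpha}(X)| \le 3$, so the $2$-part of that index is at most $2$. Since $G_\alpha$ has odd order, so does $N_{G_\alpha}(X)$, and therefore $|N_G(X)|_2 \le 2$, contradicting $4 \mid |N_G(X)|$.

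The case $p = 3$ is trickier because Lemma \ref{sylow}(c) does not deliver a full Sylow $3$-subgroup of $G$ inside $G_\alpha$. Here I would invoke Lemma \ref{syl3}: since $3 \in \pi(G_\alpha)$, we are in case (2), (3) or (4) of that lemma. In case (4) a Sylow $3$-subgroup of $G$ lies in $G_\alpha$ and the preceding counting argument applies verbatim. In cases (2) and (3) the subgroup $P_\alpha$ has fixed point set of size $3$, so Lemma \ref{tuedel}(c) gives $|N_G(P_\alpha) : N_{G_\alpha}(P_\alpha)| \le 3$, and the same $2$-part comparison forces $|N_G(P_\alpha)|_2 \le 2$. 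Combining this with the fact that a witnessing $3$-subgroup from $3 \vdash 2$ is Sylow-conjugate into the unique Sylow $3$-subgroup containing $P_\alpha$, and using the explicit $3$-local structure of the individual sporadic groups (available in the ATLAS) to compare normalizers, one rules out $4 \mid |N_G(X)|$ in each case.

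The principal obstacle is precisely the $p = 3$ case, since Lemma \ref{key} is unavailable for $p = 3$ and the clean Sylow-plus-Lemma \ref{tuedel} trick no longer reaches an arbitrary $3$-subgroup $X$. The argument there has to lean on Lemma \ref{syl3} and, ultimately, on the known $3$-local subgroup structure of each sporadic group, which is why this lemma naturally functions as a preparation for the case-by-case sporadic analysis that follows, rather than as a fully uniform statement.
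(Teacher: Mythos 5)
Your handling of the first assertion and of the case $p \ge 5$ of the second is exactly the paper's route: the authors dispose of this lemma in one line as ``a combination of Lemmas \ref{2isttoll} and \ref{key}'', and your Sylow-conjugation-into-$G_\alpha$ argument followed by Lemma \ref{tuedel} is precisely the proof of Lemma \ref{key}, unpacked. So for those parts you are correct and on the same path.

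Your concern about $p=3$ is legitimate: Lemma \ref{key} is only stated for $p \ge 5$, so the paper's cited lemmas do not literally cover the claim that $3 \notin \pi(G_\alpha)$ when $3 \vdash 2$, even though later arguments (e.g.\ for the Janko groups) apply the conclusion with $p=3$. However, your proposed repair does not close the gap either: in cases (2) and (3) of Lemma \ref{syl3} you control $N_G(P_\alpha)$, but the witness $X$ for $3 \vdash 2$ need not be conjugate to $P_\alpha$, nor into $G_\alpha$ at all, and the final step (``using the explicit $3$-local structure \dots one rules out $4 \mid |N_G(X)|$ in each case'') is a promissory note rather than an argument. A cleaner observation does most of the work: if $3 \in \pi(G_\alpha)$ then $G_\alpha$ itself contains a subgroup $Y$ of order $3$, and whenever $4$ divides $|N_G(Y)|$ for this particular $Y$, Lemma \ref{tuedel} forces $|N_{G_\alpha}(Y)|$, hence $|G_\alpha|$, to be even. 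The only residual difficulty is a sporadic group with several classes of subgroups of order $3$, where the class meeting $G_\alpha$ has normalizer of odd $2$-part while a different class witnesses $3 \vdash 2$; in the groups where the paper actually uses $3 \vdash 2$ this does not occur, which is why the lemma is harmless in practice, but as a uniform statement it needs either this extra remark or a restriction to $p \ge 5$.
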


\begin{proof}
This is a combination of Lemmas \ref{2isttoll} and \ref{key}.
\end{proof}

\begin{lemma}\label{Mathieu}
Suppose that $G$ is $M_{22}$, $M_{23}$ or $M_{24}$ and that $\Omega$
is a set such that ($G,\Omega$) satisfies Hypothesis \ref{3fix}.
Then $G=M_{22}$ and the action of $G$ on $\Omega$ is as the action
of $G$ on the set of cosets of a subgroup of order $7$.
\end{lemma}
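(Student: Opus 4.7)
The plan is to apply Lemma \ref{tollspor} to $H := G_\alpha$: either $H$ contains a Sylow $2$-subgroup $S$ of $G$, so $|\Omega| = |G:H|$ is odd, or $H$ has odd order with no prime $p \in \pi(H)$ satisfying $p \vdash 2$. In both situations the detailed local subgroup structure of $M_{22}$, $M_{23}$ and $M_{24}$ recorded in \cite{ATLAS} and in Table 5.3 of \cite{GLS3} reduces the verification to a finite enumeration.

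For the Sylow $2$ case, each of these three groups has only a short list of maximal subgroups of odd index, so the candidates for $H$ are limited. For each candidate I would pick an involution $t \in H$ and apply Lemma \ref{tuedel} to $C_G(t)$: this pulls a subgroup of $C_G(t)$ of index at most $3$ into $H$, which typically drives $|H|$ above that of any maximal overgroup or forces an element of $H$ to have at least four fixed points on the coset space $G/H$. Handling $M_{24}$, whose Sylow $2$-subgroup has order $1024$ and several $2$-local overgroups, will be the heaviest step in this case.

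For the odd-order case, I would read off the normalizers $N_G(X)$ of subgroups $X$ of prime order from the ATLAS and apply Lemma \ref{key} iteratively to exclude primes from $\pi(H)$. In $M_{22}$, $|N_G(\Z_3)| = 36$ and $|N_G(\Z_5)| = 20$ give $3, 5 \vdash 2$, while $N_G(\Z_{11}) = 11{:}5$ gives $11 \vdash 5$, so via Lemma \ref{key} none of $3, 5, 11$ can lie in $\pi(H)$. The only remaining prime is $7$, and since the Sylow $7$-subgroup is cyclic of order $7$, Lemma \ref{sylow}(c) forces $|H| = 7$. The parallel computations for $M_{23}$ and $M_{24}$ eliminate every odd prime via chained applications of Lemma \ref{key}, leaving no viable odd-order $H$ in either group. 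The surviving candidate $H = \Z_7 \le M_{22}$ satisfies $|N_G(H):H| = 3$ and is t.i.\ (distinct Sylow $7$-subgroups intersect trivially), so Corollary \ref{FrobTo3} yields that $(G, G/H)$ satisfies Hypothesis \ref{3fix} with $|\Omega| = 2^7 \cdot 3^2 \cdot 5 \cdot 11$ and a generator of $H$ fixing exactly the three cosets comprising $N_G(H)/H$, which is the asserted example.
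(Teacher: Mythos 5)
Your overall strategy coincides with the paper's: split via Lemma \ref{tollspor} into the case where $G_\alpha$ contains a Sylow $2$-subgroup and the case where $|G_\alpha|$ is odd, kill primes in the odd case by chaining $\vdash$-relations through Lemma \ref{key}, and handle the even case by confronting the odd-index maximal subgroups with Lemma \ref{tuedel}. Your odd-order analysis for $M_{22}$ is correct and complete, and the verification of the surviving example ($H$ of order $7$, t.i., $|N_G(H):H|=3$, so every nontrivial element of $H$ fixes exactly the three cosets lying in $N_G(H)$) is exactly the right computation. One citation is off, however: Corollary \ref{FrobTo3} does not apply here, since it presupposes that $(G,\Omega)$ already satisfies Hypothesis \ref{3fix} with $G_\omega$ an odd-order Frobenius group; you should simply run the t.i.\ coset count directly, as in the proof of that corollary.

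The remaining branches are announced rather than carried out, and that is where most of the content of this lemma sits. In the odd-order case for $M_{23}$ and $M_{24}$ you still have to dispose of $p=23$ (the paper uses $23\vdash 11$ together with $11\vdash 5\vdash 2$) and of $p=7$, which in the larger groups dies via $7\vdash 3$ and $3\vdash 2$ but survives in $M_{22}$. In the Sylow-$2$ case you need the actual contradictions: for $M_{22}$, that $3\in\pi(G_\alpha)$ forces $G_\alpha$ into a maximal subgroup of shape $2^4{:}\A_6$ or $2^4{:}\Sym_5$ and Lemma \ref{tuedel} then rules both out; for $M_{23}$, that $G_\alpha$ must be $2^4{:}\Alt_7$, whereupon $3$-elements fix a unique point and Lemma \ref{tuedel} forces a subgroup of shape $(3\times\Alt_5)\cdot 2$ into $G_\alpha$, which is impossible; and for $M_{24}$, that $|\Omega|$ being odd and coprime to $5$ and $7$ makes elements of order $5$ fix too many points. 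None of these steps would fail -- the paper supplies them all -- but as written your argument asserts rather than demonstrates that the enumeration closes, so these finite checks must still be done.
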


\begin{proof}
Let $\alpha \in \Omega$, let $x \in H:=G_\alpha$ and $X:=\langle x
\rangle$. We may suppose that $x$ has prime order $p$. For maximal
subgroups of $G$ and information about local subgroups we refer to
Tables 5.3c-e in \cite{GLS3}.

We first suppose that $H$ has odd order, in particular $p$ is odd
and $p \not \vdash 2$. In all groups considered here, $11 \vdash 5$
and $5 \vdash 2$, so $p$ is neither $11$ nor $5$. Moreover $23
\vdash 11$ and hence $p \neq 23$. If $p=7$, then this either leads
to $M_{22}$ and the example that is stated in the lemma, or, in the
larger groups, we have that $7 \vdash 3$. But also $3 \vdash 2$, so
this leads to a contradiction.

Lemma \ref{tollspor} leaves the case where $H$ contains a Sylow
$2$-subgroup of $G$. Looking at centralizers of involutions (and in
$M_{22}$, also at the normalizer of an elementary abelian subgroup
of order $8$), we see that $3 \in \pi(H)$ by Lemma \ref{tuedel}.

If $G=M_{22}$, then $H$ lies in a maximal subgroup of structure
$2^4:\A_6$ or $2^4:\Sym_5$, so by Lemma \ref{tuedel} it is equal to
one of these groups. But this does not agree with Lemma
\ref{tuedel}.

If $G$ is $M_{23}$ or $M_{24}$, then $H$ contains a full involution
centralizer. In $M_{23}$ this means that $H$ is a maximal subgroup
of structure $2^4:\Alt_7$. Then by congruence modulo $3$, all
$3$-elements must have a unique fixed point and Lemma \ref{tuedel}
forces $H$ to contain  a subgroup of structure $(3 \times \Alt_5)
\cdot 2$. This is a contradiction. In $M_{24}$ we see that $H$
contains a Sylow $2$-subgroup of $G$, hence $|\Omega|$ is odd. It is
also coprime to $5$ and $7$, and in the only remaining possible case
it follows that elements of order $5$ in $H$ have too many fixed
points.
\end{proof}

\begin{lemma}\label{Janko}
Suppose that $G$ is a Janko Group. Then
there is no set $\Omega$ such that ($G,\Omega$) satisfies Hypothesis \ref{3fix}.
\end{lemma}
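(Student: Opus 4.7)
The plan is to treat each of $J_1, J_2, J_3, J_4$ separately by contradiction, following the template of Lemma \ref{Mathieu}. Fix $\alpha \in \Omega$ and set $H := G_\alpha$. Lemma \ref{tollspor} gives two subcases: either $|H|$ is odd and no $p \in \pi(H)$ satisfies $p \vdash 2$, or $H$ contains a Sylow $2$-subgroup $S$ of $G$. Throughout I will consult Tables~5.3 of \cite{GLS3} and \cite{ATLAS} for the maximal subgroups, prime normalizers, and involution centralizers of each Janko group.

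For the odd-order subcase I will first tabulate, for each Janko group, which primes $p \in \pi(G)$ satisfy $p \vdash 2$ and which pairs $(p,q)$ satisfy $p \vdash q$; the transitive closure of these relations yields $\rightarrow$. By Lemma \ref{key}, if $p \geq 5$ lies in $\pi(H)$, then every prime $q$ with $p \rightarrow q$ also lies in $\pi(H)$. In each Janko group, the large prime divisors either already satisfy $p \vdash 2$ or propagate via $\rightarrow$ to such a prime, excluding them from $\pi(H)$. For any surviving candidates I will either exhibit no odd-order subgroup of $G$ with the allowed prime content (by inspection of the list of maximal subgroups) or apply Lemma \ref{tuedel} to a suitable prime normalizer to force a new prime into $\pi(H)$ and reach a contradiction.

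For the case $S \leq H$, I will pick an involution $t$ in the centre of $S$. Lemma \ref{tuedel} then forces $|C_G(t) : C_H(t)| \leq 3$. In $J_1$ the centralizer $C_G(t) = 2 \times \Alt_5$ has no subgroup of index $2$ or $3$, so $C_G(t) \leq H$; since $C_G(t)$ is maximal in $J_1$, this yields $H = C_G(t)$, and one then verifies directly that elements of order $3$ or $5$ in $H$ have the wrong fixed-point count on $\Omega$. For $J_2$ and $J_3$ the extraspecial-type centralizers $2^{1+4}.\Alt_5$ are similarly rigid and force $H$ to contain more of $G$ than any proper subgroup allows. For $J_4$ the centralizer $2^{1+12}.3M_{22}.2$, combined with $S \leq H$ and Lemma \ref{tuedel}, already forces $H = G$. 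The principal obstacle will be $J_4$, whose seven prime divisors require careful bookkeeping of the $\rightarrow$-chains; the key observation will be that each of the isolated primes $29, 31, 37, 43$ sits at the head of a short chain reaching $2$ through a suitable normalizer, so the odd-order subcase collapses uniformly.
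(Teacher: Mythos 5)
Your overall skeleton matches the paper's: split via Lemma \ref{tollspor} into the odd-order case (killed by the $\vdash$/$\rightarrow$ chains and Lemma \ref{key}, with Lemma \ref{tuedel} applied directly to prime normalizers for the primes such as $7$ and $19$ in $J_1$ that do not formally satisfy $p \vdash 2$) and the case $S \leq H$. The odd-order half is essentially identical to the paper's argument. In the even-order half you diverge: the paper does not work with a central involution at all, but instead uses the fact that $H$ contains involutions from \emph{every} class to deduce $3,5 \in \pi(H)$, then invokes Lemma \ref{sylow}~(c) to place a full Sylow $5$-subgroup of $G$ inside $H$, and finally compares the $2$- and $5$-parts of $|H|$ against the list of maximal subgroups; this is cheaper than your centralizer analysis because it avoids any discussion of the subgroup structure of $2^{1+4}{:}\Alt_5$. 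The one soft spot in your version is $J_2$ and $J_3$: there the involution centralizer $2^{1+4}{:}\Alt_5$ is itself a \emph{maximal} subgroup, so its perfectness does not force $H$ ``to contain more of $G$ than any proper subgroup allows''; it forces exactly $H = 2^{1+4}{:}\Alt_5$, and you still owe a finishing contradiction (e.g.\ Lemma \ref{tuedel} applied to an element of order $3$ or $5$ of $H$, whose normalizer in $G$ is too large relative to its normalizer in $H$, or a fixed-point count as you do for $J_1$). That is a small omission in a sketch rather than a failure of the method; with that step supplied, and with the maximal-subgroup check for $J_4$ spelled out (only $U_3(11){:}2$ and $11^{1+2}{:}(5 \times 2\Sym_4)$ have order divisible by $11^3$, and neither has order divisible by $7$), your proof goes through.
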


\begin{proof}
Assume that
$\Omega$ is such a set, let $\alpha \in \Omega$ and $H:=G_{\alpha}$.
For information about local subgroups of $G$ we refer to Tables 5.3f-i in \cite{GLS3} whereas
we use the lists of
maximal subgroups of $G$ from Tables 5.4 and 5.11 in \cite{W}.

First suppose that $H$ has odd order and let $x \in H$ be of prime
order $p$. We note that $3 \vdash 2$ and $5 \vdash 2$, so $p \ge 7$.
Then the tables yield that also $p \neq 7,19$. Moreover If $11
\vdash 5$, $17 \vdash 2$, $23 \vdash 11$, $29 \vdash 7$ and $43
\vdash 7$. The only remaining primes are $31$ and $37$, but they are
also impossible because $31 \vdash 5$ and $37 \vdash 2$. Hence this
case does not occur at all.

With Lemma \ref{tollspor} we know that $H$ contains involutions from
all conjugacy classes. In particular $3,5 \in \pi(H)$ whence, by
Lemma \ref{sylow}, we see a Sylow $5$-subgroup of $G$ in $H$.

If $G=J_1$, then Lemma \ref{tuedel} yields that $H$ contains
a subgroup of shape $3 \times D_{10}$ or $\Sym_3 \times 5$ and an $\Alt_5$.
There is no maximal subgroup that could contain $H$ now.

If $G=J_2$, then $H$ is contained in a maximal subgroup of structure $\A_5 \times D_{10}$ or $5^2:D_{12}$
(by its index in $G$). Both cases are impossible because $9$ divides $|H|$ by Lemma \ref{tuedel}.

If $G=J_3$, then there is only one type of maximal subgroup that contains a Sylow $5$-subgroup and a
subgroup of order $3^3$, and it has structure $(3 \times \A_6):2$. But its order is only divisible by
$2^4$ and not by $2^6$, so it cannot contain $H$.

In the last case $G=J_4$, we see that the centralizer of an involution involves the group $M_{22}$. Hence
Lemma \ref{tuedel} yields that $|H|$ is divisible not only by $2,3$ and $5$, but also by $7$ and $11$,
hence by $11^3$ (using Lemma \ref{sylow}~(c)).
There are only two types of maximal subgroups that have order divisible by $11^3$, and in both cases their
order is not divisible by $7$.
This is a contradiction.
\end{proof}

\begin{lemma}\label{Co}
Suppose that $G$ is a Conway Group. Then there is no set $\Omega$
such that ($G,\Omega$) satisfies Hypothesis \ref{3fix}.
\end{lemma}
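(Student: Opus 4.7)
The plan is to follow the template of Lemmas \ref{Mathieu} and \ref{Janko}. Assume that $G \cong Co_1$, $Co_2$ or $Co_3$ and that $(G,\Omega)$ satisfies Hypothesis \ref{3fix}; set $\alpha \in \Omega$ and $H := G_\alpha$. Local and maximal-subgroup data can be taken from Tables 5.3j--l of \cite{GLS3} and from \cite{ATLAS}. By Lemma \ref{tollspor} either $|H|$ is odd and no $p \in \pi(H)$ satisfies $p \rightarrow 2$, or $H$ contains a Sylow $2$-subgroup of $G$. I would dispose of each case separately.

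In the odd-order case I would traverse $\pi(G) \setminus \{2\}$ and show $p \rightarrow 2$ for every prime. The set is $\{3,5,7,11,23\}$ in $Co_2$ and $Co_3$, with $13$ added in $Co_1$. The relations $3 \vdash 2$ and $5 \vdash 2$ come from centralizers of elements of orders $3$ and $5$ having order divisible by $4$; for $p = 7$ the normalizer $N_G(P)$ with $P \in \mathrm{Syl}_7(G)$ has order with a small even factor (or alternatively $7 \vdash 3 \rightarrow 2$); for $p = 11$ the structure $11{:}5$ of the Sylow $11$-normalizer gives $11 \vdash 5 \rightarrow 2$; for $p = 23$ the normalizer $23{:}11$ yields $23 \vdash 11 \rightarrow 2$; and for $p = 13$ in $Co_1$ the normalizer of a Sylow $13$-subgroup has order divisible by $4$, giving $13 \vdash 2$. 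This rules out the odd-order case outright.

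In the second case, let $S \in \mathrm{Syl}_2(G)$ with $S \le H$ and choose a $2$-central involution $z \in Z(S) \le H$. Lemma \ref{tuedel} forces $G_\alpha \cap C_G(z)$ to have index at most $3$ in $C_G(z)$. The centralizers of $2$-central involutions in the Conway groups have shapes $2.\mathrm{Sp}_6(2)$ in $Co_3$, $2^{1+8}{:}\mathrm{Sp}_6(2)$ in $Co_2$, and $2^{1+8}_+.\Omega_8^+(2)$ in $Co_1$; since $\mathrm{Sp}_6(2)$ and $\Omega_8^+(2)$ are simple nonabelian groups with no proper subgroup of index at most $3$, it follows that $H$ contains the whole nonsolvable composition factor of $C_G(z)$. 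This forces $\{3,5,7\} \subseteq \pi(H)$. Further applications of Lemma \ref{tuedel} to normalizers of suitable cyclic subgroups of $H$ of these orders then draw the remaining primes $11$, $23$ (and $13$ in $Co_1$) into $\pi(H)$, yielding an order for $H$ that exceeds the index of any proper subgroup of $G$ containing a full Sylow $2$-subgroup, as recorded in the maximal-subgroup lists of \cite{ATLAS}.

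The hard part will be the $Co_1$ analysis, because $|C_{Co_1}(z)| = 2^{21} \cdot 3^5 \cdot 5^2 \cdot 7$ only captures five of the seven primes dividing $|G|$, so $11$, $13$ and $23$ must genuinely be introduced by chasing normalizers of $H$-subgroups built from the $C_G(z)$-section. One has to verify this introduction rigorously without allowing $H$ to collapse back into $C_G(z)$, and then perform the final subgroup-index comparison in \cite{ATLAS}; everything else is routine by direct analogy with the $J_4$ argument in Lemma \ref{Janko}.
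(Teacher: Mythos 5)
Your odd-order case is exactly the paper's: show $p \rightarrow 2$ for every prime divisor of $|G|$ and invoke Lemma \ref{key}; that half is fine. The problem is the even-order case, where you have left a genuine gap that you yourself flag as ``the hard part'': pulling $11$, $13$ and $23$ into $\pi(H)$ for $Co_1$ by chasing normalizers of cyclic subgroups, and then comparing $|H|$ against indices of maximal subgroups. Neither step is carried out, and the first is not routine --- it is far from clear that the normalizer of a suitable subgroup of odd prime order built from the $C_G(z)$-section has order divisible by $11$, $13$ or $23$ with the index-at-most-$3$ condition of Lemma \ref{tuedel} holding, and without that the whole chain collapses. As written, the even-order case is a plan, not a proof.

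The paper avoids all of this with a much shorter argument that you should adopt: once $H$ contains a Sylow $2$-subgroup and involutions from every class (Lemma \ref{tollspor}), the involution centralizers force $3,5 \in \pi(H)$ via Lemma \ref{tuedel} (your observation about $2.\mathrm{Sp}_6(2)$, $2^{1+8}{:}\mathrm{Sp}_6(2)$ and $2^{1+8}_+.\Omega_8^+(2)$ having no proper subgroup of index at most $3$ already gives this). Then Lemma \ref{sylow}(c) puts a full Sylow $5$-subgroup of $G$ inside $H$, so $|G:H|$ is odd and coprime to $5$. Inspection of the maximal subgroup lists in \cite{ATLAS} shows that every maximal subgroup of $Co_1$, $Co_2$ and $Co_3$ has index divisible by $2$ or by $5$, so no such $H$ exists. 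This one observation replaces your entire accumulation of the large primes and the final order comparison; there is no need to ever discuss $7$, $11$, $13$ or $23$ in the even-order case.
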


\begin{proof}
Assume otherwise, let
$\Omega$ denote such a set, let $\alpha \in \Omega$ and $H:=G_\alpha$.
For information about local subgroups of $G$ we refer to Tables 5.3j-l in \cite{GLS3} and
for lists of
maximal subgroups of $G$ and their indices we use \cite{ATLAS}.

The tables yield that for all prime divisors $p$ of $G$, we have
that $p \rightarrow 2$. Hence it is impossible that $H$ has odd
order, by Lemma \ref{key}. Lemma \ref{tollspor} implies that $H$
contains involutions from all conjugacy classes. This yields that
$3,5 \in \pi(H)$. In particular $H$ contains a full Sylow
$5$-subgroup of $G$ by Lemma \ref{sylow}~(c). Inspection of the
lists of maximal subgroups of $G$ shows that all maximal subgroups
have index divisible by $5$ or by $2$, which is a contradiction.
\end{proof}

We use the idea from the previous lemma to find a general
approach for almost all the remaining sporadic groups.

\begin{thm}\label{sporhelp}
Suppose that $G$ is one of the following sporadic simple groups:
$HS, McL, Suz, He, Ly, Ru, O'N, Fi_{22}, Fi_{23}, F'_{24}, HN, Th,
BM$. Then there is no set $\Omega$ such that $(G,\Omega$) satisfies
Hypothesis \ref{3fix}.
\end{thm}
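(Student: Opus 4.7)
The plan is to follow the template set by Lemmas \ref{Mathieu}, \ref{Janko}, and especially \ref{Co}: assume the contrary, let $\alpha \in \Omega$ and $H := G_\alpha$, and argue in two stages depending on whether $|H|$ is odd.

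First I would dispose of the odd-order case uniformly. Using the tables of local subgroups in \cite{GLS3} (Tables 5.3), I would verify that for each of the listed sporadic groups $G$ and each prime divisor $p$ of $|G|$, one has $p \rightarrow 2$ in the sense of the transitive closure of $\vdash$. This uses only that prime order elements have normalizers whose orders are listed, and that the chain $p \vdash q$ can be composed. Granted this, Lemma \ref{key} rules out $H$ having odd order. For most of the groups the argument from Lemma \ref{Co} transfers essentially verbatim; the slightly awkward ones are $Ly$ (where one must chase elements of order $37$ and $67$ into $\vdash 2$ via orders $5$ and $2$ respectively), $O'N$ (primes $19$ and $31$), $Fi_{23}, F'_{24}$ (prime $17, 23, 29$), and $BM$ (primes $19, 23, 31, 47$), but in every case the local subgroup data in \cite{GLS3} suffice.

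Second, by Lemma \ref{tollspor} we may suppose that $H$ contains a Sylow $2$-subgroup $S$ of $G$, hence an involution from every $G$-conjugacy class. For each group on the list one picks a distinguished class $2A$ of involutions; the centralizer $C_G(t)$ is large and involves a non-solvable composition factor (e.g. $C_G(t) \cong 2 \cdot \textrm{Alt}_8$ in $HS$, $2 \cdot \textrm{Alt}_8$ in $McL$, $2 \cdot \Omega_6^-(2)$ in $Suz$, a cover of $\textrm{Sp}_6(2)$ in $Fi_{22}$, etc.). Lemma \ref{tuedel}(c) then forces $H$ to contain a subgroup of $C_G(t)$ of index at most $3$, and thus primes like $3, 5, 7$ land in $\pi(H)$. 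Lemma \ref{sylow}(c) upgrades this to: $H$ contains a full Sylow $p$-subgroup of $G$ for every prime $p \geq 5$ dividing $|H|$.

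Third, and this is where the individual cases are actually settled, one compares with the maximal subgroup list from \cite{ATLAS}. The combined divisibility requirements $|S| \mid |H|$ plus $|G|_p \mid |H|$ for the relevant odd primes $p \geq 5$, plus $3 \mid |H|$, force $|G:H|$ to avoid a specific set of primes; inspection shows that no proper (maximal) subgroup of $G$ can have index avoiding that set, yielding $H = G$, a contradiction. The main obstacle will be the book-keeping for the largest groups, particularly $BM$, where the conjugacy class and maximal subgroup lists are long; here it may be cleaner to use two different involution centralizers simultaneously, or to combine $t \in 2A$ with a $3$-central element of order $3$ obtained from Lemma \ref{tuedel} applied to $C_G(t)$, to force enough primes into $|H|$ (including, typically, $5$ and $7$) that no maximal subgroup of $G$ of the appropriate index survives. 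The groups $He, Ru, Th, HN$ are straightforward variants, while $Ly$ and $O'N$ require one extra iteration of Lemma \ref{tuedel} inside the involution centralizer (which involves $\textrm{Alt}_{11}$ in $Ly$ and $\textrm{Alt}_8$ in $O'N$) to bring in the prime $11$ or $7$ respectively before quoting the maximal subgroup list.
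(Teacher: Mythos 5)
Your proposal follows the paper's proof essentially step for step: ruling out odd $|H|$ via $p \rightarrow 2$ and Lemma \ref{key}, then forcing $2,3,5 \in \pi(H)$ (hence a Sylow $2$- and Sylow $5$-subgroup of $G$ inside $H$) and contradicting the maximal subgroup lists, with exactly the exceptional cases ($O'N$ via the prime $7$ and its Sylow $7$-subgroup of order $7^3$, $Fi_{23}$ via a $3$-central element inside the $2\cdot Fi_{22}$ centralizer) that the paper treats separately. The only blemish is the parenthetical claim that the involution centralizer in $O'N$ involves $\Alt_8$ — it is $4\cdot\PSL_3(4){:}2$ — but this does not affect the argument, which only needs $5$ and $7$ to divide its order.
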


\begin{proof}
Assume otherwise and let $\Omega$ be such that ($G,\Omega$)
satisfies Hypothesis \ref{3fix}. Let $\alpha \in \Omega$ and
$H:=G_{\alpha}$. For information about local subgroups of $G$ we
refer to Tables 5.3m-y in \cite{GLS3} and for lists of maximal
subgroups of $G$ and their indices we use \cite{ATLAS} unless stated
otherwise.

\begin{enumerate}
\item[(1)]
$2,3,5 \in \pi(H)$.

\begin{proof}
In all groups we see that for all odd $p \in \pi(G)$, we have that
$p \rightarrow 2$ and hence $H$ has even order. It contains
involutions from all conjugacy classes by Lemma \ref{tollspor} and
so we see that also $3,5 \in \pi(H)$ by Lemma \ref{tuedel}.
\end{proof}

\item[(2)]
$H$ is contained in a maximal subgroup of index that is odd and
coprime to $5$.

\begin{proof}
We know from (1) and from Lemma \ref{sylow}~(c) that $H$ contains a
Sylow $5$-subgroup of $G$. Moreover $H$ contains a Sylow
$2$-subgroup of $G$ by (1) and Lemma \ref{tollspor}. The same holds
for a maximal subgroup containing $H$ and hence the statement about
the index follows.
\end{proof}
\end{enumerate}

We inspect the lists of maximal subgroups of the groups and in particular their indices.
In most cases, this already contradicts (2).
For lists of maximal subgroups of the Fischer sporadic simple groups we refer to Table 5.5 in \cite{W}
(particularly because there is a mistake in the list of subgroups of $\Fi_{23}$ in \cite{ATLAS}).
For BM, we refer to Table 5.7 in \cite{W}.

For Th, there is one maximal subgroup missing in the ATLAS, namely $\PSL_3(3)$ (see Table 5.8 in \cite{W}).
Its index is divisible by $2^{11}$ and by $5^3$, so this possibility contradicts (2).
For $\Fi_{24}'$, we also note that the maximal subgroups of structure $\PSU_3(3):2$
and $\PGL_2(13)$ cannot contain $H$ because of (2).

However, there are a few exceptions.

If $G=O'N$, then $H$ could be contained in a maximal subgroup of structure
$4\dot{~} \PSL_3(4):2$.
Then $H$ contains subgroups of order $5$ and $7$, so by Lemma \ref{sylow}~(c) it follows that $H$
contains a Sylow $7$-subgroup of $G$. This has order $7^3$, which is impossible.

If $G=\Fi_{23}$, then $H$ could be contained in an involution centralizer of structure $2\Fi_{22}$.
In particular $H$ contains a subgroup of order $3^9$ and hence a $3$-central element of $G$. Lemma
\ref{tuedel} implies that $3^{12}$ divides $|H|$, but this is false.
\end{proof}

\begin{lemma}\label{M}
There is no set $\Omega$ such that ($M,\Omega$) satisfies Hypothesis
\ref{3fix}.
\end{lemma}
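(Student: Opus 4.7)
The plan is to mimic the strategy of Theorem \ref{sporhelp} applied to the Monster $M$. Assume for contradiction that $(M,\Omega)$ satisfies Hypothesis \ref{3fix}, fix $\alpha \in \Omega$, and set $H := M_\alpha$. For the local structure of $M$ we would refer to the tables in \cite{GLS3} and to the list of currently known maximal subgroups of $M$ (see, e.g., \cite{ATLAS} together with the subsequent literature fixing this list).

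First I would show that $|H|$ is even. Every odd prime $p \in \pi(M)$ lies in $\{3,5,7,11,13,17,19,23,29,31,41,47,59,71\}$, and for each of these one can trace a chain $p \rightarrow \cdots \rightarrow 2$ using normalizers of cyclic subgroups of prime order inside $M$. For the small primes $3$ and $5$ one already has $3 \vdash 2$ and $5 \vdash 2$. For each of the remaining primes $r$, the centralizer $C_M(x)$ of an element $x$ of order $r$ is a well-known subgroup of $M$ whose order is divisible by a smaller prime $r'$, and this yields $r \vdash r'$; iterating leads to $2$. For instance $71 \vdash 7$, $59 \vdash 29$, $47 \vdash 23$, $41 \vdash 5$, $31 \vdash 5$, $29 \vdash 7$, $23 \vdash 11$, $19 \vdash 3$, $17 \vdash 2$, $13 \vdash 3$, $11 \vdash 5$, $7 \vdash 3$, all read off from the local subgroups listed in \cite{GLS3}. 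If $H$ had odd order, Lemma \ref{key} would force $2 \in \pi(H)$, a contradiction. Hence $|H|$ is even.

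Next, Lemma \ref{tollspor} gives that $H$ contains a full Sylow $2$-subgroup $S$ of $M$ and contains involutions from both conjugacy classes. Pick $t \in H$ in class $2A$; then $C_M(t) \cong 2 \cdot BM$, which is quasisimple. By Lemma \ref{tuedel}~(c), $H \cap C_M(t)$ has index at most $3$ in $C_M(t)$, and since $2 \cdot BM$ has no proper subgroup of index $\le 3$, we conclude $C_M(t) \le H$. Applying Lemma \ref{tuedel} once more to a $2B$ involution $t' \in H$, whose centralizer $C_M(t') \cong 2^{1+24} \cdot Co_1$ is also perfect modulo its center, forces $C_M(t') \le H$ as well.

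The hard part will be deducing the final contradiction from the fact that $H$ contains both $C_M(2A)$ and $C_M(2B)$ and a full Sylow $2$-subgroup of $M$. One route is to note that $\langle C_M(2A), C_M(2B)\rangle = M$, because no proper subgroup of $M$ contains both a $2A$-centralizer and a $2B$-centralizer (this can be read off from the known maximal subgroups: the only maximal overgroup of $C_M(2A) = 2 \cdot BM$ is itself, and it does not contain $C_M(2B)$). This yields $H = M$, which is absurd because $H$ is a point stabilizer of a transitive, nonregular action on $\Omega$ with $|\Omega| \ge 5$. To make this step rigorous without invoking the full classification of maximal subgroups of $M$, one can argue more carefully: by Lemma \ref{sylow}~(c), $H$ also contains Sylow $p$-subgroups for every prime $p \ge 5$ dividing $|H|$; combined with the previously derived containments this forces $|H|$ to be divisible by $|M|/k$ for a very small integer $k$, and then Lagrange together with the list of indices of maximal subgroups of $M$ (all of which are $\ge 10^{20}$) yields the contradiction. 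This last bookkeeping step is the only real obstacle, and it is entirely parallel to the arguments at the end of the proof of Theorem \ref{sporhelp}.
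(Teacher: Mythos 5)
Your first three steps coincide with the paper's proof: every odd prime $p \in \pi(M)$ satisfies $p \rightarrow 2$, so Lemma \ref{key} forces $|H|$ to be even; Lemma \ref{tollspor} then puts involutions from both classes into $H$; and Lemma \ref{tuedel}, applied to the quasisimple centralizers $2\cdot BM$ and $2^{1+24}\cdot Co_1$ (which have no proper subgroups of index at most $3$), forces $H$ to contain subgroups isomorphic to $BM$ and to $Co_1$. The genuine gap is in your concluding step. Your primary route asserts that no proper subgroup of $M$ contains both centralizers because this "can be read off from the known maximal subgroups" -- but the list of maximal subgroups of the Monster was not known to be complete when this argument is made, so the word "known" carries real weight: an as-yet-undiscovered maximal subgroup could a priori contain $H$. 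Your fallback "bookkeeping" route does not go through as stated either: knowing that $H$ contains $BM$, $Co_1$, a Sylow $2$-subgroup, and Sylow $p$-subgroups for those $p \ge 5$ that happen to divide $|H|$ says nothing about the primes $29, 41, 59, 71$, which need not divide $|H|$ at all; hence $|\Omega| = |M:H|$ is not forced to be a small integer, and the appeal to indices of maximal subgroups being at least $10^{20}$ is neither justified nor sufficient for a contradiction.

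The paper closes exactly this gap with a result of Holmes and Wilson quoted on p.~258 of \cite{W}: any maximal subgroup $U$ of $M$ not on the known list satisfies $E \le U \le \au(E)$ where $E$ is one of $\PSL_2(13)$, $\PSU_3(4)$, $\PSU_3(8)$, $\Sz(8)$, $\PSL_2(8)$, $\PSL_2(16)$ or $\PSL_2(27)$; such a $U$ is far too small to contain $BM$ or $Co_1$, and inspection shows that no subgroup on the known list contains both. You need this quantified constraint on the unknown maximal subgroups (or the complete classification) to finish; without it your argument is incomplete at precisely the point you yourself flagged as "the only real obstacle".
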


\begin{proof}
Assume otherwise and let $G$ denote the Monster sporadic group $M$.
Let $\Omega$ be such that ($G,\Omega$) satisfies Hypothesis
\ref{3fix}, let $\alpha \in \Omega$ and let $H:=G_{\alpha}$. We
refer to Table 5.3z in \cite{GLS3} for information about local
subgroups and to Table 5.6 in \cite{W} for the list of known maximal
subgroups of $G$.

First we show that $H$ has even order. This follows easily because,
if $p$ is any odd prime divisor of $G$, then inspection of the
tables shows that $p \rightarrow 2$. Then we use Lemma \ref{key}. It
follows from Lemma \ref{tollspor} that $H$ contains involutions from
both conjugacy classes, so looking at the involution centralizers in
Table 5.3z in \cite{GLS3}, Lemma \ref{tuedel} tells us that $H$
contains a subgroup isomorphic to $BM$ and to $Co_1$. Checking the
list of known maximal subgroups of $G$, we already see that this
does not occur.

On page 258 in \cite{W} it is noted (quoting work of Holmes and
Wilson) that if $U$ is any other maximal subgroup of $G$, then there
exists a group $E$ isomorphic to one of $\PSL_2(13)$, $\PSU_3(4)$,
$\PSU_3(8)$, $\Sz(8)$, $\PSL_2(8)$, $\PSL_2(16)$ or $\PSL_2(27)$
such that $E \le U \le \au(E)$. Checking the possibilities for $U$
with these constraints, we see that $U$ does not have a subgroup
isomorphic to $BM$ or to $Co_1$ and therefore $H$ cannot be
contained in a maximal subgroup $U$ of $G$ of this kind.
\end{proof}

All results of this section together yield the following:

\begin{thm}\label{mainspor}
Suppose that $G$ is a sporadic simple group and that $\Omega$ is
such that $(G, \Omega)$ satisfies Hypothesis \ref{3fix}. Then
$G=M_{11}$ and $|\Omega|=11$ or $G=M_{22}$ and $|\Omega|=2^7 \cdot
3^2 \cdot 5 \cdot 11$.
\end{thm}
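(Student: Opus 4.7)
The plan is to observe that Theorem \ref{mainspor} is a direct consequence of the classification of sporadic simple groups together with the lemmas and theorems already established in this section. There are $26$ sporadic simple groups, and each of them (or each relevant family) has been handled individually.

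First I would dispose of the Mathieu groups. Lemma \ref{M11M12} handles $M_{11}$ and $M_{12}$, producing exactly the example $M_{11}$ acting on $11$ points, while Lemma \ref{Mathieu} handles $M_{22}$, $M_{23}$ and $M_{24}$, yielding the single example $M_{22}$ on a set of size $2^7 \cdot 3^2 \cdot 5 \cdot 11$. These are precisely the two examples that appear in the conclusion of the theorem.

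Next I would rule out all remaining sporadic groups. The Janko groups $J_1,\ldots,J_4$ are excluded by Lemma \ref{Janko}, and the three Conway groups $\mathrm{Co}_1, \mathrm{Co}_2, \mathrm{Co}_3$ by Lemma \ref{Co}. The thirteen groups $HS, McL, Suz, He, Ly, Ru, O'N, Fi_{22}, Fi_{23}, Fi_{24}', HN, Th, BM$ are excluded collectively by Theorem \ref{sporhelp}, and finally the Monster $M$ is excluded by Lemma \ref{M}.

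Tallying the list, these cover $5 + 4 + 3 + 13 + 1 = 26$ sporadic simple groups, which is all of them. Therefore no additional sporadic groups can satisfy Hypothesis \ref{3fix}, and the only examples are the two listed in the statement. The proof is essentially a bookkeeping step; no serious obstacle arises because the hard analytic work has already been carried out in the individual lemmas of this section.
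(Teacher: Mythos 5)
Your proposal is correct and matches the paper exactly: the paper states Theorem \ref{mainspor} with the preface ``All results of this section together yield the following,'' i.e.\ it is precisely the bookkeeping assembly of Lemmas \ref{M11M12}, \ref{Mathieu}, \ref{Janko}, \ref{Co}, Theorem \ref{sporhelp} and Lemma \ref{M} that you describe. Your tally of all $26$ sporadic groups confirms the case analysis is exhaustive, so nothing is missing.
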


\section{Proof of the main results}

\begin{lemma}\label{o3}
Suppose that $N$ is an elementary abelian normal subgroup of $G$ and
that $H$ is a t.i. subgroup of $G$ of order coprime to $6$. Suppose
further that $|N_G(X):N_H(X)| = 3$ for all subgroups $1 \neq X \leq
H$ and that $|C_N(H)| = 3$. Then $H$ has a normal complement $K$ in
$G$.
\end{lemma}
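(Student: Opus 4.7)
The plan is to construct $K$ as a normal $\pi(H)$-complement for the Hall subgroup $H$ in $G$, using the focal subgroup theorem. First, since $(|H|,6)=1$, the condition $|C_N(H)|=3$ forces $N$ to be an elementary abelian $3$-group (the centralizer of a coprime-action group on an elementary abelian $p$-group is itself a $p$-group). Coprime action on $N$ then yields a decomposition $N = C \oplus L$ with $C := C_N(H)$ of order $3$ and $L := [N,H]$. Applying the normalizer hypothesis with $X = H$ gives $|N_G(H):H| = 3$, so there is a Sylow $3$-subgroup $T$ of $N_G(H)$ of order $3$, with $N_G(H) = H\rtimes T$. Since $T$ normalizes $C$ and $\mathrm{Aut}(C) \cong \Z_2$ contains no element of order $3$, $T$ centralizes $C$; a brief coprime-action calculation (using $h^n = h[h,n]$ with $[h,n]\in H\cap N = 1$) also gives $N \cap N_G(H) = C$, so $L \cap N_G(H) = 1$.

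Second, I would show that $H$ is a Hall subgroup of $G$. For each prime $p \in \pi(H)$, pick $P \in \mathrm{Syl}_p(H)$ and apply the hypothesis to $X = P$: since $3 \notin \pi(H)$, the equality $|N_G(P)| = 3\cdot|N_H(P)|$ forces $|N_G(P)|_p = |N_H(P)|_p = |P|$, so $P \in \mathrm{Syl}_p(G)$. Hence $H$ is a Hall $\pi(H)$-subgroup of $G$.

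Third, the t.i. property implies that fusion of non-identity elements of $H$ in $G$ is controlled entirely by $N_G(H) = HT$: indeed, if $h \in H$ with $h \neq 1$ and $h^g \in H$, then $1 \neq h^g \in H \cap H^g$ forces $g \in N_G(H)$. By the focal subgroup theorem applied to the Hall subgroup $H$, this yields $H \cap G' = H'\cdot [H,T]$. The existence of a normal $\pi(H)$-complement to $H$ in $G$ (which is precisely the desired $K$) is then equivalent, via a standard consequence of Burnside's transfer theorem for Hall subgroups, to the condition $H\cap G' = H'$, i.e., to $[H,T] \subseteq H'$.

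The main obstacle is thus to show that $T$ acts trivially on the abelianization $H/H'$. This is where the elementary abelian normal subgroup $N$ and the condition $|C_N(H)| = 3$ are essential. Viewing $L = [N,H]$ as an $\mathbb{F}_3[HT]$-module on which $H$ has no non-zero fixed vectors, and using that $HT$ centralizes $C$, one can apply Clifford theory together with the normalizer hypothesis $|N_G(X):N_H(X)| = 3$ for every $1\neq X \le H$ (so that every nontrivial subgroup of $H$ is normalized by a conjugate of $T$) to trace the $T$-action through the derived series of $H$ and deduce that $[H,T] \le H'$. Once this is established, the focal subgroup / Hall-complement criterion produces the normal complement $K$, completing the proof.
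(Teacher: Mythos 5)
Your overall strategy (show $H$ is a Hall subgroup, control fusion via the t.i.\ property, and apply transfer to $H$ as a whole) is genuinely different from the paper's, which instead observes that $[N,H]H$ is a Frobenius group (so $H$, an odd-order Frobenius complement, is metacyclic), orders the primes $p_1<\cdots<p_n$ of $\pi(H)$, and peels off a normal $p_i$-complement at each stage via Burnside's theorem starting from the smallest prime. Your route could be made to work, and your argument that each Sylow subgroup of $H$ is a Sylow subgroup of $G$ is correct and self-contained. But as written there are two real gaps. First, the step you yourself flag as ``the main obstacle'' --- showing $[H,T]\le H'$ --- is left as an unexecuted sketch, and the tools you propose (Clifford theory on $L=[N,H]$, tracing the $T$-action through the derived series of $H$) are neither carried out nor the right ones. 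In fact the obstacle is illusory: since $C:=C_N(H)$ centralizes $H$, has order $3$, and meets $H$ trivially, the index computation gives $N_G(H)=C\times H$ exactly, so $C$ is the unique (indeed central) Sylow $3$-subgroup of $N_G(H)$; hence your $T$ \emph{equals} $C$ and $[H,T]=1$. You compute $N\cap N_G(H)=C$ and note that $T$ centralizes $C$, but never draw this conclusion, and your notation $N_G(H)=H\rtimes T$ suggests you believe $T$ could act nontrivially on $H$.

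Second, the assertion that $H\cap G'=H'$ yields a normal complement ``via a standard consequence of Burnside's transfer theorem for Hall subgroups'' is not a citable theorem when $H$ is nonabelian, and this is where the remaining work lies. The transfer $V\colon G\to H/H'$ only produces $K_1=\ker V$ with $G=HK_1$ and $H\cap K_1=H'$; if $H'\neq 1$ one must iterate inside $K_1$ with the Hall subgroup $H'$ (whose fusion in $K_1$ is again controlled by the t.i.\ property, since $N_{K_1}(H')=C\times H'$), descend the derived series of $H$, and then check that the eventual complement is characteristic in $K_1$, hence normal in $G$. Note that a naive prime-by-prime application of Burnside or Tate in arbitrary order fails: for $P$ a Sylow $p$-subgroup of $H$ one only gets $P\cap G'=P\cap H'$, which equals $P$ rather than $P'=1$ whenever $P\le H'$, and indeed $G$ has no normal $p$-complement for such $p$. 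This is precisely why the paper orders the primes and descends through successive normal complements. So the skeleton of your argument is salvageable, but both the identification $T=C_N(H)$ and the descent down the derived series must be supplied before this is a proof.
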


\begin{proof}
Our hypotheses imply that $N$ is a $3$-group, which means that $H$
acts coprimely on $N$ and therefore $N=C_N(H) \times [N,H]$.
 and
that $N_G(H) = C_N(H) \times H$. Moreover $[N,H]$ is an
$H$-invariant subgroup of $N$, in particular $[N,H]H$ is a subgroup
of $NH$. Now let $h \in H^\#$ and $x \in [N,H]$ be such that
$x^h=x$. Then $h \in H \cap H^x$, so $H= H^x$ because $H$ is a t.i.
subgroup. This means that $[H,x] \le H \cap N=1$ and therefore $x
\in C_N(H)$. This forces $x=1$ and we deduce that $[N,H]H$ is a
Frobenius group with complement $H$. As $|H|$ is odd, the Sylow
subgroups of $H$ are cyclic and in particular $H$ is metacyclic (see
8.18 in \cite{Hupp}). Also we see that $Z(NH) = C_N(H)$.

Let $n \in \N$ and let $p_1,..,p_n$ be pair-wise distinct prime
numbers such that $\pi(H)=\{p_1,...,p_n\}$ and $p_1 < \cdots < p_n$.
Let $P_1 \in \sy_{p_1}(H)$. We recall that $H$ is a
$\{2,3\}'$-group, so we know that $p_1 \ge 5$ and hence $P_1 \in
\sy_{p_1}(G)$ by Lemma \ref{sylow}~(c).

As $P_1$ is cyclic and $p_1$ is the smallest element in $\pi(H)$ we
see that $|\au(P)|_{p_1'} = (p_1-1) < p_2,...,p_n$. This means that
$N_H(P_1) = P_1$. Thus $|N_G(P_1):N_H(P_1)| = 3$ by hypothesis and
it follows that $N_G(P_1) = C_N(H) \times N_H(P_1) = C_N(H) \times
P_1$. Burnside's $p$-complement theorem implies that $P_1$ has a
normal $p_1$-complement $M_1$ in $G$. We recall that $p_1 \ge 5$ and
hence $N \le M$. Moreover $H_1:=H \cap M_1$ is characteristic in $H$
and so $N_G(H_1) = C_N(H) \times H$.

We show that $M_1, H_1$ and $N$ satisfy the hypotheses of the lemma
instead of $G,H$ and $N$. Of course $N$ is an elementary abelian
normal subgroup of $M_1$ and $H_1$ is a $\{2,3\}'$-group. Let $g \in
M_1$ be such that $H_1 \cap H_1^g \neq 1$. Then $1 \neq H \cap M_1
\cap H^g$, in particular $H \cap H^g \neq 1$ and thus $H=H^g$.
Therefore $H_1 \cap H_1^g=H \cap M_1 \cap H^g=H \cap M_1=H_1$, which
means that $H_1$ is a t.i. subgroup of $M_1$. If $1 \neq Y \le H_1$,
then $N_G(Y)=N_H(Y) \times C_N(H)$ by hypothesis and hence
$N_{M_1}(Y)=N_{H_1}(Y) \times C_N(H)$. In particular
$|N_{M_1}(Y):N_{H_1}(Y)| = 3$.

We continue in this way: $p_2 \ge 7$ and hence $H_1$ contains a
Sylow $p_2$-subgroup $P_2$ of $G$, hence of $M_1$ (by Lemma
\ref{sylow}~(c). Arguing for $M_1$, $H_1$ and $P_2$ as for $G$, $H$
and $P_1$ before, we find a normal $p_2$-complement $M_2$ in $M_1$.
Then $M_2$ is characteristic in $G$, in fact
$M_2=O_{\{p_1,p_2\}'}(G)$ and $M_2$ contains $N$, so we may repeat
these arguments until we reach the largest prime divisor of $|H|$.
This way we find a normal complement for $H$ in $G$, namely
$O_{\pi(H)'}(G)$.
\end{proof}

\vspace{0.2cm} In light of the results of the previous sections, the
proofs of Theorem 1.1 and 1.2 are basically an application of the Classification of
Finite Simple Groups (CFSG). The main point of this section is to prove Theorem 1.3,
which requires a bit more work.

\begin{proof}[Proof of Theorem \ref{simple3fp}]
Let $\Omega$ be a set such that $(G, \Omega)$ satisfies Hypothesis
\ref{3fix} and such that $G$ is simple. Then we apply the CFSG and
Theorems \ref{altmain}, \ref{liemain} and \ref{mainspor}. This gives
exactly the possibilities that are listed in Theorem
\ref{simple3fp}.
\end{proof}

\begin{proof}[Proof of Theorem \ref{almostsimple3fp}]

Let $\Omega$ be a set such that $(G, \Omega)$ satisfies Hypothesis
\ref{3fix} and suppose that $G$ is almost simple, but not simple.
Then Lemma \ref{hypcomp} implies that either $F^*(G) \cong
\PSL_2(2^p)$ with $p$ a prime, which is conclusion (1), or
$(F^*(G),\Omega)$ satisfies Hypothesis \ref{3fix}. If $F^*(G)$ is an
alternating group, then Theorem \ref{altmain} yields that $\Sym_5$
acting on $5$ points is the only example. But in light of the
isomorphism $\Sym_5 \cong \au (\PSL_2(4))$ we see that this example
is a special case of conclusion (1). If $F^*(G)$ is of Lie type,
then Lemmas \ref{AutPSL3}, \ref{AutPSU3} and \ref{autsingles} show
that (2) and (3) are the only possible examples.

Finally if $F^*(G)$ is sporadic, then $F^*(G)$ is isomorphic to
$M_{11}$ or to $M_{22}$. Our hypothesis that $G$ is not simple
implies that only the latter case can occur and in fact $G \cong
\au(M_{22})$. Let $\omega \in \Omega$. Then $G_\omega$ contains a
Sylow $7$-subgroup $S$ of $G$ and $N_{F^*(G)}(S) \cap G_\omega = S$.
Now $|N_G(S)/N_{F^*(G)}(S)| = 2$ and thus Lemma \ref{tuedel} forces
an involution $t$ into $G_\omega$. However $|C_{F^*(G)}(t)| = 1344$
and then Lemma \ref{tuedel} gives that $N_{F^*(G)}(S) \cap G_\omega
\neq S$, which is a contradiction.
\end{proof}

\begin{lemma}\label{3strich}
Suppose that Hypothesis \ref{3fix} holds. Then $3 \in \pi(G)$.
\end{lemma}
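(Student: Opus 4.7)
The plan is to argue by contradiction: assume $3 \nmid |G|$. Lemma \ref{frob} then shows that $|G|$ must be even, since otherwise $G$ would be a $\{2,3\}'$-group and hence forced to be Frobenius by Lemma \ref{frob}, which is incompatible with the existence of a nontrivial three-point stabilizer under Hypothesis \ref{3fix}.

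The main strategy is to select a minimal normal subgroup $N$ of $G$ and work through the four conclusions of Theorem \ref{minimalnormal}. Case (c) (where $N$ is a $3$-group) is immediate. In case (d), $N = E(G)$ is the unique component, and Lemma \ref{hypcomp} leaves only the possibilities $N \cong \PSL_2(q)$ for some $2$-power $q$, $N \cong \A_5$, or $(N, \alpha^N)$ satisfies Hypothesis \ref{3fix}; in the last subcase Lemma \ref{center} forces $Z(N) = 1$, so $N$ is simple and Theorem \ref{simple3fp} applies. In every one of these possibilities $|N|$ is divisible by $3$, contradicting our standing assumption.

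Case (b) splits further. If $N$ is a Klein four group with fixed-point-free involutions, then $G/C_G(N)$ embeds into $\Sym_3$, and since $3 \nmid |G|$ this image has order at most $2$; in either subcase an involution of $N$ ends up in $Z(G)$, contradicting $|Z(G)| \in \{1,3\}$ from Lemma \ref{center}. If instead $|N:N_\alpha| = 2$ with $N_\alpha$ fixing two points, then $G$-conjugacy of the point stabilizers forces $|N:N_\gamma|=2$ for every $\gamma \in \Omega$, so every $N$-orbit has size $2$; Lemma \ref{2cycles} then limits $|\Omega| \le 6$, and Lemma \ref{kleinersechs} exhausts the possibilities, all of which have $|G|$ divisible by $3$.

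In the remaining case (a), every Sylow subgroup of $G_\alpha$ has rank $1$. Since cases (b), (c) and (d) have been ruled out for every minimal normal subgroup, $N$ must be elementary abelian of prime order $p \ge 5$. Normality and transitivity give $\FO(N) = \emptyset$; if $p$ divided $|G_\alpha|$, Lemma \ref{sylow}(c) would embed a Sylow $p$-subgroup of $G$, and hence the normal $p$-subgroup $N$, into $G_\alpha$, contradicting $\FO(N) = \emptyset$. So $N$ acts semiregularly on $\Omega$. To finish, I would pick $x \in G$ with $|\FO(x)|=3$: since $x$ normalizes $N$, the fixed points of $x$ on any $N$-orbit it meets are in bijection with $C_N(x)$, a $p$-power, so $|C_N(x)|$ must divide $3$ and therefore equal $1$, with $\FO(x)$ distributed across three distinct $N$-orbits. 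Thus $\langle x \rangle$ acts fixed-point-freely on $N$, making $N \rtimes \langle x \rangle$ Frobenius. Applying the same reasoning to every nontrivial three-point stabilizer $H$ shows that $NH$ is Frobenius, and combining this with the rank-$1$ constraint on $G_\alpha$ together with Lemma \ref{sylow} should ultimately force $G$ itself into a Frobenius structure, violating Hypothesis \ref{3fix}. The main obstacle is precisely this final contradiction in case (a); the other three subcases are direct applications of earlier machinery.
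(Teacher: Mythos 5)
Your reduction via Theorem \ref{minimalnormal} is a genuinely different route from the paper's, and cases (c), (d) and both subcases of (b) are handled correctly (for (d) one could equally quote the Toborg--Waldecker result together with Lemma \ref{Szq}, which is how the paper itself treats the component case inside its own proof). The trouble is case (a), and it is not a loose end but the actual content of the lemma. A first, smaller issue: conclusion (a) of Theorem \ref{minimalnormal} is a statement about $G_\alpha$, not about $N$, so ruling out the specific configurations listed in (b) does not force $p \ge 5$. An elementary abelian $2$-group $N$ of order at least $8$ acting semiregularly on $\Omega$ falls under conclusion (a) and is untouched by your case (b) analysis; your semiregularity and $|C_N(x)|=1$ computations would survive for such an $N$, but the claim as written is unjustified and the appeal to Lemma \ref{sylow}~(c) is unavailable when $p=2$.

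Second, and decisively: the step ``combining this with the rank-$1$ constraint on $G_\alpha$ together with Lemma \ref{sylow} should ultimately force $G$ itself into a Frobenius structure'' is precisely where a new idea is required, and nothing in the sketch supplies it. Knowing that $NH$ is a Frobenius group for each three point stabilizer $H$ says nothing about $G$: to apply Lemma \ref{charfrob} to $H$ one must control $N_G(X)$ for every $1 \neq X \le H$, and by Lemma \ref{charfrob} there must exist some such $X$ with $N_G(X) \nleq H$. Since $3 \nmid |G|$, the corresponding elements of $N_G(X) \setminus N_H(X)$ induce transpositions on $\FO(X)$, hence fix one of the three points and have even order, forcing the point stabilizers to have even order -- and at that stage the Frobenius argument collapses. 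The paper's proof is organised entirely around these elements: it takes a minimal counter-example, disposes of the case $|G_\alpha|$ odd by the Frobenius argument, shows that $G$ has cyclic or quaternion Sylow $2$-subgroups whenever $G_\alpha$ contains one, manufactures a transitive subgroup $M$ of index $2$, and invokes minimality to conclude that $(M,\Omega)$ fails Hypothesis \ref{3fix}, so that some involution $t$ fixes exactly three points; then $\langle t\rangle \in \sy_2(G)$, $M$ has odd order and acts regularly or as a Frobenius group on $\Omega$, and $3=|\FO(t)|=|C_M(t)|$ contradicts $3\notin\pi(M)$. Some argument of this calibre is needed to close your case (a); without it the proof is incomplete.
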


\begin{proof}
Assume otherwise, choose $G$ to be a minimal counter-example and let
$\alpha \in \Omega$. First we consider the case where $G_\alpha$ has
odd order. Let $1 \neq H \le G_\alpha$ be a three point stabilizer,
fixing the distinct points $\alpha, \beta$ and $\gamma$ of $\Omega$.
Let $1 \neq X \le H$ and $g \in N_G(X)$. As $o(g)$ is coprime to $3$
by assumption, the fixed points of $X$ cannot be interchanged by $g$
in a $3$-cycle. But the fact that point stabilizers have odd order
also implies that $g$ cannot interchange two of the points $\alpha,
\beta, \gamma$ and fix the third. Thus it fixes them all and is
hence contained in $H$. Now Lemma \ref{charfrob} forces $G$ to be a
Frobenius group, contrary to Hypothesis \ref{3fix}.

We conclude that $G_\alpha$ has even order.

\begin{enumerate}

\item[(1)]
If $G_\alpha$ contains a Sylow $2$-subgroup of $G$, then $G$ has
cyclic or quaternion Sylow $2$-subgroups.

\begin{proof}
Suppose that $G_\alpha$ contains a Sylow $2$-subgroup. Then
$O_2(G)=1$, and moreover $O_3(G)=1$ by assumption. If $E$ is a
component of $G$, then one of the cases from Lemma \ref{hypcomp}
holds. The first two cases are impossible by the assumption that $3
\notin \pi(G)$, and in the third case the main result of \cite{TW}
yields that $E/Z(E)$ is a Suzuki group. But this contradicts Lemma
\ref{Szq}. Hence $E(G)=1$ and $F^*(G)=F(G)$ is a $\{2,3\}'$-group.
Looking at Theorem \ref{minimalnormal}, we deduce that (a) holds and
therefore our claim follows.
\end{proof}

\item[(2)]
$G$ has a subgroup $M$ of index $2$.

\begin{proof}
First suppose that $G_\alpha$ contains a Sylow $2$-subgroup of $G$
and let $T$ be a $2$-subgroup of $G$. Then $T$ is cyclic or
quaternion by (1) and therefore $N_G(T)/C_G(T)$ is a $2$-group
(recall that $3 \notin \pi(G)$). So Frobenius' Theorem implies that
$G$ has a normal $2$-complement and hence a subgroup of index $2$.

Now two cases from Lemma \ref{syl2} remain, namely (2) and (3).
First suppose that $S \in \sy_2(G)$ is dihedral or semidihedral.
Then Frobenius' Theorem is applicable again and $G$ has a normal
$2$-complement, in particular a subgroup of index $2$.

Finally suppose that Lemma \ref{syl2}~(3) holds and let $\beta \in
\Omega$ be such that $S_\alpha=S_\beta$. Let $s \in S \setminus
S_\alpha$. We already treated the case where $S$ is cyclic, so we
may suppose that $o(s) \neq |S|$. Then $s$ induces a product of an
even number of cycles of $2$-power length on each regular $S$-orbit.
Moreover $s$ interchanges $\alpha$ and $\beta$ and therefore it
induces an odd permutation on $\Omega$. So again $G$ has a subgroup
of index $2$.
\end{proof}

\item[(3)]
Let $M$ be as in (2). Then $M$ acts transitively on $\Omega$.

\begin{proof}
Assume otherwise. Then $M$ has two orbits on $\Omega$ which we
denote by $\Delta_1$ and $\Delta_2$. Then the elements in $G
\setminus M$ interchange $\Delta_1$ and $\Delta_2$, so they have no
fixed points. By Hypothesis \ref{3fix} we find $y \in M_\alpha$ such
that $y$ fixes three points on $\Omega$. We may choose $y$ of prime
order $p$ and we may suppose that $\alpha \in \Delta_1$. If $\alpha$
is the unique fixed point of $y$ on $\Delta_1$, then $|\Delta_1|
\equiv 1$ modulo $p$ and it follows that $y$ also has a unique fixed
point on $\Delta_2$. But then $y$ cannot have three fixed points in
total, so this is impossible. With similar arguments it follows
that, if $y$ has two fixed points on $\Delta_1$, then it has two or
zero fixed points on $\Delta_2$, which again gives a contradiction.

Thus the only remaining possibility is that all fixed points of $y$
are contained in $\Delta_1$. In particular $|\Delta_1| \equiv 3$
modulo $p$. Then $y$ acts without fixed points on $\Delta_2$ and it
follows that $|\Delta_2| \equiv 0$ modulo $p$. As
$|\Delta_1|=|\Delta_2|$, this forces $p=3$, which is impossible.
This proves our claim that $M$ acts transitively on $\Omega$.
\end{proof}
\end{enumerate}

Let $M$ be as in (2) and (3). Since $3 \notin \pi(M)$ and $G$ is a
minimal counter-example, we know that $(M,\Omega)$ does not satisfy
Hypothesis \ref{3fix}. In particular the three point stabilizers in
$M$ are trivial, which forces $G \setminus M$ to contain elements
with three fixed points. As $|G:M|=2$, this implies that some
involution $t \in G$ fixes exactly three points and hence $|\Omega|$
is odd by Lemma \ref{sylow}~(a). Now (1) yields that $G$ has cyclic
or quaternion Sylow $2$-subgroups, and this forces $\langle t
\rangle \in \sy_2(G)$. In particular $M$ has odd order. It follows
with Lemmas \ref{charfrob} and \ref{tuedel} that $M$ acts regularly
on $\Omega$ or that $M$ is a Frobenius group. In the first case
$3=|\FO(t)|=|C_M(t)|$, contrary to the fact that $3 \notin \pi(M)$.
In the second case we let $K$ denote the Frobenius kernel of $M$.
Then $K$ acts regularly on $\Omega$ and $t$ normalizes it, so we
have the same contradiction as above.
\end{proof}

This already proves one of the statements in Theorem \ref{main}. For
the additional details, we split our analysis in two parts.

\begin{prop} \label{even}
Let $\Omega$ be a set such that $(G, \Omega)$ satisfies Hypothesis
\ref{3fix} and let $\omega \in \Omega$. If $|G_\omega|$ is even,
then one of the following is true:
\begin{enumerate}
\item[(1)]
$G$ has a normal $2$-complement.

\item[(2)]
$G$ has dihedral or semidihedral Sylow $2$-subgroups and $4$ does
not divide $|G_\omega|$. In particular $G_\omega$ has a normal
$2$-complement.

\item[(3)]
$G_\omega$ contains a Sylow $2$-subgroup $S$ of $G$ and $G$ has a
strongly embedded subgroup.

\item[(4)]
$|G:G_\omega|$ is even, but not divisible by $4$ and $G$ has a
subgroup of index $2$ that has a strongly embedded subgroup.
\end{enumerate}
\end{prop}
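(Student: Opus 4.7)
The proof is structured around Lemma \ref{syl2}, which classifies the Sylow $2$-behaviour of $G$ on $\Omega$. Since $|G_\omega|$ is even, case (1) of that lemma is ruled out, leaving cases (2), (3), (4) to treat. Case (2), in which a Sylow $2$-subgroup $S$ of $G$ is dihedral or semidihedral with $|S_\omega|=2$, yields conclusion (2) of the proposition almost immediately: $G_\omega$ has cyclic Sylow $2$-subgroup of order $2$, so $4 \nmid |G_\omega|$, and Burnside's normal $p$-complement theorem supplies the normal $2$-complement in $G_\omega$.

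For case (4) of Lemma \ref{syl2}, i.e.\ $|\Omega|$ odd (so $G_\omega \supseteq S$), I would pass to Lemma \ref{odd-even}: its case (1) is excluded by $|G_\omega|$ even; its case (2) gives a strongly embedded subgroup of $G$, matching conclusion (3); its case (3) gives a normal $2$-complement in $G$, matching conclusion (1). In its case (4), $G$ has a subgroup $G_0 \unlhd G$ of index $2$ with a strongly embedded subgroup; Bender's classification then pins down the structure of $G_0$, and one checks by direct inspection either that $G$ inherits a strongly embedded subgroup (conclusion (3)) or that we land in conclusion (4).

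The substantive case is case (3) of Lemma \ref{syl2}: $S$ has a unique orbit $\{\omega,\beta\}$ of length $2$ and all other $S$-orbits are regular, so $|G{:}G_\omega|_2=2$, matching the arithmetic in conclusion (4). If $S$ is cyclic, Burnside's theorem on groups with cyclic Sylow $2$-subgroup produces a normal $2$-complement, giving conclusion (1). Otherwise, I would pick any $t \in S \setminus S_\omega$; since $S$ is non-cyclic we have $o(t) < |S|$ automatically. Then $t$ interchanges $\omega$ and $\beta$ (contributing sign $-1$), while on each regular $S$-orbit it decomposes as $|S|/o(t) \ge 2$ disjoint cycles of equal length $o(t) \ge 2$, contributing sign $(-1)^{(o(t)-1)|S|/o(t)} = +1$ there (because $o(t)-1$ is odd and $|S|/o(t)$ is even). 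Hence $t$ is an odd permutation of $\Omega$, and $G_0 := \ker(\mathrm{sgn})$ is a subgroup of index $2$ of $G$. A short orbit-length count using $|\Omega| = 2 + k|S|$ shows $G_0$ is either transitive on $\Omega$ or breaks it into two orbits of equal odd size $|\Omega|/2$; in either case one applies the strongly-embedded portion of the proof of Lemma \ref{odd-even} to the pair $(G_0,\Omega)$ or $(G_0,\text{one orbit})$ to obtain the required subgroup of $G_0$.

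The main obstacle is the parity bookkeeping in Case (3) of Lemma \ref{syl2} together with the $G_0$-transitivity/two-orbit dichotomy; once those are in hand the rest is a mechanical re-run of Lemma \ref{odd-even}. The reconciliation of Lemma \ref{odd-even}(4) with conclusions (3) and (4) in Case (4) of Lemma \ref{syl2} should require no deeper input than Bender's classification applied to the concrete candidates for $G_0$.
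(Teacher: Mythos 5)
Your overall skeleton is exactly the paper's: run through cases (2), (3), (4) of Lemma \ref{syl2}; map case (2) to conclusion (2); in case (4) (i.e.\ $|\Omega|$ odd) invoke Lemma \ref{odd-even}, whose possibilities (2), (3), (4) give conclusions (3), (1), (4) respectively (no appeal to Bender's classification is needed there — the paper simply reads off the conclusions); and in case (3) of Lemma \ref{syl2} either use Burnside when $S$ is cyclic or produce an index-$2$ subgroup $G_0$ via the sign homomorphism. Your parity bookkeeping for why $t \in S \setminus S_\omega$ is an odd permutation is correct and in fact more explicit than the paper's.

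The one step that does not go through as written is the very last one in case (3). You propose to finish by applying ``the strongly-embedded portion of the proof of Lemma \ref{odd-even}'' to $(G_0,\Omega)$ or to $(G_0,\text{one orbit})$. But Lemma \ref{odd-even} is stated and proved under the hypothesis that $|\Omega|$ is odd, and in case (3) of Lemma \ref{syl2} we have $|\Omega| = 2 + k|S|$ with $|S|\ge 4$, so $|\Omega|$ is even; moreover the two subcases of that proof that produce strongly embedded subgroups are the ones where a Sylow $2$-subgroup fixes one point or three points, whereas here $S_0 = S\cap G_0 = S_\omega$ fixes exactly two points. So there is no portion of that proof you can quote verbatim, and the detour through the $G_0$-orbit dichotomy is unnecessary (strong embedding is a group-theoretic condition and does not require transitivity). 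The repair is short and is what the paper does: let $M$ be the set-wise stabilizer in $G_0$ of the two fixed points $\{\alpha,\omega\}$ of $S_0$, note $S_0 \in \mathrm{Syl}_2(M)$, and show directly that for $g\in G_0$ a nontrivial $2$-element $x\in M\cap M^g$ (taken without loss in $S_0$ and in a Sylow $2$-subgroup of $M^g$) fixes $\alpha,\omega,\alpha^g,\omega^g$; since Lemma \ref{sylow}~(a) rules out exactly three fixed points (as $|\Omega|$ is even) and Hypothesis \ref{3fix} rules out four, we get $\{\alpha,\omega\}=\{\alpha^g,\omega^g\}$, hence $g\in M$ and $M$ is strongly embedded in $G_0$. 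With that substitution your argument matches the paper's proof.
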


\begin{proof}
By hypothesis one of the cases (2), (3) or (4) from Lemma \ref{syl2}
holds. Case (2) leads to possibility (2) of our proposition. In Case
(4) we apply Lemma \ref{odd-even}, where one of the possibilities
(2), (3) or (4) holds. They lead to the cases (3), (1) and (4) of
our proposition. Finally we suppose that Lemma \ref{syl2}~(3) holds.
Then either $S$ is cyclic, which leads to (1), or some elements of
$S^\#$ act as odd permutations on $\Omega$ and hence $G$ has a
subgroup $G_0$ of index $2$. Let $S_0:=G_0 \cap S$. Then $S_0$ fixes
exactly two points $\alpha,\omega$ on $\Omega$. Let $M$ denote the
set-wise stabilizer of $\{\alpha,\omega\}$ in $G_0$.

Let $g \in G_0$ an let $1 \neq x \in M \cap M^g$ be a $2$-element,
without loss $x \in S_0$. Then $x$ fixes $\alpha$ and $\omega$ and
it is contained in a Sylow $2$-subgroup of $M^g$, hence without loss
it fixes $\alpha^g$ and $\omega^g$. Lemma \ref{sylow}~(a) implies
that $x$ does not have three fixed points, so
$\{\alpha,\omega\}=\{\alpha^g,\omega^g\}$ and therefore $g \in M$.
This shows that $M$ is a strongly embedded subgroup of $G_0$ as in
(4).
\end{proof}

\begin{prop} \label{oddZ3}
Let $\Omega$ be a set such that $(G, \Omega)$ satisfies Hypothesis
\ref{3fix} and let $\omega \in \Omega$. Suppose that $|G_\omega|$ is
odd. If $|\FO(G_\omega)| =3$, then one of the following is true:
\begin{enumerate}
\item[(1)] $G$ has a normal subgroup $R$ of order $27$ or $9$, and $G/R$ is
isomorphic to $\Sym_3$, $\Alt_4$, $\Sym_4$, to a fours group or to a
dihedral group of order $8$.
\item[(2)] $G$ has a regular normal subgroup.
\item[(3)] $G$ has a normal subgroup $F$ of index $3$ which acts as
a Frobenius group on its three orbits.
\item[(4)] $G$ has a normal subgroup $N$ which acts
semiregularly on $\Omega$ such that $G/N$ is almost simple and
$G_\omega$ is cyclic.
\end{enumerate}
\end{prop}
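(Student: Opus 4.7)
Set $H := G_\omega$ and $\Delta := \FO(H) = \{\omega, \beta, \gamma\}$. First I would observe that $H$ equals the pointwise stabilizer of $\Delta$ and is a t.i.\ subgroup: if $1 \neq x \in H \cap H^g$, then $x$ fixes $\Delta \cup \Delta^g$, so Hypothesis \ref{3fix} forces $\Delta = \Delta^g$ and hence $H = H^g$. Since $|H|$ is odd, no element of $N_G(H)$ can induce a transposition on $\Delta$ (any such element would fix a point of $\Delta$ and so lie in a conjugate of $H$, which has odd order), so Lemma \ref{tuedel}(c) yields $|N_G(H):H| \in \{1, 3\}$. The same parity analysis gives $|N_G(X):N_H(X)| \in \{1, 3\}$ for every $1 \neq X \leq H$, with the second alternative corresponding to an element that cyclically permutes $\Delta$.

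Next I would dispose of the case $|N_G(H):H|=1$. Given $1 \neq X \leq H$ and $g \in N_G(X) \setminus H$, the element $g$ must cycle $\FO(X) = \Delta$ by the preceding parity argument, but then $g$ normalizes the pointwise stabilizer $H$ of $\Delta$, contradicting $N_G(H) = H$. Hence $N_G(X) \leq H$ for all $1 \neq X \leq H$ and Lemma \ref{charfrob} yields that $G$ is a Frobenius group with complement $H$. The Frobenius kernel is then a regular normal subgroup of $G$, which is conclusion (2).

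The main case is $|N_G(H):H| = 3$. Here I would take a minimal normal subgroup $N$ of $G$ and invoke Theorem \ref{minimalnormal}. Case (b) together with $|H|$ odd forces $N \cap H = 1$, so $N$ is semiregular; similarly any minimal normal $p$-group with $p \geq 5$ is semiregular. A transitive semiregular $N$ gives conclusion (2) directly, while an intransitive semiregular $N$ lets us replace $G$ by $G/N$ and reduce. If case (d) arises, then Lemmas \ref{comp2}, \ref{onecomp} and \ref{hypcomp} reduce us via an essentially almost simple quotient to Theorems \ref{simple3fp} and \ref{almostsimple3fp}; since $|H|$ is odd, only the cyclic-stabilizer entries of those theorems apply, which together with the semiregular normal kernel yields conclusion (4). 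In case (c), $N$ is a $3$-group with either $|N| \leq 9$ or the Sylow $3$-subgroup of $G$ of maximal class. The maximal-class sub-case, combined with the TI property of $H$ and the $3$-cycle structure of $N_G(H)/H$, produces an index-$3$ normal subgroup $F$ of $G$ whose restriction to each of its three orbits on $\Omega$ is a Frobenius group (as in Lemma \ref{3malFrobenius}), which is conclusion (3). In the sub-case $|N| \leq 9$, set $R := O_3(G)$; one verifies that $|R| \in \{9, 27\}$ and that $G/R$ embeds faithfully in $\mathrm{Aut}(R)$ while acting nontrivially on $\Delta$, and a direct enumeration forces $G/R \in \{\Sym_3, \Alt_4, \Sym_4, V_4, D_8\}$, which is conclusion (1).

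The main obstacle is this final step: pinning down the exact list of quotients $G/R$ in conclusion (1). One must combine the embedding $G/R \hookrightarrow \mathrm{Aut}(R)$ with the faithful action on $\Delta$ (which contributes a factor of $3$), the condition that $|G_\omega|$ is odd (ruling out many involution-rich automorphism subgroups), and the trivial-four-point-stabilizer constraint (which bounds the overall group order) to eliminate everything outside the listed five groups. Verifying the Frobenius condition on each of the three orbits in conclusion (3) is the corresponding delicate point in that sub-case, and reduces to showing that the restriction of the index-$3$ normal subgroup to each orbit is nonregular with trivial two-point stabilizers.
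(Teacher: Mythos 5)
Your opening observations are sound: under the hypothesis $|\FO(G_\omega)|=3$ every nontrivial element of $H=G_\omega$ fixes exactly the set $\Delta$, so $H$ is the pointwise stabilizer of $\Delta$, is t.i., and $|N_G(X):N_H(X)|\in\{1,3\}$ for $1\neq X\leq H$; the sub-case $N_G(H)=H$ is correctly disposed of via Lemma \ref{charfrob}. The proof breaks down, however, in the main case, where you try to extract all four conclusions from the case division of Theorem \ref{minimalnormal}. Your treatment of case (c) is demonstrably wrong in both branches. For the branch $|N|\leq 9$ you claim $R:=O_3(G)$ has order $9$ or $27$ and that $G/R$ embeds in $\mathrm{Aut}(R)$; take $G=\Z_3\times F$ with $F$ a Frobenius group of order coprime to $6$ (Lemma \ref{3malFrobenius}): here a minimal normal subgroup of order $3$ exists, $O_3(G)$ has order $3$, $G/O_3(G)\cong F$ certainly does not embed in $\mathrm{Aut}(\Z_3)$, and the correct conclusion is (2)/(3), not (1). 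For the maximal-class branch you claim an index-$3$ normal subgroup acting as a Frobenius group on its orbits; but the $3$-groups of maximal class of Lemma \ref{maximaleKlasse} satisfy the hypothesis of the proposition, and a $3$-group can never act as a Frobenius group (kernel and complement must have coprime orders), so conclusion (3) is impossible there --- the correct conclusion is (2), obtained from a maximal subgroup avoiding $H$. Moreover you give no argument at all for why maximal-class Sylow $3$-subgroups should produce a normal subgroup of index $3$.

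Two further ingredients of the actual proof are missing and not replaced by anything of comparable strength. First, conclusion (3) really originates in the case $3\mid |G_\omega|$ (with $H$ t.i.), which the paper handles by invoking Proposition 6.5 of \cite{PS1} to manufacture a normal subgroup of index $3$ and then inducting; your plan never isolates this case and has no mechanism to produce that subgroup. Second, your reduction ``an intransitive semiregular $N$ lets us replace $G$ by $G/N$'' silently assumes that $G/N$ acts faithfully on the set of $N$-orbits and that the quotient pair again satisfies the hypotheses and that conclusion (3) lifts back; the faithfulness is exactly Lemma 1.9 of \cite{PS1} (supplemented, when $N$ is a $3$-group with $|C_N(G_\omega)|=3$, by Lemma \ref{o3} to force a regular normal complement), and none of this is free. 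Conclusion (1) in the paper comes from the main theorem of \cite{PS2}, not from an $\mathrm{Aut}(R)$-embedding. In short, the paper's proof is essentially an application of the Pretzel--Schleiermacher machinery \cite{PS1}, \cite{PS2} together with Lemma \ref{o3} and induction, and your plan replaces that machinery with casework on Theorem \ref{minimalnormal} that does not have enough power and assigns several examples to the wrong conclusion.
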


\begin{proof}
If $G_\omega$ is not t.i., then the main theorem of \cite{PS2}
implies that $G$ has a regular normal subgroup of order $27$ or $9$.
The structure of $G/R$ is given in the corollary to the main theorem
of \cite{PS2}.

On the other hand if $G_\omega$ is t.i. and $3$ is a divisor of
$|G_\omega|$, then Proposition 6.5 of \cite{PS1} implies that $G$
has a normal subgroup $N$ of index $3$. If the action of $N$ on
$\Omega$ is transitive, then by induction over the order of an
example we can see that $N$ contains a regular normal subgroup
$N_0$, or a normal index $3$ Frobenius group $F_0$. In the first
case a Frattini argument implies that $G = N_0G_\omega = G_\omega
N_0$ and thus $N_0$ is normal in $G$, proving that $G$ possesses a
regular normal subgroup. In the second case the Frobenius kernel
$K_0$ of $F_0$ is a characteristic subgroup of $F_0$, and is hence
also normal in $G$. The number of $F_0$-orbits on $\Omega$ is equal
to $3$, thus the orbit stabilizer $G_0$ in $G$ of one of the
$F_0$-orbits acts as a Frobenius group on its fixed orbit, and hence
on all $F_0$ orbits; i.e. $G_0$ is a Frobenius group of index $3$ in
$G$.  As $G/F_0$ has order $9$, every index three subgroup of
$G/F_0$ is normal. Thus $G_0 \unlhd G$, which is one of our possible
conclusions.

Finally we consider the case where $G_\omega$ is still a t.i.
subgroup and moreover $|G_\omega|$ is coprime to $6$. If $G$ is
solvable, then Proposition 3.1 of \cite{PS1} shows that either (2)
or (3) holds. Thus we may assume that $G$ is not solvable. If $N$ is
a minimal normal subgroup of $G$ and $N$ is abelian of order $r^k$,
then $N \cap G_\omega =1$ either by Lemma \ref{sylow} if $r \neq
2,3$, or because we are assuming that $|G_\omega|$ is coprime to
$6$. In every case $N$ must act semiregularly on $\Omega$. If $r
\neq 3$, then Lemma 1.9 in \cite{PS1} implies that $G_\omega$ has at
most one fixed point on $\omega^N$. If $r = 3$, then $N$ is an
elementary abelian $3$-group and thus so is $C_N(G_\omega)$.

As $|C_N(G_\omega)| = |{\textrm{F}_{\omega^N}}|$ it must be that either
$\FO(G_\omega) \cap \omega^N = \{\omega \}$,
which is what we want, or that $|\FO(G_\omega) \cap \omega^N | =3$
and thus $|C_N(G_\omega)| = 3$.

If $|C_N(G_\omega)| = 3$, then Lemma \ref{o3} implies that
$G_\omega$ has a normal complement $K$ in $G$. As $|K||G_\Omega| =
|G| = |\Omega||G_\omega|$, we obtain that $K \cap G_\omega = 1$ and
thus that $K$ is a regular normal subgroup. This is one of our
conclusions.

So if $H$ does not posses a normal complement in $G$, then every
abelian minimal normal subgroup $N$ of $G$ acts semiregularly on
$\Omega$ and $\FO(G_\omega)$ intersects an $N$-orbit in at most one
point. If $r \neq 3$ and conclusion (3) does not hold, then Lemma
1.9 of \cite{PS1} asserts that the action of $G/N$ on $\ti{\Omega}$,
the set of $N$-orbits on $\Omega$, is faithful. Moreover
$G_{\ti{\omega}} \cong G_\omega$ and every $x \in G_{\ti{\omega}}$
fixes either $3$ or no points of $\ti{\Omega}$. So  $(G/N,
\ti{\Omega})$ is a $(0,3)$ group and in particular satisfies
Hypothesis \ref{3fix}.

If $r = 3$, then we saw that $C_N(x) = 1$ for all $1 \neq x \in G_\omega$. Thus $x$ fixes exactly
$3$ orbits of $N$. On each of these the action of $NG_\omega$ is Frobenius. If $|\ti{\Omega}| = 3$,
then $NG_\omega$ is an index three Frobenius subgroup of $G$ and the action of $G$ on
$\ti{\Omega}$ is either cyclic or $\Sym_3$. The latter case can not happen
as $G_\omega$ is odd. So $NG_\omega$ is the kernel of the action of $G$ on $\ti{\Omega}$ and
hence is normal in $G$. So if $|\ti{\Omega}| = 3$, then conclusion (3) holds.
Thus we may assume that $|\ti{\Omega}| > 3$. The kernel of the action of $G$ on $\ti{\Omega}$
lies in the stabilizer of $\omega^N$ which is $NG_\omega$. As $NG_\omega$ is a Frobenius group with complement
$G_\omega$ the kernel of the action must lie inside $N$, which implies that $G/N$ acts
faithfully on $\ti{\Omega}$. Also $G_{\ti{\omega}} \cong G_\omega$ and every $x \in
G/N$ fixes either $3$ or no points of $\ti{\Omega}$.

Thus if conclusion (3) does not hold, then $(G/N, \ti{\Omega})$ satisfies Hypothesis \ref{3fix} and
that moreover if $x \in G/N$ then $x$ fixes either $3$ or no points of $\ti{\Omega}$.

Thus if conclusions (2) and (3) do not hold for $G$ and $N$ has an
abelian minimal normal subgroup, then by induction on $|G|$ we may
conclude that (4) holds for $(G/N,\ti{\Omega})$. In turn this
implies that conclusion (4) holds for $G$. On the other hand if
conclusions (2) and (3) do not hold for $G$ and $N$ does not have an
abelian minimal normal subgroup, then by Theorem \ref{minimalnormal}
we see that $G$ is almost simple and the action on $\Omega$ must
satisfy Hypothesis \ref{3fix}. (The case with $F^*(G) = \PSL_2(2^p)$
implies that $|G_\omega|$ is even, hence it is not allowed here.)
Inspection of the simple and almost simple examples now yields that
$G_\omega$ is cyclic. Thus again conclusion (4) holds and our proof
is complete.
\end{proof}

\begin{proof}[Proof of Theorem \ref{main}]
Let $\Omega$ be a set such that $(G, \Omega)$ satisfies Hypothesis
\ref{3fix}. Then $G$ has order divisible by $3$ by Lemma
\ref{3strich}. If $\omega \in \Omega$, then we first consider the
case where $G_\omega$ has even order. Then Lemma \ref{even} gives
exactly the possibilities in Theorem \ref{main}.1. Next we suppose
that $G_\omega$ has odd order. Then Corollary \ref{FrobTo3} reduces
our situation to the case of $(0,3)$ groups, so Proposition
\ref{oddZ3} is applicable. It yields the details in Theorem
\ref{main}.2.
\end{proof}

We now consider the situation where $G_\omega$ is a Frobenius group
of odd order. We note that Corollary \ref{FrobTo3} implies that the
action of $G$ on the set of cosets of a nontrivial three point stabilizer $H$ is of type
$(0,3)$ and thus one of the conclusions of Proposition \ref{oddZ3}
holds. Conclusion (4) is impossible because $G_\omega$ is a
Frobenius group by hypothesis, so in particular it is not cyclic.
Conclusions (1) and (3) pin down the structure of $G$ as best as
possible and thus we now consider the situation where $G$ has a
regular normal subgroup.

\begin{cor}
Let $\Omega$ be a set such that $(G, \Omega)$ satisfies Hypothesis
\ref{3fix} and let $\omega \in \Omega$. Suppose further that
$|G_\omega|$ is odd. If $|\FO(G_\omega)| \neq 3$ and $G$ contains a
regular normal subgroup $N$ in its action on the set of cosets of
the three point stabilizer $H$, then $N$ is a Frobenius group with
complement $N_\omega$ and $\FO(x) = \{\omega \}$ for all $x \in
N_\omega^\#.$
\end{cor}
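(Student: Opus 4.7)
The plan is to establish the Frobenius structure in three stages, with the technical heart being a forbidden involution argument.

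\textbf{Stage 1: structure of $N$.} Since $N$ acts regularly on $\Lambda = G/H$ we have $G = NH$ with $N \cap H = 1$, so $G = N \rtimes H$; in particular $G/N \cong H$ has odd order, as $H$ is a Frobenius complement of the odd-order group $G_\omega$. Writing $G_\omega = K \rtimes H$ with Frobenius kernel $K$, an order count gives $|N \cap G_\omega| = |N||G_\omega|/|NG_\omega| = |G_\omega|/|H| = |K|$, using $NG_\omega \supseteq NH = G$. Since $(N \cap G_\omega) \cap H \leq N \cap H = 1$ and $K$ is the unique normal Hall $\pi(K)$-subgroup of the solvable group $G_\omega$, we conclude $N_\omega = N \cap G_\omega = K$. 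Consequently $|\omega^N| = |N|/|K| = |\Omega|$, so $N$ is transitive on $\Omega$.

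\textbf{Stage 2: reduction via Lemma \ref{charfrob}.} The claim that $N$ is Frobenius with complement $K$ and that $\FO(x) = \{\omega\}$ for $x \in K^\#$ is equivalent to $K$ being malnormal in $N$, which by Lemma \ref{charfrob} follows once one shows $N_N(X) \leq K$ for every nontrivial subgroup $X \leq K$. Suppose for contradiction that $n \in N_N(X) \setminus K$, and set $\alpha := \omega^n \neq \omega$; then $X = X^n \leq N_\alpha$ gives $\{\omega,\alpha\} \subseteq \FO(X)$. The value $|\FO(X)| = 3$ is impossible: the pointwise stabilizer of $\FO(X)$ would then be a three-point stabilizer through $\omega$, which by Lemma \ref{cases}(2) is a Frobenius complement of $G_\omega$, and such complements meet the Frobenius kernel $K$ trivially. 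Hence $\FO(X) = \{\omega,\alpha\}$.

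\textbf{Stage 3: the involution.} Lemma \ref{tuedel}(b) then forces $[N_G(X) : N_{G_\omega}(X)] \leq 2$, and since $n$ acts nontrivially on $\FO(X)$ the index is exactly $2$ and $n$ swaps $\omega \leftrightarrow \alpha$. As $n^2 \in K$ has odd order $m$, the element $t := n^m$ is an involution in $N_N(X)$ that still swaps $\omega$ and $\alpha$. Because $G/N \cong H$ has odd order, every $2$-element of $G$ lies in $N$; thus $t \in N$. Moreover, $N$ acts on $\Omega \cong N/K$ by left multiplication and $|K|$ is odd, so no non-identity element of $N$ of even order can fix a coset: a fixed coset $n_0 K$ would force $n_0^{-1} t n_0 \in K$ to have order $2$, which is impossible.

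\textbf{The main obstacle} will be extracting a contradiction from the existence of this involution $t \in N$ that swaps $\omega$ and $\alpha$ and normalizes $X$. My approach would be to exploit the $(0,3)$-structure of $(G,\Lambda)$ from Corollary \ref{FrobTo3}: identifying $\Lambda$ with $N$ via the semidirect product, the conjugation action of $H$ on $N$ satisfies $|\FL(h)| = |C_N(h)| = 3$ for every $h \in H^\#$, and a coprime-action fixed-point argument on each of the three $H$-fixed cosets of $K$ in $N$ shows that $C_N(H) =: Z$ has order $3$, with image in $N/K \cong \Omega$ equal to $\FO(H) = \{\omega,\beta,\gamma\}$. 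A generator $z$ of $Z$ therefore acts on $\Omega$ as the $3$-cycle $(\omega,\beta,\gamma)$. The technical heart is to combine $t$ and $z$ with the fixed-point-free action of $H$ on $K$ to produce either a fourth fixed point on $\Omega$ of some element (contradicting Hypothesis \ref{3fix}) or a nontrivial $H$-fixed element in $K$ (contradicting the Frobenius structure of $G_\omega$); the two subcases $\alpha \in \{\beta,\gamma\}$ and $\alpha \notin \{\beta,\gamma\}$ will be handled separately, the first directly via the Frobenius structure and the second by analyzing the action of $\langle X,t,z \rangle$ on $\{\omega,\alpha,\beta,\gamma\}$.
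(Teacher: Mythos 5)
Your Stages 1 and 2, and the construction of the involution in Stage 3, are correct and follow essentially the same route as the paper: identify $N_\omega$ with the Frobenius kernel $K$ of $G_\omega$, reduce via Lemma \ref{charfrob} to showing $N_N(X) \leq K$ for all $1 \neq X \leq K$, rule out $|\FO(X)|=3$ because a three point stabilizer through $\omega$ is a Frobenius complement of $G_\omega$ and hence meets $K$ trivially, and then use Lemma \ref{tuedel}~(b) to extract an involution $t \in N \setminus K$ normalizing $X$ and interchanging $\omega$ and $\alpha$. Up to that point the argument is sound.

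The genuine gap is that you never derive the contradiction: you explicitly defer "the main obstacle" and only sketch a plan. The existence of a fixed-point-free involution in $N$ is not by itself contradictory (nothing forces $|N|$ to be odd), so the proof is incomplete exactly at its critical step. The paper closes it as follows: the hypothesis $|N_N(X):N_{N_\omega}(X)|=2$ is upgraded to $|N_N(N_\omega):N_\omega|=2$, one observes $N_G(N_\omega)=N_N(N_\omega)H$, and a Frattini argument (Sylow $2$-subgroups of $N_N(N_\omega)H$ have order $2$ and lie in $N_N(N_\omega)$) conjugates $H$ into the normalizer of $\langle t \rangle$, so that some involution of $N$ lies in $N_G(H)$; but Corollary \ref{FrobTo3} gives $|N_G(H):H|=3$, so $N_G(H)$ has odd order -- contradiction. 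Your proposed substitute is not only unexecuted but shaky: the claim $|C_N(H)|=3$ does not follow from $|C_N(h)|=3$ for each $h \in H^\#$ (the intersection $\bigcap_{h}C_N(h)$ could a priori be trivial unless one first shows $|\FL(H)|=3$, which itself requires an argument, e.g.\ via Lemma \ref{cases} applied to $(G,\Lambda)$ and ruling out that $H$ is itself Frobenius). To complete your write-up you should replace the final paragraph with the Frattini/odd-order-of-$N_G(H)$ argument, which also spares you the case analysis on $\{\omega,\alpha,\beta,\gamma\}$.
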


\begin{proof}
As $|\FO(G_\omega)| \neq 3$ Lemma \ref{cases} implies that
$G_\omega$ is a Frobenius group with kernel $N_\omega$ and
complement $H$. We will show that $N_N(X) = N_\omega$ for all $1
\neq X \leq N_\omega$ which, by Lemma \ref{charfrob}, implies our
claim. Now by Hypothesis $G = NH$ and thus $N_\omega$ fixes at most
two points of $\Omega$. Thus Lemma \ref{tuedel} implies that
$|N_N(N_\omega):N_\omega| \leq 2.$ Suppose now that
$|N_N(X):N_\omega| = 2$ for some $1 \neq X \leq N_\omega$. Then
$|N_N(N_\omega):N_\omega|=2$ and $N_G(N_\omega) = N_N(N_\omega)H$.
Thus by a Frattini argument some involution $t \in N \setminus
N_\omega$ normalizes $H$. But by hypothesis $|N_G(H):H| = 3$, so $t$
must be $G$-conjugate to an element of $H$. This is impossible as
$|H|$ is odd. Therefore $N_N(X) = N_\omega$ for all $1 \neq X \leq
N_\omega$, establishing that $N$ is a Frobenius group. We also see
that every nontrivial element of $N_\omega$ fixes a unique point.
\end{proof}

\begin{cor} \label{oddFrobconclusion}
Let $\Omega$ be a set such that $(G, \Omega)$ satisfies Hypothesis
\ref{3fix} and that $|G_\omega|$ is odd. If $|\FO(G_\omega)| \neq
3$, and $G$ contains a regular normal subgroup $N$ in its action on
the cosets of the three point stabilizer $H$, then $G$ is one of the
groups from Lemma \ref{BeispielKoerper}.
\end{cor}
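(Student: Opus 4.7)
My plan is to transfer the problem to the conjugation action of $G_\omega$ on the Frobenius kernel $K$ of $N$, and then use the constraints imposed by $H$ to identify the abstract structure of $G$.

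By the preceding corollary, $N = K \rtimes N_\omega$ is a Frobenius group with kernel $K$ and complement $N_\omega$, and $|\FO(x)| = 1$ for every $x \in N_\omega^\#$. Since $G = N \rtimes H$ and $G_\omega = N_\omega \rtimes H$, one computes $G_\omega \cap N = N_\omega$ and $K \cap G_\omega = 1$; combined with $|\Omega| = |G:G_\omega| = |K|$, this shows that $K$ itself acts regularly on $\Omega$. Identifying $\Omega$ with $K$ via this regular action turns the action of any $g \in G_\omega$ on $\Omega$ into conjugation by $g$ on $K$, so $|\FO(g)| = |C_K(g)|$ throughout. In particular, every $h \in H^\#$ satisfies $|C_K(h)| = 3$, and since $C_K(H) \le C_K(h)$ with both of order $3$, we obtain $C_K(h) = C_K(H)$ for every $h \in H^\#$; hence $H$ acts fixed-point-freely on $K \setminus C_K(H)$, which gives the divisibility $|H| \mid |K|-3$, while the fixed-point-free action of $N_\omega$ on $K^\#$ gives $|N_\omega| \mid |K|-1$.

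Next I would show that both $N_\omega$ and $H$ are cyclic. The group $N_\omega$ is a Frobenius complement in $N$ (so all its Sylow subgroups are cyclic) and simultaneously a Frobenius kernel in $G_\omega$ (so it is nilpotent), hence it is cyclic. The group $H$ acts faithfully and fixed-point-freely on the cyclic group $N_\omega$, so it embeds into the abelian group $\textrm{Aut}(N_\omega)$; combined with being a $Z$-group as a Frobenius complement of odd order, this forces $H$ to be cyclic as well.

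The final step is to match the abstract structure of $G$ with that of Lemma~\ref{BeispielKoerper}. Applying Lemma~\ref{Fukushima} to a prime-order element of $H$ whose order is coprime to $|K|$ (the coprimality is handled by a short divisibility argument using $|H| \mid |K|-3$ and $(|N_\omega|,|K|)=1$), together with the nilpotence of $K$, excludes all primes other than $3$ from the set of prime divisors of $|K|$, so $K$ is a $3$-group; then the cyclic fixed-point-free action of $N_\omega$ on $K$ forces $K$ to be elementary abelian of order $3^k$, and places $N_\omega$ inside a Singer cycle. The condition $|C_K(H)| = 3$ forces $H$ to act on $K \cong \textrm{GF}(3^k)$ as a subgroup of $\textrm{Gal}(\textrm{GF}(3^k)/\textrm{GF}(3))$ whose fixed field is exactly $\textrm{GF}(3)$, pinning $|H|$ to be a prime $p$ with $k = p$; Singer-cycle maximality combined with $|N_\omega| \mid 3^p - 1$ then forces $|N_\omega| = 3^p - 1$. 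This yields $N \cong \textrm{AGL}_1(\textrm{GF}(3^p))$ with $H$ the full Galois group, which is exactly the structure of Lemma~\ref{BeispielKoerper}. The main obstacle I anticipate is this last paragraph: the interplay between the fixed-point-free action of $N_\omega$ and the near-fixed-point-free action of $H$ on $K$, together with the application of Lemma~\ref{Fukushima} and the Singer-cycle analysis, requires delicate bookkeeping; all the prior reductions are routine once the Frobenius structure is in place.
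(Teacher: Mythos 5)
Your reductions up to the cyclicity of $N_\omega$ and $H$ are sound, and your route is genuinely different from the paper's: the paper does not analyse the conjugation action on $K$ at all, but instead observes that $G_\omega$ has exactly three non-regular orbits on $\Omega$ (namely $\{\omega\}$ and the $N_\omega$-orbits of the other two fixed points of $H$) and then quotes Lemma 4.3 of Pretzel--Schleiermacher \cite{PS2}, which does all the structural work. Your attempt to make that structural work self-contained is where the proof breaks down, and in the paragraph you yourself flag as the obstacle.

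Three concrete problems. First, the Fukushima step: your coprimality argument only shows that a common prime divisor of $|H|$ and $|K|$ must be $3$, and the case $H$ a $3$-group with $3\mid |K|$ really occurs (take $K=\mathrm{GF}(27)^+$, $N_\omega=\Z_{13}\le \mathrm{GF}(27)^*$, $H=\mathrm{Gal}(\mathrm{GF}(27)/\mathrm{GF}(3))\cong\Z_3$; all hypotheses of the corollary hold). Moreover, even when a coprime prime exists, Lemma \ref{Fukushima} concludes $K=O_{3,3'}(K)C_K(h)$, which is automatically true for the nilpotent group $K$ and excludes no primes. (The claim that $K$ is a $3$-group is nevertheless correct, but needs a different argument: $G_\omega$ acts fixed-point-freely on $O_{3'}(K)$, so if $O_{3'}(K)\neq 1$ then the Frobenius group $G_\omega=N_\omega H$ would be a Frobenius complement, which is impossible since it contains a non-cyclic subgroup of order $pq$.) Second, ``a cyclic fixed-point-free action forces $K$ elementary abelian'' is not a valid principle: $x\mapsto ax$ on $\Z_{p^2}$ with $a$ of multiplicative order dividing $p-1$ is fixed-point-free. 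Getting $K$ elementary abelian requires the interplay with $H$ and is precisely the content of \cite{PS2}, Lemma 4.3. Third, the conclusion $|N_\omega|=3^p-1$ is impossible: $3^p-1$ is even while $|N_\omega|\le|G_\omega|$ is odd by hypothesis, and in the example above $N_\omega=\Z_{13}$ is a proper subgroup of $\mathrm{GF}(27)^*\cong\Z_{26}$. The paper's own proof accordingly concludes only that $N_\omega$ is a \emph{subgroup} of the multiplicative group (so ``one of the groups from Lemma \ref{BeispielKoerper}'' must be read loosely); forcing $N_\omega$ to be the full multiplicative group would actually contradict the hypotheses.
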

\begin{proof}
The previous corollary established that $G$ is solvable. Next we
observe that $G_\omega$ has exactly three orbits which are not
regular. To see this let $\FO(H) = \{\omega, \omega_1, \omega_2\}$
and consider the $N_\omega$-orbits of $\omega_1$ and $\omega_2$.
These are regular $N_\omega$-orbits on which $H$ has exactly one
fixed point. Thus $G$ satisfies the hypotheses of Lemma 4.3 in
\cite{PS2}. The conclusion of Lemma 4.3 is that $G$ contains a
normal subgroup $A$ which is isomorphic to the additive group of a
finite field of order $3^p$ where $p$ is prime. $N_\omega$ is a
subgroup of the multiplicative group of this field whereas $H$ is
the Galois group of the field. These are precisely the examples in
Lemma \ref{BeispielKoerper}.
\end{proof}

The final two corollaries give additional information for conclusion
(2) of Proposition \ref{oddZ3}.

\begin{cor}\label{oddregnot3conclusion}
Let $\Omega$ be a set such that $(G, \Omega)$ satisfies Hypothesis
\ref{3fix}. Let $\omega \in \Omega$ and suppose that $|G_\omega|$ is
odd. If $|\FO(G_\omega)| = 3$ and $G$ contains a regular normal
subgroup $N$, and if moreover $G_\omega$ is not a $3$-group, then
$N$ is solvable and $N = O_{3,3'}(N)C_N(x)$ for some $x \in
G_\omega$ with $|C_N(x)| = 3$.
\end{cor}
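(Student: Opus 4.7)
The plan is to identify $\Omega$ with $N$ via the regular action and then apply Fukushima's Lemma \ref{Fukushima} to a carefully chosen element of $G_\omega$. Since $N$ acts regularly on $\Omega$, the map $n \mapsto n \cdot \omega$ is a bijection from $N$ to $\Omega$, and under this identification the action of any $x \in G_\omega$ on $\Omega$ becomes conjugation $n \mapsto xnx^{-1}$ on $N$. Consequently $|\FO(x)| = |C_N(x)|$ for every $x \in G_\omega$. The hypothesis $|\FO(G_\omega)| = 3$ therefore translates to $|C_N(G_\omega)| = 3$, and since each $x \in G_\omega^\#$ fixes at least the three points of $\FO(G_\omega)$ and at most three points by Hypothesis \ref{3fix}, we obtain $|C_N(x)| = 3$ for every nontrivial $x \in G_\omega$.

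Because $|G_\omega|$ is odd and $G_\omega$ is not a $3$-group, we can pick $x \in G_\omega$ of prime order $p$ with $p \neq 3$, so $p \geq 5$. Then $\langle x \rangle$ acts on $N$ with fixed set of size $3$ (namely $C_N(x)$) and every other orbit of length $p$, yielding $|N| \equiv 3 \pmod{p}$; since $p \geq 5$, this forces $p \nmid |N|$. The faithful action of $G$ on $\Omega$ ensures that the automorphism of $N$ induced by $x$ is nontrivial and therefore has order $p$.

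All hypotheses of Fukushima's Lemma \ref{Fukushima} are now met for $N$ with the automorphism $\alpha$ induced by $x$: odd prime order $p$ coprime to $|N|$, and $C_N(\alpha) = C_N(x)$ of order $3$, which is a $3$-group. The lemma then delivers that $N$ is solvable and $N = O_{3,3'}(N)\,C_N(x)$, which is exactly the statement of the corollary. The main obstacle in this plan is verifying the coprimality $(p, |N|) = 1$; this is handled cleanly by the ``Frobenius-like'' orbit structure of $\langle x \rangle$ on $N$, which forces $|N| \equiv 3 \pmod{p}$ and thereby leverages the choice $p \neq 3$ in an essential way.
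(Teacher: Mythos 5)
Your proof is correct and follows essentially the same route as the paper: choose $x \in G_\omega$ of prime order $p \geq 5$, verify that $p$ is coprime to $|N|$ and that $C_N(x)$ has order $3$ via the identification $|\FO(x)| = |C_N(x)|$, and invoke Fukushima's result (Lemma \ref{Fukushima}). The only cosmetic difference is that you derive the coprimality from the congruence $|N| \equiv 3 \pmod{p}$ coming from the orbit structure, whereas the paper cites Lemma \ref{sylow}~(c) to place a Sylow $p$-subgroup inside $G_\omega$; both are immediate and equally valid.
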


\begin{proof}
As $G_\omega$ is not a $3$-group, there exists $x \in G_\omega$ of
prime order $p > 3$. Now Lemma \ref{sylow} implies that $o(x)$ and
$|N|$ are coprime. Thus the hypotheses of Lemma \ref{Fukushima} are
satisfied, which implies our conclusion.
\end{proof}

\begin{cor}\label{oddregHeq3conclusion}
Suppose that Hypothesis \ref{3fix} holds and let $\omega \in
\Omega$. Suppose that $|G_\omega|$ is odd. If $|\FO(G_\omega)| = 3$
and $G$ contains a regular normal subgroup $N$, and if moreover
$G_\omega$ is a $3$-group, then $G_\omega$ is cyclic and $N$ is
solvable such that $G = O_{3,3'}(N)G_\omega$.
\end{cor}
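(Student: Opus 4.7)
Set $H := G_\omega$ and $Z := C_N(H)$. Identifying $\Omega$ with $N$ via the regular action of $N$, we have $|Z| = |\FO(H)| = 3$, and Hypothesis \ref{3fix} combined with $Z \subseteq C_N(x)$ for $x \in H$ gives $C_N(x) = Z$ for every non-identity $x \in H$; so $Z$ is a $3$-subgroup of $N$ of order three. Since $G = NH$ with $N \cap H = 1$, the desired conclusion $G = O_{3,3'}(N)G_\omega$ is equivalent to $N = O_{3,3'}(N)$. I plan to establish the three assertions (cyclicity of $H$, solvability of $N$, and $N = O_{3,3'}(N)$) in that order.

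For cyclicity, first I would observe that $H$ acts coprimely on $O_{3'}(N)$ with $C_{O_{3'}(N)}(H) \le Z \cap O_{3'}(N) = 1$, hence fixed-point-freely. If $O_{3'}(N) \ne 1$, then $O_{3'}(N) \rtimes H$ will be Frobenius with $3$-group complement $H$, forcing $H$ to be cyclic. If $O_{3'}(N) = 1$, I would argue by contradiction: assuming $C_3 \times C_3 \le H$, Burnside's orbit-counting lemma applied to this subgroup on $N$ gives $|N| \equiv 3 \pmod 9$, whence $|N|_3 = 3$; the component-killing argument from Lemma \ref{comp2} (via Theorem 2 of \cite{CC}) then forces $E(N) = 1$ and $F^*(N) = O_3(N)$ of order at most $3$, so $|N| \le 6$, and Lemma \ref{kleinersechs} rules out any such $(G,\Omega)$ with point stabilizer containing $C_3 \times C_3$.

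For solvability, Thompson's theorem makes $O_{3'}(N)$ nilpotent. Setting $\bar N := N/O_{3'}(N)$, coprime action gives $|C_{\bar N}(x)| = 3$ for every $1 \ne x \in H$, and the same component-killing argument applied to $\bar N$ will yield $E(\bar N) = 1$: the group $H$ cannot permute components non-trivially without producing too many centralizing elements, and a fixed quasi-simple component would be solvable by \cite{CC}, a contradiction. Hence $F^*(\bar N) = O_3(\bar N)$ and $\bar N$ is $3$-constrained. Every $3'$-chief factor of $\bar N$ then admits a fixed-point-free $H$-action (its $H$-centralizer divides $3$ but it is a $3'$-group), hence is nilpotent by Thompson and therefore abelian; combined with the nilpotency of $O_{3'}(N)$, this will make $N$ solvable.

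For the final step, using Glauberman's lemma I would produce an $H$-invariant Hall $3'$-subgroup $Q$ of $\bar N$; coprime action with $C_Q(H) \le Z \cap Q = 1$ gives that $Q \rtimes H$ is Frobenius whenever $Q \ne 1$. The plan is then to lift $Q$ to an $H$-invariant $3'$-subgroup $\tilde Q$ of $N$ containing $O_{3'}(N)$, and use the $3$-constrained structure of $\bar N$ together with the t.i.-property of $H$ in $G$ (itself inherited from Hypothesis \ref{3fix}) to force $\tilde Q$ to be normal in $N$; then $\tilde Q \le O_{3'}(N)$ forces $Q = 1$ and shows $\bar N = O_{3,3'}(\bar N)$, from which Schur-Zassenhaus over $O_{3'}(N)$ gives $N = O_{3,3'}(N)$. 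The hardest part will be this last lifting-and-normality argument, since the non-coprime interaction between $H$ and the $3$-part of $N$ prevents a direct application of Fukushima's Lemma \ref{Fukushima}.
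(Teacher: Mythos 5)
Your opening reductions are sound: identifying $\Omega$ with $N$, every $1\neq x\in G_\omega$ satisfies $C_N(x)=C_N(G_\omega)$, a group of order $3$, and your proof that $H:=G_\omega$ is cyclic is correct (the Frobenius-complement argument when $O_{3'}(N)\neq 1$, and the Burnside-count elimination of $C_3\times C_3$ when $O_{3'}(N)=1$). The remaining two steps, however, each contain a genuine gap. For solvability: proving $E(\bar N)=1$ and that all $3'$-chief factors of $\bar N$ are abelian does not exclude a non-abelian chief factor of order divisible by $3$; a group with $F^*=O_3$ self-centralizing can still be non-solvable, and such a chief factor is neither a $3'$-section admitting a fixed-point-free $H$-action nor a component of $\bar N$, so your argument never touches it. The repair is one line and is exactly what Lemma \ref{comp2} of the paper does elsewhere: an element $x\in H$ of order $3$ acts faithfully on $N$ with $|C_N(x)|=3$, so Theorem 2 of \cite{CC} gives solvability of $N$ outright, making the chief-factor detour both incomplete and unnecessary.

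More seriously, the structural conclusion (that $\bar N=N/O_{3'}(N)$ is a $3$-group) is left as a plan which you yourself flag as the hardest part, and its first ingredient fails: Glauberman's lemma requires the acting group to be coprime to the group acted on, whereas $H$ is a $3$-group and $3$ divides $|\bar N|$ whenever $\bar N\neq 1$, so it does not deliver an $H$-invariant Hall $3'$-subgroup of $\bar N$. (One can obtain such a subgroup instead from conjugacy of Hall subgroups in the solvable group $G/O_{3'}(N)$ together with a Frattini argument, but the subsequent ``lifting-and-normality'' step is not carried out at all, and it is precisely there that the work lies.) The paper avoids this entirely by a different decomposition: by Lemma \ref{syl3}, a Sylow $3$-subgroup $P$ of $G$ containing $H$ satisfies either $|P\cap N|\le 3$ --- in which case Burnside's normal $p$-complement theorem applied to $N$ produces the normal $3$-complement directly and $KH$ Frobenius gives cyclicity as a byproduct --- or $|H|=3$ with $P$ of maximal class, in which case one passes to the action of $G/O_{3'}(N)$ on the set of $O_{3'}(N)$-orbits and uses Theorem \ref{almostsimple3fp} to force that quotient to be a $3$-group. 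You would need either to adopt that dichotomy or to supply the missing normality argument before the corollary can be regarded as proved.
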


\begin{proof}
We let $H:=G_\omega$ and consider $P \in \sy_3(G)$ such that $H \leq
P$. Now Lemma \ref{sylow} implies that either
$|P:H| \leq 3$, or that $|H| = 3$ and $P$ has maximal class. %

First we consider the case $|P:H| \leq 3$, i.e. $|P \cap N| \leq 3$.

If $|P \cap N| = 1$, then $|N|$ is a $3'$-group whereas $|C_N(H)| =
3$; a contradiction.

If $|P \cap N| = 3$, then either $C_N(P \cap N) = N_N(P \cap N)$ and
$N$ has a normal $3$-complement $K$, or $H$ centralizes $N_N(P \cap
N) / C_N(P \cap N)$ (which is of order $2$). In the latter case
$C_H(N)$ has even order, contrary to our hypothesis. In the former
case $G = KP$ and, as $|N:K| = 3$ and $N$ is regular on $\Omega$, we
obtain that $K$ has three orbits on $\Omega$ and that $KH$ is a
Frobenius group. Thus $H$ is cyclic and $N = O_{3,3'}(N)$, which is
our conclusion.

So now we consider the situation where $|H| = 3$ and $P$ has maximal
class. We see that Lemma 1.9 of \cite{PS1} implies that $O_{3'}(N)$
acts semiregularly on $\Omega$ and that the action of $G/O_{3'}(N)$
is faithful on the set $\ti{\Omega}$ of $O_{3'}(N)$-orbits. Now,
since no almost simple group can satisfy Hypothesis \ref{3fix} with
$H$ a $3$-group (see Theorem \ref{almostsimple3fp}), we see that
$F^*(G/O_{3'}(N)) = F(G/O_{3'}(N)) = O_{3}(G)(G/O_{3'}(G))$ and
$O_{3}(G)(G/O_{3'}(G))$ acts semiregularly on $\ti{\Omega}$. Thus
one of the following could happen:

$O_{3}(G)(G/O_{3'}(G))$ could act regularly on $ \ti{\Omega}$, or it
could act semiregularly with at least three orbits on $\ti{\Omega}$.

However the latter possibility does not occur because $H$ fixes
three points on any $H$-invariant $O_{3}(G)(G/O_{3'}(G))$-orbit,
since a $3$-group never acts fixed point freely on a $3$-group. Thus
$N/O_{3'}(G)$ acts regularly on $\ti{\Omega}$, which implies that
$G/O_{3'}(G)$ is a $3$-group because $|G/O_{3'}(G)| =
|\ti{\Omega}|\cdot |H| = |N/O_{3'}(G)|\cdot |H|$. Again our claim
follows.
\end{proof}







\end{document}